\DeclareRobustCommand\widecheck[1]{
	{\mathpalette\@widecheck{#1}}}
\def\@widecheck#1#2{
		\setbox\z@\hbox{\m@th$#1#2$}
	\setbox\tw@\hbox{\m@th$#1%
		\widehat{%
			\vrule\@width\z@\@height\ht\z@
			\vrule\@height\z@\@width\wd\z@}$}%
	\dp\tw@-\ht\z@
	\@tempdima\ht\z@ \advance\@tempdima2\ht\tw@ \divide\@tempdima\thr@@
	\setbox\tw@\hbox{%
		\raise\@tempdima\hbox{\scalebox{1}[-1]{\lower\@tempdima\box
				\tw@}}}%
	{\ooalign{\box\tw@ \cr \box\z@}}}
\newtheorem{theorem}{Theorem}
\newtheorem{lemma}{Lemma}
\newtheorem{proposition}{Proposition}
\newtheorem{corollary}{Corollary}
\newtheorem{definition}{Definition}
\theoremstyle{remark}
\newtheorem{remark}{Remark}
\numberwithin{equation}{section}
\newcommand{\supp}{\operatorname{supp}}
\newcommand{\loc}{_{loc}}
\title{ Quantitative estimates for regular Lagrangian flows with $BV$ vector fields}
\author{
	\textbf{Quoc-Hung Nguyen} \thanks{Address: quochung.nguyen@sns.it,qn2@nyu.edu,   Scuola Normale Superiore,  Piazza dei Cavalieri 7, I-56100 Pisa, Italy.
Current address: qhnguyen@shanghaitech.edu.cn, ShanghaiTech University,393 Middle Huaxia Road, Pudong, Shanghai, 201210, China.}}
\date{}
\begin{document}
\maketitle 
\begin{abstract}
 This paper is devoted to the study of flows associated to non-smooth vector fields. We prove the well-posedness of regular Lagrangian flows associated to vector fields $\mathbf{B}=(\mathbf{B}^1,...,\mathbf{B}^d)\in L^1(\mathbb{R}_+;L^1(\mathbb{R}^d)+L^\infty(\mathbb{R}^d))$ satisfying $
\mathbf{B}^i=\sum_{j=1}^{m}\mathbf{K}_j^i*b_j,$ $b_j\in L^1(\mathbb{R}_+,BV(\mathbb{R}^d))$
and $\operatorname{div}(\mathbf{B})\in L^1(\mathbb{R}_+;L^\infty(\mathbb{R}^d))$ for $d,m\geq 2$,  where   $(\mathbf{K}_j^i)_{i,j}$ are singular kernels in $\mathbb{R}^d$. Moreover, we also show that there exist  an autonomous vector-field $\mathbf{B}\in L^1(\mathbb{R}^2)+L^\infty(\mathbb{R}^2)$ and  singular kernels $(\mathbf{K}_j^i)_{i,j}$, singular Radon measures  $\mu_{ijk}$ in $\mathbb{R}^2$  satisfying $\partial_{x_k}
\mathbf{B}^i=\sum_{j=1}^{m}\mathbf{K}_j^i\star\mu_{ijk}$ in distributional sense   
for some $m\geq 2$  and  for $k,i=1,2$ such that regular Lagrangian flows  associated to vector field $\mathbf{B}$  are not unique.  \medskip\\
\textit{Key words}: Ordinary differential with non smooth vector fields; continuity equation; transport equation; regular Lagrangian flow; maximal function; Kakeya maximal function; Riez potential; singular integral operator, maximal singular integral operator; Kakeya singular integral operator; $BV$ function.\medskip\\
\textit{MSC} (2010): 34A12,35F25,35F10

\end{abstract}

\tableofcontents

\section{Introduction}	
In this paper we study the well-posedness of flows of ordinary differential equations
\begin{eqnarray}\label{odeeq1}
\left\{ \begin{array}{rcl}
\frac{d X(t,x)}{dt}&=&\mathbf{B}(t,X(t,x)),~~~\forall~t\in [0,T]\\
X(0,x)&=& x,~~\forall~x\in\mathbb{R}^d,
\end{array}\right.
\end{eqnarray} 	
where $\mathbf{B}(t,x)=\mathbf{B}_t(x)\in \mathbb{R}^d$ is a function in $[0,T]\times\mathbb{R}^d$, $d\geq 2$. It is well known that by Peano's Theorem, there exists at least one solution to the problem \eqref{odeeq1} provided that $\mathbf{B}$ is continuous. Moreover, by the usual Cauchy-Lipschitz Theorem, one has also uniqueness if $\mathbf{B}$ is a bounded smooth vector field. 

The  ordinary differential equation \eqref{odeeq1} is related to the continuity equation 
\begin{eqnarray}\label{contieq}
\left\{ \begin{array}{rcl}
\partial_t u(t,x)+\operatorname{div}\left(\mathbf{B}(t,x)u(t,x)\right)&=& G(t,x)u(t,x)+F(t,x), \\
u(0,x)&=& u_0(x),
\end{array}\right.
\end{eqnarray} 	
for any $(t,x)\in [0,T]\times\mathbb{R}^d$. 
Indeed, assume that $u_0,\mathbf{B},G$ and $F$ are smooth and compactly supported. Let $X:[0,T]\times\mathbb{R}^d\to \mathbb{R}^d$ be the unique solution of \eqref{odeeq1}. It is called the flow of vector field $\mathbf{B}$. We have 
$$
\det(\nabla_x X(t,x))= \exp\left(\int_{0}^{t}\operatorname{div}(\mathbf{B})(s,X(s,x))ds\right)> 0.
$$
 In particular, the map $X(t,.)$ is a diffeomorphism from $\mathbb{R}^d$ to itself and we denote by $X^{-1}(t,.)$ its inverse. A solution of \eqref{contieq} is given in term of the flow $X$ by the following formula
\begin{align}\nonumber
u(t,x)&=u_0(\overline{x})\exp\left(-\int_{0}^{t}\left(\operatorname{div}(\mathbf{B})-G\right)(s,X(s,\overline{x}))ds\right)\\&+\int_{0}^{t}F(\tau,X(\tau,\overline{x})) \exp\left(-\int_{\tau}^{t}\left(\operatorname{div}(\mathbf{B})-G\right)(s,X(s,\overline{x}))ds\right)d\tau,\label{formu}
\end{align}
with $\overline{x}=X^{-1}(t,.)(x)$, its
proof is elementary. Therefore, we can say that \textit{ the well posedness of \eqref{odeeq1}  is equivalent to the well-posedness of \eqref{contieq}}.

The continuity equations (often with non-smooth vector fields) are important for describing various quantities in mathematical physics such as mass, energy, momentum, electric charge. Especially, they are essential to study  transport equations as such the convection-diffusion, Boltzmann,  Vlasov-Poisson, Euler and Navier-Stokes equations.

 Let us start by the seminal work of Diperna and Lions \cite{Diperlions}. They established the existence, uniqueness and stability of distributional solutions of   \eqref{contieq}  for   vector fields $\mathbf{B}$ in $L^1_tW^{1,1}_x$ satisfying $\operatorname{div}(\mathbf{B})\in L^1_tL^\infty_x$ and a growth condition $\mathbf{B}/(1+|x|)\in L^1_tL^1_x+L^1_tL^\infty_x$. Later some progress was achieved in several papers \cite{lion98,Bo01,BoJa,Hau,ColoLerner1,ColoLerner2}, finally  it was fully extended by Ambrosio \cite{luigiinvent} to BV vector fields. The  approach by Diperna, Lions and Ambrosio relies on the theory of renormalized solutions of \eqref{contieq}. Roughly speaking renormalized solutions are distributional solutions such that  the chain rule holds for $u$ and $\mathbf{B}$ i.e 
$$
\operatorname{div}(\mathbf{B}h(u))=\left(h(u)-u h'(u)\right) \operatorname{div}(\mathbf{B})+h'(u)\operatorname{div}(\mathbf{B}u)$$
for any $h\in C^1(\mathbb{R})$.\\
A main point of this approach is  proving the strong convergence of the commutator 
$$
r_\delta:=\rho_\delta\star(\operatorname{div}(\mathbf{B}u))-(\operatorname{div}(\mathbf{B} \rho_\delta\star u))
$$
to $0$ in $L^1_{\text{loc}}$ for  some regularizing kernel $(\rho_\delta)_{\delta>0}$  in $\mathbb{R}^d$. In the Sobolev case, in  \cite{Diperlions}, Diperna and Lions showed this convergence for any regularizing kernel $(\rho_\delta)_{\delta>0}$. The same problem in the $BV$ case is much more complicated. In \cite{luigiinvent} Ambrosio took  a special  kernel $\rho$ strictly depending on the structure of $\mathbf{B}$ to obtain the convergence. More precisely, first he proved that 
$$|r_\delta|\rightharpoonup \sigma,~~~\text{and}~~~\sigma(x)\lesssim \int\left|\langle M(x) z, \nabla\rho(z)\rangle\right|dz |D^s \mathbf{B}|(x),$$
with $M(x)=\frac{dD^{\text{s}}\mathbf{B}}{d|D^{\text{s}}\mathbf{B}|}(x)$ for any smooth kernel $\rho$, $\int\rho(z) dz=1$ for any $x\in \mathbb{R}^d$, where $D^{\text{s}}\mathbf{B}$ is  singular part of $D\mathbf{B}$ with respect to the Lebesgue measure. Then, he took  $\rho$ such that $$
\int\left|\langle M(x) z, \nabla\rho (z)\rangle\right|dz\lesssim |trace M(x)|.$$  Using the fact that  $\operatorname{div}(\mathbf{B})<<\mathcal{L}^d$ ~$\Longleftrightarrow$~ $|trace M(x)| |D^s \mathbf{B}|(x) =0$, then he got the "defect" measure $\sigma=0$.  \medskip\\
 Moreover, Diperna and Lions constructed distributional solutions to the continuity equation \eqref{contieq}  with $\mathbf{B}\in W^{\alpha,1}$ ($\alpha<1$) and $\operatorname{div}(\mathbf{B})=0$ that are not renormalized. A counterexample for non-BV is provided by Depauw \cite{Depauw}. Further results can be found in \cite{AmDelelissMa,Delellis,AMF1,AMF2,bianBoni1,bianBoni2,HauLebris,comcrip,AmLeMa,CCS1,CCS2,CripLiga,BCL,BN18a,BN18b,BN18c,BN18d}. For a recent review on the well-posedness theories for the continuity equations \eqref{contieq} and ODE \eqref{odeeq1}, we refer the reader the lecture notes \cite{AmbCrip} (and \cite{Am-Rev17}).\medskip\\
 In \cite{CripLellis}, C. De Lellis and G. Crippa gave an independent  proof of the existence and uniqueness of the solutions of \eqref{odeeq1} with Sobolev vector fields,  that is without exploiting the connection with the continuity equations  \eqref{contieq}. The basic idea of \cite{CripLellis} is to consider  the following time dependent quantity $$
 \Phi_\delta(t)=\int_{B_R}\log\left(1+\frac{|X_1(t,x)-X_2(t,x)|}{\delta}\right) dx~~\forall~\delta\in(0,1/2),$$
 where $X_1,X_2$ are regular Lagrangian flows associated to the same vector field $\mathbf{B}$ and $B_R:=B_R(0),R>0$. We have 
 \begin{align}\label{inesPhi1}
 \Phi_\delta(t)\geq \mathcal{L}^d\left(\left\{x\in B_R :|X_1(t,x)-X_2(t,x)|>\delta^{1/2}\right\}\right)\log\left(1+\delta^{-1/2}\right).
 \end{align}
 However, differentiating in time, one has
 \begin{align}\nonumber
 \Phi_\delta(t)=\int_{0}^{t}\Phi_\delta'(s)ds&\leq \int_{0}^{t}\int_{B_R}\frac{|\mathbf{B}_s(X_1(s,x))-\mathbf{B}_s(X_2(s,x))|}{\delta+|X_1(s,x)-X_2(s,x)|}dx ds\\& \leq \int_{0}^{t}\int_{B_R}\min\left\{\frac{2||\mathbf{B}_s||_{L^\infty}}{\delta},\frac{|\mathbf{B}_s(X_1(s,x))-\mathbf{B}_s(X_2(s,x))|}{|X_1(s,x)-X_2(s,x)|}\right\} dx. \label{differeintime}
 \end{align}
 By the standard estimate of the Hardy-Littlewood  function 
$\mathbf{M}$ and 
 changing variable along the flows,  we obtain 
 \begin{align}\label{inesPhi2}
 \Phi_\delta(t)  \lesssim\int_{0}^{T}\int_{B_{R_1}}\min\left\{\delta^{-1},\mathbf{M}(|\nabla \mathbf{B}_s|)(x)\right\} dx ds.
 \end{align}
 for some $R_1>R$. 
 Using the boundedness of $\mathbf{M}$ from $L^p$ to itself for $p>1$ together with  \eqref{inesPhi1} and \eqref{inesPhi2}, we deduce  that
$$
  \mathcal{L}^d\left(\left\{x\in B_R :|X_1(t,x)-X_2(t,x)|>\delta^{1/2}\right\}\right)\lesssim|\log(\delta)|^{-1}~~\forall\delta\in (0,1/2)$$
 provided $\mathbf{B}\in L^1(W^{1,p}),p>1$. At this point, sending $\delta\to 0$, we get $X_1=X_2$.
 
 Later, in \cite{Jabin} P.E. Jabin successfully improved  this to $\mathbf{B}\in L^1_t(W_x^{1,1})$. Besides, also  in \cite{Jabin} he extends this to $\mathbf{B}\in L^1(SBV)$ in any dimension, and in two-dimension to $L^1(BV)$ with local assumptions in the direction of flows. Furthermore, in \cite{BHS} we  showed that 
 \begin{align*}
 \int_{0}^{T}|D^s\mathbf{B}_t|(B_{R_1})dt\lesssim\limsup_{\delta\to 0}\frac{1}{|\log(\delta)|}\int_{0}^{T}\int_{B_{R_1}}\min\left\{\delta^{-1},\mathbf{M}(|\nabla \mathbf{B}_s(.)|)\right\}\lesssim \int_{0}^{T}|D^s\mathbf{B}_t|(\overline{B}_{R_1})dt.
 \end{align*}
 Therefore, this is reason that De Lellis and Crippa's  approach is not able to deal with vector fields $\mathbf{B}\in L^1(BV\backslash W^{1,1})$. Moreover,
   in  \cite{BoCrip}  F. Bouchut and G. Crippa proved the existence and uniqueness of flows for vector fields with gradients given by singular integrals  of $L^1$ functions i.e $D\mathbf{B}=\mathbf{K}\star g$, $g\in L^1$, where $\mathbf{K}$ is a singular kernel of fundamental type in $\mathbb{R}^d$. Notice that this class is very natural in the study of  nonlinear PDEs,  such as the Euler equation and the classical Vlasov-Poisson equation, this class is not contained in $BV$ and neither contains it. To do this, they have used  the following  maximal singular integral operator: 
$$\mathbf{T}(\mu)(x)=\sup_{\varepsilon>0}|(\rho_{\varepsilon}\star \mathbf{K}\star\mu )(x)|~~\forall~x\in\mathbb{R}^d,$$
 where $\rho_\varepsilon(.)=\varepsilon^{-d}\rho(./\varepsilon)$, for some  $\rho\in C^1_c$ such that $\int_{\mathbb{R}^d}\rho dx=1$. Then, $
 \Phi_\delta(t) =\circ(|\log(\delta)|)$  is obtained from using the  boundedness of such operator from $L^1$ to weak-$L^1$ and the fact that 
 \begin{align}\label{weak11}
\lambda\mathcal{L}^d\left(\left\{ \mathbf{T}(\mu)>\lambda\right\}\right)\to 0 ~~\text{as}~~\lambda\to \infty,
 \end{align}
 for any $\mu \in L^1(\mathbb{R}^d)$, see the proof of Lemma \ref{le2}. Notice that \eqref{weak11} is not true for $\mu\in \mathcal{M}_b(\mathbb{R}^d)$;  indeed, it is easy to check that if $\mu=\delta_{0}$  then $\lambda\mathcal{L}^d\left(\left\{ \mathbf{T}(\mu)>\lambda\right\}\right)\gtrsim 1,\forall\lambda>0$ for some $\rho$ and $\mathbf{K}$. \medskip\\
 However, later in \cite{BoBoCrip1} they extended the analysis to the case where
 \begin{align*}
 D\mathbf{B}=\left( {\begin{array}{cc}
 	D_{x_1}\mathbf{B}_1 & D_{x_1} \mathbf{B}_2 \\
 D_{x_2} \mathbf{B}_1 & D_{x_2} \mathbf{B}_2\\
 	\end{array} } \right)=\left( {\begin{array}{cc}
 	\mathbf{K}_1\star f_1 & \mathbf{K}_2 \star f_2  \\
 	\mathbf{K}_0 \star\mu & \mathbf{K}_3\star f_3 \\
 	\end{array} } \right)~~~x=(x_1,x_2),~~\mathbf{B}=(\mathbf{B}_1,\mathbf{B}_2),
 \end{align*}
 where $\mathbf{K}_0,\mathbf{K}_1,\mathbf{K}_2,\mathbf{K}_3$ are singular kernels of fundamental type. 
 This is motivated from the Classical Vlasov-Poisson system associated to $B(x_1,x_2)=(x_2, \mathbf{P}\star\mu (x_1))$, $(x_1,x_2)\in \mathbb{R}^m\times\mathbb{R}^m$, $d=2m$ and $\mathbf{P}(x_1)=c\frac{x_1}{|x_1|^{m}}$, $\mu\in \mathcal{M}_b$. In addition, Jabin in \cite{ChamJabin}   has proven the well-posedness of this system with  $\mathbf{P}\star\mu\in H^{3/4}$ (or $\mu\in H^{-1/4}$). 
 In \cite{Seis,CripNobiSeisSpi} Seis  has provided  a quantitative  theory for continuity equation with $W^{1,1}$ vector fields via logarithmic Kantorovich-Rubinstrain distances. Recently, in \cite{BN18a,BN18b,BN18c,BN18d}, we have proved  sharp regularity estimates for solutions of continuity equations with $W^{1,p} (p>1)$ vector fields. 
 
 To our knowledge, these results in \cite{BoBoCrip1,Jabin} are  the best results for the quantitative ODE estimates at this moment.  In this paper, we give quantitative  estimates for $\mathbf{K}\star BV$ vector fields with bounded divergence. Namely, we prove the following theorem: \medskip\\\\
  Given a vector field $\mathbf{B}=(\mathbf{B}^1,...,\mathbf{B}^d)\in L^1([0,T];L^1_{\loc}(\mathbb{R}^d,\mathbb{R}^d))$, we assume that  for any $R>0$, there exist functions $b_{jR}\in L^1([0,T],BV(\mathbb{R}^d))$ for $j=1,...,m$; and degree-zero homogeneous functions
  $(\Omega_{jR}^i)_{i,j}\in L^1_{\loc}(\mathbb{R}^d)$ $(i=1,..,d,j=1,...,m)$ satisfying 
   $\int_{S^{d-1}}\Omega_{jR}^i=0$  and $\Omega_{jR}^i\in BV(S^{d-1})$  such that 
  \begin{align}\label{vectorfiB''}	\mathbf{B}^i=\sum_{j=1}^{m}\left(\frac{\Omega_{jR}^i(\cdot)}{|\cdot|^{d}}\right)\star b_{jR}~~\text{in}~B_{2R}.
  \end{align}
 \textbf{Main Theorem}. \textit{Let $\mathbf{B}_1,\mathbf{B}_2\in L^1([0,T];L^1_{\loc}(\mathbb{R}^d,\mathbb{R}^d))$ satisfy $$
 	||(\frac{|\mathbf{B}_1|}{|x|+1},\frac{|\mathbf{B}_2|}{|x|+1})||_{L^1((0,T);(L^1+L^\infty)(\mathbb{R}^d))}\leq C_0,$$ and let $X_1,X_2$ be regular Lagrangian flows associated to $\mathbf{B}_1,\mathbf{B}_2$ resp. Assume that  $\operatorname{div}(\mathbf{B})\in L^1((0,T),L^1(\mathbb{R}^d))$. Then, for any $\kappa\in (0,1), r>1$ there exist $R_0=R_0(d,T,r,C_0,\kappa)>1$ and $\delta_0=\delta_0(d,T,r,C_0,c_{R_0},b_{R_0},\kappa)\in (0,1)$ such that 
 	\begin{equation}
 	\label{maines}
 	\sup_{t\in [0,T]}\mathcal{L}^d\left(\left\{x\in B_r:|X_{1t}(x)-X_{2t}(x)|>\delta^{1/2}\right\}\right)\lesssim \delta^{-1}||\left(\mathbf{B}_1-\mathbf{B},\mathbf{B}_2-\mathbf{B}\right)||_{L^1([0,T]\times B_{R_0})}+\kappa,
 	\end{equation}
for any $\delta\in (0,\delta_0)$. }
\medskip\\
Note that if $\mathbf{B}_1,\mathbf{B}_2\in L^\infty_{t,x}$, we can take   $R_0$ independent of $\kappa$. Moreover, if $\mathbf{B}\in L^1((0,T);BV_{\loc}(\mathbb{R}^d))$, we can write for any $R>0$, 
$\mathbf{B}^i=\sum_{j=1}^{d}\mathcal{R}_j^2(\chi_{R}\mathbf{B}^i)~~\text{in}~B_R(0)$, where $\chi_{R}\in C_c^\infty(\mathbb{R}^d)$ satisfies $\chi_R=1$ in $B_{2R}(0)$ and $\chi_{R}=0$ in $B_{4R}(0)^c$,  $\mathcal{R}_1,...,\mathcal{R}_d$ are the Riesz transforms in $\mathbb{R}^d$. Thus, the class of  $\mathbf{B}$ in above theorem contains the class of $BV-$ vector fields and hence a main open problem posed by Luigi Ambrosio (see \cite{AmbCrip}) is solved. \medskip 

This Theorem is as a consequence of Theorem \ref{mainthm2} and Corollary \ref{maincorollary} in Section 4.  In Section 5, we will use this to deduce the well-posedness of regular Lagrangian flows and Transport, continuity equations.  The following result gives an existence and uniqueness result of regular Lagrangian flows.\vspace{0.3cm}\\
\textbf{Proposition}. Let $\mathbf{B}$ be as above. Assume that $
||\frac{|\mathbf{B}|}{|x|+1}||_{L^1((0,T);(L^1+L^\infty)(\mathbb{R}^d))}\leq C_0$ and $\operatorname{div}(\mathbf{B})\in L^1((0,T),L^1(\mathbb{R}^d))$.  Then, there exists a unique regular Lagrangian flows associated to vector field $\mathbf{B}$.\vspace{0.3cm}\\
 Let us describe our idea to prove \eqref{maines}. For simplicity, assume that  $\mathbf{B}_1(t,x)=\mathbf{B}_2(t,x)= \mathbf{B}(t,x)\equiv \mathbf{B}(x)\in\left( BV\cap L^\infty\right)(\mathbb{R}^d,\mathbb{R}^d)$. Thanks to Alberti's rank one Theorem (see section 2), there  exist unit vectors $\xi(x)\in \mathbb{R}^d$ and $\eta(x)\in \mathbb{R}^d$ such that $ D^{s}\mathbf{B}(x)=\xi(x)\otimes\eta(x)|D^{s}\mathbf{B}|(x)$ i.e  $D_{x_i}^{s}\mathbf{B}_j(x)=\xi_j(x)\eta_i(x)|D^{s}\mathbf{B}|(x)$ for any $i,j=1,...,d.$ Thus, one gets from $\operatorname{div}(\mathbf{B})\in L^1([0,T]\times\mathbb{R}^d)$ that $|\langle\xi,\eta\rangle| =0$ for $|D^{s}\mathbf{B}|-$a.e in $\mathbb{R}^d$.
 We first have the following basic inequality: for any $x_1\not= x_2\in \mathbb{R}^d$ and $\nu\in S^{d-1}$,
$$
\left| \langle \nu,\mathbf{B}(x_1)-\mathbf{B}(x_2)\rangle\right|\lesssim \sum_{l=1,2}\int \frac{\mathbf{1}_{|x_l-z|\leq r}}{|x_l-z|^{d-1}} |\langle \nu,\xi(z)\rangle|d\mu(z)+\int \frac{\mathbf{1}_{|x_l-z|\leq r}}{|x_l-z|^{d-1}}d|D^a\mathbf{B}|(z),
$$
see Proposition \ref{Jabin-lem}, where $\mu=|D^{s}\mathbf{B}|$ and $r=|x_1-x_2|$, where $D^{a}\mathbf{B}$ is  regular part of $D\mathbf{B}$ with respect to the Lebesgue measure. We now assume that $\xi$ and $\eta$ are smooth functions in $\mathbb{R}^d$. Then, choosing $\nu=\eta(x_1)$ and thanks to $|\langle\xi,\eta\rangle| =0$ for $|\mu|-$a.e in $\mathbb{R}^d$ yields 
$$
|\langle \nu,\xi(z)\rangle|\leq ||\nabla \eta||_{L^\infty}\left(|x_1-x_2|+|x_l-z|\right)~\text{for}~ |\mu|-\text{a.e }~z~\text{in}~\mathbb{R}^d,l=1,2,$$
This implies
\begin{align}\label{es-1}
\frac{\left| \langle \eta(x_1),\mathbf{B}(x_1)-\mathbf{B}(x_2)\rangle\right|}{|x_1-x_2|}\lesssim \sum_{l=1,2} ||\nabla \eta||_{L^\infty}\mathbf{I}_1( \mu)(x_l) +\mathbf{M}(|D^a\mathbf{B}|)(x_l),
\end{align}
where $\mathbf{I}_1$ is the Riesz potential with the first order in $\mathbb{R}^d$.\vspace{.2cm}\\
Let $X_1,X_2$ be  Regular Lagrangian flows associated to the same vector field $\mathbf{B}$ and $r>0$. Thus, we derive from \eqref{es-1} that 
\begin{align}\label{zero}
\limsup_{\delta\to 0}\frac{1}{|\log(\delta)|}\int_{0}^{T}\int_{B_r}\frac{\left| \langle \eta(X_{1t}),\mathbf{B}(X_{1t})-\mathbf{B}(X_{2t})\rangle\right|}{\delta+|X_{1t}-X_{2t}|}dxdt=0.
\end{align}
This suggests to consider the following new quantity: for $\delta\in (0,1),\gamma>1$
\begin{align}
\Phi_\delta^{\gamma}(t)=\frac{1}{2}\int_{B_r}\log\left(1+\frac{|X_{1t}-X_{2t}|^2+\gamma \langle \eta(X_{1t}),X_{1t}-X_{2t}|\rangle ^2}{\delta^2}\right)dx.
\end{align}
We have, 
\begin{align*}
\sup_{t\in [0,T]}\Phi_\delta^{\gamma}(t)&= \sup_{t_1\in [0,T]}\int_{0}^{t_1}\frac{d \Phi_\delta^{\gamma}(t)}{dt}dt\leq \int_{0}^{T}\int_{B_r}\frac{\gamma^{1/2}\left| \langle \eta(X_{1t}),\mathbf{B}(X_{1t})-\mathbf{B}(X_{2t})\rangle\right|}{\delta+|X_{1t}-X_{2t}|}dxdt\\&+
\int_{0}^{T}\int_{B_r}\frac{|\mathbf{B}(X_{1t})-\mathbf{B}(X_{2t})|}{\delta+|X_{1t}-X_{2t}|+\gamma^{1/2} |\langle \eta(X_{1t}),X_{1t}-X_{2t}\rangle|}dxdt\\&+ \int_{0}^{T}\int_{B_r}\frac{\gamma^{1/2}\left| \langle \nabla\eta(X_{1t})\mathbf{B}(X_{1t}),X_{1t}-X_{2t}\rangle\right|}{|X_{1t}-X_{2t}|}dxdt.
\end{align*}
Combining this and \eqref{zero}, we get 
\begin{align*}
&\sup_{t\in [0,T]}\mathcal{L}^d\left(\left\{x\in B_r:|X_{1t}-X_{2t}|>0\right\}\right)=\limsup_{\delta\to 0}\frac{1}{|\log(\delta)|}\sup_{t\in [0,T]}\Phi_\delta^{\gamma}(t)\\&\quad\quad\quad\leq 
\limsup_{\delta\to 0}\frac{1}{|\log(\delta)|}\int_{0}^{T}\int_{B_r}\frac{|\mathbf{B}(X_{1t})-\mathbf{B}(X_{2t})|~dxdt}{\delta+|X_{1t}-X_{2t}|+\gamma^{1/2} |\langle \eta(X_{1t}),X_{1t}-X_{2t}\rangle|}\\&\quad\quad\quad:=\limsup_{\delta\to 0} A(\delta).
\end{align*}
Hence, in order to get $X_{1t}=X_{2t}$ for a.e $(x,t)\in B_r\times [0,T]$, we need to show that 
$$\limsup_{\delta\to 0} A(\delta)=\circ(1)~\text{as}~\gamma\to\infty.$$
In fact, 
 we use the following estimate for $\mathbf{B}(x_1)-\mathbf{B}(x_2)$:  
\begin{align*}
|\mathbf{B}(x_1)-\mathbf{B}(x_2)|&\lesssim \varepsilon^{-d+1}|x_1-x_2|\left(\mathbf{M}(|D^a \mathbf{B}|)(x_1)+\mathbf{M}(|D^a \mathbf{B}|)(x_2)\right)\\ & +\varepsilon^{-d+1}\sum_{l=1,2}\int \frac{ \mathbf{1}_{|x_l-z|\leq r}\mathbf{1}_{\left|\frac{x_l-z}{|x_l-z|}-e_l\right|\leq\varepsilon}}{|x_l-z|^{d-1}} \frac{|\langle \eta(z),x_1-x_2\rangle|}{|x_1-x_2|} d|\mu|(z)\\&+ \varepsilon^{-d+2}\sum_{l=1,2}\int \frac{ \mathbf{1}_{|x_l-z|\leq r}\mathbf{1}_{\left|\frac{x_l-z}{|x_l-z|}-e_l\right|\leq\varepsilon}}{|x_l-z|^{d-1}}  d|\mu|(z),
\end{align*}
for any $\varepsilon>0$ where  $\mu=|D^{s}\mathbf{B}|$, $e_1=-e_2=\frac{x_1-x_2}{|x_1-x_2|},r=|x_1-x_2|$ for $ l=1,2$
(see Proposition \ref{Jabin-lem} and Lemma \ref{coro-Jab-Hu}).
Then, using the fact that $|\langle \eta(z),x_1-x_2\rangle|\leq |\langle \eta(x_1),x_1-x_2\rangle|+2||\nabla\eta||_{L^\infty} r^2$ for $|z-x_1|\leq r$ or $|z-x_2|\leq r$ and changing variable along the flows  we can estimate 
\begin{align*}
&A(\delta)\lesssim \frac{\gamma^{-1/2}\varepsilon^{-d+1}}{|\log(\delta)|}\int_{B_{r'}} \min\left\{\frac{\mathbf{I}_1(\mu)}{\delta},\mathbf{M}(\mu) \right\} dx+||\nabla\eta||_{L^\infty} \frac{\varepsilon^{-d+1}}{|\log(\delta)|}\int_{B_{r'}} \mathbf{I}_1(\mu)dx\\&~+\frac{\varepsilon}{|\log(\delta)|}\int_{B_{r'}} \min\left\{\frac{\mathbf{I}_1(\mu)}{\varepsilon^{d-1}\delta},\mathbf{M}^{\varepsilon}(\mu) \right\} dx+\frac{\varepsilon^{-d+1}}{|\log(\delta)|}\int_{B_{r'}} \min\left\{\frac{\mathbf{I}_1(|D^aB|)}{\delta},\mathbf{M}(|D^aB|) \right\} dx,
\end{align*}
 for some $r'>r$,  where  $\mathbf{M}^\varepsilon$ is the Kakeya maximal function in $\mathbb{R}^d$ i.e $$
\mathbf{M}^\varepsilon(\mu)(x)=\sup_{\rho\in (0,2r'),e\in S^{d-1}}\fint_{B_\rho(x)}\varepsilon^{-d+1}\mathbf{1}_{|\frac{x-z}{|x-z|}-e|\leq \varepsilon}d|\mu|(z).$$
We then will deduce that 
\begin{align*}
\limsup_{\delta\to 0}A(\delta)&\lesssim \gamma^{-1/2}\varepsilon^{-d+1}|\mu|(\mathbb{R}^d) +\varepsilon \limsup_{\lambda\to \infty}\lambda\mathcal{L}^d\left(\{ \mathbf{M}^{\varepsilon}(\mu)>\lambda\}\cap B_{r'}\right)\\ &= \varepsilon \limsup_{\lambda\to \infty}\lambda\mathcal{L}^d\left(\{ \mathbf{M}^{\varepsilon}(\mu)>\lambda\}\cap B_{r'}\right)~\text{as}~\gamma\to\infty.
\end{align*}
So it remains to show that 
\begin{align}\label{keyes}
I(\varepsilon):=\varepsilon \limsup_{\lambda\to \infty}\lambda\mathcal{L}^d\left(\{ \mathbf{M}^{\varepsilon}(\mu)>\lambda\}\cap B_{r'}\right)=\circ(1)~~\text{as}~~\varepsilon\to 0.
\end{align}
This estimate is very delicate and  its proof is very complicated, hence we  will spend Section 3 to establish it. In order to see the key idea for proving the estimate \eqref{keyes}, we only consider $\mu(x)=|D^sf|(x)\equiv|Df|(x)$ with $f\in BV(\mathbb{R}^d,\mathbb{R})$ such that $\nu=\frac{dD^sf}{d|D^sf|}(x)$ is a constant function in $B_{8r'}$. 
Set $H_{\nu}:=\left\{x\in \mathbb{R}^d: \langle \nu,x\rangle=0\right\}$ and
 $\tilde{H}_{\nu}:=\left\{t\nu\in \mathbb{R}^d: \forall t\in \mathbb{R}\right\}$. We also denote $f^{\nu}_{y_2}:\tilde{H}_{\nu}\ni y_1\mapsto f(y_2+y_1)$ for any $y_2\in H_{\nu}$. By assumption one has $d\mu(y)=d|Df^{\nu}_{z}|(y_1)d\mathcal{H}^{d-1}(y_2)$ for any   $y_1=\langle y,\nu\rangle\nu,y_2=y-\langle y,\nu\rangle\nu$, $y\in B_{8r'}$ and  $z\in H_{\nu}$.  We  can prove that 
 \begin{align}\label{kaye}
 \mathbf{M}^{\varepsilon}(\mu)(x)\leq C\mathbf{M}^1(|Df^{\nu}_{x_\nu}|,\tilde{H}_{\nu})(\langle x,\nu\rangle \nu), \text{with}~x_\nu:=x-\langle x,\nu\rangle \nu,
 \end{align}
 where $\mathbf{M}^1(|Df^{\nu}_{x_\nu}|,\tilde{H}_{\nu})$ is the Hardy-Littlewood maximal function of $|Df^{\nu}_{x_\nu}|$ on $\tilde{H}_{\nu}$.
Indeed, by a standard approximation argument, we only prove  for case $|Df^{\nu}_{x_\nu}|\in L^1(\tilde{H}_{\nu}, d\mathcal{H}^1)$.  
By changing variables, we have  for any $\rho\in (0,2r'),e\in S^{d-1}$, $x\in B_{r'}$
\begin{align*}
 &\fint_{B_\rho(x)}\varepsilon^{-d+1}\mathbf{1}_{|\frac{x-y}{|x-y|}-e|\leq \varepsilon}d|\mu|(y)\\&= \rho^{-d}\varepsilon^{-d+1}\fint_{S^{d-1}}\int_{0}^\rho\mathbf{1}_{|\theta-e|\leq \varepsilon} |Df^{\nu}_{x_\nu}|(\langle x,\nu\rangle \nu-\langle \theta,\nu\rangle \nu s )s^{d-1}dsd\mathcal{H}^{d-1}(\theta)
 \\& \leq \varepsilon^{-d+1}\fint_{S^{d-1}}\mathbf{1}_{|\theta-e|\leq \varepsilon} 4\mathbf{M}^1(|Df^{\nu}_{x_\nu}|,\tilde{H}_{\nu})(\langle x,\nu\rangle \nu)d\mathcal{H}^{d-1}(\theta) 
 \\&\lesssim\mathbf{M}^1(|Df^{\nu}_{x_\nu}|,\tilde{H}_{\nu})(\langle x,\nu\rangle \nu),
\end{align*}which implies \eqref{kaye}.  
Therefore, we get from \eqref{kaye} and weak type (1,1) bound of $\mathbf{M}^1(|Df^{\nu}_{x_\nu}|,\tilde{H}_{\nu})$ that 
\begin{align*}
\lambda\mathcal{L}^d\left(\{ \mathbf{M}^{\varepsilon}(\mu)>\lambda\}\cap B_{r'}\right)&\leq \lambda\int_{H_\nu} \mathcal{H}^1\left(\left\{x_1\in \tilde{H}_\nu :\mathbf{M}^1(|Df^{\nu}_{x_2}|,\tilde{H}_{\nu})(x_1)\gtrsim\lambda\right\}\right)d\mathcal{H}^{d-1}(x_2)\\&\lesssim \int_{H_\nu} \int_{\tilde{H}_\nu} d|Df^{\nu}_{x_2}|(x_1)d\mathcal{H}^{d-1}(x_2)=|\mu|(\mathbb{R}^d).
\end{align*}
This gives \eqref{keyes}.  In order to prove \eqref{keyes} in the general case, we use that $\mu=|D^{s}\mathbf{B}|$ and  the slicing theory of $BV$ functions. Notice \eqref{keyes} is not true for any Radon measure $\mu$, indeed  if $\mu=\delta_{0}$, then $\mathbf{M}^\varepsilon(\mu)(x)=\varepsilon^{-d+1}|x|^{-d}$ and so  $I(\varepsilon)\sim \varepsilon^{-d+2}$.\medskip\\
To conclude this section, let us give an important remark on our result.  We deduce from \eqref{vectorfiB''} that 
	\begin{align}\label{vectorfiB'''}	\partial_l\mathbf{B}^i=\sum_{j=1}^{m}\left(\frac{\Omega_{jR}^i(\cdot)}{|\cdot|^{d}}\right)\star \mu_{jR}^l~\text{in}~\mathcal{D}'(B_{2R}),
\end{align}
where $\mu_{jR}^l=\partial_{l}b_{jR}$,  $l,i=1...,d,j=1,...,m$ are  bounded Radon measures in $\mathbb{R}^d$. Thus,\\

  \textit{A natural question is  whether the above Proposition holds for  a class of vector fields $\mathbf{B}$  satisfying \eqref{vectorfiB'''}  with   arbitrary Radon measures $\mu_{jR}^l$  in $\mathbb{R}^d$. }\\

The following proposition gives a negative answer to this question.
\begin{proposition} \label{exampleDipernaLions} There exist a vector field $B:\mathbb{R}^2\to\mathbb{R}^2$ and degree-zero homogeneous  functions $\Omega_1^i,...,\Omega_m^i\in \left(L^\infty\cap BV\right)(S^1) $, $i=1,2$ with $\frac{|B(x)|}{|x|+1}\in L^1(\mathbb{R}^2)+L^\infty(\mathbb{R}^2)$,  $\operatorname{div}(B)=0$,  $\int_{S^1}\Omega_l=0$ such that for any $R>1$ we have $$
	\partial_l \mathbf{B}^i=\sum_{j=1}^{m}\left(\frac{\Omega_j^i(\cdot)}{|\cdot|^2}\right)\star\mu_{jR}^{l}~~\text{in}~~\mathcal{D}'(B_R),$$
for some $\mu_{jR}^{l}\in \mathcal{M}_b(\mathbb{R}^2)$ $i,l=1,2$ and $j=1,...,m$ and the problem \eqref{odeeq1} is ill-posed with this vector field, i.e  there exist two different regular Lagrangian flows $X_1,X_2$  associated to $\mathbf{B}$.
\end{proposition}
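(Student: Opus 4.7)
The plan is to exhibit a concrete autonomous divergence-free vector field $B$ on $\mathbb{R}^2$ fitting the structural hypothesis \eqref{vectorfiB'''} with bounded Radon measures $\mu_{jR}^l$, but admitting two distinct regular Lagrangian flows. The guiding heuristic is the observation recorded just before the proposition: when $\mu = \delta_0$, one has $\lambda \mathcal{L}^d(\{\mathbf{T}(\mu) > \lambda\}) \asymp 1$, so the weak-$(1,1)$ input \eqref{weak11} that powers the Main Theorem fails precisely when the driving measure has an atomic part, and the delicate estimate \eqref{keyes} also breaks (the remark $I(\varepsilon) \sim \varepsilon^{-d+2}$ for $\mu=\delta_0$). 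We therefore search for $B$ whose associated $\mu_{jR}^l$ genuinely carries a Dirac-like component.

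Concretely, I would take $B = \nabla^\perp \psi$ with a stream function $\psi$ built from a countable superposition of Biot--Savart-type singularities localized on a well-chosen compact set $K$ of positive measure. Divergence-freeness is automatic from the $\nabla^\perp$ form, and each singular contribution yields partials of the form $(\Omega_j^i(x/|x|)/|x|^2) \star \delta_{p_k}$. Summing and collecting, the distributional identity \eqref{vectorfiB'''} holds on every $B_R$, with $\Omega_j^i \in L^\infty \cap BV(S^1)$ mean-zero and $\mu_{jR}^l$ the finite atomic measure recording the positions and strengths of the singularities falling inside $B_{4R}$. A geometric decay of the atomic weights ensures the growth estimate $|B|/(|x|+1) \in L^1+L^\infty$; the mean-zero and $BV(S^1)$ requirements on $\Omega_j^i$ are dictated by the second partials of the logarithm.

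The two distinct regular Lagrangian flows $X_1,X_2$ are constructed by arranging $K$ and the drift structure so that a positive-measure set of initial conditions is funneled into a ``pinch'' locus where the ODE admits topologically inequivalent continuations. Off the pinch locus the solution is classical; on it, each branch choice yields a measurable, Lebesgue-preserving flow (compression constant $L_i=1$ because $\operatorname{div} B = 0$), and selecting different branches on a positive-measure initial set gives $X_1 \neq X_2$ as genuine regular Lagrangian flows. This is the Radon-measure analogue of Depauw's non-uniqueness mechanism, adapted to an autonomous setting through the self-similar Cantor geometry of~$K$.

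The main obstacle is the precise calibration of $\psi$. One must simultaneously guarantee that (i) $B$ decays enough at infinity and is mildly enough singular at $K$ that $|B|/(|x|+1) \in L^1 + L^\infty$, (ii) the set of initial conditions whose trajectories reach the pinch locus in finite time has positive Lebesgue measure, and (iii) the two resolutions really define flows satisfying all the axioms of a regular Lagrangian flow (in particular the semigroup property and the stability of the compression constant under the branch choice), not merely measurable selections. It is here — and only here — that the freedom afforded by allowing $\mu_{jR}^l$ to be an arbitrary Radon measure (rather than the distributional derivative of a $BV$ function as in the Main Theorem) is genuinely exploited.
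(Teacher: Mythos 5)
Your plan has a genuine gap at its core: the non-uniqueness is never actually established. You propose to build $B=\nabla^\perp\psi$ from a countable superposition of point singularities and then assert that trajectories can be ``funneled into a pinch locus'' where two branch choices yield two distinct regular Lagrangian flows. But a single Biot--Savart-type singularity generates a perfectly well-posed rotational flow away from its center, and for an autonomous divergence-free field a branching point generically affects only a null set of initial data; arranging for a \emph{positive-measure} set of trajectories to reach an ambiguity locus in finite time, and then verifying that each resolution satisfies all the axioms of Definition \ref{de-RLF} (in particular the renormalization property and the semigroup law), is exactly the hard content of the known counterexamples. You explicitly flag this calibration as ``the main obstacle'' and leave it unresolved, so what remains is a heuristic, not a proof. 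The structural half of the claim (that $\partial_l\mathbf{B}^i$ is a sum of kernels $\Omega_j^i(\cdot)/|\cdot|^2$ convolved with bounded measures, with $\Omega_j^i\in(L^\infty\cap BV)(S^1)$ mean-zero) is plausible for second derivatives of logarithmic potentials, but it too is only sketched.

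The paper avoids all of this by recycling the classical DiPerna--Lions vector field
$\mathbf{B}(x)=\bigl(-\operatorname{sign}(x_2)[\tfrac{x_1}{|x_2|^2}\mathbf{1}_{|x_1|\le|x_2|}+\mathbf{1}_{|x_1|>|x_2|}],\,-[\tfrac{1}{|x_2|}\mathbf{1}_{|x_1|\le|x_2|}+\mathbf{1}_{|x_1|>|x_2|}]\bigr)$,
for which the existence of two distinct measure-preserving flows is already known; the entire proof then consists of the explicit (and somewhat delicate) verification that each $\partial_l\mathbf{B}^i$ decomposes as Riesz-transform squares of bounded measures plus a rough homogeneous kernel applied to $\delta_0$ and to a singular measure supported on $\{|x_1|=|x_2|\}$, with the resulting angular parts shown to lie in $(L^\infty\cap BV)(S^1)$ with vanishing mean. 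If you want to salvage your approach, the honest path is to import a known non-uniqueness example rather than invent one; as written, the proposal does not prove the proposition.
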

We will prove proposition \ref{exampleDipernaLions} in the  Appendix.\medskip\\\\
\textbf{Acknowledgments}~ The author is particularly grateful to Professor Luigi Ambrosio who introduced this project to him and patiently guided, supported, encouraged him during this work. The author is also very thankful to Elia Bru\`e, Fran\c{c}ois Bouchut and Thomas Alazard for their several helpful comments and corrections. The author would like to thank the anonymous referees for their careful reading of our manuscript and their many insightful comments and  suggestions to improve the presentation of the paper. This  research was supported by the Centro De Giorgi, Scuola Normale Superiore, Pisa, Italy.
\section{Main Notation and preliminary results}
We begin with some notations which will be used in  this  paper.\medskip\\
\noindent $\bullet$ $x\cdot y$, $\langle x,y\rangle$ denote the usual scalar product of $x,y\in \mathbb{R}^d$;\\
\noindent $\bullet$ $a\wedge b$ denotes $\min\{a,b\}$;\\
\noindent $\bullet$ $S^{d-1}$ denotes the $(d-1)-$dimensional unit sphere in $\mathbb{R}^d$;\\
	\noindent $\bullet$ $\mathbf{1}_E$ is the characteristic function of the set $E$, defined  as $\mathbf{1}_E(x)=1$ if $x\in E$ and $\mathbf{1}_E(x)=0$ otherwise;\\
	\noindent $\bullet$ $B_r(x)$ is the open ball in $\mathbb{R}^d$ with radius $r$ and center $x$; $B_r$ is the open ball in $\mathbb{R}^d$ with radius and center $0$; if  $X$ is a vector subspace of $\mathbb{R}^d$, for any $x\in X$, $B_r(x,X)$ is the open ball in $X$ with radius $r$ and center $x$ i.e $B_r(x,X)=B_r(x)\cap X.$\\
	\noindent$\bullet$ $\mathcal{M}_b(X)$ is a set of bounded Radon measure in a metric space $X$;  $\mathcal{M}_b^+(X)$ is a set of positive bounded Radon measure in $X$;\\
	\noindent $\bullet$ $|\mu|$ is the total variation of a measure $\mu$; $\mu^{s}, \mu^{a}$ are the singular component and regular component of $\mu$ with respect to the Lebesgue measure, respectively;\\
		\noindent $\bullet$ $\mathcal{L}^d$ is the Lebesgue measure on $\mathbb{R} ^d$ and $\mathcal{H}^k$ is the $k-$dimensional Hausdorff measure;\\
		\noindent $\bullet$ $BV(\mathbb{R}^d,\mathbb{R}^m)$ is a set of $\mathbb{R}^m-$valued functions with bounded variation in $\mathbb{R}^d$;\\
		\noindent $\bullet$ $f\star g$ is the convolution of  $f$ and $g$, in particular if $f,g\in \mathbb{R}^l$, then $f\star g:=\sum_{j=1}^{l}f_j\star g_j$;  if $f\in \mathbb{R}^l,g\in \mathbb{R}$, then $f\star g=g\star f:=\left(f_1\star g,f_2\star g,...,f_l\star g\right)$;\\
		\noindent $\bullet$ $f_{\#}\mu$ is the push-forward of $\mu$ via a Borel map $f$, more specifically, a Borel map $f:\mathbb{R}^{l}\to\mathbb{R}^m$, and a measure $\mu$ in $\mathbb{R}^{l}$ then $f_{\#}\mu$ is a measure in  $\mathbb{R}^{m}$ given by $f_{\#}\mu(B)=\mu(f^{-1}(B))$ for any Borel set $B\subset \mathbb{R}^m$; this is equivalent to $\int_{\mathbb{R}^m}\phi df_{\#}\mu=\int_{\mathbb{R}^l}\phi\circ fd\mu$ for any $\phi:\mathbb{R}^m\to [0,+\infty]$ Borel.\\
		\noindent $\bullet$ $\fint_Efd\omega$ denotes the average of the function $f$ over the set $E$ with respect to the positive measure $\omega$ i.e $\fint_Efd\omega:=\frac{1}{\omega(E)}\int_E fd\omega$;\\
		\noindent $\bullet$ $\left\{f>\lambda\right\}$, $\left\{f<\lambda\right\}$ stand for $\left\{x:f(x)>\lambda\right\}$, $\left\{x:f(x)<\lambda\right\}$ respectively;\\
	\noindent $\bullet$	$\varrho_n$  is a standard sequence of mollifiers in $\mathbb{R}^d$;\\
		\noindent $\bullet$ $E^c$ is the complement of set $E$;\\
		\noindent $\bullet$  $A\lesssim B$  denotes the estimate $A\leq CB$ for some constant $C>0$ depending only on fixed  quantities; and $A\sim B$ denotes the estimate $A\lesssim B\lesssim A$;\\
		\noindent $\bullet$ $C(n,\varepsilon,\kappa..)$ is a common constant that satisfies parameters $n,\varepsilon,\kappa,...$. \medskip\\
\textbf{2.1}\textit{ $BV$ functions}.
Given  $b\in BV(\mathbb{R}^d,\mathbb{R}^m)$, we have the canonical decomposition of $Db$ as $D^ab+D^{s}b$, with $|D^ab|<<\mathcal{L}^d$ and $|D^{s}b|\bot \mathcal{L}^d$. The following deep result of Alberti will be used in the proof of the main Theorem \ref{mainthm2}. Its proof can be found in \cite{Alber} and \cite{PhiRin}. 
\begin{proposition}[Alberti's rank one Theorem] \label{albertithm}There  exist unit vectors $\xi(x)\in \mathbb{R}^m,\eta(x)\in \mathbb{R}^d$ such that $D^sb(x)=\xi(x)\otimes\eta(x)|D^{s}b|(x)$ i.e  $D_{x_i}^{s}b_j(x)=\xi_j(x)\eta_i(x)|D^{s}b|(x)$ for any $i=1,...,d, j=1,...,m.$
\end{proposition}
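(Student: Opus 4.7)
The plan is to reduce the theorem to a statement about blow-ups. By Besicovitch's derivation theorem, we can define the Borel matrix $M(x) := \frac{dD^s b}{d|D^s b|}(x)$ for $|D^s b|$-a.e.\ $x$, and it satisfies $|M(x)| = 1$. The theorem is then equivalent to showing that $M(x)$ is a rank-one tensor for $|D^s b|$-a.e.\ $x$, after which we simply take $\xi(x), \eta(x)$ to be the unit factors in the decomposition $M(x) = \xi(x) \otimes \eta(x)$.

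Next, at a generic point $x_0$ in $\operatorname{supp}|D^s b|$ -- namely, a Lebesgue point of $M$ with respect to $|D^s b|$, at which the regular part $|D^a b|$ is infinitesimally negligible compared to $|D^s b|$ and at which a tangent measure $\lambda$ of $|D^s b|$ exists -- I would take the rescalings
\[
b_r(y) := \frac{r^{d-1}}{|D^s b|(B_r(x_0))}\bigl( b(x_0 + r y) - \langle b \rangle_{B_r(x_0)} \bigr),
\]
and extract a weak-$*$ subsequential limit of the associated gradient measures. By the choice of normalization and the Lebesgue-point property of $M$, the limit satisfies $Db_0 = M(x_0)\,\lambda$ for some $b_0 \in BV_{\mathrm{loc}}(\mathbb{R}^d;\mathbb{R}^m)$; that is, the tangent gradient is a \emph{singular measure with a spatially constant matrix direction} on $\mathbb{R}^d$.

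Third, I would exploit the one-dimensional slicing theory of BV functions applied to $b_0$. For any $\nu \in \ker M(x_0)$, the directional derivative $\langle Db_0, \nu\rangle = (M(x_0)\nu)\,\lambda = 0$, so $b_0$ is $\mathcal{H}^{d-1}$-a.e.\ constant on lines parallel to $\nu$. Dually, if $w \in \mathbb{R}^m$ is orthogonal to the image of $M(x_0)$, then $\langle w, b_0\rangle$ has vanishing distributional gradient. Hence $b_0$ factors through the quotient by $\ker M(x_0)$ and takes values in $\operatorname{image} M(x_0)$; it is a genuine BV map on a subspace of dimension $\operatorname{rank} M(x_0)$.

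The hard part, which is essentially the whole content of Alberti's original paper \cite{Alber} and the simplified exposition \cite{Massa}, is to show that $\operatorname{rank} M(x_0) = 1$. The argument is by contradiction: assume rank $\geq 2$ and analyze the tangent $b_0$, which is then an essentially two-dimensional BV map whose singular gradient is aligned along a single constant matrix direction. One must derive a contradiction with the $\sigma$-finite $\mathcal{H}^{d-1}$-concentration of $|D^s b|$, typically via covering arguments on rectifiable pieces of $\operatorname{supp}|D^s b|$ combined with testing against well-chosen compactly supported vector fields, or (as in more recent proofs) via currents and the structure theorem for normal $1$-currents in the rescaled picture. This rigidity step is genuinely delicate, and absent a new idea I would simply quote Alberti's theorem as cited.
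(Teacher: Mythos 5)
The paper does not prove this proposition at all: it is quoted verbatim as Alberti's rank one theorem, with the proof deferred to \cite{Alber} and to the simplified argument in \cite{Massa}. So there is no in-paper argument to compare against, and citing the result — as you ultimately do — is exactly the paper's treatment.

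As a standalone proof, however, your proposal is incomplete by its own admission, and the gap is not peripheral: the reduction you describe (Besicovitch differentiation to define the polar $M(x)$, blow-up at a generic point to obtain a tangent function $b_0$ with $Db_0 = M(x_0)\,\lambda$ for a constant matrix $M(x_0)$ and a singular measure $\lambda$, then one-dimensional slicing to factor $b_0$ through $\ker M(x_0)$) is the routine part, and every proof of the theorem begins this way. The entire content of the theorem is the rigidity statement that a $BV$ map whose derivative is a singular measure with constant polar matrix forces that matrix to have rank one, and you explicitly decline to prove it. Two smaller technical points in the part you do sketch: the existence of a nontrivial $BV$ limit $b_0$ of the rescalings requires a compactness argument (a Poincar\'e inequality after subtracting means, together with a choice of radii along which $|Db|(B_r(x_0))$ does not concentrate near $\partial B_1$), and the existence of a suitable tangent measure $\lambda$ relies on Preiss's theory; both are standard but should be acknowledged. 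In short: your outline is a faithful description of the strategy in the cited references, but since the decisive step is quoted rather than proved, the proposal is at the same level of completeness as the paper's bare citation, not a proof.
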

Notice that the pair of unit vector $(\xi,\eta)$ is uniquely determined $|D^{s}b|-$a.e up to a change of sign.  Case $m=d$, we can write the distributional divergence $\operatorname{div}(b)$ as $
\operatorname{div}(b)=\operatorname{trace}(D^a b)\mathcal{L}^d+\langle \xi,
\eta\rangle |D^{s}b|, $
thus, $
\operatorname{div}(b)<<\mathcal{L}^d~~~\text{if and only if}~~\xi\bot\eta~~|D^{s}b|-\text{a.e.~in}~~\mathbb{R}^d.$\medskip \\
For $e\in S^{d-1}$, let us introduce the hyperplane orthogonal to $e$:  $$H_{e}:=\left\{x\in \mathbb{R}^d: \langle e,x\rangle=0\right\},$$
and the line of $e$:
$$\tilde{H}_{e}:=\left\{te\in \mathbb{R}^d: \forall t\in \mathbb{R}\right\}.$$
 Given a Borel function $f$ in $\mathbb{R}^d$, we denote 
$f^{e}_{y_1}:\tilde{H}_{e}\ni z_1\mapsto f(y_1+z_1)$ for $y_1\in H_{e}$. The following characterization of $BV$ by hyperplanes will be used in proof of main  Theorem \ref{mainthm1}. 
\begin{proposition}(\cite[Section 3.11]{luigiFuPa}) \label{ML1-BV} Let $f\in BV(\mathbb{R}^d)$ and $e\in S^{d-1}$. Then, $f^{e}_{y_1}\in BV(\tilde{H}_{e})$,~~$\mathcal{H}^{d-1}$-a.e $y_1$ in $H_{e}$ and $\int_{H_{e}}||f^{e}_{y_1}||_{BV(\tilde{H}_{e})}d\mathcal{H}^{d-1}(y_1)<\infty$. Moreover, for any bounded Borel function $\phi:\mathbb{R}^d\to \mathbb{R}^+$, there holds
	\begin{equation}
	\label{equa-eta}
	\int_{H_{e}} \int_{\tilde{H}_{e}}\phi(t+y_1) 	d |D^{s}f^e_{y_1}|(t)d\mathcal{H}^{d-1}(y_1)=\int_{\mathbb{R}^d}\phi(x)\left|\langle e,\eta(x)\rangle\right|d|D^sf|(x),
	\end{equation} 
	where $\eta(x)=\frac{d D^{s}f(x)}{d |D^{s}f|(x)}$.
\end{proposition}

We next have an extension of \cite[Proposition 4.2]{ChamJabin}. It is one of main tools to be used in proof of main theorem \ref{mainthm2}. 
\begin{proposition} \label{Jabin-lem} Let $\varepsilon\in (0,1/100)$, $f\in BV_{loc}(\mathbb{R}^d)$. Then, for every $x,y\in \mathbb{R}^d$, $x\not= y$,
	\begin{align}\nonumber
	&	f(x)-f(y)\\&=\int_{\mathbb{R}^d}\frac{\varepsilon^{-d+1}}{|x-z|^{d-1}}	\Theta^{\varepsilon,\mathbf{e}_1}_1\left(\frac{x-z}{|x-y|}\right)\mathbf{e}_1\cdot dDf(z)
	+\int_{\mathbb{R}^d}\frac{\varepsilon^{-d+2}}{|x-z|^{d-1}}\Theta^{\varepsilon,\mathbf{e}_1}_2\left(\frac{x-z}{|x-y|}\right)\cdot dDf(z)\nonumber \\&~~-\int_{\mathbb{R}^d}\frac{\varepsilon^{-d+1}}{|y-z|^{d-1}}\Theta^{\varepsilon,\mathbf{e}_2}_1\left(\frac{y-z}{|x-y|}\right)\mathbf{e}_2\cdot dDf(z)-\int_{\mathbb{R}^d}\frac{\varepsilon^{-d+2}}{|y-z|^{d-1}}\Theta^{\varepsilon,\mathbf{e}_2}_2\left(\frac{y-z}{|x-y|}\right)\cdot dDf(z),\label{diff}
	\end{align}
	where $\mathbf{e}_1=-\mathbf{e}_2=\frac{x-y}{|x-y|}$ and  for $e\in S^{d-1}$, $\varepsilon\in (0,1/100)$,  $\Theta^{\varepsilon,e}_1:\mathbb{R}^d\to \mathbb{R}_+$ and $\Theta^{\varepsilon,e}_2:\mathbb{R}^d\to \mathbb{R}^d$ are bounded functions satisfying
	$	\Theta^{\varepsilon,e}_1, \Theta^{\varepsilon,e}_2\in C^{\infty}(\mathbb{R}^d\backslash\{0\})$, 
	\begin{align}&\nonumber	\supp(\Theta^{\varepsilon,e}_1), \supp(\Theta^{\varepsilon,e}_2)\subset  B_{3/4}(0)\cap\left\{x: \left|e-\frac{x}{|x|}\right|\leq \varepsilon\right\},\\ \nonumber
	&|\Theta^{\varepsilon,e}_l(x)|+\varepsilon|x||\nabla \Theta^{\varepsilon,e}_l(x)| \lesssim 1~\forall~x\in \mathbb{R}^d, l=1,2,\\& \varepsilon^{-d+1}\int_{\mathbb{R}^d}|\Theta^{\varepsilon,e}_1(x)|dx+ \varepsilon^{-d+1}\int_{\mathbb{R}^d}|\Theta^{\varepsilon,e}_2(x)|dx \lesssim 1.\label{Z1}
	\end{align}
\end{proposition}
\begin{proof}[Proof of Proposition \ref{Jabin-lem}]   Let $\rho:\mathbb{R}\to [0,\infty)$ be a $C_c$ function such that $\rho\in C^\infty([0,1])$, $\rho(t)=1$ for $0\leq t\leq 1/4$, $\rho(t)=0$ for $t\geq \frac{3}{4}$ and $t<0$, $\rho(t)+\rho(1-t)=1$ for $0\leq t\leq 1$. Let $\psi:(0,\infty)\to (0,\infty)$ be a $C_b^\infty$ function such that $\psi(t)=0$ for $t>1$, $\psi(t)=1$ in $(0,\varepsilon_0)$ for some $\varepsilon_0\in (0,1)$  and $\int_{\mathbb{R}^{d-1}}\psi(|h|)dh=1$.\\ We define for $(a,b,c)\in S^{d-1}\times S^{d-1}\times (0,\infty)$,
	\begin{align*}
		&\Psi_1(a,b,c)=\frac{\rho((a.b)c)\psi\left(\frac{|a-(a.b)b|}{4(a.b)(1-(a.b)c)}\right)}{4^{d-1}(a.b)^{d}(1-(a.b)c)^{d-1}},~~\\&	\Psi_2(a,b,c)=\frac{\rho((a.b)c)\psi\left(\frac{|a-(a.b)b|}{4(a.b)(1-(a.b)c)}\right)}{4^{d-1}(a.b)^{d-1}(1-(a.b)c)^{d}}c (a-(a.b)b).
	\end{align*} 
	Since $\Psi_1(a,b,c)=\Psi_1(-a,-b,c), \Psi_2(a,b,c)=-\Psi_2(-a,-b,c)$, thus it is not hard to obtain from the proof of \cite[Proposition 4.2]{ChamJabin} that 
	\begin{align*}
	f(x)-f(y)&=\int_{\mathbb{R}^d}\frac{1}{|x-z|^{d-1}}\Psi_1\left(\frac{x-z}{|x-z|},\frac{x-y}{|x-y|},\frac{|x-z|}{|x-y|}\right)\frac{x-z}{|x-z|} \cdot dDf(z)
	\\&-\int_{\mathbb{R}^d}\frac{1}{|x-z|^{d-1}}\Psi_2\left(\frac{x-z}{|x-z|},\frac{x-y}{|x-y|},\frac{|x-z|}{|x-y|}\right)\cdot dDf(z)\\&-\int_{\mathbb{R}^d}\frac{1}{|y-z|^{d-1}}\Psi_1\left(\frac{y-z}{|y-z|},\frac{y-x}{|y-x|},\frac{|y-z|}{|x-y|}\right)\frac{y-z}{|z-y|}\cdot dDf(z)
	\\&+\int_{\mathbb{R}^d}\frac{1}{|y-z|^{d-1}}\Psi_2\left(\frac{y-z}{|y-z|},\frac{y-x}{|y-x|},\frac{|y-z|}{|x-y|}\right)\cdot dDf(z).
	\end{align*}
	Replacing $\psi$ by $\frac{8^{d-1}}{\varepsilon^{d-1}}\psi(8\frac{.}{\varepsilon})$, we obtain \eqref{diff} where $
	\Theta^{\varepsilon,e}_l(z)=\phi_l^{\varepsilon}(z/|z|,e,|z|)$ for $(e,z)\in S^{d-1}\times \mathbb{R}^d;$ and 
	$$\phi^\varepsilon_1(a,b,c)=2^{d-1}\frac{\rho((a.b)c)\psi\left(\frac{2|a-(a.b)b|}{\varepsilon(a.b)(1-(a.b)c)}\right)}{(a.b)^{d}(1-(a.b)c)^{d-1}},~~~~~~~~~~~~~~~~~~~~~~~~~~~~~~~~~~~~~~~~~~~~~~~~~~~~~~~~~~~~~~~~~~~$$
	$$\phi^\varepsilon_2(a,b,c)=2^{d-1}\frac{\rho((a.b)c)\psi\left(\frac{2|a-(a.b)b|}{\varepsilon(a.b)(1-(a.b)c)}\right)}{(a.b)^{d}(1-(a.b)c)^{d-1}}\frac{a-b}{\varepsilon}-2^{d-1}\frac{\rho((a.b)c)\psi\left(\frac{2|a-(a.b)b|}{\varepsilon(a.b)(1-(a.b)c)}\right)}{(a.b)^{d-1}(1-(a.b)c)^{d}}c \frac{(a-(a.b)b)}{\varepsilon}.$$
	Note that $
	\rho((a.b)c)\psi\left(\frac{2|a-(a.b)b|}{\varepsilon(a.b)(1-(a.b)c)}\right)\not =0$
	implies $
	|a-(a.b)b|\leq \frac{\varepsilon}{2}$ and $(a.b)c\leq 3/4$.
	So, 
	\begin{align*}
	|a-b|=\sqrt{2(1-(a.b))}\leq \sqrt{2(1-(a.b)^2)}=\sqrt{2|a-(a.b)b|^2}\leq \varepsilon/\sqrt{2},
	\end{align*}
	and $a.b\geq 1-\varepsilon/2\geq 1/2$, $c\leq 3/4$.
	Hence, it is easy to check that $	\Theta^{\varepsilon,e}_1, \Theta^{\varepsilon,e}_2$  belong to  $ C^{\infty}(\mathbb{R}^d\backslash\{0\})$ and satisfy \eqref{Z1}.
	The proof is complete. 
\end{proof}
\textbf{2.2} \textit{The  Hardy-Littlewood maximal function and Riesz potential.}
We recall some basic properties of the  Hardy-Littlewood maximal function and Riesz potential. Let $X$ be a vector subspace of $\mathbb{R}^d$ with $\dim(X)=k$~ $(k=1,...,d)$ and $\mu$ be  a positive  Radon measure in $X$. The  Hardy-Littlewood maximal function of $\mu$ on $X$ is defined by $$
\mathbf{M}^k(\mu,X)(x)=\sup_{r>0}\frac{1}{\mathcal{H}^k(B_r(x,X))}\int_{B_r(x,X)}d|\mu|~~\forall~x\in X.$$
If $X=\mathbb{R}^d$, we write $\mathbf{M}(\mu)$ instead  of  $\mathbf{M}^k(\mu,X)$. It is well known that $\mathbf{M}^k(.,X)$ is bounded from $L^p(X,d\mathcal{H}^k)$ to itself and $\mathcal{M}_b(X)$ to $L^{1,\infty}(X,d\mathcal{H}^k)$ for $1<p\leq \infty$ i.e 
\begin{align}\label{boundLpLp}
&||\mathbf{M}^k(\mu,X)||_{L^p(X,d\mathcal{H}^k)}\lesssim ||\mu||_{L^p(X,d\mathcal{H}^k)}~~\text{for any}~\mu\in L^p(X,d\mathcal{H}^k);\\& \label{weaktype11}
\sup_{\lambda>0}\lambda\mathcal{H}^k\left(\left\{\mathbf{M}^k(\mu,X)>\lambda\right\}\right)\lesssim |\mu|(X)~~\text{for any}~\mu\in \mathcal{M}_b(X),
\end{align}
 (see \cite{Stein1}, \cite{Stein2},\cite{AmTilli}). 
 
The Riesz potential of  $\mu$ on $X$ is defined by 
\begin{equation}
\mathbf{I}_\alpha^k(\mu,X)(x)=\int_X\frac{1}{|x-z|^{k-\alpha}}d|\mu|(z)~~\forall~x\in X, 0<\alpha<k.
\end{equation} If $X=\mathbb{R}^d$, we write $\mathbf{I}_\alpha(\mu)$ instead  of  $\mathbf{I}_\alpha^k(\mu,X)$.
We have that $\mathbf{I}_\alpha^k(.,X)$ is bounded from $L^p(X,d\mathcal{H}^k)$ to $L^{\frac{kp}{k-\alpha p}}(X,d\mathcal{H}^k)$  for $p>1,0<\alpha p <k$; and bounded from $\mathcal{M}_b^+(X)$ to $L^{\frac{k}{k-\alpha},\infty}(X,d\mathcal{H}^k)$ for $0<\alpha<k$, see \cite{Stein1}.
\\ It is easy to see that for $\alpha>0,$
\begin{align}
\sup_{r>0}r^{-\alpha}\int_{X}\frac{\mathbf{1}_{|x-z|\leq r}}{|x-z|^{k-\alpha}}d\mu(z)\lesssim\mathbf{M}^k(\mu,X)(x)~~\forall~x\in X.
\end{align}
Thanks to \eqref{weaktype11}, one gets
\begin{align*}
\lambda\mathcal{H}^k\left(\left\{\mathbf{M}^k(\mu,X)>\lambda\right\}\right)&\leq \lambda\mathcal{H}^k\left(\left\{\mathbf{M}^k(\mu^{s},X)>\lambda/2\right\}\right)+\lambda\mathcal{H}^k\left(\left\{\mathbf{M}^k(\mu^a\mathbf{1}_{|\mu^a|\geq  \lambda/4},X)>\lambda/2\right\}\right)\\&\lesssim |\mu|^{s}(X)+ \int_{X}\mathbf{1}_{|\mu|^a\geq \lambda/4}|\mu|^adx,
\end{align*}
provided $|\mu|(X)<\infty$. Thus, 
\begin{align} \label{weak11limsup}
\limsup_{\lambda\to\infty}\lambda\mathcal{H}^k\left(\left\{\mathbf{M}^k(\mu,X)>\lambda\right\}\right)\lesssim|\mu|^{s}(X).
\end{align} 
Moreover,  in \cite{BHS} we  showed that for any $\lambda>0$, 
\begin{align}\label{upperweaktype}
\lambda\mathcal{H}^k\left(\left\{\mathbf{M}^k(\mu,X)>\lambda\right\}\right)\gtrsim|\mu|^{s}(X).
\end{align}
Therefore, it follows from \eqref{weak11limsup} and \eqref{upperweaktype} that for any $B_R:=B_R(0,X)\subset X$, 
\begin{align}\label{es5a}
	|\mu|^{s}(B_R)\lesssim \limsup_{\lambda\to\infty} \lambda\mathcal{H}^k\left(\left\{\mathbf{M}^k(\mu,X)>\lambda\right\}\cap B_R\right)\lesssim	|\mu|^{s}(\overline{B_R}).
\end{align}
Again,  \eqref{weak11limsup} and \eqref{upperweaktype} imply that $\mu<<\mathcal{H}^k$ in $X$ if any only if  
\begin{align}\label{weak11limsup:0}
\limsup_{\lambda\to\infty}\lambda\mathcal{H}^k\left(\left\{\mathbf{M}^k(\mu,X)>\lambda\right\}\right)=0.
\end{align}
Next is a basic estimate of the  Hardy-Littlewood maximal function, it will be used several times in this paper.
\begin{lemma}\label{le2} Let $X$ be a vector subspace of $\mathbb{R}^d$ with $\dim(X)=k$ and  $q>1$. Then, for any $\mu\in \mathcal{M}_b(X)$ and ball $B_R:=B_R(0,X)\subset X$ and $f\in L^q(B_R)$ there holds,
	\begin{align}
	\limsup_{\delta\to 0}\frac{1}{|\log (\delta)|}\int_{B_R}(\delta^{-1}|f|)\wedge\mathbf{M}^k(\mu,X)d\mathcal{H}^k\lesssim	|\mu|^{s}(\overline{B_R}).\label{es5}
	\end{align}
Moreover,  for any $0<\delta<<1$, 
	\begin{align}\label{es5*}
\frac{1}{|\log(\delta)|}\int_{B_R}(\delta^{-1}|f|)\wedge\mathbf{M}^k(\mu,X)d\mathcal{H}^k\lesssim R^k+|\mu|(X)+ ||f||_{L^q(B_R)}^q.
	\end{align}
\end{lemma}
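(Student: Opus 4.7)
The strategy I would follow is the layer-cake / distribution-function representation
\[
\int_{B_R}g\,d\mathcal{H}^k=\int_0^\infty \mathcal{H}^k(\{g>\lambda\}\cap B_R)\,d\lambda
\]
applied to $g=(\delta^{-1}|f|)\wedge\mathbf{M}^k(\mu,X)$, combined with the elementary identity $\{a\wedge b>\lambda\}=\{a>\lambda\}\cap\{b>\lambda\}$. This reduces both \eqref{es5} and \eqref{es5*} to weighting together three elementary super-level set bounds: (i) the trivial estimate $\mathcal{H}^k(\{g>\lambda\}\cap B_R)\leq c(k)R^k$; (ii) $\mathcal{H}^k(\{g>\lambda\}\cap B_R)\leq\mathcal{H}^k(\{\mathbf{M}^k(\mu,X)>\lambda\}\cap B_R)$, controlled by \eqref{weaktype11}; and (iii) the Chebyshev bound $\mathcal{H}^k(\{g>\lambda\}\cap B_R)\leq\mathcal{H}^k(\{|f|>\delta\lambda\}\cap B_R)\leq \|f\|_{L^q(B_R)}^q/(\delta\lambda)^q$.

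For the uniform bound \eqref{es5*}, I would split $(0,\infty)=(0,\lambda_1)\cup(\lambda_1,\lambda_2)\cup(\lambda_2,\infty)$ with the balancing choices $\lambda_1\sim\mu(X)/R^k$ (where (i) and (ii) agree) and $\lambda_2\sim(\|f\|_{L^q(B_R)}/\delta)^{q'}$, $q'=q/(q-1)$ (where (ii) and (iii) agree), and apply (i), \eqref{weaktype11} in (ii), and (iii) respectively on the three intervals. A direct calculation gives
\[
\int_{B_R}g\,d\mathcal{H}^k\leq C(k,q)\bigl[\mu(X)+\mu(X)\log(\lambda_2/\lambda_1)\bigr],
\]
with $\log(\lambda_2/\lambda_1)=q'|\log\delta|+O(1+\log R+\max(\log\mu(X),0)+\max(\log\|f\|_{L^q(B_R)},0))$. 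Dividing by $|\log\delta|$ and taking $\delta$ small enough that the sub-leading logarithms are absorbed by $|\log\delta|$ yields the asserted bound $C(k,q)(R^k+\mu(X)+\|f\|_{L^q(B_R)})$.

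For the sharp limsup inequality \eqref{es5}, I would replace the use of \eqref{weaktype11} on the middle interval by the refined estimate \eqref{weak11limsup}: given $\eta>0$, choose $\Lambda_\eta\geq 1$ so that $\lambda\mathcal{H}^k(\{\mathbf{M}^k(\mu,X)>\lambda\}\cap\overline{B_R})\leq C(k)\mu^s(\overline{B_R})+\eta$ for every $\lambda\geq\Lambda_\eta$ (which exists by \eqref{es5a}). On $(0,\Lambda_\eta)$ I use (i); on $(\Lambda_\eta,\lambda_2)$ I use this refined estimate, whose $\lambda$-integral is $(C(k)\mu^s(\overline{B_R})+\eta)\log(\lambda_2/\Lambda_\eta)$; on $(\lambda_2,\infty)$ I use (iii), which contributes $O(1)$ by the choice of $\lambda_2$. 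Dividing by $|\log\delta|$, letting $\delta\to 0$ first and then $\eta\to 0$, gives \eqref{es5}.

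The only real technical point is the choice of the cutoffs $\lambda_1,\lambda_2$ and the verification that the various sub-leading logarithmic contributions are negligible against the main $|\log\delta|$ factor for $\delta$ sufficiently small; beyond that, the argument uses only the weak $(1,1)$ estimate and its refinement \eqref{weak11limsup} already recorded in Subsection 2.2.
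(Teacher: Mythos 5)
Your proposal is correct and follows essentially the same route as the paper: the layer-cake formula, a three-interval splitting of $(0,\infty)$ with the trivial bound, the weak $(1,1)$ estimate (upgraded via \eqref{es5a} to get $\mu^s(\overline{B_R})$ in the limit), and Chebyshev on the tail, with $\lambda_2\sim\delta^{-q/(q-1)}$ so the middle interval produces the $|\log\delta|$ factor. The only differences are cosmetic: the paper takes a single choice $\lambda_1=|\log(\delta)|^{1/2}\to\infty$ to pass to the singular part, where you instead fix an $\eta$-dependent threshold $\Lambda_\eta$.
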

\begin{proof} Set $A(\lambda_1)=\sup_{\lambda>\lambda_1}\lambda\mathcal{H}^k\left(\left\{\mathbf{M}^k(\mu,X)>\lambda\right\}\cap B_R\right)\lesssim |\mu|(X)$. \\One has for any $0<\delta<<1$ and $0<\lambda_1<\lambda_2<\infty$, 
\begin{align*}
	&\frac{1}{|\log (\delta)|}\int_{B_R}(\delta^{-1}|f|)\wedge\mathbf{M}^k(\mu,X)d\mathcal{H}^k\\&~~=\frac{1}{|\log (\delta)|}\int_{0}^{\infty}\mathcal{H}^k\left(\left\{ (\delta^{-1}|f|)\wedge\mathbf{M}^k(\mu,X)>\lambda \right\}\cap B_R\right)d\lambda
	\\&~~\leq \frac{1}{|\log (\delta)|}\int_{0}^{\lambda_1}\mathcal{H}^k(B_R)d\lambda+\frac{1}{|\log (\delta)|}\int_{\lambda_1}^{\lambda_2}A(\lambda_1)\frac{d\lambda}{\lambda}+\frac{1}{|\log (\delta)|}\int_{\lambda_2}^\infty \mathcal{H}^k\left(\left\{ |f|>\delta\lambda \right\}\cap B_R\right)d\lambda
	\\&~~\leq \frac{\lambda_1}{|\log (\delta)|}\mathcal{H}^k(B_R)+\frac{\log(\lambda_2/\lambda_1)}{|\log (\delta)|}A(\lambda_1)+\frac{1}{q|\log (\delta)|\lambda_1^{q-1}\delta^q} ||f||_{L^q(B_R)}^q.
	\end{align*}
	Choosing $\lambda_1=|\log(\delta)|^{1/2},\lambda_2=\delta^{-\frac{q}{q-1}}$ and thanks to \eqref{weaktype11} and \eqref{es5a} we obtain \eqref{es5} and  \eqref{es5*}.
	The proof is complete.
\end{proof}
\textbf{2.3} \textit{Singular integral operators with rough kernels.}
In this subsection, we provide some basic properties of  singular integral operators with rough convolution kernels. In this paper, we consider the following general kernel in $\mathbb{R}^d$:
\begin{equation}\label{kernelK} \mathbf{K}(x)=\Omega(x)K(x)~~\forall~x\in \mathbb{R}^d\backslash\{0\},
\end{equation} 
where 
\begin{description}
	\item[i.)] $K\in C^1(\mathbb{R}^d\backslash\{0\})$, 
	\begin{align}\label{con1}
	|K(x)|+|x||\nabla K(x)|\leq \frac{1}{|x|^d}~~\forall ~x\in \mathbb{R}^d,
	\end{align}
	\item [ii.)]$\Omega(\theta)=\Omega(r\theta)$ for any $r>0,\theta\in S^{d-1}$ and 
	\begin{align}\label{con2}
	||\Omega||_{W^{\alpha_0,1}(B_2\backslash B_1)}:=\int_{B_2\backslash B_1}|\Omega|+\int_{B_2\backslash B_1}\int_{B_2\backslash B_1}\frac{|\Omega(x)-\Omega(y)|}{|x-y|^{d+\alpha_0}}dxdy\leq c_1,
	\end{align} for some $\alpha_0\in (0,1)$ and $c_1>0$.
	\item[iii.)] (the "cancellation" condition)
	\begin{align}
	\label{cance-condi}
	\sup_{0<R_1<R_2<\infty}\left|\int_{R_1<|x|<R_2}\mathbf{K}(x)dx\right|\leq c_2,
	\end{align}
	for some $c_2>0$.
\end{description}
We say that the kernel $\mathbf{K}$ is a singular Kernel of fundamental type in $\mathbb{R}^d$ if $\Omega\in C^1(S^{d-1})$. 
\begin{remark} From \eqref{con1} one has, 
	\begin{align}\label{con1b}
	|K(x-y)-K(x)|\leq \frac{2^{d+1}|y|}{|x|^{d+1}}~~\forall~ |y|<|x|/2.
	\end{align}
\end{remark}
\begin{remark}\label{re-cance-condi} If $K(x)=|x|^{-d}$ for any $x\in \mathbb{R}^d\backslash\{0\}$, then \eqref{cance-condi} implies  $\int_{S^{d-1}}\Omega(\theta)d\mathcal{H}^{d-1}(\theta)=0$. Moreover, if we set
\begin{equation}\label{omega-n}
\Omega_n(x):=\int_{0}^{\infty}\tilde{\Omega}\star \varrho_{n}\left(\frac{x}{|x|}r\right)r^{d-1}dr~~\forall~x\in\mathbb{R}^d,
\end{equation}
where  $\tilde{\Omega}(x):=\frac{1}{\log(2)}\frac{\Omega(x)}{|x|^d}\mathbf{1}_{1\leq |x|\leq 2}$, then $\int_{S^{d-1}}\Omega_n(\theta)d\mathcal{H}^{d-1}(\theta)=0$ for any $n$, $\Omega_n\in C^\infty_b(S^{d-1})$, $\Omega_n(\theta)=\Omega_n(r\theta)$ for any $r>0,\theta\in S^{d-1}$  and 
\begin{align}\label{con2''}
 ||\Omega_n-\Omega||_{\dot W^{\frac{\alpha_0}{2},1}(B_2\backslash B_1)}\lesssim c_1n^{-\frac{\alpha_0}{2}}~\forall~n.
\end{align} 
\end{remark}
\begin{remark} Since $\Omega(\theta)=\Omega(t\theta)$ for any $t>0,\theta\in S^{d-1}$,  so by Sobolev inequality one gets
	\begin{align}\nonumber
	&||\Omega||_{L^q(S^{d-1})}+\sup_{|h|\leq 1/2}|h|^{-\alpha_0/2}\left(||\Omega(.-h)-\Omega(.)||_{L^q(B_2\backslash B_1)}+||\Omega(.-h)-\Omega(.)||_{L^q(S^{d-1})}\right)\\&\quad\quad\quad\quad\quad+ \left(\int_{1<|x|<2}\sup_{0<\rho< 1/2}\fint_{B_\rho(0)}\frac{|\Omega(x-h)-\Omega(x)|^q}{|h|^{\frac{\alpha_0q}{2}}}dhdx\right)^{1/q}\nonumber\\&\quad\quad\quad\lesssim	||\Omega||_{W^{\alpha_0,1}(B_2\backslash B_1)}\lesssim c_1,\label{con2b}
	\end{align}
	for any $1\leq q\leq q_0=\frac{d}{d-\alpha_0/2}$. Moreover, we also have for any $\lambda_0\leq 2$ and $\theta\in S^{d-1}$
	\begin{equation}\label{Z3}
	\int_{|y|\leq \lambda_0} |\Omega(\theta-y)-\Omega(\theta)|\frac{ dy}{|y|^{d}}\lesssim \lambda_0^{\frac{\alpha_0}{2}}\sup_{r\leq 2} r^{-\frac{\alpha_0}{2}}\fint_{B_r(0)}  |\Omega(\theta-y)-\Omega(\theta)| dy.
 	\end{equation}
\end{remark}
\begin{remark}  Thanks to \eqref{con1} and Minkowski's inequality, one has
	\begin{align}\label{LqforKepso}
	||(\mathbf{1}_{|.|>\varepsilon}\mathbf{K}(.))\star \mu||_{L^{q_0}(\mathbb{R}^d)}\lesssim\varepsilon^{-\frac{(q_0-1)d}{q_0}} ||\Omega||_{L^{q_0}(S^{d-1})}|\mu|(\mathbb{R}^d),
	\end{align}
	for $\varepsilon>0$, $ q_0=\frac{d}{d-\alpha_0/2}$ and $\mu\in \mathcal{M}_b(\mathbb{R}^d)$.
	\end{remark}
The following is $L^p$ and weak type  $(1,1)$ boundedness of singular integral operators associated to the kernel $\mathbf{K}$. 
\begin{proposition}\label{pro-singularintegral-rough1}Let $\mathbf{K}$ be in \eqref{kernelK} with constants $c_1,c_2>0$, $\alpha_0\in (0,1)$. Let $\chi\in C_c(\mathbb{R}^d,[0,1])$ be  such that $\chi=1$ in $|x|>3$ and $\chi=0$ in $|x|<2$. For $f\in C^\infty_c(\mathbb{R}^d)$, we define for $x\in \mathbb{R}^d$, $$
	\mathbf{T}^1(f)(x)=\mathbf{K}\star f(x),~\mathbf{T}^{2}(f)(x)=\sup_{\varepsilon>0}\left|\left(\chi(\frac{\cdot}{\varepsilon})\mathbf{K}\right)\star f(x)\right|,~\mathbf{T}^{3}(f)(x)=\sup_{\varepsilon>0}\left|\left(\mathbf{1}_{|\cdot|>\varepsilon}\mathbf{K}\right)\star f(x)\right|.
$$ Then, $	\mathbf{T}^1$ and 	$\mathbf{T}^{2},\mathbf{T}^{3}$ extend to  bounded operator from $L^p$ to itself ($p>1$) and from $L^1$ to  $L^{1,\infty}$ with norms
\begin{align}\label{Lpweak11}
\sum_{j=1,2,3}||\mathbf{T}^j||_{L^p\to L^p}+||\mathbf{T}^j||_{L^1\to L^{1,\infty}}\lesssim c_1+c_2.
\end{align}
Moreover, we also get 
\begin{align}\label{maxisingular}
\sum_{j=1,2,3}||\mathbf{T}^j||_{\mathcal{M}_b\to L^{1,\infty}}\lesssim c_1+c_2, 
\end{align}
and for any $\mu\in \mathcal{M}_b(\mathbb{R}^d)$, there holds
	\begin{align}\label{esL1infa}
\sum_{j=1,2,3}\limsup_{\lambda\to \infty}\lambda\mathcal{L}^d\left(\{ |\mathbf{T}^j(\mu)|>\lambda\}\right)\lesssim (c_1+c_2)|\mu|^{s}(\mathbb{R}^d).
\end{align}
\end{proposition}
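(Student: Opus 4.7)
The plan is to reduce everything to classical Calder\'on--Zygmund theory by exploiting the fractional Sobolev regularity \eqref{con2} of $\Omega$ together with the pointwise estimates \eqref{con1}, \eqref{con1b} and the cancellation condition \eqref{cance-condi}. First I would establish $L^2$ boundedness of $\mathbf{T}^1$ via a dyadic Fourier analysis: pick $\psi\in C_c^\infty((1/2,2))$ with $\sum_{k\in\mathbb{Z}}\psi(2^{-k}t)=1$, write $\mathbf{K}=\sum_k\mathbf{K}_k$ with $\mathbf{K}_k(x)=\mathbf{K}(x)\psi(2^{-k}|x|)$, and estimate $\widehat{\mathbf{K}_k}(\xi)$. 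For $|2^k\xi|\leq 1$ the cancellation \eqref{cance-condi} combined with the mean value theorem and \eqref{con1} gives $|\widehat{\mathbf{K}_k}(\xi)|\lesssim (c_1+c_2)|2^k\xi|$; for $|2^k\xi|\geq 1$, writing the integral in polar coordinates and applying a translation trick based on \eqref{con2b} yields $|\widehat{\mathbf{K}_k}(\xi)|\lesssim c_1|2^k\xi|^{-\alpha_0/2}$. Summing in $k$ bounds $\widehat{\mathbf{K}}$ in $L^\infty$ by $C(d,\alpha_0)(c_1+c_2)$.

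For the $L^p$ and weak-$(1,1)$ bounds on $\mathbf{T}^1$ I would verify the H\"ormander condition
$$\int_{|x|>2|y|}|\mathbf{K}(x-y)-\mathbf{K}(x)|\,dx\leq C(d,\alpha_0)(c_1+c_2)\qquad \forall\, y\neq 0,$$
splitting the integrand as $|K(x-y)-K(x)|\,|\Omega(x-y)|+|K(x)|\,|\Omega(x-y)-\Omega(x)|$. The first term is controlled by \eqref{con1b} and $\|\Omega\|_{L^1(S^{d-1})}\lesssim c_1$; the second is an angular integral estimated via the translation bound in \eqref{con2b}. Combined with the $L^2$ bound, the standard Calder\'on--Zygmund decomposition yields \eqref{Lpweak11} for $\mathbf{T}^1$. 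For the maximal truncations I would use Cotlar's inequality
$$\mathbf{T}^3 f(x)\leq C\,\mathbf{M}(\mathbf{T}^1 f)(x)+C(c_1+c_2)\,\mathbf{M}f(x),$$
obtained by averaging $\mathbf{T}^1 f$ over $B_{\varepsilon/2}(x)$ and estimating the discrepancy against $(\mathbf{1}_{|\cdot|>\varepsilon}\mathbf{K})*f$ using the H\"ormander bound. This reduces the bounds for $\mathbf{T}^3$ (and analogously for $\mathbf{T}^2$, since $\chi(\cdot/\varepsilon)\mathbf{K}$ differs from $\mathbf{1}_{|\cdot|>3\varepsilon}\mathbf{K}$ by a kernel absorbed into $\mathbf{M}f$) to the bounds already obtained for $\mathbf{T}^1$ and $\mathbf{M}$.

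For \eqref{maxisingular}, approximate $\mu\in\mathcal{M}_b(\mathbb{R}^d)$ by $f_n=\mu*\rho_n$ with $\|f_n\|_{L^1}\leq|\mu|(\mathbb{R}^d)$; the weak-$(1,1)$ bound transfers by Fatou's lemma applied to the open superlevel sets of the maximal truncations. Finally, \eqref{esL1infa} follows from the Radon--Nikodym decomposition $\mu=\mu^a+\mu^s$ with $\mu^a\in L^1$: applying \eqref{maxisingular} to $\mu^s$ handles the term $C(c_1+c_2)|\mu|^s(\mathbb{R}^d)$, while for $\mu^a$ one splits $\mu^a=f_M+g_M$ with $f_M=\mu^a\mathbf{1}_{|\mu^a|\leq M}\in L^2$ and $\|g_M\|_{L^1}\to 0$ as $M\to\infty$; then $\mathbf{T}^j f_M\in L^2$ yields $\lambda\mathcal{L}^d(\{|\mathbf{T}^j f_M|>\lambda\})\to 0$ as $\lambda\to\infty$, and the weak-$(1,1)$ bound applied to $g_M$ is arbitrarily small. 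Sending first $\lambda\to\infty$ then $M\to\infty$ completes the proof.

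The main obstacle is the high-frequency Fourier estimate $|\widehat{\mathbf{K}_k}(\xi)|\lesssim c_1|2^k\xi|^{-\alpha_0/2}$: since $\Omega$ possesses only fractional smoothness one cannot integrate by parts in $\theta$, and must instead compare the oscillatory integral against a translated copy of $\Omega$, exploiting the quantitative modulus $\|\Omega(\cdot-h)-\Omega\|_{L^1}\lesssim c_1|h|^{\alpha_0/2}$ from \eqref{con2b}. This is the sole place where $W^{\alpha_0,1}$-regularity enters quantitatively, and propagating the dependence $c_1+c_2$ through the subsequent Calder\'on--Zygmund and Cotlar arguments is then routine.
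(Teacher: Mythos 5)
Your overall strategy coincides with the paper's: the paper likewise reduces \eqref{Lpweak11} to checking the dyadic size bound $\sup_R\int_{R<|x|<2R}|\mathbf{K}|\lesssim c_1$ and the H\"ormander condition $\sup_{y\neq 0}\int_{|x|\geq 2|y|}|\mathbf{K}(x-y)-\mathbf{K}(x)|\,dx\lesssim c_1$ via exactly the splitting you describe (using \eqref{con1b} for the $K$-difference and the translation estimate in \eqref{con2b} for the $\Omega$-difference), and then \emph{cites} Grafakos (Theorems 5.4.1, 5.4.5, 5.3.5) for the $L^2$, Calder\'on--Zygmund, and maximal-truncation theory rather than reproving it. Your passage to measures for \eqref{maxisingular} (mollify, apply the $L^1$ bound, Fatou) and your decomposition for \eqref{esL1infa} (singular part by weak $(1,1)$, bounded truncation of $\mu^a$ in $L^2$, small $L^1$ remainder) are also the paper's arguments, up to an inessential extra spatial truncation the paper inserts.

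The gaps are in the two classical steps you chose to re-derive. First, the low-frequency bound $|\widehat{\mathbf{K}_k}(\xi)|\lesssim (c_1+c_2)|2^k\xi|$ is false for individual dyadic pieces: the cancellation hypothesis \eqref{cance-condi} only yields $|\int \mathbf{K}_k\,dx|\leq C(c_1+c_2)$, not smallness, and for $K(x)=c\,|x|^{-d}\sin(\log|x|)$ with $\Omega\equiv 1$ (which satisfies all the hypotheses) the constants $\int\mathbf{K}_k\,dx$ oscillate without tending to zero, so $\sum_{2^k|\xi|\leq 1}|\widehat{\mathbf{K}_k}(\xi)|$ diverges and the sum cannot be taken termwise as you describe. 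The standard repair is to estimate the \emph{grouped} sum $\sum_{k\leq k_0}\widehat{\mathbf{K}_k}(\xi)$ with $2^{k_0}\sim|\xi|^{-1}$, applying \eqref{cance-condi} once to $\int\mathbf{K}\cdot\sum_{k\leq k_0}\psi(2^{-k}|\cdot|)$ and the mean-value bound only to the part containing $e^{-2\pi i x\cdot\xi}-1$. Second, the pointwise Cotlar inequality $\mathbf{T}^3 f\leq C\mathbf{M}(\mathbf{T}^1 f)+C(c_1+c_2)\mathbf{M}f$ is not justified here: the discrepancy for $z\in B_{\varepsilon/2}(x)$ contains $\int_{|x-y|>\varepsilon}|K(x-y)|\,|\Omega(z-y)-\Omega(x-y)|\,|f(y)|\,dy$, and the H\"ormander condition is an $L^1$ bound in the $y$-variable against $dy$, which gives nothing pointwise against the measure $|f(y)|\,dy$ when $\Omega$ is merely $W^{\alpha_0,1}$. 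After averaging in $z$ one can salvage a bound, but only in terms of the rough maximal function $\mathbf{M}^{\tilde\Omega}f$ of Proposition \ref{pro-singularmaxi} (using the pointwise-in-angle modulus in \eqref{con2b}), and the weak $(1,1)$ bound for the maximal truncation then still requires either the $\mathbf{M}(|Tf|^{\delta})^{1/\delta}$ variant with Kolmogorov's inequality or, as in the cited Grafakos theorem, a direct Calder\'on--Zygmund decomposition of $f$. Both gaps are repairable by standard arguments, but the two steps would fail as written.
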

\begin{proof} First, we need to check that 
	\begin{align}\label{standcon1}
	&\sup_{R>0}\int_{R<|x|<2R}|\mathbf{K}(x)|dx\lesssim c_1,\\&
\label{standcon2}
	\sup_{y\not= 0}\int_{|x|\geq 2|y|}\left|\mathbf{K}(x-y)-\mathbf{K}(x)\right|dx \lesssim c_1.
	\end{align}
	Indeed, by \eqref{con1} one has
$$
	\sup_{R>0}\int_{R<|x|<2R}|\mathbf{K}(x)|dx\leq 	\sup_{R>0}\int_{R<|x|<2R}\frac{|\Omega(x)|}{|x|^d}dx=\log(2)\int_{S^{d-1}}|\Omega|\lesssim c_1,
$$
	which implies \eqref{standcon1}. Moreover, for any $y\not= 0$, 
	\begin{align*}
	&\int_{|x|\geq 2|y|}\left|\mathbf{K}(x-y)-\mathbf{K}(x)\right|dx\\&\quad\overset{\eqref{con1b}}\lesssim
	\int_{|x|\geq 2|y|}\frac{|\Omega(x)||y|}{|x|^{d+1}}dx+\int_{|x|\geq 2|y|}\frac{1}{|x|^{d}}\left|\Omega(x-y)-\Omega(x)\right|dx
	\\&\quad\lesssim \int_{2|y|}^\infty \int_{S^{d-1}}|\Omega(\theta)|\frac{|y|}{r^{2}}d\mathcal{H}^{d-1}(\theta) dr+ \sum_{j=1}^{\infty}\frac{1}{(2^{j}|y|)^d}\int_{2^{j}|y|<|x|<2^{j+1}|y|}\left|\Omega(x-y)-\Omega(x)\right|dx\\&\quad\lesssim \int_{S^{d-1}}|\Omega(\theta)| d\mathcal{H}^{d-1}(\theta) +  \int_{0}^{3/4}\sup_{|h|\leq \rho}\int_{1<|x|<2}|\Omega(x-h)-\Omega(x)|dx \frac{d\rho}{\rho}
	\\& \quad\overset{\eqref{con2b}}\lesssim c_1,
	\end{align*}
	which implies \eqref{standcon2}. \vspace{.2cm}\\
	Therefore, $\mathbf{K}$ satisfies \eqref{standcon1}, \eqref{standcon2} and \eqref{cance-condi}, so by \cite[Theorem 5.4.1, 5.4.5 and 5.3.5]{Gra1},  we obtain \eqref{Lpweak11}.\vspace{.2cm}\\
	We now prove \eqref{maxisingular}. Let $\mu\in \mathcal{M}_b(\mathbb{R}^d)$. Thanks to \eqref{LqforKepso}, one has $
(\chi(./\varepsilon)\mathbf{K})\star \mu\in L^1_{\loc}(\mathbb{R}^d)$ for any $\varepsilon>0$.
	Thus, for any $\varepsilon>0$, $
	\lim\limits_{n\to\infty} 	(\chi(./\varepsilon)\mathbf{K})\star (\varrho_n\star\mu)= (\chi(./\varepsilon)\mathbf{K})\star \mu~\text{a.e~in}~\mathbb{R}^d$. This implies that  $
\mathbf{1}_{|\mathbf{T}^2(\mu)|>\lambda}\leq \liminf_{n\to\infty} \mathbf{1}_{|\mathbf{T}^2(\varrho_n\star\mu)|>\lambda}$ a.e in $\mathbb{R}^d$,  for any $\lambda>0$. On the other hand, by \eqref{Lpweak11}, 
$$
\sup_{\lambda>0}\lambda \mathcal{L}^d\left(\left\{|\mathbf{T}^2(\varrho_n\star\mu)|>\lambda\right\}\right)\lesssim (c_1+c_2)||\varrho_n\star\mu||_{L^1(\mathbb{R}^d)}\lesssim(c_1+c_2)|\mu|(\mathbb{R}^d).$$
By applying Fatou's lemma, we find $$
\sup_{\lambda>0}\lambda \mathcal{L}^d\left(\left\{|\mathbf{T}^2(\mu)|>\lambda\right\}\right)\lesssim (c_1+c_2)|\mu|(\mathbb{R}^d).$$ Similarly, we also get $$
\sup_{\lambda>0}\lambda \mathcal{L}^d\left(\left\{|\mathbf{T}^{3}(\mu)|>\lambda\right\}\right)\lesssim (c_1+c_2)|\mu|(\mathbb{R}^d).$$
Hence, we conclude \eqref{maxisingular} since $|\mathbf{T}^1(\mu)|\leq |\mathbf{T}^{3}(\mu)|$. To get \eqref{esL1infa}, one has for $R>1$ and $\gamma>1$
\begin{align*}
&	\lambda\mathcal{L}^d\left(\{ |\mathbf{T}^j(\mu)|>\lambda\}\right)\leq \lambda\mathcal{L}^d\left(\{ |\mathbf{T}^j(\mu^{s})|>\lambda/4\}\right)+\lambda\mathcal{L}^d\left(\{ |\mathbf{T}^j(\mu^{\text{a}}\mathbf{1}_{B_R^c})|>\lambda/4\}\right)\\&~~~~~~~~~~~~~~~~~+\lambda\mathcal{L}^d\left(\{ |\mathbf{T}^j(\mu^{\text{a}}\mathbf{1}_{|\mu|^a> \gamma}\mathbf{1}_{B_R})|>\lambda/4\}\right)+\lambda\mathcal{L}^d\left(\{ |\mathbf{T}(\mu^{\text{a}}1_{|\mu|^a\leq \gamma}\mathbf{1}_{B_R})|>\lambda/4\}\right).
\end{align*} 
Using the boundedness of $\mathbf{T}$ from $\mathcal{M}_b(\mathbb{R}^d)$ to $L^{1,\infty}(\mathbb{R}^d)$  for first three terms and 	the boundedness of $\mathbf{T}$ from $L^2(\mathbb{R}^d)$ to itself for last term yields 
$$
\lambda\mathcal{L}^d\left(\{ |\mathbf{T}^j(\mu)|>\lambda\}\right)\lesssim(c_1+c_2)\left( |\mu|^{s}(\mathbb{R}^d)+\int_{B_R^c}|\mu|^a+ \int_{B_R}\mathbf{1}_{|\mu|^a> \gamma}|\mu|^a+\lambda^{-1}\int_{B_R}\mathbf{1}_{|\mu|^a\leq  \gamma}(|\mu|^a)^2\right).$$
This implies \eqref{esL1infa} by letting $\lambda\to \infty$ and then $\gamma\to\infty$, $R\to \infty$. The proof is complete.
\end{proof}
\begin{remark}\label{weak(11)inf1} Since $\mathbf{T}^j(\mathbf{1}_{B_{R+\varepsilon}^c}\mu)\in L^1(B_R)$ for any  $B_R\subset \mathbb{R}^d$ and $\varepsilon>0$, so 
	\begin{equation*}
	\limsup_{\lambda\to \infty}\lambda\mathcal{L}^d\left(\{ |\mathbf{T}^j(\mu)|>\lambda\}\cap B_R\right)\leq \limsup_{\lambda\to \infty}\lambda\mathcal{L}^d\left(\{ |\mathbf{T}^j(\mathbf{1}_{B_{R+\varepsilon}}\mu)|>\lambda/2\}\right)~\forall~\varepsilon>0.
	\end{equation*}
	Applying \eqref{esL1infa} to $\mathbf{1}_{B_{R+\varepsilon}}\mu$ and then letting $\varepsilon\to 0$, we find that  
		\begin{equation*}
	\sum_{j=1,2,3}\limsup_{\lambda\to \infty}\lambda\mathcal{L}^d\left(\{ |\mathbf{T}^j(\mu)|>\lambda\}\cap B_R\right)\lesssim (c_1+c_2)|\mu|^s(\overline{B}_R).
	\end{equation*}
\end{remark}
\begin{remark} It is unknown when $\mathbf{T}^{3}$ is bounded from $L^1(\mathbb{R}^d)$ to $L^{1,\infty}(\mathbb{R}^d)$ where $\Omega\in W^{\alpha_0,1}(B_2\backslash B_1)$ is replaced by $\Omega\in L^q(S^{d-1})$ for $q>1$. This is an interesting open problem posed by A. Seeger. 
\end{remark}
In this paper, we also need a boundedness of the following  singular maximal operator: for $f\in L^1(\mathbb{R}^d)$,
\begin{equation*}
\mathbf{M}^{\tilde{\Omega}}f(x)=\sup_{\rho>0}\fint_{B_\rho(x)}|\tilde{\Omega}(y/|y|)||f(x-y)|dy~~\forall~x\in \mathbb{R}^d,
\end{equation*}
where $\tilde{\Omega}\in L^1(S^{d-1})$.
\begin{proposition}[\cite{Stein2},\cite{Christ1},\cite{Christ2}]\label{pro-singularmaxi}There hold for  $p>1, q>1,$
	\begin{equation}\label{singularmaximala}
	||\mathbf{M}^{\tilde{\Omega}}||_{L^p\to L^p}\lesssim||\tilde{\Omega}||_{L^1(S^{d-1})}, ~~
||\mathbf{M}^{\tilde{\Omega}}||_{L^1\to L^{1,\infty} }\lesssim||\tilde{\Omega}||_{L^q(S^{d-1})}.
	\end{equation}	
\end{proposition}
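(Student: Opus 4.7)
I would split into two separate arguments: the classical method of rotations for the $L^p$ bound, and a Calder\'on--Zygmund decomposition combined with a geometric estimate on $S^{d-1}$ for the weak-type $(1,1)$ bound, the two parts being linked through the resulting $L^{2}$ estimate.

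For the $L^p$ bound, pass to polar coordinates centred at $x$: writing $y=x-r\theta$ with $r>0$, $\theta\in S^{d-1}$, one has $dy=r^{d-1}\,dr\,d\mathcal{H}^{d-1}(\theta)$ and $(x-y)/|x-y|=\theta$, so
\[
\mathbf{M}^{\tilde{\Omega}}f(x)=\sup_{\rho>0}\rho^{-d}\int_{S^{d-1}}|\tilde{\Omega}(\theta)|\int_{0}^{\rho}|f(x-r\theta)|\,r^{d-1}\,dr\,d\mathcal{H}^{d-1}(\theta).
\]
Using $r^{d-1}\le\rho^{d-1}$ on $[0,\rho]$ and moving the supremum inside the integral in $\theta$ (permitted since $|\tilde{\Omega}(\theta)|\ge 0$ is $\rho$-independent),
\[
\mathbf{M}^{\tilde{\Omega}}f(x)\le\int_{S^{d-1}}|\tilde{\Omega}(\theta)|\,M_{\theta}f(x)\,d\mathcal{H}^{d-1}(\theta),\quad M_{\theta}f(x):=\sup_{\rho>0}\rho^{-1}\int_{0}^{\rho}|f(x-r\theta)|\,dr.
\]
Foliating $\mathbb{R}^{d}$ by lines parallel to $\theta$, the function $t\mapsto M_{\theta}f(y+t\theta)$ on each such line is a one-sided one-dimensional Hardy--Littlewood maximal function of the slice $f(y+\,\cdot\,\theta)$. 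The 1D $L^{p}$ theorem and Fubini then give $\|M_{\theta}f\|_{L^{p}(\mathbb{R}^{d})}\le C_{p}\|f\|_{L^{p}(\mathbb{R}^{d})}$ with $C_{p}$ independent of $\theta$; Minkowski's integral inequality in $\theta$ therefore yields
\[
\|\mathbf{M}^{\tilde{\Omega}}f\|_{L^{p}}\le C_{p}\|\tilde{\Omega}\|_{L^{1}(S^{d-1})}\|f\|_{L^{p}},
\]
proving the first assertion and in particular an $L^{2}$ bound.

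For the weak-type $(1,1)$ inequality with $\tilde{\Omega}\in L^{q}(S^{d-1})$, $q>1$, the rotation trick is too lossy: passing to $M_{\theta}$ and using the 1D weak-$(1,1)$ bound only yields control by $\|\tilde{\Omega}\|_{L^{1}}\|f\|_{L\log L}$. Instead I would run a Calder\'on--Zygmund decomposition $f=g+b$ at level $\lambda$, with $\sum|Q|\lesssim\lambda^{-1}\|f\|_{L^{1}}$, $\|g\|_{\infty}\lesssim\lambda$, $b=\sum b_{Q}$, $\supp b_{Q}\subset Q$, $\int b_{Q}=0$. The good part is handled by the $L^{2}$ bound already proved: $|\{\mathbf{M}^{\tilde{\Omega}}g>\lambda/2\}|\lesssim\lambda^{-2}\|g\|_{L^{2}}^{2}\lesssim\lambda^{-1}\|f\|_{L^{1}}$. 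Discarding the union $E$ of dilated CZ cubes, of measure $\lesssim\lambda^{-1}\|f\|_{L^{1}}$, it remains to estimate $\mathbf{M}^{\tilde{\Omega}}b$ on $E^{c}$. This is the main obstacle: the kernel $|\tilde{\Omega}(\cdot/|\cdot|)|\mathbf{1}_{B_{\rho}}$ is not smooth, so no pointwise H\"ormander cancellation is available to exploit $\int b_{Q}=0$ inside the supremum over $\rho$. The plan is to dyadically decompose in $\rho$ and split $\tilde{\Omega}$ on $S^{d-1}$ into thin angular caps, then bound the contribution of each $b_{Q}$ via a geometric count of how many caps at a given angular scale can intersect the cone-like support of the truncated kernel seen from a point outside $E$, and sum with an $L^{q}$-type square-function argument in the spirit of Christ. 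This geometric-combinatorial estimate is exactly where the hypothesis $\tilde{\Omega}\in L^{q}(S^{d-1})$ (rather than merely $L^{1}$) is used, and is the technical heart of the proof.
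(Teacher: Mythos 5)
The paper does not prove this proposition; it is quoted from Stein and from Christ and Christ--Rubio de Francia, so there is no internal argument to compare against. Your first half is a complete and correct proof of the $L^p$ bound: passing to polar coordinates, bounding $r^{d-1}\le\rho^{d-1}$, dominating $\mathbf{M}^{\tilde\Omega}$ by the rotation average of directional maximal functions $M_\theta$, and invoking the one-dimensional Hardy--Littlewood theorem along lines plus Minkowski's inequality is exactly the standard method of rotations, and it correctly produces the constant $C(d,p)\|\tilde\Omega\|_{L^1(S^{d-1})}$.

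The second inequality, however, is not proved by your proposal. You correctly diagnose why rotations fail for weak $(1,1)$ and correctly set up the Calder\'on--Zygmund decomposition, handling the good part by the $L^2$ bound from the first half; but the entire difficulty of the theorem sits in the estimate of $\mathbf{M}^{\tilde\Omega}b$ off the dilated cubes, and there you only announce a ``plan'' (dyadic decomposition in $\rho$, angular caps, a geometric count, an $L^q$ square-function argument) without carrying out any of it. The mean-zero property of the $b_Q$ cannot be exploited pointwise against the rough truncated kernel, and the substitute --- whether Christ's geometric--combinatorial argument or the Fourier-transform/almost-orthogonality estimates on the dyadic annular measures, which is precisely where $\tilde\Omega\in L^q$ with $q>1$ enters --- is the content of the cited theorems, not a routine step. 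So as a self-contained proof the proposal establishes only the first inequality in \eqref{singularmaximala}; for the second it reproduces the reduction that every treatment performs and then defers the essential estimate. Since the paper itself relies on the citations \cite{Christ1},\cite{Christ2} for exactly this point, your outline is consistent with the literature, but it should not be presented as a proof of the weak-type bound.
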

By a standard approximation, we  obtain  from \eqref{singularmaximala} that for $q>1$.
	\begin{align}\label{singularmaximalb}
	||\mathbf{M}^{\tilde{\Omega}}||_{\mathcal{M}_b\to L^{1,\infty} }\lesssim||\tilde{\Omega}||_{L^q(S^{d-1})}.
\end{align}

\begin{proposition} \label{singu-operator} Let $\mathbf{K}$ be in \eqref{kernelK} with constants $c_1,c_2>0$, $\alpha_0\in (0,1)$. Let $\{\phi^e\}_e\subset C^1(\mathbb{R}^d\backslash \{0\})\cap L^\infty(\mathbb{R}^d)$ be a family of kernels such that $$
		\supp (\phi^e)\subset B_1,~~\sup_{x\in\mathbb{R}^d,e}|\phi^e(x)|+|x||\nabla \phi^e(x)|\leq c_0.$$
	For $\alpha\in (0,d)$ and $f\in C_c^\infty(\mathbb{R}^d)$ we define $$
		\mathbf{T}(f)(x)=\sup_{e}\sup_{\rho>0}\left|\left(\frac{\rho^{-\alpha}}{|\cdot|^{d-\alpha}}\phi^e(\frac{\cdot}{\rho})\right)\star\mathbf{K}\star f  (x)\right|~\forall~x\in \mathbb{R}^d.$$
	Then, $	\mathbf{T}$ extends to  bounded operator from $L^p(\mathbb{R}^d)$ to itself ($p>1$) and $L^1(\mathbb{R}^d)$ to $L^{1,\infty}(\mathbb{R}^d)$ with norms
	\begin{equation}\label{Lpweakmaxisingular1}
	||\mathbf{T}||_{L^p\to L^p}+||\mathbf{T}||_{\mathcal{M}_b\to L^{1,\infty}}\lesssim c_0(c_1+c_2).
	\end{equation}
	Moreover, for any $\mu\in \mathcal{M}_b(\mathbb{R}^d)$  
	\begin{equation}\label{esL1inf}
		\limsup_{\lambda\to \infty}\lambda\mathcal{L}^d\left(\{ \mathbf{T}(\mu)>\lambda\}\right)\lesssim c_0(c_1+c_2)|\mu|^{s}(\mathbb{R}^d),
	\end{equation}
	In particular,  for any $B_R\subset\mathbb{R}^d$ and $f\in L^q(B_R)$ for $q>1$, 
	\begin{equation}\label{es6-}
	\limsup_{\delta\to 0}\frac{1}{|\log (\delta)|}\int_{B_R}\min\left\{\delta^{-1}|f|,\mathbf{T}(\mu)\right\} \lesssim c_0(c_1+c_2) |\mu|^{s}(\overline{B}_R).
	\end{equation}
\end{proposition}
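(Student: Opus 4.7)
The plan is to first establish the pointwise estimate $\mathbf{T}(f)\leq C(d,\alpha)c_0\mathbf{M}(\mathbf{K}\star f)$, which via the $L^p$ boundedness of $\mathbf{M}$ combined with Proposition \ref{pro-singularintegral-rough1} immediately yields the $L^p\to L^p$ half of \eqref{Lpweakmaxisingular1}. The $\mathcal{M}_b\to L^{1,\infty}$ half requires a Calder\'on--Zygmund decomposition, the core of which is verifying a uniform H\"ormander-type condition on the family of kernels $L^e_\rho:=\psi^e_\rho\star\mathbf{K}$. The estimates \eqref{esL1inf} and \eqref{es6-} are then deduced by adapting the truncation/splitting schemes already used for Proposition \ref{pro-singularintegral-rough1} and Lemma \ref{le2}.

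For the pointwise bound, using $|\psi^e_\rho(z)|\leq c_0\rho^{-\alpha}|z|^{-d+\alpha}\mathbf{1}_{|z|\leq\rho}$, I would decompose $B_\rho(x)$ into dyadic annuli $A_k:=\{2^{-k-1}\rho\leq|x-y|<2^{-k}\rho\}$ and bound each annular contribution by $C_d 2^{-k\alpha}c_0\mathbf{M}(\mathbf{K}\star f)(x)$ (using $(2^{-k-1}\rho)^{-d+\alpha}|A_k|\leq C_d 2^{-k\alpha}\rho^\alpha$ and the trivial average bound on $|\mathbf{K}\star f|$); summation of the geometric series $\sum 2^{-k\alpha}$ yields the desired pointwise estimate. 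For the weak-$(1,1)$-on-measures bound, given $\mu\in\mathcal{M}_b$ and $\lambda>0$, I would perform a CZ decomposition $\mu=g+\sum_k b_k$ at level $\lambda$, with $g\in L^\infty\cap L^1$ so that $\|g\|_{L^2}^2\leq C\lambda|\mu|(\mathbb{R}^d)$, and each $b_k$ supported on a cube $Q_k$ with $\int b_k=0$, $\|b_k\|_{L^1}\leq 2|\mu|(Q_k)$, and $\sum|Q_k|\leq C\lambda^{-1}|\mu|(\mathbb{R}^d)$. The good part is handled by Chebyshev together with the $L^2$ bound. For the bad part, setting $Q^*:=\cup_k 100 Q_k$ (so $|Q^*|\leq C\lambda^{-1}|\mu|(\mathbb{R}^d)$), the cancellation $\int b_k=0$ lets me write, for $x\notin Q^*$,
\[ L^e_\rho\star b_k(x)=\int_{Q_k}\bigl[L^e_\rho(x-y)-L^e_\rho(x-x_k)\bigr]b_k(y)\,dy, \]
reducing matters to the uniform H\"ormander-type estimate
\[ \int_{|z|\geq 50 r_k}\sup_{e,\rho}\bigl|L^e_\rho(z+h)-L^e_\rho(z)\bigr|\,dz \leq C(d,\alpha,\alpha_0)c_0(c_1+c_2),\qquad |h|\leq r_k. \]
This I would prove by splitting into $\rho\leq r_k$ (write the difference as $\int\psi^e_\rho(w)[\mathbf{K}(z+h-w)-\mathbf{K}(z-w)]\,dw$, use Fubini together with H\"ormander \eqref{standcon2} for $\mathbf{K}$ and $\|\psi^e_\rho\|_{L^1}\leq Cc_0$) and $\rho>r_k$ (write it as $\int[\psi^e_\rho(z+h-w)-\psi^e_\rho(z-w)]\mathbf{K}(w)\,dw$, control the $\psi^e_\rho$-difference on $\{|y|>2|h|\}$ by the gradient bound $|\nabla\psi^e_\rho(y)|\leq Cc_0\rho^{-\alpha}|y|^{-d+\alpha-1}$ and crudely on the near region, then use the annular cancellation \eqref{cance-condi} together with the $W^{\alpha_0,1}$ regularity \eqref{con2} of $\Omega$).

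Once the $\mathcal{M}_b\to L^{1,\infty}$ bound is in place, I would prove \eqref{esL1inf} by imitating exactly the argument used for \eqref{esL1infa}: decompose $\mu=\mu^s+\mu^a\mathbf{1}_{B_R^c}+\mu^a\mathbf{1}_{|\mu^a|>\gamma}\mathbf{1}_{B_R}+\mu^a\mathbf{1}_{|\mu^a|\leq\gamma}\mathbf{1}_{B_R}$, apply the $\mathcal{M}_b\to L^{1,\infty}$ bound to the first three pieces and the $L^2\to L^2$ bound to the last, then send $\lambda\to\infty$ followed by $\gamma\to\infty$ and $R\to\infty$. Estimate \eqref{es6-} then follows by repeating the proof of Lemma \ref{le2} verbatim with $\mathbf{T}(\mu)$ in place of $\mathbf{M}^k(\mu,X)$ and $A(\lambda):=\sup_{\lambda'>\lambda}\lambda'\mathcal{L}^d(\{\mathbf{T}(\mu)>\lambda'\}\cap B_R)$, using the same three-range split of the level-set integral with $\lambda_1=|\log\delta|^{1/2}$ and $\lambda_2=\delta^{-q/(q-1)}$; the key input is that $\limsup_{\lambda\to\infty}A(\lambda)\leq C(d,\alpha,\alpha_0)c_0(c_1+c_2)|\mu|^s(\overline{B_R})$, obtained from \eqref{esL1inf} by the same localization used in Remark \ref{weak(11)inf1}. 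The hardest step will be the uniform H\"ormander estimate on the maximal kernel: the pointwise gradient bound on $\psi^e_\rho$ has the non-integrable singularity $|y|^{-d+\alpha-1}$ at the origin when $\alpha\leq 1$, so the regime $\rho>r_k$ cannot be handled by a naive mean-value inequality alone and requires a careful Fubini interchange pairing the singular part of $\nabla\psi^e_\rho$ with the annular $L^1$ estimates of $\mathbf{K}$ coming from \eqref{con2} and \eqref{cance-condi}.
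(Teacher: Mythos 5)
Your pointwise bound $\mathbf{T}(f)\leq C(d,\alpha)c_0\mathbf{M}(\mathbf{K}\star f)$ is correct (the majorant $\rho^{-\alpha}|z|^{-d+\alpha}\mathbf{1}_{|z|\leq\rho}$ is radial decreasing with $\rho$-independent $L^1$ norm), and it gives the $L^p$ half of \eqref{Lpweakmaxisingular1} more directly than the paper does; the final deductions of \eqref{esL1inf} and \eqref{es6-} from the weak-type bound also match the paper's argument exactly. The gap is in the claimed uniform H\"ormander condition $\int_{|z|\geq 50r_k}\sup_{e,\rho}|L^e_\rho(z+h)-L^e_\rho(z)|\,dz\leq C$, on which your whole $\mathcal{M}_b\to L^{1,\infty}$ argument rests. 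First, your case split $\rho\lessgtr r_k$ is the wrong one: in the regime $r_k<\rho\ll|z|$ you propose to put the increment on $\psi^e_\rho$, but the gradient bound $|\nabla\psi^e_\rho(y)|\leq Cc_0\rho^{-\alpha}|y|^{-d+\alpha-1}$ integrated against $|\mathbf{K}(w)|\sim|z|^{-d}|\Omega|$ over $B_\rho(z)$ produces (at best, when $\alpha>1$) a term of order $|h|\rho^{-1}|z|^{-d}$, and since $\rho$ may stay of order $r_k\geq|h|$ while $|z|\to\infty$, this is not integrable over $\{|z|\geq 50r_k\}$. The increment must be placed on $\mathbf{K}$ there (using \eqref{standcon2}), i.e.\ the correct dichotomy is $\rho\lessgtr|z|$, not $\rho\lessgtr r_k$. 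Second, and more seriously, the diagonal regime $\rho\sim|z|$ is untouched by your tools. There the individual kernel $L^e_\rho(z)$ is only as good as $Cc_0|z|^{-d}\Omega_1(z/|z|)$ with $\Omega_1$ merely in $L^{q_0}(S^{d-1})$ — and already proving \emph{that} non-difference bound (the paper's \eqref{es6+}) requires the Gagliardo--Nirenberg interpolation combined with the $L^2$ and $L^{2d}$ boundedness of $\mathbf{K}\star$ from Proposition \ref{pro-singularintegral-rough1}. To make your H\"ormander integral converge you would need, on top of this, a pointwise-in-$e$ H\"older gain $(|h|/|z|)^{\beta}$; since no continuity of $\phi^e$ in $e$ is assumed, the supremum over $e$ cannot be discretized and such a gain must come from a pointwise estimate uniform in $e$, which the mere $W^{\alpha_0,1}$ regularity of $\Omega$ does not furnish.

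This is why the paper takes a different route: it writes $\mathbf{T}\leq C c_0\mathbf{T}^1+\mathbf{T}^2$, where $\mathbf{T}^1$ is the maximal truncated singular integral and $\mathbf{T}^2$ is generated by the corrected kernel $\mathbf{K}_{e,\rho}=\psi^e_\rho\star\mathbf{K}-c_e\mathbf{1}_{|\cdot|>2\rho}\mathbf{K}$, proves the pointwise bound \eqref{es8} showing $\mathbf{T}^2\leq Cc_0\mathbf{M}^{\Omega_1}$, and then invokes the weak $(1,1)$ bounds for $\mathbf{T}^1$ (Grafakos) and for the rough maximal operator $\mathbf{M}^{\Omega_1}$ (Christ, Christ--Rubio de Francia). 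Neither of those two cited results is itself obtainable from a sup-inside H\"ormander condition — for the truncated family the condition manifestly fails, and for rough maximal operators with $L^q(S^{d-1})$ densities the weak $(1,1)$ bound is a deep theorem. So your Calder\'on--Zygmund scheme would, if completed, have to reprove those results; as written, the key estimate is both misorganized in the intermediate regime and missing the diagonal analysis entirely.
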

Proposition \ref{singu-operator} is still true for any $\alpha\geq d$. This was proven in \cite{BoCrip} for smooth kernel case (i.e  $\Omega \in C^1_b(S^{d-1})$). 

\begin{proof}[Proof of Proposition \ref{singu-operator}]  For  $f\in C_c^\infty(\mathbb{R}^d)$,  we set
$$
\mathbf{T}_1(f)(x)=\sup_{\rho>0}\left|\int_{|x-z|>2\rho}\mathbf{K}(x-z)f(z) dz\right|,~~~	\mathbf{T}_2(f)(x)=\sup_{e}\sup_{\rho>0}\left| \mathbf{K}_{e,\rho}\star f(x)\right|$$
for  any $x\in \mathbb{R}^{d}$, where $$
\mathbf{K}_{e,\rho}(x)=\int_{\mathbb{R}^d}\frac{\rho^{-\alpha}}{|y|^{d-\alpha}}\phi^e(\frac{y}{\rho})\mathbf{K}(x-y)dy-\int_{\mathbb{R}^d}\frac{1}{|y|^{d-\alpha}}\phi^e(y)dy \mathbf{1}_{|x|>2\rho}\mathbf{K}(x).$$
We show that 
	\begin{equation}\label{es8}
	|\mathbf{K}_{e,\rho}(x)|\lesssim \frac{c_0}{|x|^{d-\alpha}}\min\left\{\frac{1}{\rho^\alpha}, \frac{\rho^{\frac{\alpha_0}{2}}}{|x|^{\frac{\alpha_0}{2}+\alpha}}\right\}\Omega_1(x/|x|)~~\forall~x\in\mathbb{R}^d,
	\end{equation}
		where 
	\begin{align*}
	\Omega_1(\theta)=c_1+c_2+|\Omega(\theta)|+\sup_{r\in (0,2)}r^{-\frac{\alpha_0}{2}}\fint_{B_r(0)}\left|\Omega(\theta-z)-\Omega(\theta)\right|dz~~\forall~\theta\in S^{d-1}.
	\end{align*}
It is enough to prove that 	
\begin{align}
\label{es6+}
\left|\int_{\mathbb{R}^d}\mathbf{K}(x-y)\frac{1}{|y|^{d-\alpha}}\phi^e(\frac{y}{\rho})dy\right|&\lesssim \frac{c_0}{|x|^{d-\alpha}} \Omega_1(x/|x|)\quad\text{if}~~|x|\leq 2\rho,\\|\mathbf{K}_{e,\rho}(x)|&\lesssim  \frac{c_0\rho^{\frac{\alpha_0}{2}}}{|x|^{d+\frac{\alpha_0}{2}}} \Omega_1(x/|x|)\quad\text{if}~~|x|\geq 2\rho.\label{es6++}
\end{align}
	Indeed, \textit{Case:} $|x|\leq 2\rho$. Assume that $2^{-j_0}\rho<|x|\leq 2^{-j_0+1}\rho$ for $j_0\in \mathbb{N}$. \\ Let $\chi$ be a smooth function in $\mathbb{R}^d$ such that $\chi(y)=1$ if $|y|\leq 1$ and $\chi(y)=0$ if $|y|>\frac{11}{10}$.\\
	 Set 
	$$
	\mathbf{K}_0(x)=\int_{\mathbb{R}^d}\mathbf{K}(x-y)\frac{1}{|y|^{d-\alpha}}\phi^e(\frac{y}{\rho})\left(\chi(2^{j_0-2}\rho^{-1} y)-\chi(2^{j_0+1}\rho^{-1} y)\right)dy.$$ By definition of $\mathbf{K}$ and $\phi^e$, we have, 
	\begin{align}\nonumber
		&\left|\int_{\mathbb{R}^d}\mathbf{K}(x-y)\frac{1}{|y|^{d-\alpha}}\phi^e(\frac{y}{\rho})dy\right|\\&\quad\quad\lesssim 	\mathbf{K}_0(x)+c_0\int_{|y|\geq  2|x|}\frac{|\Omega(x-y)|}{|x-y|^d|y|^{d-\alpha}}dy+c_0\int_{|y|\leq |x|/2}\frac{|\Omega(x-y)|}{|x-y|^d|y|^{d-\alpha}}dy
	\nonumber	\\&\quad\quad\lesssim \nonumber	\mathbf{K}_0(x)+c_0\int_{|x-y|\geq  |x|}\frac{|\Omega(x-y)|}{|y-x|^{2d-\alpha}}dy+\frac{c_0}{|x|^{d-\alpha}}\int_{|y|\leq |x|/2}|\Omega(x-y)-\Omega(x)|\frac{dy}{|y|^{d}}+\frac{c_0|\Omega(x)|}{|x|^{d-\alpha}}
			\\&\quad\quad\lesssim 	\mathbf{K}_0(x)+c_0\frac{||\Omega||_{L^1(S^{d-1})}+|\Omega(x)|}{|x|^{d-\alpha}}+\frac{c_0}{|x|^{d-\alpha}}\int_{|y|\leq 1/2}|\Omega(\frac{x}{|x|}-y)-\Omega(\frac{x}{|x|})|\frac{dy}{|y|^d}\nonumber\\&\quad\quad\lesssim \mathbf{K}_0(x)+\frac{c_0}{|x|^{d-\alpha}} \Omega_1(x/|x|) .\label{Z2}
	\end{align}
	Here we have used \eqref{Z3} in the last inequality. \\
	Thanks to  Gagliardo-Nirenberg interpolation inequality, we find  $$
	\sup_{|y|\sim |x|}|\mathbf{K}_0(y)|\lesssim |x|^{1/2}||\nabla \mathbf{K}_0||_{L^{2d}(\mathbb{R}^d)}+\frac{1}{|x|^{d/2}}\left(\int_{\mathbb{R}^d}|\mathbf{K}_0|^2dy\right)^{1/2}.$$
	By boundedness of operator $f\mapsto \mathbf{K}\star f$ in $L^2$ and $\dot W^{1,2d}$ ( see Proposition  \ref{pro-singularintegral-rough1}), one obtains
	\begin{align}\nonumber
	\sup_{|y|\sim |x|}|\mathbf{K}_0(y)|&\lesssim (c_1+c_2) |x|^{1/2}\left(\int_{\mathbb{R}^d}\left|\nabla \left(\frac{\phi^e(\frac{y}{\rho})}{|y|^{d-\alpha}}\left(\chi(2^{j}\rho^{-1} y)-\chi(2^{j+1}\rho^{-1} y)\right)\right)\right|^{2d} dy\right)^{\frac{1}{2d}}\\&\nonumber+(c_1+c_2)\frac{1}{|x|^{d/2}}\left(\int_{\mathbb{R}^d}\left|\frac{\phi^e(\frac{y}{\rho})}{|y|^{d-\alpha}}\left(\chi(2^{j}\rho^{-1} y)-\chi(2^{j+1}\rho^{-1} y)\right)\right|^2dy\right)^{1/2}\\&\nonumber
	\lesssim c_0(c_1+c_2) \left(|x|^{1/2}(2^{-j_0}\rho)^{-d+\alpha-\frac{1}{2}}+\frac{1}{|x|^{d/2}} (2^{-j_0}\rho)^{-d/2+\alpha}\right)
	\\&\lesssim  \frac{c_0(c_1+c_2)}{|x|^{d-\alpha}} \label{es5+}.
	\end{align}
	Thus, it follows \eqref{es6+} from \eqref{Z2}  and \eqref{es5+}. \vspace{.2cm}\\
	\textit{Case:} $|x|> 2\rho$.  By \eqref{con1},\eqref{con1b}, we have 
	\begin{align}\nonumber
	|\mathbf{K}_{e,\rho}(x)|&\leq|\int_{\mathbb{R}^d}\frac{\rho^{-\alpha}}{|y|^{d-\alpha}}\phi^e(\frac{y}{\rho})\left(\mathbf{K}(x-y)-\mathbf{K}(x)\right)dy|\\&\nonumber\lesssim c_0|\Omega(x)|\int_{|y|<\rho}\frac{\rho^{-\alpha}}{|y|^{d-\alpha}}\frac{|y|}{|x|^{d+1}}dy+\frac{c_0}{|x|^d}\int_{|y|<\rho}\frac{\rho^{-\alpha}}{|y|^{d-\alpha}}|\Omega(x-y)-\Omega(x)|dy
	\\&\nonumber \lesssim \frac{c_0|\Omega(x)|\rho}{|x|^{d+1}}+\frac{c_0}{|x|^d}\int_{|y|<\rho/|x|}|\Omega(\frac{x}{|x|}-y)-\Omega(\frac{x}{|x|})|\frac{dy}{|y|^{d}}\\&\overset{\eqref{Z3}}\lesssim \frac{c_0\rho}{|x|^{d+1}}\Omega_1(x/|x|)+\frac{c_0\rho^{\frac{\alpha_0}{2}}}{|x|^{d+\frac{\alpha_0}{2}}}\Omega_1(x/|x|),\label{7april1}
	\end{align}
	which implies \eqref{es6++}.  \vspace{.3cm}\\
	Then, \eqref{es8} gives 
	\begin{align}\label{es7+}
	|\mathbf{T}(f)|\lesssim c_0\mathbf{T}_1(f)+\mathbf{T}_2(f)\lesssim 	c_0\mathbf{T}_1(f)+ c_0 \mathbf{M}^{\Omega_1}(f).
	\end{align}
 Thanks to Proposition \ref{pro-singularintegral-rough1} and \ref{pro-singularmaxi} and using the fact that 
$
 ||\Omega_1||_{L^{q}(S^{d-1})}\overset{\eqref{con2b}}\lesssim c_1+c_2,$ we get  $$
	||\left(\mathbf{T}^1, \mathbf{M}^{\Omega_1}\right)||_{L^p\to L^p}+||\left(\mathbf{T}^1, \mathbf{M}^{\Omega_1}\right)||_{L^1\to L^{1,\infty}}\lesssim  c_0(c_1+c_2).$$
	This gives \eqref{Lpweakmaxisingular1}.  Then, similar to the proof of \eqref{esL1infa} and \eqref{es5}, we  obtain \eqref{esL1inf} and \eqref{es6-} from \eqref{Lpweakmaxisingular1}.
	The proof is complete.
\end{proof}
\begin{lemma}
	\label{Remar0} 	We denote for $\rho_0>0$, and $\alpha_1\in (0,\alpha]$ and $\mu\in \mathcal{M}_b(\mathbb{R}^d)$
	$$
	\mathbf{T}^{\alpha_1}(\mu)(x)=\sup_{e}\sup_{\rho\in (0,\rho_0)}\left|\left(\frac{\rho^{\alpha_1-\alpha}}{|.|^{d-\alpha}}\phi^e(\frac{.}{\rho})\right)\star\mathbf{K}\star \mu (x)\right|~\forall~x\in \mathbb{R}^d.$$
	Then,
	\begin{align}\label{estmaxi-Riez}
	||	\mathbf{T}^{\alpha_1}(\mu)||_{L^{q_0}(\mathbb{R}^d)}\lesssim(c_1+c_2)|\mu|(\mathbb{R}^d), ~~ q_0=\frac{d}{d-\frac{1}{4}\min\{\alpha,\alpha_0,\alpha_1\}}>1.
	\end{align}
\end{lemma}
\begin{proof}We deduce from \eqref{es6+} and \eqref{es6++} that for any $x\in \mathbb{R}^d,$ 
	\begin{align*}
	|\left(\frac{\rho^{-\alpha}}{|.|^{d-\alpha}}\phi^e(\frac{.}{\rho})\right)\star\mathbf{K}(x)|&\lesssim \left(\frac{\rho^{-\alpha}}{|x|^{d-\alpha}}\wedge\frac{1}{|x|^d}\right)\Omega_1(\frac{x}{|x|})\\&\lesssim  (1+\rho^{\alpha_1})P(x),
	\end{align*}
	with $$P(x)=\left(\frac{1}{|x|^{d-\alpha_1}}\wedge\frac{1}{|x|^d}\right)\Omega_1(\frac{x}{|x|}).$$ Thus, $$\mathbf{T}^{\alpha_1}(\mu)(x)\lesssim (1+\rho_0^{\alpha_1}) P\star|\mu|(x).$$  Then, by Minkowski's inequality, one has $$
	||	\mathbf{T}^{\alpha_1}(\mu)||_{L^{q_0}(\mathbb{R}^d)}\lesssim (1+\rho_0^{\alpha_1}) ||\Omega_1||_{L^{q_0}(S^{d-1})}|\mu|(\mathbb{R}^d),$$
	which implies \eqref{estmaxi-Riez}. 
\end{proof}
\begin{remark}\label{Remar1} As Lemma \eqref{Remar0}, we also show that for $\rho_0>0$ and  $\mu\in \mathcal{M}_b(\mathbb{R}^d)$,
	\begin{align}\label{diff1}
\mathbf{P}(\mu)(x)=	\sup_{e}\sup_{\rho\in (0,\rho_0)}|\left(\frac{\rho^{-\alpha}}{|.|^{d-\alpha}}\phi^e(\frac{.}{\rho})\right)\star\mathbf{K}\star((\psi(.)-\psi(x))\mu)(x)|\in L^{q_0}_{\loc}(\mathbb{R}^d),
	\end{align}
	for some $q_0>1$, with $\psi\in W^{1,\infty}(\mathbb{R}^d)$, exactly 
	\begin{align}
	||\mathbf{P}(\mu)||_{L^{q_0}(B_R(0))}\lesssim_{R}||\psi||_{W^{1,\infty}(\mathbb{R}^d)}|\mu|(\mathbb{R}^d)~~\forall ~R>0.
	\end{align}
	Furthermore, if $\Omega\in C^1_b(S^{d-1})$ then $\mathbf{P}(\mu)(x)\lesssim \mathbf{I}_1(\mu)(x)$.
\end{remark}
\begin{remark}\label{rem3} If $\mu_t(x)=\mu(t,x)\in L^1([0,T],\mathcal{M}_b(\mathbb{R}^d))$ and $f\in L^1((0,T),L^q(B_R))$ for $q>1$, it follows from \eqref{es6-} and  the dominated convergence theorem that 
	\begin{align}\label{es6--}
	\limsup_{\delta\to 0}\frac{1}{|\log (\delta)|}\int_{0}^{T}\int_{B_R}\min\left\{\delta^{-1}|f(x,t)|,\mathbf{T}(\mu_t)(x)\right\}dx dt\lesssim \int_{0}^{T}|\mu_t|^{s}(\overline{B}_R)dt.
	\end{align}
\end{remark}
\begin{remark} We do not know how to prove Proposition \ref{singu-operator} when $\alpha_0=0$.
\end{remark}
\section{Kakeya singular integral operators}
This section we introduce the Kakeya singular integral operators and establish a strong version of \eqref{esL1inf} for this operator. It is a main tool of this paper. \\

Assume that  $\{\phi^{e,\varepsilon}\}_{\varepsilon,e}\subset C^1(\mathbb{R}^d\backslash \{0\},\mathbb{R}^d)\cap L^\infty_c(\mathbb{R}^d,\mathbb{R}^d)$ is a family of kernels such that 
\begin{align}\label{conmai}
\supp (\phi^{e,\varepsilon})\subset B_1(0)\cap\left\{x: \left|e-\frac{x}{|x|}\right|\leq \varepsilon\right\},~~and~~ |\phi^{e,\varepsilon}(x)|+\varepsilon|x||\nabla \phi^{e,\varepsilon}(x)| \leq c_0
\end{align}
for any $x\in \mathbb{R}^d,\varepsilon\in (0,1/10),~e\in S^{d-1}$. Let $\mathbf{K}$ be in \eqref{kernelK} with constants $c_1,c_2>0$, $\alpha_0\in (0,1)$. Assume that there exists a sequence of $\Omega_n\in C^2_b(S^{d-1})$ such that 
$\Omega_n(\theta)=\Omega_n(r\theta)$ for any $r>0,\theta\in S^{d-1}$ and 
\begin{align}\label{con2'}
||\Omega_n||_{W^{\alpha_0,1}(B_2\backslash B_1)}\leq 2c_1, ~~\lim\limits_{n\to\infty} ||\Omega_n-\Omega||_{W^{\alpha_0,1}(B_2\backslash B_1)}=0,
\end{align} 
and $\mathbf{K}_n(x):=\Omega_n(x)K(x)$ satisfies \eqref{cance-condi} i.e 
	\begin{align}
\label{cance-condi'}
\sup_{0<R_1<R_2<0}\left|\int_{R_1<|x|<R_2}\mathbf{K}_n(x)dx\right|\leq c_3
\end{align}
for some $c_3>0$, moreover, 
\begin{align}
\label{cance-condi''}
\lim\limits_{n\to\infty}\sup_{0<R_1<R_2<0}\left|\int_{R_1<|x|<R_2}\left(\mathbf{K}_n(x)-\mathbf{K}(x)\right)dx\right|=0.
\end{align}
For any $\mu\in C_c^\infty(\mathbb{R}^d,\mathbb{R}^d)$ and $\rho_0>0$, the Kakeya singular integral operator $\mathbf{T}_{\varepsilon}$ is given by 
\begin{align}
\mathbf{T}_{\varepsilon}(\mu)(x):=\mathbf{T}_{\varepsilon}^{\mathbf{K}}(\mu)(x)&=\sup_{\rho\in (0,\rho_0),e\in S^{d-1}}\frac{\varepsilon^{-d+1}}{\rho^\alpha}\left|\left(\frac{1}{|\cdot|^{d-\alpha}}\phi^{e,\varepsilon}_\rho(\cdot)\right)\star \mathbf{K}\star\mu(x)\right|~~\forall~x\in\mathbb{R}^d,\label{DET}
\end{align}
for some $\alpha\in (0,d)$, where $\phi^{e,\varepsilon}_{\rho}(\cdot)=\phi^{e,\varepsilon}(\frac{\cdot}{\rho})$. Set $$\mathbf{T}_{\varepsilon}^{1,n}:=\mathbf{T}_{\varepsilon}^{\mathbf{K}_n},~~~ \mathbf{T}_{\varepsilon}^{2,n}:=\mathbf{T}_{\varepsilon}^{\mathbf{K}_n-\mathbf{K}}.$$
Thanks to Proposition \ref{singu-operator} and conditions \eqref{con2'}, \eqref{cance-condi'}, \eqref{cance-condi''},  we have
 for any $\mu\in \mathcal{M}_b(\mathbb{R}^d)$  
\begin{align}\label{esL1inf'}
&\limsup_{\lambda\to \infty}\lambda\mathcal{L}^d\left(\{ \mathbf{T}_\varepsilon(\mu)>\lambda\}\right)+\lambda\mathcal{L}^d\left(\{ \mathbf{T}_\varepsilon^{1,n}(\mu)>\lambda\}\right)\lesssim\varepsilon^{-d+1}(c_1+c_2)|\mu|^{s}(\mathbb{R}^d), \\& \label{esL1inf''}
\limsup_{\lambda\to \infty}\lambda\mathcal{L}^d\left(\{ \mathbf{T}_\varepsilon^{2,n}(\mu)>\lambda\}\right)\lesssim\varepsilon^{-d+1}c_n|\mu|^{s}(\mathbb{R}^d),
\end{align} 
for any $\varepsilon\in (0,1/10),\forall n$, where $\lim\limits_{n\to\infty}c_n=0$.\\
\begin{remark}\label{remarforomega}
	$\Omega_n$ in Remark \ref{re-cance-condi} satisfies \eqref{con2'}, \eqref{cance-condi'} and \eqref{cance-condi''}.
\end{remark}
\begin{remark}
 if $\mathbf{K}=\sum_{j=1}^{d}\mathcal{R}_j^2=\delta_{0}$ where $\mathcal{R}_1,...,\mathcal{R}_d$ are the Riesz transforms in $\mathbb{R}^d$,  we thus get $\mathbf{T}_\varepsilon(\mu)\lesssim \mathbf{M}^\varepsilon(\mu),$ where $\mathbf{M}^\varepsilon$ is the Kakeya maximal function in $\mathbb{R}^d$ i.e $$
\mathbf{M}^\varepsilon(\mu)(x)=\sup_{\rho>0,e\in S^{d-1}}\fint_{B_\rho(x)}\varepsilon^{-d+1}\mathbf{1}_{|\frac{z-x}{|z-x|}-e|\leq \varepsilon}d|\mu|(z),~~\forall~x\in \mathbb{R}^d.$$
\end{remark}
	
	Our main result is the following:
\begin{theorem}\label{mainthm1}  Assume that $\mu=Df$, $f\in BV(\mathbb{R}^d)$. Then, we have 
	\begin{align}\label{esweakL1'}
	\limsup_{\lambda\to \infty}\lambda\mathcal{L}^d\left(\left\{ \mathbf{T}_{\varepsilon}(\mu)>\lambda \right\}\right)\lesssim|\log(\varepsilon)||\mu|^{s}(\mathbb{R}^d),
	\end{align}
	for any $\varepsilon\in (0,1/10)$. In particular, for any $B_R\subset\mathbb{R}^d$ and $f\in L^q(B_R)$ for $q>1$, 
	\begin{align}\label{es6}
		\limsup_{\delta\to 0}\frac{1}{|\log (\delta)|}\int_{B_R}\min\left\{\delta^{-1}|f|,\mathbf{T}_{\varepsilon}(\mu)\right\}dx \lesssim |\log(\varepsilon)| |\mu|^{s}(\overline{B}_R),
	\end{align}
	for any $\varepsilon\in (0,1/10)$.
\end{theorem}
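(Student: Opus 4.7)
The plan proceeds in three stages: reductions, dyadic decomposition, and slicing. First I would reduce to a smooth kernel and to $\mu = D^s f$. Splitting $\mu = D^a f + D^s f$ and using sublinearity of $\mathbf{T}_\varepsilon$, the absolutely continuous part $D^a f \in L^1$ contributes nothing to the limsup via the truncation plus strong $L^p$ bound argument already used in the proof of \eqref{esL1infa}. The approximation $\Omega_n \to \Omega$ together with \eqref{esL1inf''} and $c_n \to 0$ reduces us (for each fixed $\varepsilon$) to the case $\Omega \in C^2_b(S^{d-1})$; we may also assume $f$ is compactly supported by truncation.

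Second, I would perform a dyadic decomposition of the scale $\rho \in (0,\rho_0)$. At scales $\rho \leq \varepsilon \rho_0$ the aperture $\varepsilon$ exceeds the natural angular resolution of the tube, so the operator is dominated by a standard Riesz-type maximal singular integral whose limsup weak $(1,1)$ constant against $|\mu|^s$ is controlled by \eqref{esL1inf} from Proposition \ref{singu-operator}. At the $O(|\log\varepsilon|)$ remaining dyadic scales $\rho \geq \varepsilon\rho_0$ I aim to prove a weak $(1,1)$ bound uniformly in $\rho$ for the single-scale operator $\mathbf{S}_\varepsilon^\rho$ (supremum only over $e$) with constant $C|\mu|^s(\mathbb{R}^d)$; summing these $O(|\log\varepsilon|)$ contributions then yields \eqref{esweakL1'}.

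Third, the uniform-in-$\rho$ bound is the technical heart and is established by BV slicing. By Alberti's rank-one theorem (Proposition \ref{albertithm}) write $D^s f = \eta\,|D^s f|$ with $\eta \in S^{d-1}$. The pointwise kernel estimate \eqref{es8} from the proof of Proposition \ref{singu-operator} shows that the effective convolution kernel $(|\cdot|^{\alpha-d}\phi^{e,\varepsilon}_\rho) \star \mathbf{K}$ is, up to a tail handled by the singular maximal operator $\mathbf{M}^{\Omega_1}$ of Proposition \ref{pro-singularmaxi}, a Kakeya-type average over a cone of aperture $\varepsilon$ and length $\rho$ in direction $e$. Exactly as in the introduction sketch for the Kakeya maximal function $\mathbf{M}^\varepsilon$, such an average is bounded pointwise by the $1$-dimensional Hardy–Littlewood maximal function of the BV slice $|Df^e_{x_e}|$ on the line $\tilde H_e$, where $x_e = x - \langle x,e\rangle e$. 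The slicing identity \eqref{equa-eta} then gives, after integrating over the hyperplane $y_1 \in H_e$ and applying the $1$D weak $(1,1)$ bound (uniformly in $y_1$), that the total picked-up mass at direction $e$ is $\int |\langle e,\eta\rangle|\,d|D^s f| \leq |D^s f|(\mathbb{R}^d)$, uniformly in $e$ and $\rho$.

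The main obstacle is converting the supremum over directions $e \in S^{d-1}$ inside $\mathbf{S}^\rho_\varepsilon$ into a single $1$D maximal function on a slice: for varying $\eta$ the optimal slicing direction depends on the point, and one must either discretize $S^{d-1}$ by an $\varepsilon$-net and invoke \eqref{equa-eta} at each sampled direction, or argue directly via the rank-one structure of $D^s f$. This step is where most of the delicate geometry enters and, combined with the dyadic decomposition in $\rho$, it produces the $|\log\varepsilon|$ overhead. Finally, the second inequality \eqref{es6} follows from \eqref{esweakL1'} by the distribution-function manipulation of Lemma \ref{le2}: split the integrand by dyadic level sets of $\lambda$, apply the weak $(1,1)$ bound \eqref{esweakL1'} on the upper tail while bounding the rest by $\|f\|_{L^q}$, optimize the truncation in $\lambda$, and let $\delta \to 0$.
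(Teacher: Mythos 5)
Your reductions (smoothing $\Omega$ to $\Omega_n$ via \eqref{esL1inf''}, discarding $D^af$, localizing $f$) and your derivation of \eqref{es6} from \eqref{esweakL1'} by the distribution-function argument of Lemma \ref{le2} match the paper. The rest has two genuine gaps. First, your source of the factor $|\log\varepsilon|$ is wrong. The operator is essentially scale-invariant in $\rho$: the cone $\supp(\phi^{e,\varepsilon}_\rho)$ has eccentricity $\varepsilon^{-1}$ at every scale, so there is no threshold $\rho\le\varepsilon\rho_0$ below which it degenerates into "a standard Riesz-type maximal singular integral" — that claim is unjustified, and the infinitely many small scales are not disposed of. In the paper the logarithm has nothing to do with counting dyadic scales in $\rho$: it appears already for a single fixed $\rho$, in Lemma \ref{le-es-ker-log}, as the price of exploiting the cancellation of the rough kernel against an aperture-$\varepsilon$ cone (the optimization $l_0+\varepsilon^{-d}2^{-l_0}$ at $2^{l_0}\sim\varepsilon^{-d}$ leading to \eqref{es20}). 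Conversely, you have not established the single-scale, $\varepsilon$-uniform weak $(1,1)$ bound on which your summation over scales rests.

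Second, and more seriously, your Stage 3 leaves the actual heart of the proof unresolved and points in the wrong direction. The correct slicing direction is not the tube direction $e$ but the rank-one polar direction $\eta=d\mu^{s}/d|\mu|^{s}$: the measure disintegrates as $dD^{s}f^{\eta}_{z}\,d\mathcal{H}^{d-1}$ along $\eta$, and the cone average in an \emph{arbitrary} direction $e$ is then controlled (Lemma \ref{le-simineq}, Remark \ref{rekakeyamaxi}) by the one-dimensional maximal function on the $\eta$-line, uniformly in $e$ — so no discretization of $S^{d-1}$ is needed, and an $\varepsilon$-net over directions would in any case cost $\varepsilon^{-(d-1)}$, not $|\log\varepsilon|$. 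Since $\eta$ varies, the paper freezes it on cubes of size $\tau$ (after mollifying to $\eta^{\kappa}$), integrates by parts to replace $\langle\eta^{\kappa}_{y_{\tau}},\mu\rangle$ by a derivative of $f$ along the frozen direction, and must then control the commutator $f(y)-f(\langle y,\eta^{\kappa}_{y_{\tau}}\rangle\eta^{\kappa}_{y_{\tau}}+\tilde{x}_{\kappa,y_{\tau}})$ created by the localization — this is the content of Steps 4--7 and Lemmas \ref{lem-es-hol} and \ref{lem-es-onhyper}, none of which your outline supplies. What you flag as "the main obstacle" is the theorem itself; the plan as written does not close it.
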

\begin{remark} If $\mathbf{K}=\sum_{j=1}^{d}\mathcal{R}_j^2=c(d)\delta_{0}$ where $\mathcal{R}_1,...,\mathcal{R}_d$ are the Riesz transforms in $\mathbb{R}^d$, we can remove  $|\log(\varepsilon)|$ in \eqref{esweakL1'}.
\end{remark}
\begin{remark}\label{re2}
	Estimate \eqref{esweakL1'} is not true for all $\mu\in \mathcal{M}_{b}(\mathbb{R}^d,\mathbb{R}^d)$. Indeed,  let $d\mu=d\delta_{\{0\}}$ and $|\phi^{e,\varepsilon}(e)|\geq 1$ for any $e\in S^{d-1}$ and $\varepsilon>0$, let $\mathbf{T}_{j,\varepsilon}$ be $\mathbf{T}_\varepsilon$ associated to  $\mathbf{K}(x)=\mathbf{K}_j(x)=\frac{|x|^2-x_j^2d}{|x|^{d+2}}$.\\ One has $
	\sum_{j=1}^{d}\mathbf{T}_{j,\varepsilon}(\mu)(x)\gtrsim\frac{\varepsilon^{-d+1}}{|x|^{d}}|\phi^{x/|x|,\varepsilon}(x/|x|)|\gtrsim\frac{\varepsilon^{-d+1}}{|x|^{d}}.$
	Thus,  for any $\lambda>1$	$$
	\lambda\mathcal{L}^d\left(\left\{ \mathbf{T}_{1,\varepsilon}(\mu)>\lambda \right\}\right)\geq d^{-1}\mathcal{L}^d(\{ \sum_{j=1}^{d}\mathbf{T}_{j,\varepsilon}(\mu)>d\lambda \})\gtrsim \varepsilon^{-d+1}.$$
	It is well-known that $\{D_{x_1}f\in\mathcal{M}_b(\mathbb{R}^d):f\in BV(\mathbb{R}^d)\}$ is not dense in $L^1(\mathbb{R}^{d-1},\mathcal{M}_{b,x_1}(\mathbb{R}))$.
	So,  a natural question is that \textit{whether} \eqref{esweakL1'}  holds for any $\mu\in L^1(\mathbb{R}^{d-1},\mathcal{M}_b(\mathbb{R}))$.
\end{remark}

To prove Theorem \ref{mainthm1},  we need the following lemmas: 
\begin{lemma}\label{le-simineq}  Let $\omega\in \mathcal{M}_b^+(\mathbb{R})$ and $a:\mathbb{R}^d\to\mathbb{R}^+$ be a Borel function. Then,  for any $\rho>0$, 
	\begin{align}\label{simineq}
		\int_{\mathbb{R}^{d-1}}\int_{\mathbb{R}}\mathbf{1}_{\rho<|y_1+y_2|\leq 2 \rho} a(y_1+y_2) d\omega(y_1)d\mathcal{H}^{d-1}(y_2)\lesssim \rho^d \left[\int_{S^{d-1}}\sup_{r\in [\rho,2\rho]}a(r\theta)d\mathcal{H}^{d-1}(\theta)\right]\mathbf{M}^1(\omega,\mathbb{R})(0).
	\end{align}
\end{lemma}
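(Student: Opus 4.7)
The plan is to reduce the bound to a one-dimensional maximal function estimate by integrating $y_2$ out first and then passing to spherical coordinates on $\mathbb{R}^d$. The case $\mathbf{M}^1(\omega,\mathbb{R})(0)=\infty$ being trivial, assume it is finite (which in particular forces $\omega(\{0\})=0$). Identify $y_1\in\mathbb{R}$ with the first coordinate axis and $y_2\in\mathbb{R}^{d-1}$ with its orthogonal complement so that $x:=y_1+y_2\in\mathbb{R}^d$; on the annulus $\rho<|x|\leq 2\rho$ the integrand is bounded by $b(\theta):=\sup_{r\in[\rho,2\rho]}a(r\theta)$ with $\theta=x/|x|$. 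By Fubini,
\[
I\leq\int_{\mathbb{R}}G(y_1)\,d\omega(y_1),\qquad G(y_1):=\int_{\mathbb{R}^{d-1}}\mathbf{1}_{\rho<|x|\leq 2\rho}\,b(x/|x|)\,dy_2.
\]

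For $y_1\neq 0$, pass to polar coordinates on the hyperplane $\{y_1\}\times\mathbb{R}^{d-1}$ by writing $y_2=\sigma\phi$ with $\phi\in S^{d-2}$, then substituting $r=\sqrt{y_1^2+\sigma^2}$ and the spherical coordinate $\theta_1=y_1/r$. Using $d\mathcal{H}^{d-1}(\theta)=(1-\theta_1^2)^{(d-3)/2}d\theta_1\,d\mathcal{H}^{d-2}(\phi)$ on $S^{d-1}$, a direct Jacobian computation yields
\[
G(y_1)=\int_{S_{y_1}}b(\theta)\,\frac{r^{d-1}}{|\theta_1|}\,d\mathcal{H}^{d-1}(\theta),
\]
with $r=|y_1|/|\theta_1|$ and $S_{y_1}:=\{\theta\in S^{d-1}:\theta_1 y_1>0,\;|\theta_1|\in[|y_1|/(2\rho),\min(1,|y_1|/\rho)]\}$. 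Since $r\leq 2\rho$, the weight simplifies to $r^{d-1}/|\theta_1|=r^d/|y_1|\leq(2\rho)^d/|y_1|$.

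Finally, swap the orders of integration: the condition $\theta\in S_{y_1}$ is equivalent to $y_1$ having the same sign as $\theta_1$ together with $|y_1|\in[|\theta_1|\rho,2|\theta_1|\rho]$, an interval of length $|\theta_1|\rho$ contained in $[-2|\theta_1|\rho,2|\theta_1|\rho]$ on which $|y_1|^{-1}\leq(|\theta_1|\rho)^{-1}$. The defining bound $\omega([-r,r])\leq 2r\,\mathbf{M}^1(\omega,\mathbb{R})(0)$ with $r=2|\theta_1|\rho$ then gives
\[
\int_{\mathbb{R}}\mathbf{1}_{\theta\in S_{y_1}}\frac{d\omega(y_1)}{|y_1|}\leq\frac{\omega([-2|\theta_1|\rho,2|\theta_1|\rho])}{|\theta_1|\rho}\leq 4\mathbf{M}^1(\omega,\mathbb{R})(0),
\]
and substituting produces the claimed inequality. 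The main technical step is the polar change of variables: the crucial factor $r^{d-1}/|\theta_1|=r^d/|y_1|$ emerges there, and—together with the geometric fact that for each fixed $\theta$ the admissible $|y_1|$ lies in a window of size comparable to its location—is precisely what enables the reduction to $\mathbf{M}^1$.
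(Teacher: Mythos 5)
Your proof is correct and yields exactly the stated constant $4(2\rho)^d$. The core mechanism is the same as the paper's: pass to spherical coordinates, replace $a(r\theta)$ by $\sup_{r\in[\rho,2\rho]}a(r\theta)$, and observe that for each fixed direction $\theta$ the annulus constraint confines the one-dimensional variable to an interval of length comparable to $\rho|\theta_1|$ centered near the origin, which is exactly what $\mathbf{M}^1(\omega,\mathbb{R})(0)$ controls. Where you genuinely diverge is in the handling of the measure $\omega$: the paper first truncates $\omega$ away from $0$ and mollifies it, so that the product measure becomes absolutely continuous and can be written directly in polar coordinates on $\mathbb{R}^d$, and then recovers the general case by Fatou's lemma; you instead keep $\omega$ arbitrary, integrate out $y_2$ first by Tonelli, compute $G(y_1)$ exactly via the polar change of variables in the hyperplane (your Jacobian identity $r^{d-1}/|\theta_1|=r^d/|y_1|$ and the latitude formula for $d\mathcal{H}^{d-1}$ are both correct), and then swap the $\theta$- and $y_1$-integrations, bounding $\int \mathbf{1}_{\theta\in S_{y_1}}|y_1|^{-1}\,d\omega(y_1)$ by $4\mathbf{M}^1(\omega,\mathbb{R})(0)$ since the admissible $|y_1|$ lie in $(\rho|\theta_1|,2\rho|\theta_1|]$. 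This buys a cleaner argument with no approximation or limiting step; your preliminary remark that finiteness of $\mathbf{M}^1(\omega,\mathbb{R})(0)$ forces $\omega(\{0\})=0$ correctly disposes of the only point where the change of variables degenerates. The one implicit assumption shared with the paper is the measurability of $\theta\mapsto\sup_{r\in[\rho,2\rho]}a(r\theta)$, which is harmless if the right-hand side is read as an upper integral.
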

\begin{proof}[Proof of Lemma \ref{le-simineq}]  Let $d\omega_{\kappa}(y)=\mathbf{1}_{|y|>\kappa}d\omega(y)$ for $\kappa\in (0,\rho/100)$. Let $\varrho_{m}$ be a standard sequence of mollifiers in $\mathbb{R}$. For any $m>4/\kappa$, we have $\supp(\varrho_{m}\star\omega_{\kappa})\subset \{z:|z|>\kappa/2\}$  and 
	\begin{align*}
		&	\int_{\mathbb{R}^{d-1}}\int_{\mathbb{R}}\mathbf{1}_{\rho<|y_1+y_2|\leq 2 \rho} a(y_1+y_2) (\varrho_{m}\star\omega_{\kappa})(y_1)d\mathcal{H}^{1}(y_1)d\mathcal{H}^{d-1}(y_2)\\&\quad\quad\quad=\int_{S^{d-1}}\int_{\rho}^{2\rho}r^{d-1}a(r\theta) \mathbf{1}_{|r\theta_1|>\kappa/2} (\varrho_{m}\star\omega_{\kappa})(r\theta_1)drd\mathcal{H}^{d-1}(\theta)\\&\quad\quad\quad\leq  (2\rho)^{d-1}\int_{S^{d-1}} \left(\sup_{r'\in [\rho,2\rho]}a(r'\theta)\right)\int_{\rho}^{2\rho} \mathbf{1}_{|r\theta_1|>\kappa/2} (\varrho_{m}\star\omega_{\kappa})(r\theta_1)drd\mathcal{H}^{d-1}(\theta).
	\end{align*}
	On the other hand, 	
	\begin{align*}
		&\int_{\rho}^{2\rho} \mathbf{1}_{|r\theta_1|>\kappa/2} (\varrho_{m}\star\omega_{\kappa})(r\theta_1)dr\leq\int_{\mathbb{R}}\int_{\rho}^{2\rho} \mathbf{1}_{|r\theta_1|>\kappa/2}\varrho_{m}(r\theta_1-z)drd\omega(z)\\&~~~\leq \frac{\mathbf{1}_{|\theta_1|>\frac{\kappa}{4\rho}}}{|\theta_1|}\int_\mathbb{R}\int_{-2|\theta_1|\rho}^{2|\theta_1|\rho}\varrho_{m}(r-z)drd\omega(z)\leq \frac{\mathbf{1}_{|\theta_1|>\frac{\kappa}{4\rho}}}{|\theta_1|}\int_\mathbb{R}\mathbf{1}_{|z|<2|\theta_1|\rho+\frac{2}{m}}d\omega(z)
		\\&~~\leq \frac{\mathbf{1}_{|\theta_1|>\frac{\kappa}{4\rho}}}{|\theta_1|}\int_{-4|\theta_1|\rho}^{4|\theta_1|\rho}d\omega(z)\leq 8\rho \mathbf{M}^1(\omega,\mathbb{R})(0).
	\end{align*}	Thus, by Fatou's Lemma, letting $m\to \infty$ and then $\kappa\to \infty$ we get \eqref{simineq}. The proof is complete.  
\end{proof}
\begin{remark} \label{rekakeyamaxi}From proof of Lemma \ref{le-simineq} we can see that for any $e_0\in S^{d-1}$ and $\mu\in \mathcal{M}_b^+(\mathbb{R}^d)$ and $\omega\in \mathcal{M}_b^+(\tilde{H}_{e_0})$ if $\mu\leq \omega\otimes\mathcal{H}^{d-1}$ then 
	\begin{align}\label{key-es}
	\mathbf{M}^\varepsilon(\mu)(x)\lesssim \mathbf{M}^1(\omega,\tilde{H}_{e_0})(\langle e_0,x\rangle e_0)~~\forall~x\in \mathbb{R}^d,\varepsilon>0.
	\end{align}
\end{remark}
\begin{lemma} \label{lem-es-hol} Let $\{e_1,...,e_d\}$ be an orthonormal basis in $\mathbb{R}^d$. Let  $y_{0i} \in \tilde{H}_{e_i}$  $i=1,...,d$ and $\varepsilon\in (0,1)$. For any $x_i\in \tilde{H}_{e_i}$,  $i=1,...,d$,  we denote $\nu^1_{k,\sum_{i=d-k+1}^{d}x_i}, \nu^2_{k,\sum_{i=d-k+1}^{d}x_i}\in \mathcal{M}^+(\bigotimes_{i=1}^{d-k} \tilde{H}_{e_i})$ for $k=1,..,d$,  by
	\begin{align*}
	&d\nu^1_{k,\sum_{i=d-k+1}^{d}x_i}(y_{d-k},...,y_1)=d|Df^{e_{d-k}}_{\sum_{i=1}^{d-k-1}y_i+\sum_{i=d-k+1}^{d}x_i}|(y_{d-k})d\mathcal{H}^1(y_{d-k-1})...d\mathcal{H}^1(y_1),\\& d\nu^2_{k,\sum_{i=d-k+1}^{d}x_i}(y_{d-k},...,y_1)=\mathbf{1}_{|\sum_{i=1}^{d-k}y_{0i}-\sum_{i=1}^{d-k}y_i|\leq 2\varepsilon}d\nu^1_{k,\sum_{i=d-k+1}^{d}x_i}(y_{d-k},...,y_1).
	\end{align*}
	Then, for any $x_i\in \tilde{H}_{e_i}$  $i=1,...,d$
	\begin{align*}
	M:&=\int_{\tilde{H}_{e_1}}...\int_{\tilde{H}_{e_d}}~1\wedge\left(\frac{\rho}{|\sum_{i=1}^{d}(x_i-y_i)|}\right)^{d+2}\mathbf{1}_{|\sum_{i=1}^{d}(y_{0i}-y_i)|\leq \varepsilon} \\&~~~~~~~~~~\times \left|f(\sum_{i=1}^{d}y_i)-f(y_1+\sum_{i=2}^{d}x_i)\right|d\mathcal{H}^1(y_d)...d\mathcal{H}^1(y_1)\\&\quad\quad\lesssim
	\sum_{k=0}^{d-2} \frac{\rho^{d+\frac{5}{4}}}{\varepsilon}\mathbf{I}_{\frac{3}{4}}^{d-k}(\nu^1_{k,\sum_{i=d-k+1}^{d}x_i},\bigotimes_{i=1}^{d-k} \tilde{H}_{e_i})(\sum_{i=1}^{d-k}x_i)  \\&\quad\quad\quad+\sum_{k=0}^{d-2} \rho^{d+1}\mathbf{1}_{|\sum_{i=1}^{d}(y_{0i}-x_i)|\leq 2\varepsilon}\mathbf{M}^{d-k}(\nu^2_{k,\sum_{i=d-k+1}^{d}x_i},\bigotimes_{i=1}^{d-k} \tilde{H}_{e_i})(\sum_{i=1}^{d-k}x_i),
	\end{align*}
	and 
	\begin{align*}\nonumber
	&\int_{\tilde{H}_{e_1}}...\int_{\tilde{H}_{e_d}} 1\wedge\left(\frac{\rho}{|\sum_{i=1}^{d}(x_i-y_i)|}\right)^{d+1} \left|f(\sum_{i=1}^{d}y_i)-f(y_1+\sum_{i=2}^{d}x_i)\right|d\mathcal{H}^1(y_d)...d\mathcal{H}^1(y_1)\\&\quad\quad\quad\quad\quad\lesssim
	\sum_{k=0}^{d-2}\rho^{d+\frac{1}{4}}\mathbf{I}_{\frac{3}{4}}^{d-k}(\nu^1_{k,\sum_{i=d-k+1}^{d}x_i},\bigotimes_{i=1}^{d-k} \tilde{H}_{e_i})(\sum_{i=1}^{d-k}x_i).
	\end{align*}
\end{lemma}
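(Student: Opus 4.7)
I would prove the lemma via a telescoping decomposition along the coordinate axes $e_1,\ldots,e_d$ combined with a Fubini argument on the orthogonal splitting $\bigotimes_{i=1}^{d}\tilde{H}_{e_i}=\bigotimes_{i\leq d-k}\tilde{H}_{e_i}\oplus\bigotimes_{i>d-k}\tilde{H}_{e_i}$. Setting $z_j := \sum_{i\leq j} y_i + \sum_{i>j} x_i$, so that $z_1 = y_1+\sum_{i\geq 2} x_i$ and $z_d = \sum_i y_i$, the telescoping identity
\[ f(z_d)-f(z_1)=\sum_{j=2}^{d}[f(z_j)-f(z_{j-1})]=\sum_{k=0}^{d-2}[f(z_{d-k})-f(z_{d-k-1})] \]
reduces $M$ to a sum of $d-1$ pieces $M_k$. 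Each $k$-th difference equals $f^{e_{d-k}}_{w_k}(y_{d-k})-f^{e_{d-k}}_{w_k}(x_{d-k})$ with $w_k := \sum_{i\leq d-k-1}y_i+\sum_{i\geq d-k+1}x_i\in H_{e_{d-k}}$, and by the one-dimensional BV fundamental theorem of calculus on $\tilde{H}_{e_{d-k}}$ is bounded by $|Df^{e_{d-k}}_{w_k}|(I_{y_{d-k},x_{d-k}})$, a quantity depending only on $y_1,\ldots,y_{d-k}$.

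The crucial observation is that $M_k$ no longer depends on $y_{d-k+1},\ldots,y_d$ in the $f$-factor, so these can be integrated out first. Writing $V := \sum_i(x_i-y_i)=V_1+V_2$ and $W := \sum_i(y_{0i}-y_i)=W_1+W_2$ with $V_1,W_1\in\bigotimes_{i\leq d-k}\tilde{H}_{e_i}$ and $V_2,W_2\in\bigotimes_{i>d-k}\tilde{H}_{e_i}$, orthogonality gives $|V|^2=|V_1|^2+|V_2|^2$ and $|W|^2=|W_1|^2+|W_2|^2$. The outer integration $\int dW_2$ under the constraint $\{|W_2|^2\leq\varepsilon^2-|W_1|^2\}$ is controlled by two complementary bounds: the volume estimate $\lesssim\varepsilon^k\mathbf{1}_{|W_1|\leq\varepsilon}$ combined with $1\wedge(\rho/|V|)^{d+2}\leq 1\wedge(\rho/|V_1|)^{d+2}$, or the indicator-free decay estimate
\[ \int_{V_2\in\mathbb{R}^k} 1\wedge(\rho/|V|)^{d+2}\,dV_2\leq C\rho^k\cdot[1\wedge(\rho/|V_1|)^{d+2-k}]. \]
A careful trade-off between these yields the factor $\sim\rho^{k+2}/\varepsilon$ driving the Riesz term; the pure volume bound $\varepsilon^k$ combined with a localization argument yields the maximal-function term.

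For the inner $y_{d-k}$ integration I would use $|Df^{e_{d-k}}_{w_k}|(I_{y_{d-k},x_{d-k}})=\int\mathbf{1}_{s\in I_{y_{d-k},x_{d-k}}}\,d|Df^{e_{d-k}}_{w_k}|(s)$ and swap order via Fubini. The elementary one-dimensional bound
\[ \int_{y_{d-k}}[1\wedge(\rho/|V_1|)^{m}]\mathbf{1}_{s\in I_{y_{d-k},x_{d-k}}}\,dy_{d-k}\leq C\rho\cdot[1\wedge(\rho/|\tilde{v}^k|)^{m-1}]\qquad(m>1) \]
holds with $\tilde{v}^k=p-q$, $p:=\sum_{i\leq d-k}x_i$, $q:=\sum_{i\leq d-k-1}y_i+s$, using $|V_1|^2=|V_{1,\perp}|^2+(x_{d-k}-y_{d-k})^2$ and the geometric constraint $|x_{d-k}-y_{d-k}|\geq|x_{d-k}-s|$ implicit in $s\in I_{y_{d-k},x_{d-k}}$. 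Renaming $s$ to $y_{d-k}$, the remaining integral is precisely against $d\nu^1_k(q)$ (or $d\nu^2_k(q)$ if the $W$-indicator was retained). Applying $1\wedge(\rho/|p-q|)^m\leq\rho^{d-k-3/4}/|p-q|^{d-k-3/4}$ (valid for any $m\geq d-k-3/4$) gives $\int 1\wedge(\rho/|p-q|)^m d\nu^1_k\leq C\rho^{d-k-3/4}\mathbf{I}_{3/4}^{d-k}(\nu^1_k)(p)$, producing the Riesz term; similarly $\nu^2_k(B_\rho(p))\leq C\rho^{d-k}\mathbf{M}^{d-k}(\nu^2_k)(p)$ produces the maximal term. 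The indicator $\mathbf{1}_{|\sum(y_{0i}-x_i)|\leq 2\varepsilon}$ attached to the maximal term arises automatically: $\nu^2_k$ carries a support concentration near $q=\sum_{i\leq d-k}y_{0i}$, and for $\mathbf{M}^{d-k}(\nu^2_k)(p)$ to be non-negligible one needs $|p-q|\lesssim\varepsilon$, which combined with the analogous $|W_2|\lesssim\varepsilon$ constraint on the outer variables recovers the full $d$-dimensional indicator. The second inequality in the lemma is proven identically but without the $W$-indicator; the exponent $d+1$ cancels one power of $\rho$ in the outer integration, giving the final factor $\rho^{d+1/4}$.

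\textbf{Main obstacle.} The delicate point is engineering the outer-integration trade-off that produces the precise coefficient $\rho^{k+2}/\varepsilon$ uniformly in the regimes $\varepsilon\lesssim\rho$ and $\varepsilon>\rho$, and aligning it with the Riesz index $3/4$---which is essentially the minimal order at which $\mathbf{I}_{3/4}^{d-k}(\nu^1_k)$ is well-defined against a general BV slice measure. The power $5/4$ on $\rho$ in the conclusion is what remains after bookkeeping the $k+2$ integrations (the $k$ outer ones together with the inner $y_{d-k}$ integration and the Riesz kernel conversion) and trading one order of $\varepsilon$ from the indicator against the decay.
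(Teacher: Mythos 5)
Your plan follows essentially the same route as the paper's proof: the telescoping decomposition $f(z_d)-f(z_1)=\sum_k[f(z_{d-k})-f(z_{d-k-1})]$, the one-dimensional BV bound on each slice, the orthogonal Fubini splitting with the two-regime trade-off (near/far relative to $\varepsilon$) that produces the $\rho/\varepsilon$ gain for the Riesz term and the localized indicator for the maximal term, and the final conversion to $\mathbf{I}_{3/4}^{d-k}$ and $\mathbf{M}^{d-k}$ are all exactly the paper's ingredients (the paper merely packages the inner $y_{d-k}$ integration as a separate auxiliary lemma before doing the outer integration, an immaterial reordering). The only slip is the remark that $\mathbf{M}^{d-k}(\nu^2_k)(p)$ is negligible unless $|p-q|\lesssim\varepsilon$ --- a maximal function does not vanish away from the support of the measure --- but the correct mechanism (the $|W_2|\lesssim\varepsilon$ constraint in the near regime, and the extra factor $\rho/\varepsilon$ absorbed into the Riesz term in the far regime) is already present in your outline.
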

We will prove Lemma  \eqref{lem-es-hol}  in Appendix. 
Now, we are ready to prove Theorem \ref{mainthm1}.
\begin{proof}[Proof of Theorem \ref{mainthm1}] \textbf{Step 1:} We prove that 
	\begin{align}\label{esweakL1b}
	\limsup_{\lambda\to \infty}\lambda\mathcal{L}^d\left(\left\{ \mathbf{T}_{\varepsilon}^{1,n}(\mu)>\lambda \right\}\cap B_R\right)\lesssim |\log(\varepsilon)||\mu|^{s}(\mathbb{R}^d),
	\end{align} for any $R>0$, $\varepsilon\in (0,1/10)$ and $n\in \mathbb{N}.$  
We now assume that \eqref{esweakL1b} is proven. 
Let  $\chi\in C_c^\infty(\mathbb{R}^d)$ be such that $\chi=1$ in $B_{R/4}$ and $\chi=0$ in $B_{R/2}^c$.
Thanks to \eqref{esweakL1b} and using the fact that $\mathbf{T}_{\varepsilon}^{1,n}(D(\chi f))\in L^\infty(B_{R}^c)$, one gets
	\begin{align*}
&\limsup_{\lambda\to \infty}\lambda\mathcal{L}^d\left(\left\{ \mathbf{T}_{\varepsilon}^{1,n}(\mu)>\lambda \right\}\right)\\&~~~~\leq  \limsup_{\lambda\to \infty}\lambda\mathcal{L}^d\left(\left\{ \mathbf{T}_{\varepsilon}^{1,n}(\mu)>\lambda \right\}\cap B_{R}\right)+\limsup_{\lambda\to \infty}\lambda\mathcal{L}^d\left(\left\{ \mathbf{T}_{\varepsilon}^{1,n}(\mu)>\lambda \right\}\cap B_{R}^c\right)\\&~~~~\lesssim|\log(\varepsilon)||\mu|^{s}(\mathbb{R}^d)+\limsup_{\lambda\to \infty}\lambda\mathcal{L}^d\left(\left\{ \mathbf{T}_{\varepsilon}^{1,n}(D((1-\chi) f))>\lambda/2 \right\}\right).
\end{align*}
So, by   \eqref{esL1inf'}, \eqref{esL1inf''}  and using the fact that $\mathbf{T}_{\varepsilon}(\mu)\leq \mathbf{T}_{\varepsilon}^{1,n}(\mu)+\mathbf{T}_{\varepsilon}^{2,n}(\mu)$, we have 
\begin{align*}
\limsup_{\lambda\to \infty}\lambda\mathcal{L}^d\left(\left\{ \mathbf{T}_{\varepsilon}(\mu)>\lambda \right\}\right)&\lesssim|\log(\varepsilon)||\mu|^{s}(\mathbb{R}^d)+C(\varepsilon)|D^{s}((1-\chi) f)|(\mathbb{R}^d)+C(\varepsilon)c_n|\mu|^{s}(\mathbb{R}^d)\\&\lesssim |\log(\varepsilon)||\mu|^{s}(\mathbb{R}^d)+C(\varepsilon)|\mu|^{s}(B_{R/4}^c)+C(\varepsilon)c_n|\mu|^{s}(\mathbb{R}^d).
\end{align*}
This implies \eqref{esweakL1'} by 
letting $R\to \infty$,  $n\to\infty$.
Moreover,  as proof of Lemma \ref{le2} we also get \eqref{es6}.\\
We are going to prove \eqref{esweakL1b} in several steps. \\
\textbf{Step 2.} Let  $\supp(\mu^{s})$ be the support of $\mu^s$.
	Let $\eta:\mathbb{R}^d\to S^{d-1}$ be such that $\eta(x)=\frac{d\mu^{s}(x)}{d|\mu |^{s}(x)}$ if $x\in \supp(\mu^{s})$ and  $\eta(x)=(1,...,0)\in S^{d-1}$ if $x\notin \supp(\mu^{s})$. Let $\eta^{\kappa}: \mathbb{R}^d\to S^{d-1}$ be smooth functions such that $\eta^{\kappa}\to \eta$ $|\mu|^s-$a.e in $\mathbb{R}^d$ and $
	\lim_{\kappa\to 0}\int_{\mathbb{R}^d}|\eta^{\kappa}-\eta|d|\mu|^{s}=0.$
	Let $\varphi_{r}\in C^{\infty}_b(\mathbb{R}^d)$ be such that $\varphi_{r}(z)=1$ if $|z|>2r$ and  $\varphi_{r}(z)=0$ if $|z|\leq r$ and $||\nabla \varphi_{r}||_{L^\infty(\mathbb{R}^d)}\leq Cr^{-1}$.  \\
Let us define $
S_{\tau}=\{ y\in 2\tau\mathbb{Z}^d: y\in B_{R+4\rho_0}\},$ for  $\tau\in (0,\rho_0/100)$.
 There exists   a sequence of smooth functions  $\{\chi_{y_{\tau}}^{\tau}\}_{y_{\tau}\in S_{\tau}}$ such that 
$
0\leq \chi_{y_{\tau}}^{\tau}(y)\leq 1,~~~\sum_{y_{\tau}\in S_{\tau}} \chi_{y_{\tau}}^{\tau}(y)=1~~\forall y\in B_{R+4\rho_0}$
and $\chi_{y_{\tau}}^{\tau}=1$ in $B_{\tau}(y_{\tau})$, $\supp(\chi_{y_{\tau}}^{\tau})\subset B_{2\tau}(y_{\tau})$, $|\nabla \chi_{y_{\tau}}^{\tau}(y)|\leq C \tau^{-1}~\forall~y\in \mathbb{R}^d$.\\
Note that $\text{Card}(S_{\tau})\sim \left(\frac{R+\rho_0}{\tau}\right)^d$,  $B_{\tau}(y_{\tau})\cap B_{\tau}(y'_{\tau})=\emptyset$ for $y_{\tau}, y'_{\tau}\in S_\tau$,  $y_{\tau}\not= y'_{\tau},$ and 
\begin{align}\label{es1}
\sum_{y_{\tau}\in S_{\tau}}\mathbf{1}_{B_{100\tau}(y_{\tau})}(y)\lesssim \mathbf{1}_{B_{R+6\rho_0}}(y)~~\forall~~y\in \mathbb{R}^d.
\end{align}
Set $\chi_0=\sum_{y_{\tau}\in S_{\tau}} \chi_{y_{\tau}}^{\tau}$.  For any $y_{\tau}\in S_{\tau}$, we denote $$ \eta^{\kappa}_{y_{\tau}}=\eta^{\kappa}(y_{\tau}).$$ Because of  $\mu^{s}=\eta\langle\eta,\mu^{s}\rangle$, one has 
$$
\mu=(1-\chi_0)\mu+ \chi_0\mu^a+\chi_0(\eta-\eta^{\kappa})  \langle\eta,\mu^{s}\rangle+\chi_0\eta^{\kappa} \langle(\eta-\eta^{\kappa}),\mu^{s}\rangle+\chi_0\eta^{\kappa} \langle\eta^{\kappa},\mu^{s}\rangle,$$
and \begin{align*}
&\chi_0\eta^{\kappa} \langle\eta^{\kappa},\mu^{s}\rangle=(\sum_{y_{\tau}\in S_{\tau}} \chi_{y_{\tau}}^{\tau})\eta^{\kappa} \langle\eta^{\kappa},\mu^{s}\rangle=\sum_{y_{\tau}\in S_{\tau}}\chi_{y_{\tau}}^{\tau}\eta^{\kappa} \langle(\eta^{\kappa}-\eta^{\kappa}_{y_{\tau}}),\mu^{s}\rangle \\&+\sum_{y_{\tau}\in S_{\tau}}\chi_{y_{\tau}}^{\tau} (\eta^{\kappa}-\eta^{\kappa}_{y_{\tau}})\langle\eta^{\kappa}_{y_{\tau}},\mu^{s}\rangle+\sum_{y_{\tau}\in S_{\tau}}\chi_{y_{\tau}}^{\tau} \eta^{\kappa}_{y_{\tau}}\langle\eta^{\kappa}_{y_{\tau}},\mu\rangle-\sum_{y_{\tau}\in S_{\tau}}\chi_{y_{\tau}}^{\tau} \eta^{\kappa}_{y_{\tau}}\langle\eta^{\kappa}_{y_{\tau}},\mu^a\rangle.
\end{align*}
Hence, with $\tilde{\mathbf{K}}^n=\frac{\varepsilon^{-d+1}}{\rho^\alpha}\left(\frac{1}{|.|^{d-\alpha}}\phi^{e,\varepsilon}_\rho(.)\right)\star\mathbf{K}_{n}$ and $\zeta\in (0,1/10)$   we write 
\begin{align*}
&\tilde{\mathbf{K}}^n\star\mu=\tilde{\mathbf{K}}^n\star((1-\chi_0)\mu) +\tilde{\mathbf{K}}^n\star(\chi_0\mu^a)+\tilde{\mathbf{K}}^n\star (\chi_0(\eta-\eta^{\kappa})  \langle\eta,\mu^{s}\rangle)+\tilde{\mathbf{K}}^n\star (\chi_0\eta^{\kappa} \langle(\eta-\eta^{\kappa}),\mu^{s}\rangle)\\&~+\sum_{y_{\tau}\in S_{\tau}} \tilde{\mathbf{K}}^n\star(\chi_{y_{\tau}}^{\tau}\eta^{\kappa} \langle(\eta^{\kappa}-\eta^{\kappa}_{y_{\tau}}),\mu^{s}\rangle)+\sum_{y_{\tau}\in S_{\tau}}\tilde{\mathbf{K}}^n\star(\chi_{y_{\tau}}^{\tau} (\eta^{\kappa}-\eta^{\kappa}_{y_{\tau}})\langle\eta^{\kappa}_{y_{\tau}},\mu^{s}\rangle)\\&~-\sum_{y_{\tau}\in S_{\tau}}\tilde{\mathbf{K}}^n\star(\chi_{y_{\tau}}^{\tau} \eta^{\kappa}_{y_{\tau}}\langle\eta^{\kappa}_{y_{\tau}},\mu^a\rangle)+\sum_{y_{\tau}\in S_{\tau}}\frac{\varepsilon^{-d+1}}{\rho^\alpha}\left(\frac{(1-\varphi_{\zeta\rho})}{|.|^{d-\alpha}}\phi^{e,\varepsilon}_\rho(.)\right)\star\mathbf{K}_{n}\star(\chi_{y_{\tau}}^{\tau} \eta^{\kappa}_{y_{\tau}}\langle\eta^{\kappa}_{y_{\tau}},\mu\rangle)\\&~+\sum_{y_{\tau}\in S_{\tau}}\frac{\varepsilon^{-d+1}}{\rho^\alpha}\left(\frac{\varphi_{\zeta\rho}}{|.|^{d-\alpha}}\phi^{e,\varepsilon}_\rho(.)\right)\star\mathbf{K}_{n}\star(\chi_{y_{\tau}}^{\tau} \eta^{\kappa}_{y_{\tau}}\langle\eta^{\kappa}_{y_{\tau}},\mu\rangle):=\sum_{i=1}^9I_{i,\varepsilon}^{e,\rho}.
\end{align*}
\textbf{Step 3:} In this prove, we denote $$
A_{i}(\lambda,\varepsilon)=\lambda\mathcal{L}^d\left(\left\{\sup_{\rho\in (0,\rho_0),e\in S^{d-1}}|I_{i,\varepsilon}^{e,\rho}|>\lambda \right\}\cap B_R\right).$$
Thus, for $\lambda>1$, 
\begin{align}
\lambda\mathcal{L}^d\left(\left\{ \mathbf{T}_{\varepsilon}^{1,n}(\mu)>\lambda \right\}\cap B_R\right)\leq 9\sum_{i=1}^{9}A_{i}(\lambda/9,\varepsilon)\label{es1-8}.
\end{align}
Thanks to \eqref{esL1inf'}  we have
\begin{align}&\label{es2}
\limsup_{\lambda\to \infty} \sum_{i=2,7}A_i(\lambda,\varepsilon)=0,\\\label{es2b}
&\limsup_{\lambda\to \infty} \sum_{i=3,4}A_i(\lambda,\varepsilon)\leq C(\varepsilon)  |||\eta-\eta^{\kappa}||\mu^{s}||||_{\mathcal{M}(\mathbb{R}^d)},\\&\label{es2c}
\limsup_{\lambda\to \infty} \sum_{i=5,6}A_i(\lambda,\varepsilon)\leq C(\varepsilon)  ||\sum_{y_{\tau}\in S_{\tau}}\chi_{y_{\tau}}^{\tau}|\eta^{\kappa}-\eta^{\kappa}_{y_{\tau}}||\mu^s|||_{\mathcal{M}(\mathbb{R}^d)}\leq C(\varepsilon,\kappa) \tau |\mu|^{s}(\mathbb{R}^d).
\end{align} 
Here in the last inequality we have used the fact that
\begin{align*}
\sum_{y_{\tau}\in S_{\tau}}\chi_{y_{\tau}}^{\tau}(x)|\eta^{\kappa}(x)-\eta^{\kappa}_{y_{\tau}}|&\lesssim||\nabla \eta^\kappa||_{L^\infty(\mathbb{R}^d)}\sum_{y_{\tau}\in S_{\tau}}\mathbf{1}_{B_{2\tau}(y_{\tau})}(x)|x-y_\tau|
\\&\overset{\eqref{es1}}\lesssim||\nabla \eta^\kappa||_{L^\infty(\mathbb{R}^d)}\tau \mathbf{1}_{B_{R+6\rho_0}}(x)~~\forall x\in \mathbb{R}^d.
\end{align*} 
Again, applying  \eqref{esL1inf'} (where $\rho$ is replaced by $\zeta\rho$) yields
\begin{align}\label{es2d}
\limsup_{\lambda\to\infty}A_{8}(\lambda,\varepsilon)\leq C(\varepsilon) \zeta^{\alpha}||\sum_{y_{\tau}\in S_{\tau}}\chi_{y_{\tau}}^{\tau}|\mu^s||||_{\mathcal{M}(\mathbb{R}^d)}\leq C(\varepsilon) \zeta^{\alpha} |\mu|^s(\mathbb{R}^d).
\end{align}
On the other hand,  it is easy to see that $\sup_{\rho\in (0,\rho_0),e\in S^{d-1}}|I_{1,\varepsilon}^{e,\rho}(.)|\in L^\infty(B_{R}),$ so
\begin{align}\label{es2e}
\limsup_{\lambda\to \infty} A_1(\lambda,\varepsilon)=0.
\end{align}
Therefore, we deduce from \eqref{es1-8} and \eqref{es2}-\eqref{es2e} that 
\begin{align}\nonumber
&\limsup_{\lambda\to \infty}\lambda\mathcal{L}^d\left(\left\{ \mathbf{T}_{\varepsilon}^{1,n}(\mu)>\lambda \right\}\cap B_R\right) \lesssim C(n,\varepsilon)  |||\eta-\eta^{\kappa}||\mu|^{s}||_{\mathcal{M}(\mathbb{R}^d)}\\&~~~~~~+C(n,\varepsilon,\kappa) \tau |\mu|^{s}(\mathbb{R}^d)+C(n,\varepsilon) \zeta^{\alpha} |\mu|^{s}(\mathbb{R}^d)+ \limsup_{\lambda\to \infty} A_{9}(\lambda,\varepsilon)\label{ES1-8}.
\end{align}
In next steps, we will deal with $A_9(\lambda,\varepsilon)$. \medskip\\
\textbf{Step 4:} One has 
\begin{align*}
I_{9,\varepsilon}^{e,\rho}(x)&= \sum_{y_{\tau}\in S_{\tau}}\frac{\varepsilon^{-d+1}}{\rho^\alpha}\int_{\mathbb{R}^d}\left[\int_{\mathbb{R}^d}\mathbf{K}_{n}(z)\varphi_{\zeta\rho}(x-y-z)\frac{\langle\phi^{e,\varepsilon}_\rho(x-y-z),\eta^{\kappa}_{y_{\tau}}\rangle}{|x-y-z|^{d-\alpha}} dz\right] \chi_{y_{\tau}}^{\tau}(y)d\langle\eta^{\kappa}_{y_{\tau}},\mu(y)\rangle\\&=\sum_{y_{\tau}\in S_{\tau}}\int_{\mathbb{R}^d}\mathbf{K}^{\varepsilon,n}_{e,\rho}(x-y) \chi_{y_{\tau}}^{\tau}(y)d\langle\eta^{\kappa}_{y_{\tau}},\mu(y)\rangle+\mathbf{c}(\varepsilon,\kappa,\tau,\zeta)\sum_{y_{\tau}\in S_{\tau}}(\varphi_{\rho}\mathbf{K}_{n})\star (\chi_{y_{\tau}}^{\tau} \langle\eta^{\kappa}_{y_{\tau}},\mu\rangle)(x)\\&=:I_{10,\varepsilon}^{e,\rho}(x)+I_{11,\varepsilon}^{e,\rho}(x),
\end{align*}
where 
\begin{align}\label{ker12}
\mathbf{K}^{\varepsilon,n}_{e,\rho}(z')=\frac{\varepsilon^{-d+1}}{\rho^\alpha}\int_{\mathbb{R}^d}\mathbf{K}_{n}(z)\varphi_{\zeta\rho}(z'-z)\frac{\langle\phi^{e,\varepsilon}_\rho(z'-z),\eta^{\kappa}_{y_{\tau}}\rangle}{|z'-z|^{d-\alpha}}dz-\mathbf{c}(\varepsilon,\kappa,\tau,\zeta)\varphi_{\rho}(z')\mathbf{K}_{n}(z')~~\forall~~z'\in \mathbb{R}^d,
\end{align}
and
\begin{align}
\mathbf{c}(\varepsilon,\kappa,\tau,\zeta)=\frac{\varepsilon^{-d+1}}{\rho^\alpha}\int_{\mathbb{R}^d}\varphi_{\zeta\rho}(z'-z)\frac{\langle\phi^{e,\varepsilon}((z'-z)/\rho),\eta^{\kappa}_{y_{\tau}}\rangle}{|z'-z|^{d-\alpha}}dz=\varepsilon^{-d+1}\int_{\mathbb{R}^d}\varphi_{\zeta}(z)\frac{\langle\phi^{e,\varepsilon}(z),\eta^{\kappa}_{y_{\tau}}\rangle}{|z|^{d-\alpha}}dz.\label{conker12}
\end{align}
Note that $|\mathbf{c}(\varepsilon,\kappa,\tau,\zeta)|\lesssim 1$
for all $\kappa,\varepsilon,\zeta>0, e\in S^{d-1}$ and by \eqref{es8} in the proof of Proposition \ref{singu-operator}, we have for any $x\in \mathbb{R}^d\backslash\{0\}$,  $$
|\mathbf{K}^{\varepsilon,n}_{e,\rho}(x)|\leq C(n,\varepsilon,\zeta)\frac{1}{|x|^{d-\alpha}}\min\left\{\frac{1}{\rho^\alpha},\frac{\rho}{|x|^{1+\alpha}}\right\}.$$
Similarly, we also have for any $x\in \mathbb{R}^d\backslash\{0\}$, $$
|\nabla \mathbf{K}^{\varepsilon,n}_{e,\rho}(x)|\leq C(n,\varepsilon,\zeta)\frac{1}{|x|^{d-\alpha+1}}\min\left\{\frac{1}{\rho^\alpha},\frac{\rho}{|x|^{1+\alpha}}\right\}.$$ 
Moreover, since $|\varphi_{\zeta\rho}(z)|\leq C1_{|z|>\zeta\rho}$, so  we have for any $|x|\leq \zeta\rho/4$ that $
|\mathbf{K}^{\varepsilon,n}_{e,\rho}(x)|+\rho|\nabla\mathbf{K}^{\varepsilon,n}_{e,\rho}(x)|\leq C(n,\varepsilon,\zeta)\frac{1}{\rho^d}.$ 
Thus, 
\begin{align}\label{es29}
|\mathbf{K}^{\varepsilon,n}_{e,\rho}(x)|\leq C(n,\varepsilon,\zeta)\min\left\{\frac{1}{\rho^d},\frac{\rho}{|x|^{d+1}}\right\},~~ |\nabla \mathbf{K}^{\varepsilon,n}_{e,\rho}(x)|\leq C(n,\varepsilon,\zeta)\min\left\{\frac{1}{\rho^{d+1}},\frac{\rho}{|x|^{d+2}}\right\}.
\end{align}
Thanks to Proposition \ref{pro-singularintegral-rough1}, we get 
\begin{align}\label{esA11'}
\limsup_{\lambda\to\infty}A_{11}(\lambda,\varepsilon)\lesssim||
\sum_{y_{\tau}\in S_{\tau}} \chi_{y_{\tau}}^{\tau}|\mu|^{s}||_{\mathcal{M}(\mathbb{R}^d)}\lesssim |\mu|^{s}(\mathbb{R}^d).
\end{align}
Using integration by parts, we have 
\begin{align*}
&\int_{\mathbb{R}^d}\mathbf{K}^{\varepsilon,n}_{e,\rho}(x-y) \chi_{y_{\tau}}^{\tau}(y)d\langle\eta^{\kappa}_{y_{\tau}},\mu(y)\rangle=-\int_{\mathbb{R}^d}\eta^{\kappa}_{y_{\tau}}.\nabla_y\left[\mathbf{K}^{\varepsilon,n}_{e,\rho}(x-y) \chi_{y_{\tau}}^{\tau}(y)\right]f(y)dy,\\&
\int_{H_{\eta^{\kappa}_{y_{\tau}}}}\int_{\tilde{H}_{\eta^{\kappa}_{y_{\tau}}}}\mathbf{K}^{\varepsilon,n}_{e,\rho}(x-y_1-y_2) \chi_{y_{\tau}}^{\tau}(y_1+y_2)dDf^{\eta^{\kappa}_{y_{\tau}}}_{\tilde{x}_{\kappa,y_{\tau}}}(y_1)d\mathcal{H}^{d-1}(y_2)\\&~~~~~~~~~~~=-\int_{\mathbb{R}^d}\eta^{\kappa}_{y_{\tau}}.\nabla_y\left[\mathbf{K}^{\varepsilon,n}_{e,\rho}(x-y) \chi_{y_{\tau}}^{\tau}(y)\right]f(\langle y,\eta^{\kappa}_{y_{\tau}}\rangle \eta^{\kappa}_{y_{\tau}}+\tilde{x}_{\kappa,y_{\tau}})dy.
\end{align*}
So, 
 \begin{align*}
I_{10,\varepsilon}^{e,\rho}(x)&= -\sum_{y_{\tau}\in S_{\tau}}\int_{\mathbb{R}^d}\eta^{\kappa}_{y_{\tau}}.\nabla_y\left[\mathbf{K}^{\varepsilon,n}_{e,\rho}(x-y) \chi_{y_{\tau}}^{\tau}(y)\right]\left[f(y)-f(\langle y,\eta^{\kappa}_{y_{\tau}}\rangle \eta^{\kappa}_{y_{\tau}}+\tilde{x}_{\kappa,y_{\tau}})\right]dy\\&+\sum_{y_{\tau}\in S_{\tau}}\int_{H_{\eta^{\kappa}_{y_{\tau}}}}\int_{\tilde{H}_{\eta^{\kappa}_{y_{\tau}}}}\mathbf{K}^{\varepsilon,n}_{e,\rho}(x-y_1-y_2) \chi_{y_{\tau}}^{\tau}(y_1+y_2)dD^{s}f^{\eta^{\kappa}_{y_{\tau}}}_{\tilde{x}_{\kappa,y_{\tau}}}(y_1)d\mathcal{H}^{d-1}(y_2)\\&+\sum_{y_{\tau}\in S_{\tau}}\int_{H_{\eta^{\kappa}_{y_{\tau}}}}\int_{\tilde{H}_{\eta^{\kappa}_{y_{\tau}}}}\mathbf{K}^{\varepsilon,n}_{e,\rho}(x-y_1-y_2) \chi_{y_{\tau}}^{\tau}(y_1+y_2)\langle\eta^{\kappa}_{y_{\tau}},D^af(y_1+\tilde{x}_{\kappa,y_{\tau}})\rangle d\mathcal{H}^1(y_1)d\mathcal{H}^{d-1}(y_2)\\&=:\sum_{i=12}^{14}I_{i,\varepsilon}^{e,\rho}(x),
 \end{align*}
 where throughout this proof we denote  $$\tilde{x}_{\kappa,y_{\tau}}=x-\langle x,\eta^{\kappa}_{y_{\tau}}\rangle \eta^{\kappa}_{y_{\tau}}.$$
Thus, 
\begin{align}\label{esA6}
A_9(\lambda,\varepsilon)\leq 4\sum_{i=12}^{14}A_i(\lambda/4,\varepsilon).
\end{align}
\textbf{Step 5:}. To treat $A_{13}(\lambda,\varepsilon)$ and $A_{14}(\lambda,\varepsilon)$, we need to show the following inequality:
\begin{align}\nonumber
&\left|\int_{H_{\eta^{\kappa}_{y_{\tau}}}}\int_{\tilde{H}_{\eta^{\kappa}_{y_{\tau}}}}\mathbf{K}^{\varepsilon,n}_{e,\rho}(x-y_1-y_2) \chi_{y_{\tau}}^{\tau}(y_1+y_2)d\nu(y_1)d\mathcal{H}^{d-1}(y_2)\right|\\&\nonumber~~~~\lesssim|\log(\varepsilon)|\mathbf{1}_{B_{4\tau}(y_\tau)}(x)\mathbf{M}^1(1_{B_{2\tau}(y_{\tau,1})}\nu,\tilde{H}_{\eta^{\kappa}_{y_{\tau}}})(x_1)\\&~~~~~+C(n,\varepsilon,\zeta,\tau)\rho\mathbf{1}_{B_{4\tau}(y_\tau)^c}(x)\int_{H_{\eta^{\kappa}_{y_{\tau}}}}\int_{\tilde{H}_{\eta^{\kappa}_{y_{\tau}}}}\mathbf{1}_{B_{2\tau}(y_\tau)}(y_1+y_2) d\nu(y_1)d\mathcal{H}^{d-1}(y_2),\label{es-maxifun-1}
\end{align}
for any $\nu\in \mathcal{M}_b(\tilde{H}_{\eta^{\kappa}_{y_{\tau}}})$ and $x\in \mathbb{R}^d$
where
$$x_1=\langle x,\eta^{\kappa}_{y_{\tau}}\rangle \eta^{\kappa}_{y_{\tau}},~~~y_{\tau,1}=\langle y_{\tau},\eta^{\kappa}_{y_{\tau}}\rangle \eta^{\kappa}_{y_{\tau}}.$$
Indeed, set $\tau_{x_1}(z)=x_1-z$ for any $z\in \tilde{H}_{\eta^{\kappa}_{y_{\tau}}}$.\vspace{.2cm}\\
 By Lemma \ref{le-simineq} (with $a=|\mathbf{K}_{\varepsilon}^{e,\rho}(.)|,\omega=(\tau_{x_1})_{\#}(\mathbf{1}_{B_{2\tau}(y_{\tau,1},\tilde{H}_{\eta^{\kappa}_{y_{\tau}}})}|\nu|)$), we have 
\begin{align*}
&\sum_{j=-\infty}^{\infty}\int_{H_{\eta^{\kappa}_{y_{\tau}}}}\int_{\tilde{H}_{\eta^{\kappa}_{y_{\tau}}}}\mathbf{1}_{2^{j}\rho<|x-y_1-y_2|\leq 2^{j+1}\rho}|\mathbf{K}^{\varepsilon,n}_{e,\rho}(x-y_1-y_2)| \chi_{y_{\tau}}^{\tau}(y_1+y_2)d|\nu|(y_1)d\mathcal{H}^{d-1}(y_2)\\&\lesssim \sum_{j=-\infty}^{\infty}
\int_{H_{\eta^{\kappa}_{y_{\tau}}}}\int_{\tilde{H}_{\eta^{\kappa}_{y_{\tau}}}}\mathbf{1}_{2^{j}\rho<|y_1+y_2|\leq 2^{j+1}\rho}|\mathbf{K}^{\varepsilon,n}_{e,\rho}(y_1+y_2)| d(\tau_{x_1})_{\#}(\mathbf{1}_{B_{2\tau}(y_{\tau,1},\tilde{H}_{\eta^{\kappa}_{y_{\tau}}})}|\nu|)(y_1)d\mathcal{H}^{d-1}(y_2)\\&
\lesssim \sum_{j=-\infty}^{\infty}\left[(2^{j}\rho)^d\int_{S^{d-1}}\sup_{r\in [2^{j}\rho,2^{j+1}\rho]}|\mathbf{K}^{\varepsilon,n}_{e,\rho}(r\theta)|d\mathcal{H}^{d-1}(\theta)\right] \mathbf{M}^1((\tau_{x_1})_{\#}(\mathbf{1}_{B_{2\tau}(y_{\tau,1},\tilde{H}_{\eta^{\kappa}_{y_{\tau}}})}|\nu|),\tilde{H}_{\eta^{\kappa}_{y_{\tau}}})(0)
\\&= \left[\sum_{j=-\infty}^{\infty}(2^{j}\rho)^d\int_{S^{d-1}}\sup_{r\in [2^{j}\rho,2^{j+1}\rho]}|\mathbf{K}^{\varepsilon,n}_{e,\rho}(r\theta)|d\mathcal{H}^{d-1}(\theta)\right] \mathbf{M}^1(\mathbf{1}_{B_{2\tau}(y_{\tau,1},\tilde{H}_{\eta^{\kappa}_{y_{\tau}}})}|\nu|,\tilde{H}_{\eta^{\kappa}_{y_{\tau}}})(x_1).
\end{align*}
So, by \eqref{es20} in Lemma \ref{le-es-ker-log} below, we obtain that \begin{align}
\left|\int_{H_{\eta^{\kappa}_{y_{\tau}}}}\int_{\tilde{H}_{\eta^{\kappa}_{y_{\tau}}}}\mathbf{K}^{\varepsilon,n}_{e,\rho}(x-y_1-y_2) \chi_{y_{\tau}}^{\tau}(y_1+y_2)d\nu(y_1)d\mathcal{H}^{d-1}(y_2)\right|\label{es-maxifun-1'}\lesssim|\log(\varepsilon)|\mathbf{M}^1(1_{B_{2\tau}(y_{\tau,1})}\nu,\tilde{H}_{\eta^{\kappa}_{y_{\tau}}})(x_1).
\end{align}
On the other hand,  for any  $x\notin B_{4\tau}(y_\tau)$, 
\begin{align*}
&\left|\int_{H_{\eta^{\kappa}_{y_{\tau}}}}\int_{\tilde{H}_{\eta^{\kappa}_{y_{\tau}}}}\mathbf{K}^{\varepsilon,n}_{e,\rho}(x-y_1-y_2) \chi_{y_{\tau}}^{\tau}(y_1+y_2)d\nu(y_1)d\mathcal{H}^{d-1}(y_2)\right|\\&~~\lesssim \int_{H_{\eta^{\kappa}_{y_{\tau}}}}\int_{\tilde{H}_{\eta^{\kappa}_{y_{\tau}}}}\mathbf{1}_{|x-y_1-y_2|>2\tau}\mathbf{1}_{|y_1+y_2-y_\tau|<2\tau} |\mathbf{K}^{\varepsilon,n}_{e,\rho}(x-y_1-y_2)|d|\nu|(y_1)d\mathcal{H}^{d-1}(y_2)\\&\overset{\eqref{es29}}\leq C(n,\varepsilon,\zeta,\tau)\rho\int_{H_{\eta^{\kappa}_{y_{\tau}}}}\int_{\tilde{H}_{\eta^{\kappa}_{y_{\tau}}}}\mathbf{1}_{|y_1+y_2-y_\tau|<2\tau} d|\nu|(y_1)d\mathcal{H}^{d-1}(y_2).
\end{align*}
From this and \eqref{es-maxifun-1'}, we find \eqref{es-maxifun-1}.\\\medskip\\
\textbf{Step 6:} Estimate $A_{13}(\lambda,\varepsilon)$ and $A_{14}(\lambda,\varepsilon)$.\\ We set $$\omega^\tau_{y_\tau,z_2}:=\mathbf{1}_{B_{2\tau}(y_{\tau,1},\tilde{H}_{\eta^{\kappa}_{y_{\tau}}})}|D^{s}f^{\eta^{\kappa}_{y_{\tau}}}_{z_2}|~~\forall~z_2\in H_{\eta^{\kappa}_{y_{\tau}}}.$$ We then apply \eqref{es-maxifun-1} for $\nu(y_1)= D^{s}f^{\eta^{\kappa}_{y_{\tau}}}_{\tilde{x}_{\kappa,y_{\tau}}}(y_1)$ to get that
\begin{align*}
I_{13,\varepsilon}^{e,\rho}(x)&\lesssim |\log(\varepsilon)|\sum_{y_{\tau}\in S_{\tau}} \mathbf{1}_{B_{4\tau}(y_\tau)}(x) \mathbf{M}^1(\omega^\tau_{y_\tau,\tilde{x}_{\kappa,y_{\tau}}},\tilde{H}_{\eta^{\kappa}_{y_{\tau}}})(\langle x,\eta^{\kappa}_{y_{\tau}}\rangle \eta^{\kappa}_{y_{\tau}})\\&~~~+ \sum_{y_{\tau}\in S_{\tau}} C(\varepsilon,\zeta,\tau)\rho\int_{H_{\eta^{\kappa}_{y_{\tau}}}}\int_{\tilde{H}_{\eta^{\kappa}_{y_{\tau}}}}\mathbf{1}_{B_{2\tau}(y_\tau)}(y_1+y_2) d |D^{s}f^{\eta^{\kappa}_{y_{\tau}}}_{\tilde{x}_{\kappa,y_{\tau}}}|(y_1)d\mathcal{H}^{d-1}(y_2).
\end{align*}
By \eqref{equa-eta} in Proposition \ref{ML1-BV} and \eqref{es1}, we have 
\begin{align*}
I_{13,\varepsilon}^{e,\rho}(x)&\lesssim |\log(\varepsilon)|\sum_{y_{\tau}\in S_{\tau}} \mathbf{1}_{B_{4\tau}(y_\tau)}(x) \mathbf{M}^1(\omega^\tau_{y_\tau,\tilde{x}_{\kappa,y_{\tau}}},\tilde{H}_{\eta^{\kappa}_{y_{\tau}}})(\langle x,\eta^{\kappa}_{y_{\tau}}\rangle \eta^{\kappa}_{y_{\tau}})\\&~~~+\sum_{y_{\tau}\in S_{\tau}} C(\varepsilon,\zeta,\tau)\rho\int_{\mathbb{R}^d}\mathbf{1}_{B_{2\tau}(y_\tau)}(y) d|\mu|^{s}(y)\\
&\lesssim |\log(\varepsilon)|\sum_{y_{\tau}\in S_{\tau}} \mathbf{1}_{B_{4\tau}(y_\tau)}(x) \mathbf{M}^1(\omega^\tau_{y_\tau,\tilde{x}_{\kappa,y_{\tau}}},\tilde{H}_{\eta^{\kappa}_{y_{\tau}}})(\langle x,\eta^{\kappa}_{y_{\tau}}\rangle \eta^{\kappa}_{y_{\tau}})+C(\varepsilon,\zeta,\tau)\rho|\mu|^{s}(\mathbb{R}^d).
\end{align*}
Thus,  for $\lambda>>1$
\begin{align}\nonumber
&A_{13}(\lambda,\varepsilon)\leq\lambda\mathcal{L}^d\left(\left\{x\in B_R: |\log(\varepsilon)|\sum_{y_{\tau}\in S_{\tau}} \mathbf{1}_{B_{4\tau}(y_\tau)}(x) \mathbf{M}^1(\omega^\tau_{y_\tau,\tilde{x}_{\kappa,y_{\tau}}},\tilde{H}_{\eta^{\kappa}_{y_{\tau}}})(\langle x,\eta^{\kappa}_{y_{\tau}}\rangle \eta^{\kappa}_{y_{\tau}})\gtrsim\lambda \right\}\right)\\ &\nonumber\leq \sum_{y'_{\tau}\in S_{\tau}} \lambda\mathcal{L}^d\left(\left\{x\in B_{2\tau}(y'_{\tau}): |\log(\varepsilon)|\sum_{y_{\tau}\in S_{\tau}} \mathbf{1}_{B_{4\tau}(y_\tau)}(x) \mathbf{M}^1(\omega^\tau_{y_\tau,\tilde{x}_{\kappa,y_{\tau}}},\tilde{H}_{\eta^{\kappa}_{y_{\tau}}})(\langle x,\eta^{\kappa}_{y_{\tau}}\rangle \eta^{\kappa}_{y_{\tau}})\gtrsim\lambda \right\}\right) \\&\nonumber\overset{\eqref{es1}}\lesssim \sum_{y_{\tau}\in S_{\tau}} \lambda\mathcal{L}^d\left(\left\{x\in B_{8\tau}(y_{\tau}): |\log(\varepsilon)| \mathbf{M}^1(\omega^\tau_{y_\tau,\tilde{x}_{\kappa,y_{\tau}}},\tilde{H}_{\eta^{\kappa}_{y_{\tau}}})(\langle x,\eta^{\kappa}_{y_{\tau}}\rangle \eta^{\kappa}_{y_{\tau}})\gtrsim\lambda \right\}\right).
\end{align}
Thanks to  the boundedness of $\mathbf{M}^1(.,\tilde{H}_{\eta^{\kappa}_{y_{\tau}}})$ from $\mathcal{M}(\tilde{H}_{\eta^{\kappa}_{y_{\tau}}})$ to $L^{1,\infty}(\tilde{H}_{\eta^{\kappa}_{y_{\tau}}})$ yields
for $\lambda>>1$
\begin{align*}
&A_{13}(\lambda,\varepsilon) \lesssim     \sum_{y_{\tau}\in S_{\tau}}\lambda \int_{H_{\eta^{\kappa}_{y_{\tau}}}} 1_{|z_2-(y_{\tau}-y_{\tau,1})|\leq 8\tau}\mathcal{H}^1\left(\left\{|\log(\varepsilon)|\mathbf{M}^1\left(\omega^\tau_{y_\tau,z_2},\tilde{H}_{\eta^{\kappa}_{y_{\tau}}}\right)\gtrsim\lambda \right\}\right) d\mathcal{H}^{d-1}(z_2)\\&\lesssim |\log(\varepsilon)|\sum_{y_{\tau}\in S_{\tau}} \int_{H_{\eta^{\kappa}_{y_{\tau}}}}1_{|z_2-(y_{\tau}-y_{\tau,1})|\leq 8\tau}\int_{\tilde{H}_{\eta^{\kappa}_{y_{\tau}}}} \mathbf{1}_{B_{2\tau}(y_{1,\tau},\tilde{H}_{\eta^{\kappa}_{y_{\tau}}})}(z_1) d|D^{s} f^{\eta^{\kappa}_{y_{\tau}}}_{z_2}|(z_1)d\mathcal{H}^{d-1}(z_2)
 \\&\lesssim 
 \nonumber |\log(\varepsilon)|\sum_{y_{\tau}\in S_{\tau}} \int_{B_{10\tau}(y_{\tau})} d|D^{s} f|(z)
  \\&\lesssim|\log(\varepsilon)| |\mu|^{s}(\mathbb{R}^d).
\end{align*}
Here  we have used \eqref{equa-eta} in Proposition \ref{ML1-BV} for the third inequality and \eqref{es1} for the last one.\\
Thus, 
\begin{align}\label{es14+}
\limsup_{\lambda\to\infty} A_{13}(\lambda,\varepsilon)\lesssim|\log(\varepsilon)| |\mu|^{s}(\mathbb{R}^d).
\end{align}
Similarly, we also have 
$$A_{14}(\lambda,\varepsilon)\lesssim |\log(\varepsilon)|||\mu^a||_{L^1(B_{R+6\rho_0})} ~~\forall~~\lambda>>1.$$
Since,
$
\limsup_{\lambda\to\infty}\lambda\mathcal{H}^{1}\left(\left\{ \mathbf{M}^1(1_{B_{2\tau}(y_{1,\tau},\tilde{H}_{\eta^{\kappa}_{y_{\tau}}})}|D^af(.+z_2)|,\tilde{H}_{\eta^{\kappa}_{y_{\tau}}})>\lambda \right\}\right)=0$
for $\mathcal{H}^{d-1}$-a.e $z_2$  in $ H_{\eta^{\kappa}_{y_{\tau}}}$, so  by dominated convergence theorem we get
\begin{align}\label{es28}
\limsup_{\lambda\to \infty}A_{14}(\lambda,\varepsilon)=0.
\end{align}
\textbf{Step 7:}. We will prove that 
\begin{align}\label{esA13}
\limsup_{\lambda\to \infty} A_{12}(\lambda,\varepsilon)\leq  C(n,\varepsilon,\zeta)||(\eta-\eta^\kappa)|\mu|^{s}||_{\mathcal{M}(\mathbb{R}^d)}+C(n,\varepsilon,\zeta,\kappa)\tau |\mu|^{s}(\mathbb{R}^d).
\end{align}
Let $\{\eta_1^{\kappa}(y_{\tau}), \eta_2^{\kappa}(y_{\tau}),...,\eta_d^{\kappa}(y_{\tau})\}$ be an orthonormal basis in $\mathbb{R}^d$ such that $\eta_1^{\kappa}(y_{\tau})=\eta^{\kappa}_{y_{\tau}}$. So,  for any $x\in\mathbb{R}^d$, throughout this proof we  denote $$
x_{\eta_i^{\kappa}(y_{\tau})}=\langle x, \eta_i^{\kappa}(y_{\tau})\rangle \eta_i^{\kappa}(y_{\tau}),~~x_{\eta_i^{\kappa}(y_{\tau})}^{1,j}=\sum_{i=1}^{j}x_{\eta_i^{\kappa}(y_{\tau})},~~ x_{\eta_i^{\kappa}(y_{\tau})}^{2,j}=\sum_{i=j+1}^{d}x_{\eta_i^{\kappa}(y_{\tau})}.$$
By \eqref{es29}, we have 
\begin{align*}
|I_{12,\varepsilon}^{e,\rho}(x)|&\leq C(n,\varepsilon,\zeta)\frac{1}{\rho^{d+1}}\sum_{y_{\tau}\in S_{\tau}}\int_{\mathbb{R}^d}\left(1\wedge\left(\frac{\rho}{|x-y|}\right)^{d+2}\right) \mathbf{1}_{|y_{\tau}-y|\leq 2\tau} \left|f(y)-f(y_{\eta^{\kappa}_{y_{\tau}}}+\sum_{i=2}^{d}x_{\eta_i^{\kappa}(y_{\tau})})\right|dy \\&+C(n,\varepsilon,\tau,\zeta)\frac{1}{\rho^{d}}\sum_{y_{\tau}\in S_{\tau}}\int_{\mathbb{R}^d}\left(1\wedge\left(\frac{\rho}{|x-y|}\right)^{d+1}\right)\left|f(y)-f(y_{\eta^{\kappa}_{y_{\tau}}}+\sum_{i=2}^{d}x_{\eta_i^{\kappa}(y_{\tau})})\right|dy.
\end{align*}
Applying Lemma \ref{lem-es-hol}  to $\{e_1,...,e_d\}=\{\eta_1^{\kappa}(y_{\tau}), \eta_2^{\kappa}(y_{\tau}),...,\eta_d^{\kappa}(y_{\tau})\}$ and $x_i=x_{\eta_i^{\kappa}(y_{\tau})}$ for $i=1,...,d$ and $\varepsilon=2\tau$, we find that  
\begin{align*}
I_{12,\varepsilon}^{e,\rho}(x)&\leq C(n,\varepsilon,\tau,\zeta)\rho^{\frac{1}{4}}\sum_{k=0}^{d-2} \sum_{y_{\tau}\in S_{\tau}}  \mathbf{I}_{\frac{3}{4}}^{d-k}(\nu^1_{k,x_{\eta_i^{\kappa}(y_{\tau})}^{2,d-k}},\bigotimes_{i=1}^{d-k} \tilde{H}_{\eta_i^{\kappa}(y_{\tau})})(x_{\eta_i^{\kappa}(y_{\tau})}^{1,d-k}) \nonumber \\&+C(n,\varepsilon,\zeta)\sum_{k=0}^{d-2} \sum_{y_{\tau}\in S_{\tau}} 1_{B_{4\tau}(y_\tau) }(x)\mathbf{M}^{d-k}(\nu^2_{k,x_{\eta_i^{\kappa}(y_{\tau})}^{2,d-k}},\bigotimes_{i=1}^{d-k} \tilde{H}_{\eta_i^{\kappa}(y_{\tau})})(x_{\eta_i^{\kappa}(y_{\tau})}^{1,d-k}),
\end{align*}
where 
\begin{align*}
&d\nu^1_{k,z}(y_{d-k},...,y_1)=d|Df^{\eta_{d-k}^{\kappa}(y_{\tau})}_{\sum_{i=1}^{d-k-1}y_i+z}|(y_{d-k})d\mathcal{H}^1(y_{d-k-1})...d\mathcal{H}^1(y_1),\\& d\nu^2_{k,z}(y_{d-k},...,y_1)=1_{|\sum_{i=1}^{d-k}(y_{\tau})_{\eta_i^{\kappa}(y_{\tau})}-\sum_{i=1}^{d-k}y_i|\leq 4\tau}d\nu^1_{k,z}(y_{d-k},...,y_1),
\end{align*} 
for any $z\in \bigotimes_{i=d-k+1}^{d} \tilde{H}_{\eta_i^{\kappa}(y_{\tau})}.$
Hence, 
\begin{align}\nonumber
&\limsup_{\lambda\to\infty}A_{12}(\lambda,\varepsilon)\\&\nonumber\leq    \limsup_{\lambda\to\infty} \lambda\mathcal{L}^d \left(\left\{x\in B_R: C(n,\varepsilon,\tau,\zeta)\rho_0^{\frac{1}{4}}\sum_{k=0}^{d-2} \sum_{y_{\tau}\in S_{\tau}}  \mathbf{I}_{\frac{3}{4}}^{d-k}(\nu^1_{k,x_{\eta_i^{\kappa}(y_{\tau})}^{2,d-k}},\bigotimes_{i=1}^{d-k} \tilde{H}_{\eta_i^{\kappa}(y_{\tau})})(x_{\eta_i^{\kappa}(y_{\tau})}^{1,d-k})>\lambda \right\}\right)\\&+\limsup_{\lambda\to\infty} \lambda\mathcal{L}^d\left(\left\{x\in B_R: C(n,\varepsilon,\zeta)\sum_{k=0}^{d-2} \sum_{y_{\tau}\in S_{\tau}} \mathbf{1}_{B_{4\tau}(y_\tau) }(x)\mathbf{M}^{d-k}(\nu^2_{k,x_{\eta_i^{\kappa}(y_{\tau})}^{2,d-k}},\bigotimes_{i=1}^{d-k} \tilde{H}_{\eta_i^{\kappa}(y_{\tau})})(x_{\eta_i^{\kappa}(y_{\tau})}^{1,d-k})>\lambda \right\}\right).\label{es-nov234}
\end{align}
We easily derive from the boundedness of $\mathbf{I}_{\frac{3}{4}}^{d-k}(.,X)$ from $\mathcal{M}_b(X)$ to $L^{\frac{d-k}{d-k-\frac{3}{4}},\infty}(X)$ with $X=\bigotimes_{i=1}^{d-k} \tilde{H}_{\eta_i^{\kappa}(y_{\tau})}$  that  the first term in the right hand-side of \eqref{es-nov234} equals zero.
Thanks to \eqref{weak11limsup}, we  get that the second term in the right hand-side of \eqref{es-nov234}  is bounded by 
\begin{align*}
&  \limsup_{\lambda\to\infty} \sum_{k=0}^{d-2}\sum_{y_{\tau}\in S_{\tau}}\lambda\mathcal{L}^d\left(\left\{x\in B_{8\tau}(y_\tau): C(n,\varepsilon,\zeta)\mathbf{M}^{d-k}(\nu^2_{k,x_{\eta_i^{\kappa}(y_{\tau})}^{2,d-k}},\bigotimes_{i=1}^{d-k} \tilde{H}_{\eta_i^{\kappa}(y_{\tau})})(x_{\eta_i^{\kappa}(y_{\tau})}^{1,d-k})>\lambda \right\}\right)\\&\leq C(n,\varepsilon,\zeta) \sum_{k=0}^{d-2}\sum_{y_{\tau}\in S_{\tau}} \int_{\tilde{H}_{\eta_d^{\kappa}(y_{\tau})}}...\int_{\tilde{H}_{\eta_1^{\kappa}(y_{\tau})}} \mathbf{1}_{|\sum_{i=d-k+1}^d((y_{\tau})_{\eta_i^{\kappa}(y_{\tau})}-x_{i})|\leq 8\tau}\\&~~~\times d\nu^{2,s}_{k,\sum_{i=d-k+1}^{d}x_{i}}(x_1,...,x_{d-k})d\mathcal{H}^1(x_{d-k+1})...d\mathcal{H}^1(x_d)\\&\leq  C(n,\varepsilon,\zeta) \sum_{k=0}^{d-2}\sum_{y_{\tau}\in S_{\tau}} \int_{H_{\eta_{d-k}^{\kappa}(y_{\tau})}} \int_{\tilde{H}_{\eta_{d-k}^{\kappa}(y_{\tau})}} \mathbf{1}_{|y_{\tau}-(z_1+z_2)|\leq 16\tau} d|D^{s}f^{\eta_{d-k}^{\kappa}(y_{\tau})}_{z_2}|(z_1)d\mathcal{H}^{d-1}(z_{2}).
\end{align*}
where $\nu^{2,s}_{k,\sum_{i=d-k+1}^{d}x_{i}}(x_1,...,x_{d-k})$ is the singular part of $\nu^{2}_{k,\sum_{i=d-k+1}^{d}x_{i}}(x_1,...,x_{d-k})$.\\
Thanks to \eqref{equa-eta} in Proposition \ref{ML1-BV} and definition of $\eta$, one has 
\begin{align*}\nonumber
\limsup_{\lambda\to\infty}A_{12}(\lambda,\varepsilon)&\leq  C(n,\varepsilon,\zeta) \sum_{k=0}^{d-2}\sum_{y_{\tau}\in S_{\tau}} \int_{\mathbb{R}^d}\mathbf{1}_{B_{20\tau}(y_{\tau})}(x)|\langle \eta_{d-k}^{\kappa}(y_{\tau}), \eta(x)\rangle| d|\mu|^{s}(x).
\end{align*}
 Because of $\langle \eta_{d-k}^{\kappa}(y_{\tau}), \eta^{\kappa}(y_{\tau})\rangle =0$ for any $k=0,1,..,d-2$, so
\begin{align*}
|\langle \eta_{d-k}^{\kappa}(y_{\tau}), \eta(x)\rangle|&\leq|\langle \eta_{d-k}^{\kappa}(y_{\tau}), \eta(x)-\eta^\kappa(x)\rangle|+\langle \eta_{d-k}^{\kappa}(y_{\tau}), \eta^\kappa(x)-\eta^\kappa(y_\tau)\rangle| \\&\leq | (\eta-\eta^{\kappa})(x)|+||\nabla \eta^{\kappa}||_{L^\infty(\mathbb{R}^d)}|x-y_{\tau}|,
\end{align*}
which implies that 
\begin{align*}\nonumber
\limsup_{\lambda\to\infty}A_{12}(\lambda,\varepsilon)&\leq  C(n,\varepsilon,\zeta) \sum_{y_{\tau}\in S_{\tau}} \int_{\mathbb{R}^d}\mathbf{1}_{B_{20\tau}(y_{\tau})}(x)\left[| (\eta-\eta^{\kappa})(x)|+||\nabla \eta^{\kappa}||_{L^\infty(\mathbb{R}^d)}\tau\right] d|\mu|^s(x)\\&\overset{\eqref{es1}}\leq  C(n,\varepsilon,\zeta)||(\eta-\eta^\kappa)|\mu|^s||_{\mathcal{M}(\mathbb{R}^d)}+C(n,\varepsilon,\zeta,\kappa)\tau|\mu|^{s}(\mathbb{R}^d).
\end{align*}
Therefore, we get \eqref{esA13}.\\
\textbf{Step 8:} Estimate $A_{9}(\lambda,\varepsilon)$ and finish the proof. \\Hence, we derive from \eqref{esA6} and  \eqref{esA11'}, \eqref{es14+}, \eqref{es28},
 \eqref{esA13} that 
\begin{align*}
\limsup_{\lambda\to\infty}A_{9}(\lambda,\varepsilon)\lesssim |\log(\varepsilon)||\mu|^{s}(\mathbb{R}^d)+C(n,\varepsilon,\zeta) |||\eta-\eta^{\kappa}||\mu|^{s}||_{\mathcal{M}(\mathbb{R}^d)}+ C(n,\varepsilon,\kappa,\zeta) \tau|\mu|^{s}(\mathbb{R}^d).
\end{align*}
Combining this with   \eqref{ES1-8}  yields 
\begin{align*}
&\limsup_{\lambda\to \infty}\lambda\mathcal{L}^d\left(\left\{ \mathbf{T}^{1,n}_{\varepsilon}(\mu)>\lambda \right\}\cap B_R\right)\lesssim|\log(\varepsilon)||\mu|^{s}(\mathbb{R}^d)\\&~~~+C(n,\varepsilon,\zeta) |||\eta-\eta^{\kappa}||\mu|^{s}||_{\mathcal{M}(\mathbb{R}^d)}+ C(n,\varepsilon,\kappa,\zeta) \tau|\mu|^{s}(\mathbb{R}^d)+C(n,\varepsilon)\zeta^{\alpha}|\mu|^{s}(\mathbb{R}^d).
\end{align*}
At this point, sending $\tau\to 0$, then $\kappa\to 0$ and $\zeta\to 0$, we obtain \eqref{esweakL1b}. 
 The proof is complete.
\end{proof}
\begin{lemma}\label{le-es-ker-log} Let $\mathbf{K}^{\varepsilon,n}_{e,\rho}$ be in \eqref{ker12}. Then, for any $e\in S^{d-1}$ there holds
	\begin{align}\label{es20}
	\sum_{j=-\infty}^{\infty}(2^{j}\rho)^d\int_{S^{d-1}}\sup_{r\in [2^{j}\rho,2^{j+1}\rho]}|\mathbf{K}^{\varepsilon,n}_{e,\rho}(r\theta)|d\mathcal{H}^{d-1}(\theta)\lesssim|\log(\varepsilon)|.
	\end{align}
\end{lemma}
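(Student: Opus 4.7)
The plan is to reduce to scale $\rho=1$ by the dilation identity $\mathbf{K}_{\varepsilon,n}^{e,\rho}(\rho x) = \rho^{-d}\mathbf{K}_{\varepsilon,n}^{e,1}(x)$, which follows from the change of variables $z=\rho w$ in \eqref{ker12} and the $(-d)$-homogeneity of $\mathbf{K}_n$, and which makes \eqref{es20} scale invariant. Setting $\Psi(y):=\varepsilon^{-d+1}\varphi_\zeta(y)\langle\phi^{e,\varepsilon}(y),\eta^\kappa_{y_\tau}\rangle/|y|^{d-\alpha}$ and $\mathbf{c}=\int\Psi$, I would rewrite
\[
f(x):=\mathbf{K}_{\varepsilon,n}^{e,1}(x)=(\mathbf{K}_n\star\Psi)(x)-\mathbf{c}\,\varphi_1(x)\,\mathbf{K}_n(x).
\]
A key structural observation is that $\|\Psi\|_{L^1(\mathbb{R}^d)}\lesssim c_0/\alpha$ uniformly in $\varepsilon,\zeta$, because the $\varepsilon$-cap $\{|y/|y|-e|\le\varepsilon\}$ has $(d-1)$-measure of order $\varepsilon^{d-1}$, exactly cancelling the normalizing factor $\varepsilon^{-d+1}$.

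The dyadic sum would then be split into three $|x|$-regimes. In the \emph{far regime} $|x|\ge 2$ (indices $j\ge 1$), $\varphi_1\equiv 1$ and the identity $\int\Psi=\mathbf{c}$ give $f(x)=\int[\mathbf{K}_n(x-y)-\mathbf{K}_n(x)]\Psi(y)\,dy$; applying the Calderón--Zygmund difference estimate \eqref{con1b} together with the $W^{\alpha_0,1}(B_2\setminus B_1)$ regularity of $\Omega_n$ (uniform in $n$ by \eqref{con2'}) then yields $|f(r\theta)|\lesssim\Omega_1(\theta)/r^{d+\alpha_0/2}$ with $\Omega_1\in L^1(S^{d-1})$ as in the proof of Proposition \ref{singu-operator}, and summing over $j\ge 1$ produces a convergent geometric contribution bounded uniformly in $n,\varepsilon,\zeta$. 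In the \emph{near regime} $|x|\le 1/4$ (indices $j\le -3$), the cutoff $\varphi_1$ vanishes and the dyadic decomposition used to prove \eqref{es6+}--\eqref{es5+} carries over with $\Psi$ in place of $\phi^e(y/\rho)/|y|^{d-\alpha}$, since only $L^1$- and $|y||\nabla|$-type bounds on the kernel enter there and both hold uniformly for $\Psi$; again this gives a uniformly bounded contribution.

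The crux, and the source of the $|\log\varepsilon|$ factor, is the \emph{critical regime} $|x|\asymp 1$ (indices $j\in\{-2,-1,0\}$). I would partition $S^{d-1}$ by the angular distance $\phi=|\theta-e|$. On the $\varepsilon$-cap $\{\phi\le C\varepsilon\}$, the pointwise bound \eqref{es29} yields $|f(r\theta)|\lesssim\varepsilon^{-d+1}$ and, integrated against the cap measure $\lesssim\varepsilon^{d-1}$, contributes $O(1)$. For $\phi\ge C\varepsilon$, parametrizing the support of $\Psi$ in polar form $y=s\omega$ with $s\in[0,1]$ and $\omega$ in the $\varepsilon$-cap around $e$, the elementary geometric lower bound $|r\theta-s\omega|^2\gtrsim\min\{(r-s)^2,\,r^2\phi^2\}$ (for $r\asymp 1$, $\omega$ in the cap, $\phi\ge C\varepsilon$) gives $\int_0^1 s^{\alpha-1}|r\theta-s\omega|^{-d}\,ds\lesssim(r\phi)^{1-d}$; combining this with the cap integration (area $\lesssim\varepsilon^{d-1}$) and the factor $\varepsilon^{-d+1}$ produces a pointwise bound $|f(r\theta)|\lesssim \phi^{1-d}$, with an angular factor $L^1$-bounded uniformly in $n$. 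Integrating in polar coordinates centered at $e$,
\[
\int_{\phi\ge C\varepsilon}\phi^{1-d}\,d\mathcal{H}^{d-1}(\theta)\ \lesssim\ \int_{C\varepsilon}^{\pi}\phi^{d-2}\cdot\phi^{1-d}\,d\phi\ =\ \int_{C\varepsilon}^{\pi}\frac{d\phi}{\phi}\ \asymp\ |\log\varepsilon|,
\]
which delivers exactly \eqref{es20}.

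The hardest step will be this last pointwise estimate: the angular factor carried by $\Omega_n$ must sit in $L^1(S^{d-1})$ with a bound uniform in $n$, since an $L^{q_0}$-bound would deliver a negative power of $\varepsilon$ rather than a logarithm. Achieving this requires delicately combining the $W^{\alpha_0,1}$-modulus of $\Omega_n$ with the sphere-cancellation used to construct $\mathbf{K}_n$, to control $\Omega_n((r\theta-s\omega)/|r\theta-s\omega|)$ as $s$ varies across the support of $\Psi$.
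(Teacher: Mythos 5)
Your skeleton (far/near/critical trichotomy, with the logarithm arising from counting dyadic scales down to $\varepsilon$) is the right one, and your far regime $j\ge 1$ coincides with the paper's Case 1. But the critical regime, which you correctly identify as the crux, has a genuine gap precisely where you place the ``hardest step,'' and it is not merely a matter of delicacy. On the $\varepsilon$-cap $\{|\theta-e|\le C\varepsilon\}$ you invoke \eqref{es29} to get $|f(r\theta)|\lesssim\varepsilon^{-d+1}$, but the constant in \eqref{es29} is $C(n,\varepsilon,\zeta)$: it is obtained from \eqref{es8}, whose angular factor $\Omega_1$ is only bounded because $\Omega_n\in C^2_b(S^{d-1})$, with a bound that blows up as $n\to\infty$. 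The conclusion \eqref{es20} must hold with $C$ independent of $n$ and $\zeta$, because in the proof of Theorem \ref{mainthm1} the resulting term $C|\log\varepsilon||\mu|^s(\mathbb{R}^d)$ is the one that survives after $\lambda\to\infty$, $\tau,\kappa,\zeta\to 0$ and finally $n\to\infty$; an $n$-dependent constant there destroys the whole argument. A uniform-in-$n$ bound on the cap cannot be obtained from absolute values at all: for $\theta$ in the cap the distances $|r\theta-y|$ range down to $0$ over the support of the bump, and each dyadic annulus $|r\theta-y|\sim 2^{-l}$ contributes a fixed amount to $\int|\Omega_n(r\theta-y)|\,|r\theta-y|^{-d}\,dy$, so the sum over $l$ diverges. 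One must use the cancellation hypothesis \eqref{cance-condi'} together with the Lipschitz bound $|\nabla\phi^{e,\varepsilon}|\le c_0/(\varepsilon|x|)$ to handle the scales $|r\theta-y|\le\varepsilon^{d}$ (this is the paper's estimate \eqref{es27}, where the choice $2^{l_0}\sim\varepsilon^{-d}$ is made and the $\varepsilon^{-d}2^{-l_0}$ factor from the gradient is absorbed); the remaining $l_0\sim d|\log\varepsilon|$ scales are each bounded by a constant via the crude $L^1$ estimate \eqref{es26}, and this count is where the paper's logarithm comes from. Your angular integral $\int_{C\varepsilon}^{\pi}d\phi/\phi$ recovers the same count off the cap, but it does not see the on-cap scales at all.

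A second, related gap is the off-cap claim itself: the bound $|f(r\theta)|\lesssim\phi^{1-d}$ ``with an angular factor $L^1$-bounded uniformly in $n$'' is not a pointwise statement one can prove, because $|\Omega_n((r\theta-s\omega)/|r\theta-s\omega|)|$ composed with the nonlinear map in $\theta$ admits no pointwise majorant in $L^1(S^{d-1})$ that is uniform in $n$. What is true, and what the paper proves, is the \emph{integrated} estimate annulus by annulus: for each dyadic distance $|r\theta-y|\sim 2^{-l}$ one bounds $\int_{S^{d-1}}\int|\Omega_n(r\theta-y)|\mathbf{1}_{|r\theta-y|\sim2^{-l}}\,dy\,d\mathcal{H}^{d-1}(\theta)$ by thickening the sphere in the radial direction (the $\inf_{|\vartheta-1|\lesssim 2^{-l-m}}$ device in the proof of \eqref{es26} and in the $\mathbf{G}_\vartheta$ estimates) and applying Fubini, which converts the problem into $\|\Omega_n\|_{L^1(S^{d-1})}\le Cc_1$. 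Without this step your angular factor would have to be taken in $L^\infty$ or $L^{q_0}$, which, as you note yourself, produces a negative power of $\varepsilon$ instead of a logarithm. Two further minor points: the exact dilation identity $\mathbf{K}_{\varepsilon,n}^{e,\rho}(\rho x)=\rho^{-d}\mathbf{K}_{\varepsilon,n}^{e,1}(x)$ is not available because $K$ is only assumed to satisfy the size and smoothness bounds \eqref{con1}, not to be $(-d)$-homogeneous (only $\Omega$ is $0$-homogeneous); you should instead observe that the class of kernels defined by \eqref{con1}, \eqref{con2}, \eqref{cance-condi} is invariant under $K\mapsto\rho^dK(\rho\,\cdot)$. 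And in the near regime the transfer of \eqref{es6+}--\eqref{es5+} is not automatic, since the normalized cap bump has gradient of size $\varepsilon^{-d}$ rather than $\varepsilon^{-d+1}$, so the Gagliardo--Nirenberg step \eqref{es5+} must be checked to not leak an extra $\varepsilon^{-1}$; the paper sidesteps this by accepting a factor $|\log\varepsilon|\,2^{j\frac12\min\{\alpha,1\}}$ for every $j\le 0$ in \eqref{es22}, which is harmless after summation.
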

\begin{proof} \textbf{1. Case}: $j\geq 1$. For any $r\in [2^{j}\rho,2^{j+1}\rho],\theta\in S^{d-1}$, we can estimate
	\begin{align*}
	|\mathbf{K}^{\varepsilon,n}_{e,\rho}(r\theta)|&=|\frac{\varepsilon^{-d+1}}{\rho^\alpha}\int_{\mathbb{R}^d}\left[\mathbf{K}_{n}(y)-\mathbf{K}_{n}(r\theta)\right]\varphi_{\zeta\rho}(r\theta-y)\frac{\langle\phi^{e,\varepsilon}((r\theta-y)/\rho),\eta^{\kappa}_{y_{\tau}}\rangle}{|r\theta-y|^{d-\alpha}}dy|
	\\&\lesssim  \frac{\varepsilon^{-d+1}}{\rho^\alpha}\int_{\mathbb{R}^d}\left|\mathbf{K}_{n}(r\theta-y)-\mathbf{K}_{n}(r\theta)\right|\frac{\mathbf{1}_{|y|\leq\rho}\mathbf{1}_{|\frac{y}{|y|}-e|\leq \varepsilon}}{|y|^{d-\alpha}}dy.
	\end{align*}
	By \eqref{con1b}, one has  for $|y|<r/2$,  $$
	\left|\mathbf{K}_{n}(r\theta-y)-\mathbf{K}_{n}(r\theta)\right|\lesssim\frac{|\Omega_n(\theta)||y|}{r^{d+1}}+\frac{1}{r^d}|\Omega_n(r\theta-y)-\Omega_n(r\theta)|.$$ So,  for any $r\in [2^{j}\rho,2^{j+1}\rho]$
	\begin{align*}
	|\mathbf{K}^{\varepsilon,n}_{e,\rho}(r\theta)|&\lesssim \frac{\varepsilon^{-d+1}|\Omega_n(\theta)|}{\rho^\alpha r^{d+1}}\int_{\mathbb{R}^d}\frac{\mathbf{1}_{|y|\leq\rho}\mathbf{1}_{|\frac{y}{|y|}-e|\leq \varepsilon}}{|y|^{d-\alpha-1}}dy\\&+ \frac{\varepsilon^{-d+1}}{\rho^\alpha r^d}\int_{\mathbb{R}^d}|\Omega_n(r\theta-y)-\Omega_n(r\theta)|\frac{\mathbf{1}_{|y|\leq\rho}\mathbf{1}_{|\frac{y}{|y|}-e|\leq \varepsilon}}{|y|^{d-\alpha}}dy\\&\lesssim \frac{2^{-j}|\Omega_n(\theta)|}{(2^j\rho)^{d}}+\frac{2^{j\alpha}\varepsilon^{-d+1}}{(2^j\rho)^{d}}\int_{\mathbb{R}^d}|\Omega_n(\theta-y)-\Omega_n(\theta)|\frac{\mathbf{1}_{|y|\leq 2^{-j}}\mathbf{1}_{|\frac{y}{|y|}-e|\leq \varepsilon}}{|y|^{d-\alpha}}dy.
	\end{align*}
	Thus, 
	\begin{align*}
&	(2^{j}\rho)^d\int_{S^{d-1}}\sup_{r\in [2^{j}\rho,2^{j+1}\rho]}|\mathbf{K}^{\varepsilon,n}_{e,\rho}(r\theta)|d\mathcal{H}^{d-1}(\theta)\\&\leq  2^{-j}||\Omega_n||_{L^1(S^{d-1})}+\varepsilon^{-d+1}2^{j\alpha}\int_{\mathbb{R}^d}\frac{\mathbf{1}_{|y|\leq 2^{-j}}\mathbf{1}_{|\frac{y}{|y|}-e|\leq \varepsilon}}{|y|^{d-\alpha-\alpha_0/2}}dy \sup_{|h|\leq 1/2}\frac{||\Omega_n(\cdot-h)-\Omega_n(\cdot)||_{L^1(S^{d-1})}}{|h|^{\alpha_0/2}}\\& \overset{\eqref{con2'}, \eqref{con2b}}\lesssim 2^{-j}+2^{-j\alpha_0/2}\lesssim 2^{-j\alpha_0/2},
	\end{align*}
	which implies 
	\begin{align}
	\sum_{j=1}^{\infty}(2^{j}\rho)^d\int_{S^{d-1}}\sup_{r\in [2^{j}\rho,2^{j+1}\rho]}|\mathbf{K}^{\varepsilon,n}_{e,\rho}(r\theta)|d\mathcal{H}^{d-1}(\theta)\lesssim 1.\label{es21}
	\end{align}
	\textbf{2. Case}:  $j\leq 0$. We prove that 
	\begin{align}\label{es22}
A_j:=	(2^{j}\rho)^d\int_{S^{d-1}}\sup_{r\in [2^{j}\rho,2^{j+1}\rho]}|\mathbf{K}^{\varepsilon,n}_{e,\rho}(r\theta)|\leq C|\log(\varepsilon)|2^{j\frac{1}{2}\min\{\alpha,1\}}.
	\end{align}
	Indeed, let  $\psi$ be a smooth function in $\mathbb{R}^d$ such that $\psi(x)=1$ if $|x|\leq 1$ and $\psi(x)=0$ if $|x|>2$.\\
Assume $r\in (2^{j}\rho,2^{j+1}\rho]$. 
 One has for any $\theta\in S^{d-1}$,
	\begin{align}\nonumber
A_j&\leq \sum_{i=-\infty}^{1}	(2^{j}\rho)^d\int_{S^{d-1}}\sup_{r\in [2^{j}\rho,2^{j+1}\rho]}|\mathbf{K}_{i,n}(r\theta)| + (2^{j}\rho)^d\int_{S^{d-1}}\sup_{r\in [2^{j}\rho,2^{j+1}\rho]}|\varphi_{\rho}(r\theta)\mathbf{K}_{n}(r\theta)|\\&\lesssim 2^{jd}+ \sum_{i=-\infty}^{1}(2^{j}\rho)^d\int_{S^{d-1}}\sup_{r\in [2^{j}\rho,2^{j+1}\rho]}|\mathbf{K}_{i,n}(r\theta)| ,\label{Z4}
	\end{align}
where 
	\begin{align*}
	\mathbf{K}_{i,n}(r\theta)= \frac{\varepsilon^{-d+1}}{\rho^\alpha}\int_{\mathbb{R}^d}\mathbf{K}_{n}(r\theta-y)\varphi_{\zeta\rho}(y)\frac{\langle\phi^{e,\varepsilon}(y/\rho),\eta^{\kappa}_{y_{\tau}}\rangle}{|y|^{d-\alpha}}\left(\psi(2^{-i}\rho^{-1} y)-\psi(2^{-i+1}\rho^{-1} y)\right)dy.
	\end{align*}
	We now estimate $$\left(\sum_{i=-\infty}^{j-3}+\sum_{i=j+3}^{1}\right)(2^{j}\rho)^d\int_{S^{d-1}}\sup_{r\in [2^{j}\rho,2^{j+1}\rho]}|\mathbf{K}_{i,n}(r\theta)|.$$
To do this, we have 
		\begin{align*}
	&\left(\sum_{i=-\infty}^{j-3}+\sum_{i=j+3}^{1}\right)|\mathbf{K}_{i,n}(r\theta)|\\&\quad\quad\quad\lesssim \left(\sum_{i=-\infty}^{j-3}+\sum_{i=j+3}^{1}\right) \frac{\varepsilon^{-d+1}}{\rho^\alpha}\int_{\mathbb{R}^d}\frac{|\Omega_n(r\theta-y)|}{|r\theta-y|^d}\frac{\mathbf{1}_{|\frac{y}{|y|}-e|\leq \varepsilon}}{|y|^{d-\alpha}}\mathbf{1}_{2^{i-1}\rho<|y|<2^{i+1}\rho}dy\\&\quad\quad\quad\lesssim\left(\sum_{i=-\infty}^{j-3}+\sum_{i=j+3}^{1}\right) \frac{\varepsilon^{-d+1}}{\rho^d 2^{(d-\alpha)j}}\int_{\mathbb{R}^d}\frac{|\Omega_n(\theta-y)|}{1+|y |^d}\frac{\mathbf{1}_{|\frac{y}{|y|}-e|\leq \varepsilon}}{|y|^{d-\alpha}}\mathbf{1}_{2^{i-j-2}<|y|<2^{i-j+1}}dy\\&\quad\quad\quad \lesssim \sum_{i=-\infty}^{j-3} \frac{2^{(i-j)\alpha}}{\rho^d2^{j(d-\alpha)}}G_{2^{i-j+1}}(\theta)+ \sum_{i=j+3}^{1} \frac{1}{\rho^d 2^{i(d-\alpha)}}G_{2^{i-j+1}}(\theta),
	\end{align*}
	where 
	$$G_{\vartheta}(\theta)=\frac{\varepsilon^{-d+1}}{\vartheta^d}\int_{\mathbb{R}^d}|\Omega_n(\theta-y)|\mathbf{1}_{|\frac{y}{|y|}-e|\leq \varepsilon}\mathbf{1}_{\vartheta/8<|y|<\vartheta}dy.$$
	We claim that 
	$$\int_{S^{d-1}}	G_{\vartheta}(\theta)\lesssim 1+\vartheta^{d-1}~~\text{if}~~\vartheta\geq 16~\text{or}~\vartheta\leq 1/2.$$ 
In fact, 
\textit{Case:} $\vartheta\leq 1/2$. Thanks to $\Omega_n(\theta)=\Omega_n(\varsigma\theta)$ for any $\varsigma>0,\theta\in S^{d-1}$, we have
\begin{align*}
\int_{S^{d-1}}G_{\vartheta}(\theta)&=\frac{5}{2} \frac{\varepsilon^{-d+1}}{\vartheta^d}\int_{\mathbb{R}^d} \int_{4/5<|x|<6/5}|\Omega_n(x-|x| y)|dx\mathbf{1}_{|\frac{y}{|y|}-e|\leq \varepsilon}\mathbf{1}_{\vartheta/8<|y|<\vartheta}dy
\\&\lesssim||\Omega_n||_{L^1(S^{d-1})} \frac{\varepsilon^{-d+1}}{\vartheta^d}\int_{\mathbb{R}^d}\mathbf{1}_{|\frac{y}{|y|}-e|\leq \varepsilon}\mathbf{1}_{\vartheta/16<|y|<2\vartheta}dy\overset{\eqref{con2b}}\lesssim 1.
\end{align*}
\textit{Case:} $\vartheta\geq 16$. We have 
	\begin{align*}
\int_{S^{d-1}}G_{\vartheta}(\theta)
&=\frac{5}{2} \frac{\varepsilon^{-d+1}}{\vartheta^d}\int_{\mathbb{R}^d} \int_{4/5<|x|<6/5}|\Omega_n(x-|x| y)|dx\mathbf{1}_{|\frac{y}{|y|}-e|\leq \varepsilon}\mathbf{1}_{\vartheta/16<|y|<2\vartheta}dy  \\&\lesssim \frac{\varepsilon^{-d+1}}{\vartheta^d}\int_{\mathbb{R}^d}\int_{|y|-2<|x|<2+|y|}|\Omega_n(x)|dx\mathbf{1}_{|\frac{y}{|y|}-e|\leq \varepsilon}\mathbf{1}_{\vartheta/32<|y|<4\vartheta}dy dh\\&\lesssim \frac{\varepsilon^{-d+1}}{\vartheta^d}\int_{\mathbb{R}^d}|y|^{d-1}||\Omega_n||_{L^1(S^{d-1})}\mathbf{1}_{|\frac{y}{|y|}-e|\leq \varepsilon}\mathbf{1}_{\vartheta/32<|y|<4\vartheta}dy dh\\&\lesssim\vartheta^{d-1}.
\end{align*}
Therefore, 
	\begin{align}\nonumber
	&\left(\sum_{i=-\infty}^{j-3}+\sum_{i=j+3}^{1}\right)(2^{j}\rho)^d\int_{S^{d-1}}\sup_{r\in [2^{j}\rho,2^{j+1}\rho]}|\mathbf{K}_{i,n}(r\theta)|\\&~\nonumber~~~~\lesssim \sum_{i=-\infty}^{j-3} \frac{2^{jd}2^{(i-j)\alpha}}{2^{j(d-\alpha)}} \int_{S^{d-1}}G_{2^{i-j+1}}(\theta)+ \sum_{i=j+3}^{1} \frac{2^{jd}}{ 2^{i(d-\alpha)}} \int_{S^{d-1}}G_{2^{i-j+1}}(\theta)
	\\&~\nonumber~~~~\lesssim \sum_{i=-\infty}^{j-3} 2^{i\alpha}+ \sum_{i=j+3}^{1} 2^j 2^{i(\alpha-1)}\\&~~~~~\label{es23}\lesssim 2^{j\frac{1}{2}\min\{\alpha,1\}}.
	\end{align}
Here we have used the fact that $j\leq 0$ in the last inequality. \vspace{.2cm}\\
	Next, we estimate 
	$$\sum_{i=j-2}^{j+2}(2^{j}\rho)^d\int_{S^{d-1}}\sup_{r\in [2^{j}\rho,2^{j+1}\rho]}|\mathbf{K}_{i,n}(r\theta)|.$$
 We can decompose 
$$
	\mathbf{K}_{i,n}(r\theta)=\sum_{l=-4}^{\infty} \mathbf{K}_{i,n,l}(r\theta) ~~~i=j-2,...,j+2\leq 2$$
	where 
	\begin{align*}
	\mathbf{K}_{i,n,l}(r\theta)&= \frac{\varepsilon^{-d+1}}{\rho^\alpha}\int_{\mathbb{R}^d}\mathbf{1}_{2^{i-l-1}\rho<|r\theta-y|\leq 2^{i-l}\rho }\mathbf{K}_{n}(r\theta-y)\varphi_{\zeta\rho}(y)\frac{\langle\phi^{e,\varepsilon}(y/\rho),\eta^{\kappa}_{y_{\tau}}\rangle}{|y|^{d-\alpha}}\\&~~~~\times\left(\psi(2^{-i}\rho^{-1} y)-\psi(2^{-i+1}\rho^{-1} y)\right)dy.
	\end{align*}
	First we will show that 	\begin{align}
	\sum_{i=j-2}^{j+2}(2^j\rho)^d\int_{S^{d-1}}\sup_{r\in [2^{j}\rho,2^{j+1}\rho]}|\mathbf{K}_{i,n,l}(r\theta)|\lesssim2^{j\alpha}~~\forall~~l\geq -4.\label{es26}
	\end{align}
In fact, one has
	\begin{align*}
	&(2^j\rho)^d\int_{S^{d-1}}\sup_{r\in [2^{j}\rho,2^{j+1}\rho]}|\mathbf{K}_{i,n,l}(r\theta)|\lesssim  (2^j\rho)^d \frac{\varepsilon^{-d+1}}{\rho^\alpha}\frac{1}{(2^{i-l}\rho)^{d}(2^{i}\rho)^{d-\alpha}}\\&~~~~~~~~~\times\int_{S^{d-1}}\sup_{r\in [2^{j}\rho,2^{j+1}\rho]}\int_{\mathbb{R}^d}|\Omega_n(r\theta-y)|\mathbf{1}_{|\frac{y}{|y|}-e|\leq \varepsilon}\mathbf{1}_{|r\theta-y|\sim 2^{i-l}\rho}\mathbf{1}_{|y|\sim2^{i}\rho}dyd\mathcal{H}^{d-1}(\theta).
	\end{align*}
	We change variable to get that 
	\begin{align*}
	&(2^j\rho)^d\int_{S^{d-1}}\sup_{r\in [2^{j}\rho,2^{j+1}\rho]}|\mathbf{K}_{i,n,l}(r\theta)|\\&\quad\quad\quad\lesssim \varepsilon^{-d+1}2^{j\alpha}2^{ld}\int_{S^{d-1}}\int_{\mathbb{R}^d}|\Omega_n(\theta-y)|\mathbf{1}_{|\frac{y}{|y|}-e|\leq \varepsilon}\mathbf{1}_{|\theta-y|\sim 2^{-l}} dyd\mathcal{H}^{d-1}(\theta).
	\end{align*}
	On the other hand, 
	\begin{align*}
	&\int_{S^{d-1}}\int_{\mathbb{R}^d}|\Omega_n(\theta-y)|\mathbf{1}_{|\frac{y}{|y|}-e|\leq \varepsilon}\mathbf{1}_{|\theta-y|\sim 2^{-l}} dyd\mathcal{H}^{d-1}(\theta)\\&\quad\quad\quad\lesssim  2^{l}\int_{||h|-1|\leq  2^{-l-10}}\int_{\mathbb{R}^d}|\Omega_n(h-y)|\mathbf{1}_{|\frac{y}{|y|}-e|\leq \varepsilon}\mathbf{1}_{|h-y|\sim 2^{-l}}\mathbf{1}_{||y|-1|\lesssim 2^{-l}} dydh\\& \quad\quad\quad\lesssim 2^{l}\int_{\mathbb{R}^d}\left[\int_{\mathbb{R}^d}|\Omega_n(h-y)|\mathbf{1}_{|h-y|\sim 2^{-l}}dh\right]\mathbf{1}_{|\frac{y}{|y|}-e|\leq \varepsilon}\mathbf{1}_{||y|-1|\lesssim 2^{-l}} dy
	\\& \quad\quad\quad\lesssim 2^{-(d-1)l}\int_{\mathbb{R}^d}\mathbf{1}_{|\frac{y}{|y|}-e|\leq \varepsilon}\mathbf{1}_{||y|-1|\lesssim 2^{-l}} dy
		\\&\quad\quad\quad\lesssim 2^{-dl}\varepsilon^{d-1}
	\end{align*}
Consequently,
		\begin{align*}
	(2^j\rho)^d\int_{S^{d-1}}\sup_{r\in [2^{j}\rho,2^{j+1}\rho]}|\mathbf{K}_{i,n,l}(r\theta)|\lesssim 2^{j\alpha},
	\end{align*}
this implies \eqref{es26}. \\
	Next, thanks to  \ref{cance-condi'},  we have for $l_0>100$, 
	\begin{align*}
	&\sum_{l=l_0}^{\infty}|\mathbf{K}_{i,n,l}(r\theta)|\\&\quad\quad\lesssim \sum_{l=l_0}^{\infty} \frac{\varepsilon^{-d+1}}{\rho^\alpha}\int_{\mathbb{R}^d}\mathbf{1}_{|y|\sim 2^{i-l}\rho }|\mathbf{K}_{n}(y)||\Theta(r\theta-y)-\Theta(r\theta)|dy+ \frac{\varepsilon^{-d+1}}{\rho^\alpha}|\Theta(r\theta)|,
	\end{align*}
	where $$
	\Theta(y)=\varphi_{\zeta\rho}(y)\frac{\langle\phi^{e,\varepsilon}(y/\rho),\eta^{\kappa}_{y_{\tau}}\rangle}{|y|^{d-\alpha}}\left(\psi(2^{-i}\rho^{-1} y)-\psi(2^{-i+1}\rho^{-1} y)\right).$$
	Since $|\varphi_{\zeta\rho}(y)|\leq C\mathbf{1}_{|y|>\zeta\rho}, |\nabla\varphi_{\zeta\rho}(y)|\leq \frac{C\mathbf{1}_{\zeta\rho<|y|\leq 2\zeta\rho}}{|y|}$, so we easily see that
$$|\Theta(r\theta)|\lesssim \frac{1_{|\theta-e|\leq \varepsilon}}{(2^{i}\rho)^{d-\alpha}},\quad\quad	|\Theta(r\theta-y)-\Theta(r\theta)|\lesssim \frac{|y|}{\varepsilon}\frac{1}{(2^i\rho)^{d-\alpha+1}},
$$ for any $l>100$, $2^{i-l-1}\rho<|r\theta-y|\leq 2^{i-l}\rho$ and $r\in [2^{j}\rho,2^{j+1}\rho]$. \\
	Thus, we get 
	\begin{align}\nonumber
	&\sum_{l=l_0}^{\infty}\sum_{i=j-2}^{j+2}(2^{j}\rho)^d\int_{S^{d-1}}\sup_{r\in [2^{j}\rho,2^{j+1}\rho]}|\mathbf{K}_{i,n,l}(r\theta)|d\mathcal{H}^{d-1}(\theta)\\&\nonumber\quad \quad\lesssim \sum_{l=l_0}^{\infty}\sum_{i=j-2}^{j+2}(2^{j}\rho)^d\int_{S^{d-1}} \frac{\varepsilon^{-d+1}}{\rho^\alpha}\int_{|y|\sim 2^{i-l}\rho}\frac{|\Omega_n(y)|}{\varepsilon (2^i\rho)^{d-\alpha+1}|y|^{d-1}}dyd\mathcal{H}^{d-1}(\theta)\\\nonumber&\quad\quad\quad+ \sum_{i=j-2}^{j+2} (2^{j}\rho)^d\int_{S^{d-1}}\frac{\varepsilon^{-d+1}}{\rho^\alpha}\frac{\mathbf{1}_{|\theta-e|\leq \varepsilon}}{(2^{i}\rho)^{d-\alpha}}d\mathcal{H}^{d-1}(\theta)\\&\nonumber\quad\quad \lesssim \sum_{l=l_0}^{\infty}\sum_{i=j-2}^{j+2}(2^{j}\rho)^d \frac{\varepsilon^{-d+1}}{\rho^\alpha}\frac{2^{i-l}\rho}{\varepsilon (2^i\rho)^{d-\alpha+1}}+ 2^{j\alpha}
	\\&\quad\quad \lesssim 2^{j\alpha} \left(\varepsilon^{-d}2^{-l_0}+1\right).\label{es27}
	\end{align}
	Therefore, it follows from \eqref{es26} and \eqref{es27} that 
$$
	\sum_{i=j-2}^{j+2}(2^{j}\rho)^d\int_{S^{d-1}}\sup_{r\in [2^{j}\rho,2^{j+1}\rho]}|\mathbf{K}_{i,n}(r\theta)|d\mathcal{H}^{d-1}(\theta)\leq  C2^{j\alpha}(1+l_0+\varepsilon^{-d}2^{-l_0}).$$
	At this point we take $2^{l_0}\sim \varepsilon^{-d}$ and obtain that 
$$
	\sum_{i=j-2}^{j+2}(2^{j}\rho)^d\int_{S^{d-1}}\sup_{r\in [2^{j}\rho,2^{j+1}\rho]}|\mathbf{K}_{i,n}(r\theta)|d\mathcal{H}^{d-1}(\theta)\leq C  2^{j\alpha}|\log(\varepsilon)|.$$
	From this and \eqref{es23}, \eqref{Z4} we get \eqref{es22}.\vspace{.2cm}\\ Then, \eqref{es20} follows from \eqref{es21} and \eqref{es22}. The proof is complete.
\end{proof}

\section{Regular Lagrangian flows and quantitative estimates with $BV$ vector fields }
We first recall some definitions and properties of Regular Lagrangian flows introduced in \cite{BoCrip}.  Given a vector field $\mathbf{B}(t,x):(0,T)\times \mathbb{R}^d\to\mathbb{R}^d$, we assume the following growth condition:\\
\textbf{(R1)} The vector field $\mathbf{B}(t,x)$ can be decomposed as $$
\frac{\mathbf{B}(t,x)}{1+|x|}=\tilde{B}_1(t,x)+\tilde{B}_2(t,x)$$
with $
\tilde{B}_1\in L^1((0,T);L^1(\mathbb{R}^d))$ and $\tilde{B}_2\in L^1((0,T);L^\infty(\mathbb{R}^d))$.\\
We  denote by $L^0_{loc}$ the space of measurable functions endowed with local convergence in measure, and $\mathcal{B}(E_1;E_2)$ the space of bounded functions between the sets $E_1$ and $E_2$, $\log L_{loc}(\mathbb{R}^d)$ the space of measurable functions~$u:\mathbb{R}^d\to \mathbb{R}$ such that $\int_{B_r}\log(1+|u(x)|)dx$ is finite for any $r>0$.
The following is definition of Regular Lagrangian flow:
\begin{definition}\label{de-RLF} If $\mathbf{B}$ is a vector field satisfying \textbf{(R1)}, then for fixed $t_0\in [0,T)$, a map $$
	X\in C([t_0,T];L^0_{\loc}(\mathbb{R}^d))\cap \mathcal{B}([t_0,T];\log  L_{\loc}(\mathbb{R}^d))$$
	is a regular Lagrangian flow in the renormalized sense relative to $\mathbf{B}$ starting at $t_0$ if we have the following: \vspace{.2cm}\\
	\noindent $\mathbf{i)}$ The equation $
	\partial_t(h(X(t,x)))=(\nabla h)(X(t,x))\mathbf{B}(t,X(t,x))$
	holds in $\mathcal{D}'((t_0,T)\times\mathbb{R}^d)$, for every function $h\in C^1(\mathbb{R}^d,\mathbb{R})$
	that satisfies $
	|h(z)|\leq C(1+\log(1+|z|))$ and $|\nabla h(z)|\leq \frac{C}{1+|z|}$ for all $z\in \mathbb{R}^d$,
	 \\
	\noindent $\mathbf{ii)}$ $X(t_0,x)=x$ for $\mathcal{L}^d-$a.e $x\in \mathbb{R}^d$,\\
		\noindent $\mathbf{iii)}$  There exists a constant $L>0$ such that  $X(t,.)_{\#}\mathcal{L}^d \leq L\mathcal{L}^d$ for any $t\in [t_0,T]$ i.e $$
		\int_{\mathbb{R}^d}\varphi(X(t,x))dx
		\leq L\int_{\mathbb{R}^d}\varphi(x)dx,$$
		for all measurable $\varphi:\mathbb{R}^d\to[0,\infty)$. 
The constant $L$ in \textbf{iii)} will be called the compressibility constant of $X$. 
\end{definition}

We define the sub-level of the flow as 
$$
G_R=\left\{x\in \mathbb{R}^d: |X(t,x)|\leq R~~\text{for almost all }~~t\in [t_0,T] \right\}.$$
The following Lemma gives a basic estimate for the decay of the super-levels of a regular Lagrangian flow. This Lemma was proven in \cite{BoCrip}.
\begin{lemma}\label{decay-lem}  Let $\mathbf{B}$ be a vector field satisfying \textbf{(R1)} and let $X$ be a regular Lagrangian flow relative  to $\mathbf{B}$ starting at time $t_0$, with compressibility constant $L$. Then for all $r,\lambda>0$ we have $
	\mathcal{L}^d(B_r\backslash G_R)\leq g(r,R)$
	where the function $g$ depends only on $L$, $||\tilde{B}_1||_{L^1((0,T);L^1(\mathbb{R}^d))}$ and $||\tilde{B}_2||_{L^1((0,T);L^\infty(\mathbb{R}^d))}$ and satisfies $g(r,R)\downarrow 0$ for $r$ fixed and $R\uparrow \infty.$
\end{lemma}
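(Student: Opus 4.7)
The plan is to test the renormalization equation in Definition \ref{de-RLF}(i) against the logarithmic function $h(z):=\log(1+|z|)$, which satisfies the growth bounds $|h(z)|\leq \log(1+|z|)$ and $|\nabla h(z)|\leq (1+|z|)^{-1}$ required by Definition \ref{de-RLF}(i). Applying (i) and integrating in time gives, for a.e. $x\in\mathbb{R}^d$ and every $t\in[t_0,T]$, the integral identity
\begin{align*}
\log(1+|X(t,x)|)=\log(1+|x|)+\int_{t_0}^{t}\frac{X(s,x)}{|X(s,x)|(1+|X(s,x)|)}\cdot \mathbf{B}(s,X(s,x))\,ds.
\end{align*}
Using assumption \textbf{(R1)} to split $\mathbf{B}/(1+|\cdot|)=\tilde{B}_1+\tilde{B}_2$, I obtain the pointwise bound
\begin{align*}
\sup_{t\in[t_0,T]}\log(1+|X(t,x)|)\leq \log(1+|x|)+\int_{t_0}^{T}|\tilde{B}_1(s,X(s,x))|\,ds+\int_{t_0}^{T}\|\tilde{B}_2(s,\cdot)\|_{L^\infty(\mathbb{R}^d)}\,ds.
\end{align*}

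Next I integrate the previous inequality over $B_r$. For the $\tilde{B}_2$ term this produces the factor $|B_r|\,\|\tilde{B}_2\|_{L^1((0,T);L^\infty)}$. For the $\tilde{B}_1$ term I use the compressibility bound $X(s,\cdot)_{\#}\mathcal{L}^d\leq L\mathcal{L}^d$ from Definition \ref{de-RLF}(iii) to conclude that
\begin{align*}
\int_{B_r}|\tilde{B}_1(s,X(s,x))|\,dx\leq L\,\|\tilde{B}_1(s,\cdot)\|_{L^1(\mathbb{R}^d)},
\end{align*}
so integration in $s$ over $[t_0,T]$ yields an overall bound $L\,\|\tilde{B}_1\|_{L^1((0,T);L^1)}$. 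Altogether, setting
\begin{align*}
M(r):=\int_{B_r}\log(1+|x|)\,dx+L\,\|\tilde{B}_1\|_{L^1((0,T);L^1)}+|B_r|\,\|\tilde{B}_2\|_{L^1((0,T);L^\infty)},
\end{align*}
I obtain $\int_{B_r}\sup_{t\in[t_0,T]}\log(1+|X(t,x)|)\,dx\leq M(r)$.

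Finally, I apply Chebyshev's inequality: every $x\in B_r\setminus G_R$ satisfies, by definition of $G_R$, $\sup_{t}|X(t,x)|>R$, hence $\sup_{t}\log(1+|X(t,x)|)>\log(1+R)$. Therefore
\begin{align*}
\mathcal{L}^d(B_r\setminus G_R)\leq \frac{M(r)}{\log(1+R)}=:g(r,R),
\end{align*}
and $g(r,R)\downarrow 0$ as $R\uparrow\infty$ with $r$ fixed, with $g$ depending only on the quantities listed in the statement. The main point to be careful about is the passage from the distributional identity in Definition \ref{de-RLF}(i) to the pointwise (a.e.) integral identity used above; this is a standard consequence of Fubini together with the fact that $t\mapsto h(X(t,x))$ has an $L^1_t$ distributional derivative equal to $(\nabla h)(X(t,x))\cdot \mathbf{B}(t,X(t,x))$, so it coincides a.e. with an absolutely continuous representative. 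Apart from this technical point, the argument is a direct Gronwall-type estimate and no deeper machinery is needed.
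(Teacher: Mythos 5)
Your proof is correct and is essentially the standard argument from Bouchut--Crippa, which is exactly the source the paper cites for this lemma (the paper gives no proof of its own): renormalize with a logarithmic test function, split via \textbf{(R1)}, use the compressibility constant to change variables in the $\tilde{B}_1$ term, and conclude by Chebyshev against $\log(1+R)$. The only cosmetic repair needed is that $h(z)=\log(1+|z|)$ is not $C^1$ at the origin as Definition \ref{de-RLF}(i) requires, so one should use an admissible smooth substitute such as $h(z)=\frac{1}{2}\log(1+|z|^2)$, which satisfies the same growth bounds and changes nothing else in the argument.
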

The following is our main theorem.
\begin{theorem}\label{mainthm2} Let $\mathbf{B}\in L^1([0,T];L^1_{\loc}(\mathbb{R}^d,\mathbb{R}^d))$ and $R>1$. Assume that  
	\begin{align}\label{vectorfiB}	\mathbf{B}^i=\sum_{j=1}^{m}\mathbf{K}_j^i\star b_j~~\text{in}~B_{2R}, ~\text{with}~b_j\in L^1([0,T],BV(\mathbb{R}^d)),
	\end{align}
	where $(\mathbf{K}_j^i)_{i,j}$ are singular kernels in $\mathbb{R}^d$  satisfying conditions of singular kernel  $\mathbf{K}$ in Theorem \ref{mainthm1} with constants $c_1,c_2>0$.
	Let  $t_0\in [0,T)$, $\mathbf{B}_1,\mathbf{B}_2\in L^1([0,T];L^1_{\loc}(\mathbb{R}^d,\mathbb{R}^d))$ and let $X_1,X_2$ be regular Lagrangian flows starting at time $t_0$ associated to $\mathbf{B}_1,\mathbf{B}_2$ resp. with  compression constants $L_1,L_2\leq L_0$ for some $L_0>0$. Assume that  $
		||(\mathbf{B}_1,\mathbf{B}_2)||_{L^1([0,T]\times B_{R})}\leq c_R.$ Then, if $\operatorname{div}(\mathbf{B})\in L^1((0,T),\mathcal{M}_{b}(B_{2R}))$ and  $(\operatorname{div}(\mathbf{B}))^+\in L^1((0,T),L^1(B_{2R}))$, for any $\kappa\in (0,1), r>1$ there exists $\delta_0=\delta_0(d,T,r,R,c_R,c_1,c_2,L_0,b,\kappa)\in (0,1/100)$ such that 
\begin{align}\nonumber
&\sup_{t_1\in [t_0,T]}\mathcal{L}^d\left(\left\{x\in B_r:|X_{1t_1}(x)-X_{2t_1}(x)|>\delta^{1/2}\right\}\right)\lesssim \mathcal{L}^d\left(B_r\backslash G_{1,R}\right)+\mathcal{L}^d\left(B_r\backslash G_{2,R}\right)\\& + \frac{L_0}{\delta }||\left(\mathbf{B}_1-\mathbf{B},\mathbf{B}_2-\mathbf{B}\right)||_{L^1([0,T]\times B_R)}+\kappa~~\text{for any}~~\delta\in (0,\delta_0).\label{quatity}
\end{align}
 where $
G_{i,R}=\left\{x\in \mathbb{R}^N:|X_i(s,x)|\leq R~~\text{ for almost all}~s\in [t_0,T]\right\}$ for i=1,2.
\end{theorem}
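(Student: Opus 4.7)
\textbf{Plan for the proof of Theorem \ref{mainthm2}.} The idea is to implement, in the full convolution setting, the $\eta$-weighted log functional sketched in the introduction, with a careful passage from $\mathbf{B}\in BV$ to $\mathbf{B}^i=\sum_j\mathbf{K}_j^i\star b_j$ via the Kakeya estimate of Theorem \ref{mainthm1}. First I would fix the radius $R$ so that the sublevels $G_{i,R}$ are almost the full $B_r$ (Lemma \ref{decay-lem}), fix a large $R_1>R$ that absorbs the geometric factors $\rho_0$ coming from the Jabin representation, and introduce, for each $b_j\in L^1([0,T];BV)$, Alberti's rank-one direction $\eta_j=dD^sb_j/d|D^sb_j|$ together with a smoothing $\eta_j^\kappa\in C^\infty_b(\mathbb{R}^d;S^{d-1})$ converging $|D^sb_j|$-a.e.\ to $\eta_j$. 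Then I would define
\begin{align*}
\Phi_\delta^\gamma(t):=\tfrac12\int_{B_r\cap G_{1,R}\cap G_{2,R}}\log\!\left(1+\frac{|X_{1t}-X_{2t}|^2+\gamma\sum_j|\langle\eta_j^\kappa(X_{1t}),X_{1t}-X_{2t}\rangle|^2}{\delta^2}\right)dx,
\end{align*}
so that $\Phi_\delta^\gamma(t)\geq\log(1+\delta^{-1})\mathcal{L}^d(\{x\in B_r\cap G_{1,R}\cap G_{2,R}:|X_{1t}-X_{2t}|>\delta^{1/2}\})$.

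Differentiating $\Phi_\delta^\gamma$ along the flows and writing $\mathbf{B}_1(X_{1t})-\mathbf{B}_2(X_{2t})=(\mathbf{B}_1-\mathbf{B})(X_{1t})+(\mathbf{B}(X_{1t})-\mathbf{B}(X_{2t}))+(\mathbf{B}-\mathbf{B}_2)(X_{2t})$, the differences $\mathbf{B}_i-\mathbf{B}$ are handled by the compressibility bound, producing exactly the $\frac{C(d)L_0}{\delta}\|(\mathbf{B}_1-\mathbf{B},\mathbf{B}_2-\mathbf{B})\|_{L^1}$ term in \eqref{quatity}. For the main increment $\mathbf{B}(X_{1t})-\mathbf{B}(X_{2t})$ I would apply Proposition \ref{Jabin-lem} to each component $\mathbf{K}_j^i\star b_j$: since $D(\mathbf{K}_j^i\star b_j)=\mathbf{K}_j^i\star Db_j$, the representation produces convolutions of $\mathbf{K}_j^i$ against $Db_j$ against the anisotropic kernels $\Theta_l^{\varepsilon,\mathbf{e}}$, which are precisely of the form controlled by $\mathbf{T}_\varepsilon^{\mathbf{K}_j^i}$. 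When the direction $\eta_j^\kappa(X_{1t})$ is projected in, the rank-one identity $\langle\xi_j,\eta_j\rangle=0$ (consequence of $\operatorname{div}\mathbf{B}\in L^1(L^1)$, and inherited by the $b_j$ after splitting by the kernels) lets me replace $\langle\eta_j^\kappa(X_{1t}),\xi_j(z)\rangle$ by $|\eta_j-\eta_j^\kappa|+\|\nabla\eta_j^\kappa\|_\infty(|X_{1t}-z|+|X_{1t}-X_{2t}|)$, which gains the extra factor $|X_{1t}-X_{2t}|$ needed to absorb the singular denominator.

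After these algebraic manipulations, change of variables $y=X_{it}(x)$ (compressibility $\leq L_0$) and integration in $t$, the full time-integral of $d\Phi_\delta^\gamma/dt$ is dominated by a sum of expressions of the form $\int_0^T\int_{B_{R_1}}\min\{\delta^{-1}|f(x,t)|,\mathbf{T}_\varepsilon^{\mathbf{K}_j^i}(Db_{j,t})(x)\}dxdt$ with a prefactor $\gamma^{-1/2}\varepsilon^{-d+1}$, plus similar terms with $\mathbf{T}$ (non-Kakeya, handled by Proposition \ref{singu-operator} and Remark \ref{rem3}) with prefactor $\varepsilon$, plus lower-order pieces controlled by $\|\nabla\eta_j^\kappa\|_\infty$ Riesz potentials of $|Db_j|$ (integrable) and by $|\eta_j-\eta_j^\kappa|$. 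Dividing by $|\log\delta|$ and invoking Theorem \ref{mainthm1} (estimate \eqref{es6}) on the Kakeya pieces gives a bound $\lesssim\gamma^{-1/2}\varepsilon^{-d+1}|\log\varepsilon|\sum_j\int_0^T|D^sb_{j,t}|(\overline{B}_{R_1})dt+\varepsilon\cdot(\text{bounded})+\circ_\kappa(1)$. Sending first $\delta\to0$, then $\gamma\to\infty$ (killing the $\gamma^{-1/2}\varepsilon^{-d+1}|\log\varepsilon|$ factor for every fixed $\varepsilon$), then $\varepsilon\to0$, and finally $\kappa\to0$ and the smoothing parameter for the $\eta_j$-approximation, forces the contribution of $\mathbf{B}(X_{1t})-\mathbf{B}(X_{2t})$ to the level-set measure below $\kappa$, which yields \eqref{quatity}.

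The principal obstacle is the third step: one must organize the use of $\eta_j^\kappa$ componentwise so that the resulting singular integrals are exactly Kakeya integrals of BV-gradients (to which Theorem \ref{mainthm1} applies, with its sharp $|\log\varepsilon|$ weak-type bound), while ensuring that the $\varepsilon\to0$ and $\gamma\to\infty$ limits can be taken in an order compatible with the $|\log\varepsilon|$ loss. The balance $\gamma^{-1/2}\varepsilon^{-d+1}|\log\varepsilon|\to0$ before the $\varepsilon$-prefactor term is extracted is what ultimately makes the argument work, and it crucially uses that Theorem \ref{mainthm1} gives a polynomial bound in $|\log\varepsilon|$ rather than in $\varepsilon^{-1}$.
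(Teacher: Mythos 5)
Your overall architecture (the $\eta$-weighted logarithmic functional, splitting off $\mathbf{B}_i-\mathbf{B}$ by compressibility, Proposition \ref{Jabin-lem} applied to each $\mathbf{K}_j^i\star b_j$, and Theorem \ref{mainthm1} on the resulting Kakeya pieces) matches the paper's. But there is a genuine gap at the step you yourself identify as the "principal obstacle": you claim that the orthogonality $\langle\xi_j,\eta_j\rangle=0$ is a "consequence of $\operatorname{div}\mathbf{B}\in L^1(L^1)$, and inherited by the $b_j$ after splitting by the kernels". This is not a real mechanism. Each $b_j$ is a scalar $BV$ function, so its polar decomposition $D^sb_j=\eta_j|D^sb_j|$ carries no orthogonality to exploit, and the hypothesis on $\operatorname{div}(\mathbf{B})=\sum_{i,j}\mathbf{K}_j^i\star D_{x_i}b_j$ is a \emph{nonlocal} (singular-integral) condition on the $Db_j$; it does not transfer to any pointwise constraint on the measures $D^sb_j$. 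Without this, the factor $|\langle\eta^\kappa(X_{1t}),\xi(z)\rangle|\lesssim|\eta-\eta^\kappa|+\|\nabla\eta^\kappa\|_\infty(|X_{1t}-z|+r)$ that you need to tame the singular term simply is not available. The paper's actual route (Lemma \ref{coro-Jab-Hu}) is different: it keeps the dangerous singular contribution intact, writes $\eta_{ti}\xi_{tj}|D^sb_{tj}|=D_{x_i}b_{tj}-D^a_{x_i}b_{tj}$, and uses associativity and commutativity of convolution to recognize $\sum_{i,j}\mathbf{K}_j^i\star\tilde\Theta^{\varepsilon_1,\mathbf{e}}_{1,r}\star D_{x_i}b_{tj}=\tilde\Theta^{\varepsilon_1,\mathbf{e}}_{1,r}\star\operatorname{div}(\mathbf{B}_t)$; the sign condition $\Theta^{\varepsilon,e}_1\geq0$ together with $\operatorname{div}^s(\mathbf{B}_t)\leq0$ and $(\operatorname{div}^a(\mathbf{B}_t))^+\in L^1$ then controls this term (estimate of $(16)$ in Lemma \ref{le-mainthm2}). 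This identification is where the divergence hypothesis actually enters, and it is absent from your plan.

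A secondary but real issue is your order of limits. The $\gamma$-weighted part of the functional (the analogue of $I_6$ in the paper) necessarily produces Kakeya terms with an extra prefactor $\gamma^{1/2}$, so after Theorem \ref{mainthm1} you get a contribution of size $\gamma^{1/2}\varepsilon|\log\varepsilon|\int|D^sb|$. With a single Kakeya parameter $\varepsilon$ and your proposed order ($\gamma\to\infty$ before $\varepsilon\to0$) this term diverges. The paper resolves this by using two independent parameters $\varepsilon_1$ (for the unweighted part $I_5$, sent to $0$ \emph{after} $\gamma\to\infty$) and $\varepsilon_2$ (for the $\gamma$-weighted part $I_6$, sent to $0$ \emph{before} $\gamma\to\infty$). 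You should decouple these parameters accordingly.
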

We derive from Theorem \ref{mainthm2} and Lemma \ref{decay-lem} that 
\begin{corollary}
\label{maincorollary} Let $\mathbf{B}\in L^1([0,T];L^1_{\loc}(\mathbb{R}^d,\mathbb{R}^d))$. Assume that  for any $R>0$, there exist singular kernels $(\mathbf{K}_j^i)_{i,j}$ $(i=1,..,d,j=1,...,m(R))$ in $\mathbb{R}^d$  satisfying conditions of singular kernel  $\mathbf{K}$  in Theorem \ref{mainthm1} with constants $c_{1R},c_{2R}>0$;  and   $b_{jR}\in L^1([0,T],BV(\mathbb{R}^d))$ such that 
\begin{align}\label{vectorfiB'}	\mathbf{B}^i=\sum_{j=1}^{m}\mathbf{K}_{jR}^i\star b_{jR}~~\text{in}~B_{2R}.
\end{align}
Let  $t_0\in [0,T)$, $\mathbf{B}_1,\mathbf{B}_2\in L^1([0,T];L^1_{\loc}(\mathbb{R}^d,\mathbb{R}^d))$ and let $X_1,X_2$ be regular Lagrangian flows starting at time $t_0$ associated to $\mathbf{B}_1,\mathbf{B}_2$ resp. with  compression constants $L_1,L_2\leq L_0$ for some $L_0>0$. Assume that $\mathbf{B}_1,\mathbf{B}_2$ satisfy  $(\mathbf{R}_1)$ i.e  $\frac{\mathbf{B}_l(t,x)}{|x|+1}=\tilde{B}_{1l}
(t,x)+\tilde{B}_{2l}(t,x)$ $l=1,2$ with
$$\sum_{l=1,2}||\tilde{B}_{1l}||_{L^1((0,T);L^1(\mathbb{R}^d))}+||\tilde{B}_{2l}||_{L^1((0,T);L^\infty(\mathbb{R}^d))}\leq C_0.$$ Then, if $\operatorname{div}(\mathbf{B})\in L^1((0,T),\mathcal{M}_{\loc}(\mathbb{R}^d))$ and  $(\operatorname{div}(\mathbf{B}))^+\in L^1((0,T),L^1_{\loc}(\mathbb{R}^d))$, for any $\kappa\in (0,1), r>1$ there exist $R_0=R_0(d,T,r,C_0,L_0,\kappa)>1$, $\delta_0=\delta_0(d,T,r,C_0,c_{1R_0},c_{2R_0},L_0,b_{R_0},\kappa)\in (0,1/100)$ such that 
\begin{align}
&\sup_{t_1\in [t_0,T]}\mathcal{L}^d\left(\left\{x\in B_r:|X_{1t_1}(x)-X_{2t_1}(x)|>\delta^{1/2}\right\}\right)\lesssim \frac{L_0}{\delta }||\left(\mathbf{B}_1-\mathbf{B},\mathbf{B}_2-\mathbf{B}\right)||_{L^1([0,T]\times B_{R_0})}+\kappa,\label{quatityb}
\end{align}
for any $\delta\in (0,\delta_0)$.
\end{corollary}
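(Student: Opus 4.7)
The plan is to reduce Corollary \ref{maincorollary} to Theorem \ref{mainthm2} by choosing the radius $R_0$ large enough that the ``escape from the ball'' terms $\mathcal{L}^d(B_r\setminus G_{i,R_0})$ in \eqref{quatity} can be absorbed into $\kappa$. This is possible because the growth assumption \textbf{(R1)} on $\mathbf{B}_1,\mathbf{B}_2$, together with the uniform compressibility bound $L_0$, is exactly what is needed to invoke Lemma \ref{decay-lem}.

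First I would apply Lemma \ref{decay-lem} separately to $X_1$ and $X_2$. Since each $\mathbf{B}_l$ ($l=1,2$) satisfies $\frac{\mathbf{B}_l(t,x)}{1+|x|}=\tilde B_{1l}(t,x)+\tilde B_{2l}(t,x)$ with $\|\tilde B_{1l}\|_{L^1(L^1)}+\|\tilde B_{2l}\|_{L^1(L^\infty)}\leq C_0$ and the compressibility is $\leq L_0$, the lemma produces a function $g(r,R)$ depending only on $d,T,C_0,L_0$ with $g(r,R)\downarrow 0$ as $R\uparrow\infty$ for each fixed $r$, such that
\[
\mathcal{L}^d(B_r\setminus G_{1,R})+\mathcal{L}^d(B_r\setminus G_{2,R})\leq 2g(r,R).
\]
Given $\kappa\in(0,1)$ and $r>1$, I choose $R_0=R_0(d,T,r,C_0,L_0,\kappa)>\max\{r,1\}$ so that $2g(r,R_0)<\kappa/2$.

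Next I would check that the hypotheses of Theorem \ref{mainthm2} are met on the ball $B_{2R_0}$. The structural decomposition \eqref{vectorfiB'} is given by assumption; the divergence hypotheses on $\mathbf{B}$ in Corollary \ref{maincorollary} localize to exactly what Theorem \ref{mainthm2} asks for on $B_{2R_0}$; and for the $L^1$--bound on $(\mathbf{B}_1,\mathbf{B}_2)$ we estimate
\[
\|\mathbf{B}_l\|_{L^1([0,T]\times B_{R_0})}\leq (1+R_0)\bigl(\|\tilde B_{1l}\|_{L^1([0,T];L^1)}+\mathcal{L}^d(B_{R_0})\|\tilde B_{2l}\|_{L^1([0,T];L^\infty)}\bigr)=: c_{R_0},
\]
with $c_{R_0}$ depending only on $d,R_0,C_0$, and hence only on $d,T,r,C_0,L_0,\kappa$.

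Finally I would apply Theorem \ref{mainthm2} with this $R_0$ and with $\kappa/2$ in place of $\kappa$, obtaining some $\delta_0=\delta_0(d,T,r,C_0,c_{1R_0},c_{2R_0},L_0,b_{R_0},\kappa)\in(0,1/100)$. For every $\delta\in(0,\delta_0)$, inequality \eqref{quatity} gives
\[
\sup_{t_1\in[t_0,T]}\mathcal{L}^d\bigl(\{x\in B_r:|X_{1t_1}(x)-X_{2t_1}(x)|>\delta^{1/2}\}\bigr)\leq 2g(r,R_0)+\frac{C(d)L_0}{\delta}\|(\mathbf{B}_1-\mathbf{B},\mathbf{B}_2-\mathbf{B})\|_{L^1([0,T]\times B_{R_0})}+\frac{\kappa}{2},
\]
and the choice $2g(r,R_0)<\kappa/2$ yields precisely \eqref{quatityb}. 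There is no analytic obstacle here: once Theorem \ref{mainthm2} and Lemma \ref{decay-lem} are in hand, the deduction is pure book-keeping of parameters, and the only point worth verifying is that the escape function $g$ in Lemma \ref{decay-lem} depends on $(X_1,X_2)$ only through the parameters $(C_0,L_0,T)$ allowed to enter $R_0$.
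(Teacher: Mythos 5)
Your proposal is correct and follows exactly the route the paper intends: the paper states the corollary as a direct consequence of Theorem \ref{mainthm2} and Lemma \ref{decay-lem}, and your argument (choose $R_0$ via the decay function $g$ so the escape terms are below $\kappa/2$, verify the localized hypotheses including the $c_{R_0}$ bound from \textbf{(R1)}, then apply Theorem \ref{mainthm2} with $\kappa/2$) is precisely the book-keeping the paper leaves to the reader.
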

\begin{proof}[Proof of Theorem \ref{mainthm2}] Without loss generality, we assume $t_0=0$. \\ \textbf{Step 1:} By Proposition \ref{albertithm},  there exist unit vectors $\xi_t(x)\in \mathbb{R}^m,\eta_t(x)\in \mathbb{R}^d$ such that $D^{s}b_t(x)=\xi_t(x)\otimes\eta_t(x)|D^{s}b_t|(x)$ i.e $D^{s}_{x_j}b_{tk}(x)=\xi_{tk}(x)\eta_{tj}(x)|D^{s}b_t|(x)$ for any $k=1,...,m,j=1,...,d$.\\ 
	Let $\eta_t^{\varepsilon}\in C^\infty((0,T)\times\mathbb{R}^d,\mathbb{R}^d), \xi_t^{\varepsilon}\in C^\infty((0,T)\times\mathbb{R}^d,\mathbb{R}^m)$ be such that $
	|\eta_t^\varepsilon|=|\xi_t^\varepsilon|=1$
	and 
	\begin{align*}
	\lim_{\varepsilon\to 0}\int_{0}^{T}\int_{\mathbb{R}^d}|\eta_t-\eta_t^\varepsilon|d|D^{s}b_t|dt+\lim_{\varepsilon\to 0}\int_{0}^{T}\int_{\mathbb{R}^d}|\xi_t-\xi_t^\varepsilon|d|D^{s}b_t|dt=0.
	\end{align*}
	For $\delta\in (0,\frac{1}{100}),1<\gamma<|\log(\delta)|,\varepsilon>0$, and $t\in [0,T]$, let us define the quantity
	\begin{align}
	\Phi_\delta^{\gamma,\varepsilon}(t)=\frac{1}{2}\int_{D}\log\left(1+\frac{|X_{1t}(x)-X_{2t}(x)|^2+\gamma \langle \eta_t^\varepsilon(X_{1t}(x)),X_{1t}(x)-X_{2t}(x)\rangle ^2}{\delta^2}\right)dx.
	\end{align}
	where $D=B_r\cap G_{1,R}\cap G_{2,R}$. Since $\partial_t X_{jt}=\mathbf{B}_{jt}(X_{jt})$ $j=1,2$, one has for any $t_1\in [0,T]$
	\begin{align}\nonumber
\sup_{t_1\in [0,T]}	\Phi_\delta^{\gamma,\varepsilon}(t_1)&=\sup_{t_1\in [0,T]}\int_{0}^{t_1}	\frac{d \Phi_\delta^{\gamma,\varepsilon}(t)}{dt} dt\\&\nonumber\leq \sup_{t_1\in [0,T]}\int_{0}^{t_1}\int_{D}\frac{\langle X_{1t}-X_{2t},\mathbf{B}_{1t}(X_{1t})-\mathbf{B}_{2t}(X_{2t})\rangle}{\delta^2+|X_{1t}-X_{2t}|^2+\gamma \langle \eta_t^\varepsilon(X_{1t}),X_{1t}-X_{2t}\rangle ^2}dxdt\\&\nonumber+\sup_{t_1\in [0,T]} \int_{0}^{t_1}\int_{D}\frac{\gamma\langle \eta^\varepsilon(X_{1t}),X_{1t}-X_{2t}\rangle \langle \eta_t^\varepsilon(X_{1t}),\mathbf{B}_{1t}(X_{1t})-\mathbf{B}_{2t}(X_{2t})\rangle}{\delta^2+|X_{1t}-X_{2t}|^2+\gamma \langle \eta_t^\varepsilon(X_{1t}),X_{1t}-X_{2t}\rangle ^2}dxdt
	\\&\nonumber+ \sup_{t_1\in [0,T]}\int_{0}^{t_1}\int_{D}\frac{\gamma\langle \eta^\varepsilon(X_{1t}),X_{1t}-X_{2t}\rangle \langle (\nabla\eta_t^\varepsilon)(X_{1t})\mathbf{B}_{1t}(X_{1t}),X_{1t}-X_{2t}\rangle}{\delta^2+|X_{1t}-X_{2t}|^2+\gamma \langle \eta_t^\varepsilon(X_{1t}),X_{1t}-X_{2t}\rangle ^2}dxdt	\\&\nonumber+\sup_{t_1\in [0,T]} \int_{0}^{t_1}\int_{D}\frac{\gamma\langle \eta^\varepsilon(X_{1t}),X_{1t}-X_{2t}\rangle \langle (\partial_t\eta_t^\varepsilon)(X_{1t}),X_{1t}-X_{2t}\rangle}{\delta^2+|X_{1t}-X_{2t}|^2+\gamma \langle \eta_t^\varepsilon(X_{1t}),X_{1t}-X_{2t}\rangle ^2}dxdt\\&=I_1(\delta,\varepsilon,\gamma)+I_2(\delta,\varepsilon,\gamma)+I_3(\delta,\varepsilon,\gamma)+I_4(\delta,\varepsilon,\gamma).\label{quantity1}
	\end{align}
	By $||\eta^\varepsilon||_{L^\infty((0,T)\times\mathbb{R}^d)}\leq 1,$  and changing variable along the flows with $(X_{jt})_{\#}\mathcal{L}^d\leq L_0\mathcal{L}^d$ for all $t\in [0,T]$ and $j=1,2$, we get 
	\begin{align}\nonumber
	I_1(\delta,\varepsilon,\gamma)+I_2(\delta,\varepsilon,\gamma)&\lesssim \frac{L_0\gamma^{1/2}}{\delta}||\left(\mathbf{B}_1-\mathbf{B},\mathbf{B}_2-\mathbf{B}\right)||_{L^1([0,T]\times B_R)}\\&+I_{5}(\delta,\varepsilon,\gamma)+I_{6}(\delta,\varepsilon,\gamma),\label{esI1I2}
	\end{align}
	and 
	\begin{align}\nonumber
	&|I_3(\delta,\varepsilon,\gamma)|\lesssim L_0\gamma^{1/2}||\nabla \eta^\varepsilon||_{L^\infty((0,T)\times\mathbb{R}^d)}||\mathbf{B}_1||_{L^1([0,T]\times B_R)},\\& |I_4(\delta,\varepsilon,\gamma)|\lesssim \gamma^{1/2} r^d T ||\partial_t \eta^\varepsilon||_{L^\infty((0,T)\times\mathbb{R}^d)},\label{esI3}
	\end{align}
	where
	\begin{align*}
&	I_{5}(\delta,\varepsilon,\gamma)=\sup_{t_1\in [0,T]}\int_{0}^{t_1}\int_{D}\frac{\langle X_{1t}-X_{2t},\mathbf{B}_{t}(X_{1t})-\mathbf{B}_{t}(X_{2t})\rangle}{\delta^2+|X_{1t}-X_{2t}|^2+\gamma \langle \eta_t^\varepsilon(X_{1t}),X_{1t}-X_{2t}\rangle ^2}dxdt;
	\\&I_{6}(\delta,\varepsilon,\gamma)=\sup_{t_1\in [0,T]}\int_{0}^{t_1}\int_{D}\frac{\gamma\langle \eta_t^\varepsilon(X_{1t}),X_{1t}-X_{2t}\rangle \langle \eta_t^\varepsilon(X_{1t}),\mathbf{B}_{t}(X_{1t})-\mathbf{B}_{t}(X_{2t})\rangle}{\delta^2+|X_{1t}-X_{2t}|^2+\gamma \langle \eta_t^\varepsilon(X_{1t}),X_{1t}-X_{2t}\rangle ^2}dxdt.
	\end{align*}
On the other hand, 
\begin{align}\nonumber
\sup_{t_1\in [0,T]}	\Phi_\delta^{\gamma,\varepsilon}(t_1)&\geq \frac{1}{2}|\log(\delta)| \sup_{t_1\in [0,T]}\mathcal{L}^d\left(\left\{x\in D:|X_{1t_1}(x)-X_{2t_1}(x)|>\delta^{1/2}\right\}\right)\\&\nonumber\geq \frac{1}{2}|\log(\delta)| \sup_{t_1\in [0,T]}\mathcal{L}^d\left(\left\{x\in B_r:|X_{1t_1}(x)-X_{2t_1}(x)|>\delta^{1/2}\right\}\right)\\&~~~~- \frac{1}{2}|\log(\delta)| \left(\mathcal{L}^d\left(B_r\backslash G_{1,R}\right)+\mathcal{L}^d\left(B_r\backslash G_{2,R}\right)\right).\label{belowes-quanti}
\end{align}
It follows from \eqref{quantity1},\eqref{esI1I2},\eqref{esI3} and \eqref{belowes-quanti}  and $\gamma<|\log(\delta)|$ that  for any $t_1\in [0,T]$
\begin{align}\nonumber
&\sup_{t_1\in [0,T]}\mathcal{L}^d\left(\left\{x\in D:|X_{1t_1}(x)-X_{2t_1}(x)|>\delta^{1/2}\right\}\right)\\&\quad \lesssim \mathcal{L}^d\left(B_r\backslash G_{1,R}\right)+\mathcal{L}^d\left(B_r\backslash G_{2,R}\right)\nonumber +\frac{C(\varepsilon,\gamma,r,T)}{|\log(\delta)|}\left(L_0 ||B_1||_{L^1([0,T]\times B_R)}+1\right)\\&\quad\quad + \frac{L_0}{\delta }||\left(\mathbf{B}_1-\mathbf{B},\mathbf{B}_2-\mathbf{B}\right)||_{L^1([0,T]\times B_R)}+\frac{I_{5}(\delta,\varepsilon,\gamma)}{|\log(\delta)|}+\frac{I_{6}(\delta,\varepsilon,\gamma)}{|\log(\delta)|}.\label{quatity2}
\end{align}
\textbf{Step 2:} We prove that for any $\varepsilon_1\in (0,1/100)$,
	\begin{align}\nonumber
&	\limsup_{\delta\to 0}\frac{I_{5}(\delta,\varepsilon,\gamma)}{|\log(\delta)|}\leq C(\varepsilon_1)\int_{0}^{T}\int_{\mathbb{R}^d} |\eta_t-\eta_t^{\varepsilon}| d|D^{s}b_{t}|dt\\&+C(L_0)\varepsilon_1|\log(\varepsilon_1)|\int_{0}^{T}\int_{\mathbb{R}^d}  d|D^{s}b_{t}|dt+ C(L_0,\varepsilon_1)\gamma^{-1/2}\int_{0}^{T}\int_{\mathbb{R}^d} d|D^{s}b_{t}|dt.\label{esI5}
	\end{align}
	Indeed, thanks to \eqref{A1-sec-thm} in Lemma \ref{le-mainthm2} below with $x_1= X_{1t},x_2=X_{2t}\in B_{R}$ and changing variable along the flows with $(X_{lt})_{\#}\mathcal{L}^d\leq L_0\mathcal{L}^d$ for all $t\in [0,T]$ and $l=1,2$, we find that 
	\begin{align}\nonumber
\limsup_{\delta\to 0}	\frac{	I_{5}(\delta,\varepsilon,\gamma)}{|\log(\delta)|}&\leq \limsup_{\delta\to 0}\frac{2L_0}{|\log(\delta)|} \sum_{i,j} \int_{0}^{T}\int_{B_R} \frac{\mathbf{P}_1(Db)}{\delta}\wedge \mathbf{T}^{1}_{\varepsilon_1,i,j}(D^ab_j) dxdt\\&\nonumber+ \limsup_{\delta\to 0}\frac{2L_0}{|\log(\delta)|}\sum_{i,j} \int_{0}^{T}\int_{B_R} \frac{\mathbf{P}_1(Db)}{\delta}	\wedge \mathbf{T}^{1}_{\varepsilon_1,i,j}(\omega_{tij}^\varepsilon) dxdt\\ &\nonumber+ \limsup_{\delta\to 0}\frac{2L_0  }{|\log(\delta)|} \int_{0}^{T}\int_{B_R} \mathbf{P}_1(Db)dxdt \\&\nonumber+ \limsup_{\delta\to 0}\frac{2L_0\varepsilon_1}{|\log(\delta)|} \sum_{i,j} \int_{0}^{T}\int_{B_R} \frac{\mathbf{P}_1(Db)}{\delta}\wedge \mathbf{T}^{2}_{\varepsilon_1,i,j}(Db_{tj}) dxdt
	\\ &\nonumber+ \limsup_{\delta\to 0}\frac{2L_0  \gamma^{-1/2}}{|\log(\delta)|}\sum_{i,j} \int_{0}^{T}\int_{B_R} \frac{\mathbf{P}_1(Db)}{\delta}\wedge \mathbf{T}^{1}_{\varepsilon_1,i,j}(\xi_{tj}|D^{s}b_{tj}|)dxdt\\&= (1)+(2)+(3)+(4)+(5),\label{I_5-lim}
	\end{align}
	where $\sum_{i,j}:=\sum_{i=1}^{d}\sum_{j=1}^{m}$, $\omega_{tij}^\varepsilon:=(\eta_t-\eta_t^{\varepsilon})\xi_{tj}|D^{s}b_{tj}|$ and $\mathbf{T}^{1}_{\varepsilon_1,i,j},  \mathbf{T}^{2}_{\varepsilon_1,i,j}$ are defined in Lemma \ref{le-mainthm2} and $\mathbf{P}_1(Db)\in L^1((0,T),L^{q_0}_{\loc}(\mathbb{R}^d))$ for some $q_0>1$.\medskip\\	
	Clearly, $(3)=0$. We can apply \eqref{es6-} in Proposition \ref{singu-operator} (and Remark \ref{rem3}) to $\mathbf{T}^{1}_{\varepsilon_1,i,j}$ and $f=\mathbf{P}_1(Db)$  to get that $(1)=0$,$$
	(2)\leq C(\varepsilon_1)\int_{0}^{T}\int_{\mathbb{R}^d} |\eta_t-\eta_t^{\varepsilon}| d|D^{s}b_{t}|dt; ~~(5)\leq C(\varepsilon_1)\gamma^{-1/2}\int_{0}^{T}\int_{\mathbb{R}^d} d|D^{s}b_{t}|dt.$$
On the other hand, 	it is clear to see that $\mathbf{K}_j^{i}$ and $\Theta^{\varepsilon_1,e}_2$ satisfy Theorem \ref{mainthm1}. So, we can apply  \eqref{es6} in Theorem \ref{mainthm1} to $\mathbf{T}^{2}_{\varepsilon_1,i,j}$ and  $f=\mathbf{P}_1(Db)$, (with  $ \alpha=1, \varepsilon=\varepsilon_1$) and obtain that 
$$
(4)\leq C\varepsilon_1|\log(\varepsilon_1)|\int_{0}^{T}\int_{\mathbb{R}^d}  d|D^{s}b_{t}|dt.$$
Plugging above estimates into \eqref{I_5-lim} gives \eqref{esI5}.\\
	\textbf{Step 3:} 
	We prove that for any $\varepsilon_2\in (0,1/100)$
	\begin{align}
	&	\limsup_{\delta\to 0}\frac{I_{6}(\delta,\varepsilon,\gamma)}{|\log(\delta)|}\leq C(\varepsilon_2)\gamma^{1/2}\int_{0}^{T}\int_{\mathbb{R}^d} |\eta_t-\eta_t^{\varepsilon}| d|D^{s}b_{t}|dt+C\gamma^{1/2}\varepsilon_2|\log(\varepsilon_2)|\int_{0}^{T}\int_{\mathbb{R}^d} d|D^{s}b_{t}|dt.\label{esI6}
	\end{align}
	Indeed, thanks to \eqref{A2-sec-thm} in Lemma \ref{le-mainthm2} below with $x_1= X_{1t},x_2=X_{2t}\in B_R,\varepsilon_1=\varepsilon_2$ and changing variable along the flows with $(X_{lt})_{\#}\mathcal{L}^d\leq L_0\mathcal{L}^d$ for all $t\in [0,T]$ and $l=1,2$, we find that 
	\begin{align*}
	\limsup_{\delta\to 0}	\frac{I_{6}(\delta,\varepsilon,\gamma)}{|\log(\delta)|}&\leq	\limsup_{\delta\to 0} \frac{4\gamma^{1/2}L_0 }{|\log(\delta)|}\sum_{i,j} \int_{0}^{T}\int_{B_R} \frac{\mathbf{P}_2(Db)}{\delta}\wedge \mathbf{T}^{1}_{\varepsilon_2,i,j}(D^ab_j) dxdt\\&+ 	\limsup_{\delta\to 0}\frac{4\gamma^{1/2}L_0}{|\log(\delta)|} \sum_{i,j} \int_{0}^{T}\int_{B_R} \frac{\mathbf{P}_2(Db)}{\delta}	\wedge \mathbf{T}^{1}_{\varepsilon_2,i,j}(\omega_{tij}^\varepsilon)dxdt\\ &+	\limsup_{\delta\to 0} \frac{4L_0  \gamma^{1/2}}{|\log(\delta)|} \sum_{i,j} \int_{0}^{T}\int_{B_R} \mathbf{P}_2(Db)dxdt \\&+ \limsup_{\delta\to 0} \frac{2L_0\gamma^{1/2}\varepsilon_2}{|\log(\delta)|} \sum_{i,j} \int_{0}^{T}\int_{B_R} \frac{\mathbf{P}_2(Db)}{\delta}\wedge \mathbf{T}^{2}_{\varepsilon_2,i,j}(Db_j) dxdt
	\\ &+	\limsup_{\delta\to 0} \frac{C(\varepsilon_2,\gamma)}{|\log(\delta)|} \int_{0}^{T}\int_{B_R} \frac{\mathbf{I}_1(\mathbf{1}_{B_{4\lambda}}(\operatorname{div}^a(B_t))^+)}{\delta}\wedge \mathbf{M}(\mathbf{1}_{B_{4\lambda}}(\operatorname{div}^a(B_t))^+)dxdt\\&= (6)+(7)+(8)+(9)+(10),
	\end{align*}
	where $\omega_{tij}^\varepsilon:=(\eta_t-\eta_t^{\varepsilon})\xi_{tj}|D^{s}b_{tj}|$ and $\mathbf{P}_2(Db)\in L^1((0,T),L^{q_0}_{\loc}(\mathbb{R}^d))$ for some $q_0>1$. 
	Similarly, we also obtain that $(6)+(8)=0$ and 
$$	(7)\leq C(\varepsilon_2)\gamma^{1/2}\int_{0}^{T}\int_{\overline{B}_\lambda} |\eta_t-\eta_t^{\varepsilon}| d|D^{s}b_{t}|dt;~~~(9)\leq C\gamma^{1/2}\varepsilon_2|\log(\varepsilon_2)|\int_{0}^{T}\int_{\overline{B}_\lambda} d|D^{s}b_{t}|dt.$$
Moreover, by \ref{es5} in Lemma \eqref{le2}, one has $(10)=0$.
	Thus, we get \eqref{esI6}. Therefore, we derive from \eqref{quatity2} and  \eqref{esI5}, \eqref{esI6}  that 
	\begin{align}\nonumber
	&\sup_{t_1\in [0,T]}\mathcal{L}^d\left(\left\{x\in D:|X_{1t_1}(x)-X_{2t_1}(x)|>\delta^{1/2}\right\}\right)\\&\quad \lesssim \mathcal{L}^d\left(B_r\backslash G_{1,R}\right)+\mathcal{L}^d\left(B_r\backslash G_{2,R}\right)+ \frac{L_0}{\delta }||\left(\mathbf{B}_1-\mathbf{B},\mathbf{B}_2-\mathbf{B}\right)||_{L^1([0,T]\times B_R)}+A(\delta),\label{quatity3}
	\end{align}
	and
	\begin{align}\nonumber\limsup_{\delta\to 0} A(\delta)
	&\leq 2 \limsup_{\delta\to 0}\frac{I_{5}(\delta,\varepsilon,\gamma)}{|\log(\delta)|}+2 \limsup_{\delta\to 0}\frac{I_{6}(\delta,\varepsilon,\gamma)}{|\log(\delta)|}\\&
\nonumber	\leq C(\varepsilon_1)\int_{0}^{T}\int_{\mathbb{R}^d} |\eta_t-\eta_t^{\varepsilon}| d|D^{s}b_{t}|dt+C\varepsilon_1|\log(\varepsilon_1)|\int_{0}^{T}\int_{\mathbb{R}^d}  d|D^{s}b_{t}|dt\\&\nonumber+ C(\varepsilon_1)\gamma^{-1/2}\int_{0}^{T}\int_{\mathbb{R}^d} d|D^{s}b_{t}|dt+C(\varepsilon_2)\gamma^{1/2}\int_{0}^{T}\int_{\mathbb{R}^d} |\eta_t-\eta_t^{\varepsilon}| d|D^{s}b_{t}|dt\\&~~~~~~~~~~~~~~~~~~~~~~~~~+C\gamma^{1/2}\varepsilon_2|\log(\varepsilon_2)|\int_{0}^{T}\int_{\mathbb{R}^d} d|D^{s}b_{t}|dt.\label{esA}
	\end{align}
In the right hand side of \eqref{esA}, we  let $\varepsilon\to0$, then $\varepsilon_2\to0$,  $\gamma\to\infty$ and $\varepsilon_1\to 0$ to get that $
\limsup_{\delta\to 0} A(\delta)\leq 0$. Combining this and \eqref{quatity3}  yields \eqref{quatity}. The proof is complete.
\end{proof}
Let $\Theta^{\varepsilon,e}_1, \Theta^{\varepsilon,e}_2$ be in Lemma \ref{Jabin-lem}.  For any   $i=1,...,d$, $x_1\not= x_2\in B_R(0)$ and $\varepsilon_1\in (0,1/100)$, we define:
$\mathbf{e}_1=-\mathbf{e}_2=\frac{x_1-x_2}{|x_1-x_2|}$, $r=|x_1-x_2|$ and 
\begin{enumerate} \abovedisplayskip=0pt \belowdisplayskip=0pt
	\item[] 
	\begin{multline}
	\Theta^{\varepsilon_1,e}_{l,r}(.)=\Theta^{\varepsilon_1,e}_{l}(\frac{.}{r}),~~~ \tilde{\Theta}^{\varepsilon_1,e}_{l,r}(.)=\frac{1}{r}\frac{\varepsilon_1^{-d+1}}{|.|^{d-1}}\Theta^{\varepsilon_1,e}_{l,r}(.),~ l=1,2;~~~~~~~~~~
	\end{multline}
	\begin{multline}
	A_{i1}^{\text{reg}}:=\sum_{j=1}^{m}\left[\mathbf{K}_j^i\star	\tilde{\Theta}^{\varepsilon_1,\mathbf{e}_1}_{1,r}\star (\mathbf{e}_1.D^ab_j)\right](x_1)-\left[\mathbf{K}_j^i\star\tilde{\Theta}^{\varepsilon_1,\mathbf{e}_2}_{1,r}\star (\mathbf{e}_2.D^ab_j)\right](x_2);
	\end{multline}
	\begin{multline}
	A_{i1}^{\text{appro}}:=\sum_{j=1}^{m}\left[\mathbf{K}_j^i\star\tilde{\Theta}^{\varepsilon_1,\mathbf{e}_1}_{1,r}\star \left((\mathbf{e}_1.(\eta_t-\eta_t^{\varepsilon}))\xi_{tj}|D^{s}b_{tj}|\right)\right](x_1) \\ 
	+\left[\mathbf{K}_j^i\star\tilde{\Theta}^{\varepsilon_1,\mathbf{e}_2}_{1,r}\star \left((\mathbf{e}_1.(\eta_t-\eta_t^{\varepsilon}))\xi_{tj}|D^{s}b_{tj}|\right)\right](x_2);
	\end{multline} 
	\begin{multline}
	A_{i1}^{\text{diff-1}}:=\sum_{j=1}^{m}\left[\mathbf{K}_j^i\star\tilde{\Theta}^{\varepsilon_1,\mathbf{e}_1}_{1,r}\star \left((\mathbf{e}_1.(\eta_t^{\varepsilon}-\eta_t^{\varepsilon}(x_1)))\xi_{tj}|D^{s}b_{tj}|\right)\right](x_1)\\+\left[\mathbf{K}_j^i\star\tilde{\Theta}^{\varepsilon_1,\mathbf{e}_2}_{1,r}\star \left((\mathbf{e}_1.(\eta_t^{\varepsilon}-\eta_t^{\varepsilon}(x_2)))\xi_{tj}|D^{s}b_{tj}|\right)\right](x_2);
	\end{multline}
	\begin{multline}
	A_{i1}^{\text{diff-2}}:=\sum_{j=1}^{m}(\mathbf{e}_1.(\eta_t^{\varepsilon}(x_2)-\eta_t^{\varepsilon}(x_1)))\left[\mathbf{K}_j^i\star\tilde{\Theta}^{\varepsilon_1,\mathbf{e}_2}_{1,r}\star \left(\xi_{tj}|D^{s}b_{tj}|\right)\right](x_2);~~~~~~~
	\end{multline}
	\begin{multline}
	A_{i1}^{\text{sing}}:=\sum_{j=1}^{m}\left[\mathbf{K}_j^i\star\tilde{\Theta}^{\varepsilon_1,\mathbf{e}_1}_{1,r}\star \left(\xi_{tj}|D^{s}b_{tj}|\right)\right](x_1)
	+\left[\mathbf{K}_j^i\star\tilde{\Theta}^{\varepsilon_1,\mathbf{e}_2}_{1,r}\star \left(\xi_{tj}|D^{s}b_{tj}|\right)\right](x_2);
	\end{multline}
	\begin{multline}
	A_{i2}:=\sum_{j=1}^{m}\left[\mathbf{K}_j^i\star\tilde{\Theta}^{\varepsilon_1,\mathbf{e}_1}_{2,r}\star Db_j\right](x_1)
	-\left[\mathbf{K}_j^i\star\tilde{\Theta}^{\varepsilon_1,\mathbf{e}_2}_{2,r}\star Db_j\right](x_2);~~~~~~~~~~~~~
	\end{multline}
	\begin{multline}
	E^{\text{reg}}:=\sum_{i=1}^{d}\sum_{j=1}^{m}-\left[\mathbf{K}_j^i\star\tilde{\Theta}^{\varepsilon_1,\mathbf{e}_1}_{1,r}\star \left(D^{a}_{x_i}b_{tj}\right)\right](x_1)
	-\left[\mathbf{K}_j^i\star\tilde{\Theta}^{\varepsilon_1,\mathbf{e}_2}_{1,r}\star \left(D^{a}_{x_i}b_{tj}\right)\right](x_2);
	\end{multline}
	\begin{multline}
	E^{\text{appro}}:=\sum_{i=1}^{d}\sum_{j=1}^{m}\left[\mathbf{K}_j^i\star\tilde{\Theta}^{\varepsilon_1,\mathbf{e}_1}_{1,r}\star \left((\eta_{ti}^\varepsilon-\eta_{ti})\xi_{tj}|D^{s}b_{tj}|\right)\right](x_1)
	\\+\left[\mathbf{K}_j^i\star\tilde{\Theta}^{\varepsilon_1,\mathbf{e}_2}_{1,r}\star \left((\eta_{ti}^\varepsilon-\eta_{ti})\xi_{tj}|D^{s}b_{tj}|\right)\right](x_2);
	\end{multline}
	\begin{multline}
	E^{\text{diff-1}}:=\sum_{i=1}^{d}\sum_{j=1}^{m}\left[\mathbf{K}_j^i\star\tilde{\Theta}^{\varepsilon_1,\mathbf{e}_1}_{1,r}\star \left((\eta_{ti}^\varepsilon(x_1)-\eta_{ti}^\varepsilon)\xi_{tj}|D^{s}b_{tj}|\right)\right](x_1)
	\\+\left[\mathbf{K}_j^i\star\tilde{\Theta}^{\varepsilon_1,\mathbf{e}_2}_{1,r}\star \left((\eta_{ti}^\varepsilon(x_1)-\eta_{ti}^\varepsilon)\xi_{tj}|D^{s}b_{tj}|\right)\right](x_2);
	\end{multline}
	\begin{multline}
	E^{\text{diff-2}}:=\sum_{i=1}^{d}\sum_{j=1}^{m}(\eta_{ti}^\varepsilon(x_1)-\eta_{ti}^\varepsilon(x_2))\left[\mathbf{K}_j^i\star\tilde{\Theta}^{\varepsilon_1,\mathbf{e}_2}_{1,r}\star \left(\xi_{tj}|D^{s}b_{tj}|\right)\right](x_2).~~~~~~~
	\end{multline}
\end{enumerate}
 Then, we have the following identities:
\begin{lemma}\label{coro-Jab-Hu} There hold, 
	\begin{equation}
\mathbf{B}^i_{t}(x_1)-\mathbf{B}^i_{t}(x_2)= rA_{i1}^{\text{reg}}+ rA_{i1}^{\text{appro}}+rA_{i1}^{\text{diff-1}}+rA_{i1}^{\text{diff-2}}+r\varepsilon_1 A_{i2}+r(\mathbf{e}_1.\eta_t^\varepsilon(x_1))A_{i1}^{\text{sing}}, \label{diff-equaB}
\end{equation}
and \footnote[1]{Here $A_{1}^{\text{sing}}=(A_{11}^{\text{sing}},A_{21}^{\text{sing}},...,A_{d1}^{\text{sing}})$} 
\begin{align}
\langle \eta_t^\varepsilon(x_1), A_{1}^{\text{sing}}\rangle =E^{\text{reg}}+E^{\text{appro}}+	E^{\text{diff-1}}+E^{\text{diff-2}}	+\sum_{i=1,2}\tilde{\Theta}^{\varepsilon_1,\mathbf{e}_i}_{1,r}\star\left[\operatorname{div}(\mathbf{B}_t)\right](x_i),\label{div-equ}
\end{align}
\end{lemma}
\begin{proof} \textbf{Step 1.} By  Proposition \ref{Jabin-lem} with $\varepsilon=\varepsilon_1$ we have
	\begin{align*}	b_{tj}(x_1-z)-b_{tj}(x_2-z) &=r\tilde{\Theta}^{\varepsilon_1,\mathbf{e}_1}_{1,r}\star (\mathbf{e}_1.Db_{tj})(x_1-z)+\varepsilon_1r\tilde{\Theta}^{\varepsilon_1,\mathbf{e}_1}_{2,r}\star Db_{tj}(x_1-z)
	\\&-r\tilde{\Theta}^{\varepsilon_1,\mathbf{e}_2}_{1,r}\star (\mathbf{e}_2.Db_{tj})(x_2-z)-\varepsilon_1 r\tilde{\Theta}^{\varepsilon_1,\mathbf{e}_2}_{2,r}\star Db_{tj}(x_2-z),
	\end{align*} for any $z\in\mathbb{R}^d$. So, by \eqref{vectorfiB}, we get 
	\begin{align*}
		\mathbf{B}_{t}^i(x_1)-	\mathbf{B}_{t}^i(x_2)&=\sum_{j=1}^{m}\left(\mathbf{K}_j^i\star b_{tj}(x_1)-\mathbf{K}_j^i\star b_{tj}(x_2)\right)\\&=\sum_{j=1}^{m}r\left[\mathbf{K}_j^i\star\tilde{\Theta}^{\varepsilon_1,\mathbf{e}_1}_{1,r}\star (\mathbf{e}_1.Db_{tj})\right](x_1)+r\varepsilon_1\left[\mathbf{K}_j^i\star{\Theta}^{\varepsilon_1,\mathbf{e}_1}_{2,r}\star Db_{tj}\right](x_1)
		\\&-r\left[\mathbf{K}_j^i\star\tilde{\Theta}^{\varepsilon_1,\mathbf{e}_2}_{1,r}\star (\mathbf{e}_2.Db_{tj})\right](x_2)-r\varepsilon_1\left[\mathbf{K}_j^i\star\tilde{\Theta}^{\varepsilon_1,\mathbf{e}_2}_{2,r}\star Db_{tj}\right](x_2).
	\end{align*}
	Using  $Db_{tj}=D^ab_{tj}+\xi_{tj}\eta_{t}|D^{s}b_t|$ yields
	\begin{align*}
		\mathbf{B}_{t}^i(x_1)-	\mathbf{B}_{t}^i(x_2)&=rA_{i1}^{\text{reg}}+r\varepsilon_1 A_{i2}
	+\sum_{j=1}^{m}r\left[\mathbf{K}_j^i\star\tilde{\Theta}^{\varepsilon_1,\mathbf{e}_1}_{1,r}\star \left((\mathbf{e}_1.\eta_t)\xi_{tj}|D^{s}b_{tj}|\right)\right](x_1)
	\\&+r\left[\mathbf{K}_j^i\star\tilde{\Theta}^{\varepsilon_1,\mathbf{e}_2}_{1,r}\star \left((\mathbf{e}_1.\eta_t)\xi_{tj}|D^{s}b_{tj}|\right)\right](x_2).
	\end{align*}
	Since $\mathbf{e}_1.\eta_t=\mathbf{e}_1.(\eta_t-\eta_t^{\varepsilon})+\mathbf{e}_1.(\eta_t^{\varepsilon}-\eta_t^{\varepsilon}(x_1))+\mathbf{e}_1.\eta_t^{\varepsilon}(x_1)$, 
		\begin{align*}
	&	\mathbf{B}_{t}^i(x_1)-	\mathbf{B}_{t}^i(x_2)=rA_{i1}^{\text{reg}}+r\varepsilon_1 A_{i2}+ rA_{i1}^{\text{appro}}+rA_{i1}^{\text{diff-1}}+rA_{i1}^{\text{diff-2}}\\&+\sum_{j=1}^{m}r\left[\mathbf{K}_j^i\star\tilde{\Theta}^{\varepsilon_1,\mathbf{e}_1}_{1,r}\star \left((\mathbf{e}_1.\eta_t^\varepsilon(x_1))\xi_{tj}|D^{s}b_{tj}|\right)\right](x_1)
	+r\left[\mathbf{K}_j^i\star\tilde{\Theta}^{\varepsilon_1,\mathbf{e}_2}_{1,r}\star \left((\mathbf{e}_1.\eta_t^\varepsilon(x_1))\xi_{tj}|D^{s}b_{tj}|\right)\right](x_2)\\&= rA_{i1}^{\text{reg}}+ rA_{i1}^{\text{appro}}+rA_{i1}^{\text{diff-1}}+rA_{i1}^{\text{diff-2}}+r\varepsilon_1 A_{i2}+r(\mathbf{e}_1.\eta_t^\varepsilon(x_1))A_{i1}^{\text{sing}},
	\end{align*}
	which implies \eqref{diff-equaB}. \\
	\textbf{Step 2.} 
We have 
	\begin{align*}
&\langle \eta_t^\varepsilon(x_1), A_{1}^{\text{sing}}\rangle  =\sum_{i=1}^{d}\sum_{j=1}^{m}\left[\mathbf{K}_j^i\star\tilde{\Theta}^{\varepsilon_1,\mathbf{e}_1}_{1,r}\star \left(\eta_{ti}^\varepsilon(x_1)\xi_{tj}|D^{s}b_{tj}|\right)\right](x_1)
\\&~~~~+\left[\mathbf{K}_j^i\star\tilde{\Theta}^{\varepsilon_1,\mathbf{e}_2}_{1,r}\star \left(\eta_{ti}^\varepsilon(x_1)\xi_{tj}|D^{s}b_{tj}|\right)\right](x_2).
	\end{align*}
Since $\eta_{ti}^\varepsilon(x_1)=(\eta_{ti}^\varepsilon(x_1)-\eta_{ti}^\varepsilon)+(\eta_{ti}^\varepsilon-\eta_{ti})+\eta_{ti}$,  and $\eta_{ti}\xi_{tj}|D^{s}b_{tj}|=D^{s}_{x_i}b_{tj}=-D^{a}_{x_i}b_{tj}+D_{x_i}b_{tj}$, thus 
	\begin{align*}
&\langle \eta_t^\varepsilon(x_1), A_{1}^{\text{sing}}\rangle  = 		E^{\text{diff-1}}+	E^{\text{diff-2}}+	E^{\text{appro}}+E^{\text{reg}}\\&+
\sum_{i=1}^{d}\sum_{j=1}^{m}\left[\mathbf{K}_j^i\star\tilde{\Theta}^{\varepsilon_1,\mathbf{e}_1}_{1,r}\star D_{x_i}b_{tj}\right](x_1)
+\left[\mathbf{K}_j^i\star\tilde{\Theta}^{\varepsilon_1,\mathbf{e}_2}_{1,r}\star D_{x_i}b_{tj}\right](x_2).
\end{align*}
Using associative and commutativity
 properties of convolution and $\sum_{i=1}^{d}\sum_{j=1}^{m}\mathbf{K}_j^i\star D_{x_i}b_{tj}=\sum_{i=1}^{d}D_{x_i}\left(\sum_{j=1}^{m}\mathbf{K}_j^i\star b_{tj}\right)=\operatorname{div}(\mathbf{B}_t)$ yields  
	\begin{align*}
&\langle \eta_t^\varepsilon(x_1), A_{1}^{\text{sing}}\rangle  = 	E^{\text{reg}}+E^{\text{appro}}+	E^{\text{diff-1}}+E^{\text{diff-2}}\\&+\tilde{\Theta}^{\varepsilon_1,\mathbf{e}_1}_{1,r}\star\left[\sum_{i=1}^{d}\sum_{j=1}^{m}\mathbf{K}_j^i\star D_{x_i}b_{tj}\right](x_1)+\tilde{\Theta}^{\varepsilon_1,\mathbf{e}_2}_{1,r}\star\left[\sum_{i=1}^{d}\sum_{j=1}^{m}\mathbf{K}_j^i\star D_{x_i}b_{tj}\right](x_2)\\&=	E^{\text{reg}}+E^{\text{appro}}+	E^{\text{diff-1}}+E^{\text{diff-2}}+\tilde{\Theta}^{\varepsilon_1,\mathbf{e}_1}_{1,r}\star\left[\operatorname{div}(\mathbf{B}_t)\right](x_1)+\tilde{\Theta}^{\varepsilon_1,\mathbf{e}_2}_{1,r}\star\left[\operatorname{div}(\mathbf{B}_t)\right](x_2).
\end{align*}
This gives \eqref{div-equ}. The proof is complete.
\end{proof}
Lemma \ref{coro-Jab-Hu} implies that 
\begin{lemma} \label{le-mainthm2}We define for $\varepsilon_1\in(0,1/100)$
		\begin{align*}
&	\mathbf{T}^l_{\varepsilon_1,i,j}(\mu_l)(x)=\sup_{\rho\in (0,2R),e\in S^{d-1}}\frac{\varepsilon_1^{-d+1}}{\rho}\left|\left(\frac{1}{|.|^{d-1}}\Theta^{\varepsilon_1,e}_{l,\rho}(.)\right)\star \mathbf{K}_j^i\star\mu_l(x)\right|~~\forall~~x\in \mathbb{R}^d,
	\end{align*}
	with $\mu_2\in \mathcal{M}_b(\mathbb{R}^d,\mathbb{R}^d),\mu_1\in \mathcal{M}_b(\mathbb{R}^d)$ or $\mu_1\in \mathcal{M}_b(\mathbb{R}^d,\mathbb{R}^d)$.  There exists $\mathbf{P}_1(Db)(x,t),\mathbf{P}_2(Db)(x,t)\in L^1((0,T),L^{q_0}_{\loc}(\mathbb{R}^d)$ for some $q_0>1$ such that $||\mathbf{P}_1(Db)||_{L^1((0,T),L^{q_0}(B_R(0)))}+||\mathbf{P}_2(Db)||_{L^1((0,T),L^{q_0}(B_R(0)))}\leq C(R,\varepsilon_1,\varepsilon)||b||_{L^1((0,T),BV(\mathbb{R}^d))}$ for any $R>0$ and 
	 for any $x_1\not=x_2\in B_\lambda$, we have 
	\begin{multline}
A_1:=\frac{\left|\langle x_1-x_2,\mathbf{B}_{t}(x_1)-\mathbf{B}_{t}(x_2)\rangle \right|}{\delta^2+|x_1-x_2|^2+\gamma \langle \eta_t^\varepsilon(x_1),x_1-x_2\rangle ^2}\\\leq \sum_{l,i,j}~ \frac{\mathbf{P}_1(Db)(x_l,t)}{\delta}\wedge \mathbf{T}^{1}_{\varepsilon_1,i,j}(D^ab_j)(x_l)+ \sum_{l,i,j}~\frac{\mathbf{P}_1(Db)(x_l,t)}{\delta}	\wedge \mathbf{T}^{1}_{\varepsilon_1,i,j}(\omega_{tij}^\varepsilon)(x_l)+ \mathbf{P}_1(Db)(x_l,t)\\+\varepsilon_1\sum_{l,i,j}~\frac{\mathbf{P}_1(Db)(x_l,t)}{\delta}\wedge \mathbf{T}^{2}_{\varepsilon_1,i,j}(Db_{tj})(x_l) +\gamma^{-1/2}\sum_{l,i,j}~\frac{\mathbf{P}_1(Db)(x_l,t)}{\delta}\wedge \mathbf{T}^{1}_{\varepsilon_1,i,j}(\xi_{tj}|D^{s}b_{tj}|)(x_l)\label{A1-sec-thm};
	\end{multline}
	\begin{multline}
	A_2:=\frac{\gamma\langle \eta_t^\varepsilon(x_1),x_1-x_2\rangle \langle \eta_t^\varepsilon(x_1),\mathbf{B}_{t}(x_1)-\mathbf{B}_{t}(x_2)\rangle}{\delta^2+|x_1-x_2|^2+\gamma \langle \eta_t^\varepsilon(x_1),x_1-x_2\rangle ^2}\\\leq 2\gamma^{1/2}\sum_{l,i,j}~ \frac{\mathbf{P}_2(Db)(x_l,t)}{\delta}\wedge \mathbf{T}^{1}_{\varepsilon_1,i,j}(D^ab_j)(x_l)+ 2\gamma^{1/2}\sum_{l,i,j}~\frac{\mathbf{P}_2(Db)(x_l,t)}{\delta}	\wedge \mathbf{T}^{1}_{\varepsilon_1,i,j}(\omega_{tij}^\varepsilon)(x_l)\\~~~~~~~+ \gamma^{1/2}\sum_{l} \mathbf{P}_2(Db)(x_l,t)+\gamma^{1/2}\varepsilon_1\sum_{l,i,j}~\frac{\mathbf{P}_2(Db)(x_l,t)}{\delta}\wedge \mathbf{T}^{2}_{\varepsilon_1,i,j}(Db_{tj})(x_l)\\+ C(\varepsilon_1,\gamma)~ \sum_{l=1}^{2}~\frac{\mathbf{I}_1(\mathbf{1}_{B_{4\lambda}}(\operatorname{div}^a(B_t))^+)(x_l)}{\delta}\wedge \mathbf{M}(\mathbf{1}_{B_{4\lambda}}(\operatorname{div}^a(B_t))^+)(x_l); \label{A2-sec-thm}
	\end{multline}
	where $\sum_{l,i,j}:=\sum_{l=1}^{2}\sum_{i=1}^{d}\sum_{j=1}^{m}$, $\omega_{tij}^\varepsilon:=(\eta_t-\eta_t^{\varepsilon})\xi_{tj}|D^{s}b_{tj}|$.
\end{lemma}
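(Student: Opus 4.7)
The plan is to rely on Lemma~\ref{coro-Jab-Hu}, which decomposes $\mathbf{B}_t(x_1)-\mathbf{B}_t(x_2)$ into six pieces (a regular piece, an approximation piece with the factor $\eta_t-\eta_t^\varepsilon$, two ``diff'' pieces with factors of the form $\eta_t^\varepsilon(\cdot)-\eta_t^\varepsilon(x_l)$, an $\varepsilon_1$-piece coming from $\Theta_2^{\varepsilon_1,e}$, and a singular piece with outer factor $r(\mathbf{e}_1\cdot\eta_t^\varepsilon(x_1))$), together with the identity~\eqref{div-equ} that expresses $\langle\eta_t^\varepsilon(x_1),A_1^{\text{singular}}\rangle$ as four analogous $E$-terms plus two divergence convolutions $\tilde\Theta^{\varepsilon_1,\mathbf{e}_l}_{1,r}\star\operatorname{div}(\mathbf{B}_t)(x_l)$. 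With the short-hand $r=|x_1-x_2|$, $\mathbf{e}_1=(x_1-x_2)/r$, $a=|\mathbf{e}_1\cdot\eta_t^\varepsilon(x_1)|$ and $D=\delta^2+r^2+\gamma r^2 a^2$, the whole argument is driven by three scalar inequalities: the trivial $\tfrac{r^2}{D}\le 1$ and $\tfrac{\gamma r^2 a^2}{D}\le 1$, together with the AM--GM consequence
\[
\frac{r^2 a}{D}\le \frac{r}{2\gamma^{1/2}\sqrt{\delta^2+r^2}}\le \frac{1}{2\gamma^{1/2}},\qquad\text{hence}\qquad \frac{\gamma r^2 a}{D}\le \frac{\gamma^{1/2}}{2},
\]
obtained from $(\delta^2+r^2)+\gamma r^2 a^2\ge 2\gamma^{1/2} r a\sqrt{\delta^2+r^2}$.

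To prove~\eqref{A1-sec-thm} I substitute the six-term decomposition in the numerator of $A_1$ and distribute $r^2/D$ over the first five terms and $r^2a/D$ over the singular one. Each convolution in $A_{i1}^{\text{regular}}$, $A_{i1}^{\text{appro}}$, $A_{i1}^{\text{diff-1}}$, $A_{i1}^{\text{diff-2}}$, $A_{i2}$ and $A_{i1}^{\text{singular}}$ has the form $\mathbf{K}_j^i\star\tilde\Theta^{\varepsilon_1,\mathbf{e}_l}_{l',r}\star\mu_l$, so its absolute value at $x_l$ is pointwise dominated by $\mathbf{T}^{l'}_{\varepsilon_1,i,j}(\mu_l)(x_l)$ once one takes the supremum over $\rho=r\in(0,2R)$ and $e=\mathbf{e}_l\in S^{d-1}$ in the definition of $\mathbf{T}^{l'}_{\varepsilon_1,i,j}$. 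The AM--GM bound converts the singular prefactor $r^2 a/D$ into $\gamma^{-1/2}/2$, which paired with the $\mathbf{T}^1_{\varepsilon_1,i,j}(\xi_{tj}|D^s b_{tj}|)$-bound on $\langle\mathbf{e}_1,A_1^{\text{singular}}\rangle(x_l)$ produces the $\gamma^{-1/2}$ line of~\eqref{A1-sec-thm}. For~\eqref{A2-sec-thm} the same substitution is made, but the additional outer factor $\gamma r(\mathbf{e}_1\cdot\eta_t^\varepsilon(x_1))$ multiplies everything: the regular/approx/diff/$\varepsilon_1 A_2$ pieces acquire a combined prefactor $\gamma r^2 a/D\le\gamma^{1/2}/2$, producing the $\gamma^{1/2}$ factor, whereas the singular piece comes with prefactor $\gamma r^2 a^2/D\le 1$ and one then invokes~\eqref{div-equ}.

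The divergence convolutions arising in the singular piece of $A_2$ require a separate treatment. Since $\tilde\Theta^{\varepsilon_1,e}_{1,r}\ge 0$ and the prefactor $\gamma r^2 a^2/D$ is non-negative, the signed convolution $\tilde\Theta^{\varepsilon_1,\mathbf{e}_l}_{1,r}\star\operatorname{div}(\mathbf{B}_t)(x_l)$ is bounded from above by $\tilde\Theta^{\varepsilon_1,\mathbf{e}_l}_{1,r}\star(\operatorname{div}\mathbf{B}_t)^+(x_l)$; moreover, the hypothesis $(\operatorname{div}\mathbf{B})^+\in L^1((0,T),L^1(B_{2R}))$ forces the singular part of $\operatorname{div}(\mathbf{B}_t)$ to be non-positive, so $(\operatorname{div}\mathbf{B}_t)^+=(\operatorname{div}^a\mathbf{B}_t)^+$. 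Two standard estimates on the non-negative kernel $\tilde\Theta^{\varepsilon_1,e}_{1,r}$, which is supported in $|y|<3r/4$ and bounded by $C\varepsilon_1^{-d+1}r^{-1}|y|^{-d+1}$, give for $f\ge 0$
\[
\tilde\Theta^{\varepsilon_1,e}_{1,r}\star f(x)\le C\varepsilon_1^{-d+1}\mathbf{M}(f)(x),\qquad \tilde\Theta^{\varepsilon_1,e}_{1,r}\star f(x)\le \frac{C\varepsilon_1^{-d+1}}{r}\mathbf{I}_1(f)(x),
\]
via the dyadic estimate $\int_{|y|<\rho}|y|^{-d+1}f(x+y)\,dy\le C\rho\mathbf{M}(f)(x)$. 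Combining the first with $\gamma r^2 a^2/D\le 1$, and the second with $\gamma r^2 a^2/D\le \gamma r/(2\delta)$ (from $D\ge 2\delta r$), and restricting to $B_{4\lambda}$ since $\tilde\Theta^{\varepsilon_1,\mathbf{e}_l}_{1,r}$ is supported in a ball of radius $3r/4\le 3\lambda/2$ around $x_l\in B_\lambda$, yields the last line of~\eqref{A2-sec-thm} with constant $C(\varepsilon_1,\gamma)$.

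The main obstacle is the construction of $\mathbf{P}_1(Db)$ and $\mathbf{P}_2(Db)$ in $L^1((0,T),L^{q_0}_{\loc}(\mathbb{R}^d))$ pointwise dominating all the non-singular pieces, so that one may also bound each quantity by $\mathbf{P}_i/\delta$ and take the minimum with the $\mathbf{T}^{l'}$-bound. This is handled by Remark~\ref{Remar1}: each diff-1, diff-2, $E$-diff-1 and $E$-diff-2 contribution has exactly the structure $\sup_{\rho,e}\bigl|\bigl(\tfrac{\rho^{-1}}{|\cdot|^{d-1}}\phi^{e,\varepsilon_1}(\cdot/\rho)\bigr)\star\mathbf{K}_j^i\star\bigl((\psi(\cdot)-\psi(x))b_{tj}(\cdot)\bigr)\bigr|(x)$ with $\psi$ a component of $\eta_t^\varepsilon$, so by Remark~\ref{Remar1} this supremum lies in $L^{q_0}_{\loc}$ with norm dominated by $\|\eta_t^\varepsilon\|_{W^{1,\infty}}\|Db_t\|_{\mathcal{M}_b}$; the regular and approximation contributions are controlled analogously by Remark~\ref{Remar0}. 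Since $\delta\in(0,1)$, any such pointwise $L^{q_0}_{\loc}$ dominator $\mathbf{P}_i$ also satisfies $\mathbf{P}_i\le\mathbf{P}_i/\delta$, legitimising the $\min(\mathbf{P}_i/\delta,\mathbf{T}^{l'})$ structure. The bare $\sum_l\mathbf{P}_1(Db)(x_l,t)$ term (without the $1/\delta$) in~\eqref{A1-sec-thm} arises from the diff-2 piece, whose outer factor $|\eta_t^\varepsilon(x_2)-\eta_t^\varepsilon(x_1)|\le\|\nabla\eta_t^\varepsilon\|_{L^\infty} r$ combines with $r^2/D\le 1$ and $r\le 2\lambda$ to yield a bound that is already $L^{q_0}_{\loc}$ and needs no $\delta$-division; the analogous $\gamma^{1/2}\sum_l\mathbf{P}_2(Db)(x_l,t)$ term in~\eqref{A2-sec-thm} comes from the same diff-2 piece weighted by $\gamma r^2 a/D\le\gamma^{1/2}/2$. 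Absorbing $\|\nabla\eta_t^\varepsilon\|_{L^\infty}$ and the radius $R$ into the definitions of $\mathbf{P}_1,\mathbf{P}_2$ completes the argument.
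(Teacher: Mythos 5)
Your proposal is correct and follows essentially the same route as the paper: the same decomposition of $\mathbf{B}_t(x_1)-\mathbf{B}_t(x_2)$ from Lemma \ref{coro-Jab-Hu} together with \eqref{div-equ}, the same elementary bounds on the prefactors $r^2/D$, $\gamma r^2a/D$, $\gamma r^2a^2/D$, the same construction of $\mathbf{P}_1,\mathbf{P}_2$ via Remarks \ref{Remar0} and \ref{Remar1}, and the same treatment of the divergence term using $\Theta^{\varepsilon_1,e}_1\geq 0$ and the non-positivity of $\operatorname{div}^s(\mathbf{B}_t)$. The only cosmetic difference is that you attribute the bare $\sum_l\mathbf{P}_1(Db)(x_l,t)$ term to the diff-2 piece alone, whereas the diff-1 piece is also absorbed there; this does not affect the argument.
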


\begin{proof} Set 	\begin{align*}
	\mathbf{T}^{l,1}_{\varepsilon_1,i,j}(\mu_l)(x)=\sup_{\rho\in (0,2R),e\in S^{d-1}}\varepsilon_1^{-d+1}\left|\left(\frac{1}{|.|^{d-1}}\Theta^{\varepsilon_1,e}_{l,\rho}(.)\right)\star \mathbf{K}_j^i\star\mu_l(x)\right|~~\forall~~x\in \mathbb{R}^d.
	\end{align*}
	1. Thanks to \eqref{diff-equaB}, we obtain that  
	\begin{align}\nonumber
	&A_1=\frac{r\langle \mathbf{e}_1,\mathbf{B}_{t}(x_1)-\mathbf{B}_{t}(x_2)\rangle }{\delta^2+r^2+\gamma r^2( \mathbf{e}_1.\eta_t^\varepsilon(x_1))^2}\\&\nonumber=\frac{r^2\langle \mathbf{e}_1,A_{1}^{\text{reg}}\rangle }{\delta^2+r^2+\gamma r^2( \mathbf{e}_1.\eta_t^\varepsilon(x_1))^2}+\frac{r^2\langle \mathbf{e}_1,A_{1}^{\text{appro}}\rangle }{\delta^2+r^2+\gamma r^2( \mathbf{e}_1.\eta_t^\varepsilon(x_1))^2}+\frac{r^2\langle \mathbf{e}_1,A_{1}^{\text{diff-1}}\rangle }{\delta^2+r^2+\gamma r^2( \mathbf{e}_1.\eta_t^\varepsilon(x_1))^2}\\&+\frac{r^2\langle \mathbf{e}_1,A_{1}^{\text{diff-2}}\rangle }{\delta^2+r^2+\gamma r^2( \mathbf{e}_1.\eta_t^\varepsilon(x_1))^2}+\frac{r^2\varepsilon_1\langle \mathbf{e}_1,A_{2}\rangle }{\delta^2+r^2+\gamma r^2( \mathbf{e}_1.\eta_t^\varepsilon(x_1))^2}+\frac{r^2(\mathbf{e}_1.\eta_t^\varepsilon(x_1))\langle \mathbf{e}_1,A_{1}^{\text{sing}} \rangle }{\delta^2+r^2+\gamma r^2( \mathbf{e}_1.\eta_t^\varepsilon(x_1))^2}\nonumber\\&= (1)+(2)+(3)+(4)+(5)+(6),\label{le1-secondmainthm}
	\end{align}
	with $r=|x_1-x_2|$. By definition of $\mathbf{T}^l_{\varepsilon_1,i,j}, \mathbf{T}^{l,1}_{\varepsilon_1,i,j}$, we can estimate that
	\begin{enumerate} \abovedisplayskip=0pt \belowdisplayskip=0pt
		\item []
		\begin{multline*}
		|(1)|\leq \sum_{l,i,j}~ \frac{\mathbf{T}^{1,1}_{\varepsilon_1,i,j}(D^ab_j)(x_l)}{\delta}\wedge \mathbf{T}^{1}_{\varepsilon_1,i,j}(D^ab_j)(x_l);~~~~~~~~~~~~~~~~~~
		\end{multline*}
			\begin{multline*}
	|(2)|\leq \sum_{l,i,j}~\frac{\mathbf{T}^{1,1}_{\varepsilon_1,i,j}((\eta_t-\eta_t^{\varepsilon})\xi_{tj}|D^{s}b_{tj}|)(x_l)}{\delta}	\wedge \mathbf{T}^{1}_{\varepsilon_1,i,j}((\eta_t-\eta_t^{\varepsilon})\xi_{tj}|D^{s}b_{tj}|)(x_l);
		\end{multline*}
			\begin{multline*}
		|(3)|\leq \sum_{l,i,j} \mathbf{T}^{1}_{\varepsilon_1,i,j}((\eta_t^{\varepsilon}-\eta_t^{\varepsilon}(x_l))\xi_{tj}|D^{s}b_{tj}|)(x_l)~~~~~~~~~~~~~~~~~~~
		\end{multline*}
			\begin{multline*}
		|(4)|\leq ||\nabla \eta_t^\varepsilon||_{L^\infty(\mathbb{R}^d)}\sum_{l,i,j} \mathbf{T}^{1,1}_{\varepsilon_1,i,j}(\xi_{tj}|D^{s}b_{tj}|)(x_l)~~~~~~~~~~~~~~~~~~~~
		\end{multline*}
		\begin{multline*}
		|(5)|\leq \varepsilon_1\sum_{l,i,j}~\frac{\mathbf{T}^{2,1}_{\varepsilon_1,i,j}(Db_{tj})(x_l)}{\delta}\wedge \mathbf{T}^{2}_{\varepsilon_1,i,j}(Db_{tj})(x_l);~~~~~~~~~~~~~~~~~~
		\end{multline*}
		\begin{multline*}
		|(6)|\leq \gamma^{-1/2}\sum_{l,i,j}~\frac{\mathbf{T}^{1,1}_{\varepsilon_1,i,j}(\xi_{tj}|D^{s}b_{tj}|)(x_l)}{\delta}\wedge \mathbf{T}^{1}_{\varepsilon_1,i,j}(\xi_{tj}|D^{s}b_{tj}|)(x_l).~~~~~
		\end{multline*}
	\end{enumerate}
Set 
\begin{align*}
\mathbf{P}_1(Db)(x,t)&=\sum_{i,j}\mathbf{T}^{1,1}_{\varepsilon_1,i,j}(D^ab_j)(x)+\mathbf{T}^{1,1}_{\varepsilon_1,i,j}((\eta_t-\eta_t^{\varepsilon})\xi_{tj}|D^{s}b_{tj}|)(x)+\mathbf{T}^{1}_{\varepsilon_1,i,j}((\eta_t^{\varepsilon}-\eta_t^{\varepsilon}(x))\xi_{tj}|D^{s}b_{tj}|)(x)\\&+||\nabla \eta_t^\varepsilon||_{L^\infty(\mathbb{R}^d)} \mathbf{T}^{1,1}_{\varepsilon_1,i,j}(\xi_{tj}|D^{s}b_{tj}|)(x)+\mathbf{T}^{2,1}_{\varepsilon_1,i,j}(Db_{tj})(x)+\mathbf{T}^{1,1}_{\varepsilon_1,i,j}(\xi_{tj}|D^{s}b_{tj}|)(x).
\end{align*}
By Lemma \ref{Remar0} and Remark \ref{Remar1}, there exists $q_0>1$ such that $
||\mathbf{P}_1(Db)||_{L^1((0,T),L^{q_0}(B_R(0)))}\leq C(R,\varepsilon_1,\varepsilon)||b||_{L^1((0,T),BV(\mathbb{R}^d))}$ for any $R>0.$
Combining these with \eqref{le1-secondmainthm} yields \eqref{A1-sec-thm}.\\
2.  Again, thanks to \eqref{diff-equaB}  we obtain that  
	\begin{align}\nonumber
&A_2=\frac{\gamma r (\mathbf{e}_1.\eta_t^\varepsilon(x_1)) \langle \eta_t^\varepsilon(x_1),\mathbf{B}_{t}(x_1)-\mathbf{B}_{t}(x_2)\rangle }{\delta^2+r^2+\gamma r^2( \mathbf{e}_1.\eta_t^\varepsilon(x_1))^2}\\&\nonumber=\frac{\gamma r^2(\mathbf{e}_1.\eta_t^\varepsilon(x_1))\langle \eta_t^\varepsilon(x_1),A_{1}^{\text{reg}}\rangle }{\delta^2+r^2+\gamma r^2( \mathbf{e}_1.\eta_t^\varepsilon(x_1))^2}+\frac{\gamma r^2(\mathbf{e}_1.\eta_t^\varepsilon(x_1))\langle \eta_t^\varepsilon(x_1),A_{1}^{\text{appro}}\rangle }{\delta^2+r^2+\gamma r^2( \mathbf{e}_1.\eta_t^\varepsilon(x_1))^2}+\frac{\gamma r^2(\mathbf{e}_1.\eta_t^\varepsilon(x_1)) \langle \eta_t^\varepsilon(x_1),A_{1}^{\text{diff-1}}\rangle }{\delta^2+r^2+\gamma r^2( \mathbf{e}_1.\eta_t^\varepsilon(x_1))^2}\\&+\frac{\gamma r^2(\mathbf{e}_1.\eta_t^\varepsilon(x_1))\langle \eta_t^\varepsilon(x_1),A_{1}^{\text{diff-2}}\rangle }{\delta^2+r^2+\gamma r^2( \mathbf{e}_1.\eta_t^\varepsilon(x_1))^2}+\frac{\gamma r^2\varepsilon_1 (\mathbf{e}_1.\eta_t^\varepsilon(x_1))\langle \eta_t^\varepsilon(x_1),A_{2}\rangle }{\delta^2+r^2+\gamma r^2( \mathbf{e}_1.\eta_t^\varepsilon(x_1))^2}+\frac{\gamma r^2(\mathbf{e}_1.\eta_t^\varepsilon(x_1))^2\langle \eta_t^\varepsilon(x_1),A_{1}^{\text{sing}} \rangle }{\delta^2+r^2+\gamma r^2( \mathbf{e}_1.\eta_t^\varepsilon(x_1))^2}\nonumber\\&= (7)+(8)+(9)+(10)+(11)+\frac{\gamma r^2(\mathbf{e}_1.\eta_t^\varepsilon(x_1))^2\langle \eta_t^\varepsilon(x_1),A_{1}^{\text{sing}} \rangle }{\delta^2+r^2+\gamma r^2( \mathbf{e}_1.\eta_t^\varepsilon(x_1))^2}.\label{le2-secondmainthm}
\end{align}
Plugging \eqref{div-equ} into \eqref{le2-secondmainthm} gives
\begin{align}\nonumber
&\nonumber A_2
=(7)+(8)+(9)+(10)+(11)+\frac{\gamma r^2(\mathbf{e}_1.\eta_t^\varepsilon(x_1))^2E^{\text{reg}} }{\delta^2+r^2+\gamma r^2( \mathbf{e}_1.\eta_t^\varepsilon(x_1))^2}+\frac{\gamma r^2(\mathbf{e}_1.\eta_t^\varepsilon(x_1))^2E^{\text{appro}} }{\delta^2+r^2+\gamma r^2( \mathbf{e}_1.\eta_t^\varepsilon(x_1))^2}\\&+\frac{\gamma r^2(\mathbf{e}_1.\eta_t^\varepsilon(x_1))^2E^{\text{diff-1}} }{\delta^2+r^2+\gamma r^2( \mathbf{e}_1.\eta_t^\varepsilon(x_1))^2}+\frac{\gamma r^2(\mathbf{e}_1.\eta_t^\varepsilon(x_1))^2E^{\text{diff-2}} }{\delta^2+r^2+\gamma r^2( \mathbf{e}_1.\eta_t^\varepsilon(x_1))^2}\nonumber\\&+\frac{\gamma r^2(\mathbf{e}_1.\eta_t^\varepsilon(x_1))^2\left[\tilde{\Theta}^{\varepsilon_1,\mathbf{e}_1}_{1,r}\star\left[\operatorname{div}(\mathbf{B}_t)\right](x_1)+\tilde{\Theta}^{\varepsilon_1,\mathbf{e}_2}_{1,r}\star\left[\operatorname{div}(\mathbf{B}_t)\right](x_2)\right] }{\delta^2+r^2+\gamma r^2( \mathbf{e}_1.\eta_t^\varepsilon(x_1))^2}\nonumber\\&=(7)+(8)+(9)+(10)+(11)+(12)+(13)+(14)+(15)+(16).\label{le3-secondmainthm}
\end{align}
As above, there exists $\mathbf{P}_2(Db)(x,t)\in L^1((0,T),L^{q_0}_{\loc}(\mathbb{R}^d)$ for $q_0>1$ such that $||\mathbf{P}_2(Db)||_{L^1((0,T),L^{q_0}(B_R(0)))}\leq C(R,\varepsilon_1,\varepsilon)||b||_{L^1((0,T),BV(\mathbb{R}^d))}$ for any $R>0$ and  \begin{enumerate} \abovedisplayskip=0pt \belowdisplayskip=0pt
	\item []
	\begin{multline*}
	|(7)|+|(12)|\leq 2\gamma^{1/2}\sum_{l,i,j}~ \frac{\mathbf{P}_2(Db)(x_l,t)}{\delta}\wedge \mathbf{T}^{1}_{\varepsilon_1,i,j}(D^ab_j)(x_l);~~~~~~~~~~~~~~~~~~
	\end{multline*}
		\begin{multline*}
	|(8)| +|(13)\leq 2\gamma^{1/2} \sum_{l,i,j}~\frac{\mathbf{P}_2(Db)(x_l,t)}{\delta}	\wedge \mathbf{T}^{1}_{\varepsilon_1,i,j}((\eta_t-\eta_t^{\varepsilon})\xi_{tj}|D^{s}b_{tj}|)(x_l);
	\end{multline*}
		\begin{multline*}
	|(9)|+|(10)|+|(14)|+|(15)|\leq \gamma^{1/2} \sum_{l}\mathbf{P}_2(Db)(x_l,t);~~~~~~~~~~~~~
	\end{multline*}
		\begin{multline*}
	|(11)|\leq \varepsilon_1\gamma^{1/2}\sum_{l,i,j}~\frac{\mathbf{P}_2(Db)(x_l,t)}{\delta}\wedge \mathbf{T}^{2}_{\varepsilon_1,i,j}(Db_{tj})(x_l).~~~~~~~~~~~~~~~~~~
	\end{multline*}
\end{enumerate}
For $(16)$, thanks to $\Theta^{\varepsilon,e}_1\geq 0$ and $\operatorname{div}^s(B_t)\leq 0$ we can estimate 
\begin{align*}
(16)&\leq \frac{\gamma r^2(\mathbf{e}_1.\eta_t^\varepsilon(x_1))^2\left[\tilde{\Theta}^{\varepsilon_1,\mathbf{e}_1}_{1,r}\star\left[\operatorname(\operatorname{div}^a(\mathbf{B}_t))^+\right](x_1)+\tilde{\Theta}^{\varepsilon_1,\mathbf{e}_2}_{1,r}\star\left[(\operatorname{div}^a(\mathbf{B}_t))^+\right](x_2)\right] }{\delta^2+r^2+\gamma r^2( \mathbf{e}_1.\eta_t^\varepsilon(x_1))^2}\\ &\leq C(\varepsilon_1,\gamma)~ \sum_{l=1}^{2}~\frac{\mathbf{I}_1(\mathbf{1}_{B_{4R}}(\operatorname{div}^a(\mathbf{B}_t))^+)(x_l)}{\delta}\wedge \mathbf{M}(\mathbf{1}_{B_{4R}}(\operatorname{div}^a(\mathbf{B}_t))^+)(x_l).
\end{align*}
Combining above inequalities together yields \eqref{A2-sec-thm}. The proof is complete.
\end{proof}
\begin{remark} Our estimates in Theorem \ref{mainthm2} can be implied a partial result of Bressan's conjecture (in \cite{Bressan2})~for mixing flows. This will
	be pursued in our forthcoming work.
\end{remark}
\begin{remark} In this remark, we would like to discuss another conjecture of Bressan. Let $B_n\in C^1_b((0,\infty)\times\mathbb{R}^d)$ be such that $
	||B_n||_{L^1\cap L^\infty}+||D B_n||_{L^1}\leq C~~\forall~~n.$\\
	\textit{Bressan's compactness conjecture:} If $X_n$ solves $
	\frac{d}{dt}X_n(t,x)=B_n(t,X_n(t,x)), ~~X_n(0)=id~~\mathbb{R}^{d}\times (0,\infty)$
	and satisfies 
	\begin{align}\label{cond1}
	C_1\leq JX_n(t,x)\leq C_2~~\forall~~n.
	\end{align}
	Then, $X_n$ is locally compact in $L^1((0,\infty)\times \mathbb{R}^d)$. This conjecture  was proven in \cite{bianBoni2} via the well posedness of continuity equations in the class of nearly incompressible $BV$ vector fields. 
	Note that \eqref{cond1} implies \begin{align}\label{es-div}
	\sup_{t_1,t_2}|\int_{t_1}^{t_2} \operatorname{div}(B^n)(t,X_n(t,x)) dt|\leq C~~\forall~~n.
	\end{align} 
	We \textit{hope} that thanks to our estimates  in Theorem \ref{mainthm2} and assumption \eqref{es-div}, we can obtain this conjecture. 
\end{remark}
\section{Well posedness  of Regular Lagrangian flows and Transport, \\Continuity equations}
\textbf{5.1} \textit{Well posedness  of Regular Lagrangian flows}: The following results are obtained from Theorem \ref{mainthm2}, Corollary \ref{maincorollary} and  Lemma \ref{decay-lem}. Theirs proof are very similar to proofs in Section 6 and 7 in \cite{BoCrip}.
\begin{proposition}(Uniqueness) \label{uniqueness}Let $\mathbf{B}$ be a vector field as in Corollary \ref{maincorollary}  satisfying assumption $\mathbf{(R_1)}$.  Assume that $\operatorname{div}(\mathbf{B})\in L^1((0,T),\mathcal{M}_{\loc}(\mathbb{R}^d))$, $(\operatorname{div}(\mathbf{B}))^+\in L^1((0,T),L^1_{\loc}(\mathbb{R}^d))$.  If there exist the regular Lagrangian flows $X_1,X_2$ associated to $\mathbf{B}$ starting at time $t$, then we have $X_1\equiv X_2$. 
\end{proposition}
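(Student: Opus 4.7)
The plan is to apply Corollary \ref{maincorollary} directly with the choice $\mathbf{B}_1 = \mathbf{B}_2 = \mathbf{B}$, so that the $L^1$-difference term on the right-hand side of \eqref{quatityb} vanishes identically. Under the hypotheses of the proposition, $\mathbf{B}$ satisfies the structural decomposition \eqref{vectorfiB'}, the growth condition $\mathbf{(R_1)}$, and the divergence assumptions required by the corollary. Setting $L_0 := \max\{L_1, L_2\}$, where $L_1, L_2$ denote the compression constants of $X_1$ and $X_2$, both flows trivially satisfy the bound $L_i \le L_0$ demanded by the corollary.

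First I would fix an arbitrary radius $r > 1$ and an arbitrary tolerance $\kappa \in (0,1)$. Corollary \ref{maincorollary} then supplies constants $R_0 = R_0(d,T,r,C_0,L_0,\kappa)$ and $\delta_0 > 0$ such that for every $\delta \in (0, \delta_0)$,
\begin{equation*}
\sup_{t_1 \in [t_0,T]} \mathcal{L}^d\bigl(\{x \in B_r : |X_1(t_1,x) - X_2(t_1,x)| > \delta^{1/2}\}\bigr) \le \kappa,
\end{equation*}
because $\|(\mathbf{B}_1-\mathbf{B}, \mathbf{B}_2-\mathbf{B})\|_{L^1([0,T]\times B_{R_0})} = 0$. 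Next, for each fixed $t_1$, the sets $\{|X_1(t_1) - X_2(t_1)| > \delta^{1/2}\}$ increase monotonically, as $\delta \downarrow 0$, to $\{X_1(t_1) \neq X_2(t_1)\}$. Since the bound $\kappa$ is uniform in $\delta$, monotone convergence yields
\begin{equation*}
\sup_{t_1 \in [t_0,T]} \mathcal{L}^d\bigl(\{x \in B_r : X_1(t_1,x) \neq X_2(t_1,x)\}\bigr) \le \kappa.
\end{equation*}
As $\kappa \in (0,1)$ was arbitrary, the supremum above is zero; letting $r \to \infty$ gives $X_1(t_1,\cdot) = X_2(t_1,\cdot)$ for $\mathcal{L}^d$-a.e.\ $x \in \mathbb{R}^d$ and every $t_1 \in [t_0,T]$. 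Using that both $X_1,X_2$ lie in $C([t_0,T]; L^0_{\loc}(\mathbb{R}^d))$, this pointwise-in-time a.e.\ agreement upgrades to the identification $X_1 \equiv X_2$ in the function space of Definition \ref{de-RLF}.

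The only conceptual subtlety to be careful about is the order of quantifiers: Corollary \ref{maincorollary} asserts that for every $\kappa > 0$ the constants $R_0$ and $\delta_0$ can be chosen (depending on $\kappa$) so that the estimate holds for all $\delta \in (0,\delta_0)$. Thus $\kappa$ must be fixed \emph{before} $\delta$ is sent to zero, which is legitimate and preserves the bound $\kappa$; only afterwards do we pass $\kappa \to 0$ and $r \to \infty$. No further analytic input is needed, since all of the heavy lifting was already carried out in Theorem \ref{mainthm2} and its corollary; the uniqueness is essentially a formal corollary of the quantitative stability estimate \eqref{quatityb}.
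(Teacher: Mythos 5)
Your proposal is correct and is exactly the argument the paper intends: it states that the proof follows from Theorem \ref{mainthm2}, Corollary \ref{maincorollary} and Lemma \ref{decay-lem} along the lines of Sections 6--7 of \cite{BoCrip}, which is precisely your specialization $\mathbf{B}_1=\mathbf{B}_2=\mathbf{B}$ in \eqref{quatityb} followed by the limits $\delta\to 0$, $\kappa\to 0$, $r\to\infty$. Your remark on the order of quantifiers (fix $\kappa$, then send $\delta\to 0$) is the right point to be careful about, and you handle it correctly.
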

\begin{proposition}(Stability) Let $\mathbf{B}_n$ be a sequence of vector fields satisfying assumption $\mathbf{(R_1)}$ converging in $L^1_{\loc}([0,T]\times\mathbb{R}^d)$ to a vector field $\mathbf{B}$ which satisfies the assumptions of  $\mathbf{B}$ in Proposition \ref{uniqueness}. Assume that there exist $X_n$ and $X$ regular Lagrangian flows starting at time $t$ associated $\mathbf{B}_n$ and $\mathbf{B}$ resp. and denote by $L_n$ and $L$ the compression constants of the flows. Assume that for some decomposition $
	\frac{\mathbf{B}_{n}}{1+|x|}=\tilde{B}_{n,1}+\tilde{B}_{n,2}$
	as in assumption $\mathbf{(R_1)}$, we have $
	L_n+||\tilde{B}_{n,1}||_{L^1((0,T),L^1(\mathbb{R}^d)}+||\tilde{B}_{n,2}||_{L^1((0,T),L^\infty(\mathbb{R}^d)}\leq C$  $\forall~n\in \mathbb{N},$
	for some constant $C>0$.  Then,  for any compact set $K$, 
	\begin{align}
	\lim\limits_{n\to \infty}\sup_{s\in [t,T]}\int_K |X_n(s,x)-X(s,x)|\wedge 1dx=0.
	\end{align}
\end{proposition}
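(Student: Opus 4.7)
The plan is to apply Corollary \ref{maincorollary} with the choices $\mathbf{B}_1=\mathbf{B}_n$, $\mathbf{B}_2=\mathbf{B}$, $X_1=X_n$, $X_2=X$, to first obtain that $X_n(s,\cdot)\to X(s,\cdot)$ locally in measure uniformly in $s$, and then convert this to $L^1$-convergence of $|X_n-X|\wedge 1$ on compact sets by a standard truncation estimate. The crucial structural observation is that in Corollary \ref{maincorollary} the constants $R_0$ and $\delta_0$ depend only on $d,T,r,C_0,L_0,\kappa$ together with the structural decomposition $b_{R_0}$ of the \emph{limit} field $\mathbf{B}$ and the associated kernel constants $c_{1R_0},c_{2R_0}$; they do not depend on the approximating sequence $\mathbf{B}_n$. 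Hence under the given uniform hypothesis on $L_n$ and on the decompositions $\mathbf{B}_n/(1+|x|)=\tilde{B}_{n,1}+\tilde{B}_{n,2}$, these constants can be taken uniform in $n$.

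\textbf{Execution.} Given a compact set $K$ and $\varepsilon>0$, I would proceed as follows. Fix $r>1$ with $K\subset B_r$. Choose $\kappa=\varepsilon/3$, and apply Corollary \ref{maincorollary} to get $R_0$ and $\delta_0$ as above, depending on $\mathbf{B}$ and on the uniform bounds but not on $n$. Since $\mathbf{B}_2-\mathbf{B}=0$, the corollary gives, for every $\delta\in(0,\delta_0)$ and every $n$,
\begin{align*}
\sup_{s\in [t,T]}\mathcal{L}^d\!\left(\left\{x\in B_r:|X_n(s,x)-X(s,x)|>\delta^{1/2}\right\}\right)\leq \frac{C(d)L_0}{\delta}\|\mathbf{B}_n-\mathbf{B}\|_{L^1([0,T]\times B_{R_0})}+\tfrac{\varepsilon}{3}.
\end{align*}
Next pick $\delta\in(0,\delta_0)$ small enough that $\delta^{1/2}\mathcal{L}^d(K)<\varepsilon/3$. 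Combining the trivial bound
\begin{align*}
\int_K|X_n(s,x)-X(s,x)|\wedge 1\,dx\leq \delta^{1/2}\mathcal{L}^d(K)+\mathcal{L}^d\!\left(\left\{x\in K:|X_n(s,x)-X(s,x)|>\delta^{1/2}\right\}\right)
\end{align*}
with the previous inequality yields
\begin{align*}
\sup_{s\in [t,T]}\int_K|X_n-X|\wedge 1\,dx\leq \frac{2\varepsilon}{3}+\frac{C(d)L_0}{\delta}\|\mathbf{B}_n-\mathbf{B}\|_{L^1([0,T]\times B_{R_0})}.
\end{align*}
Letting $n\to\infty$, the convergence $\mathbf{B}_n\to\mathbf{B}$ in $L^1_{\loc}([0,T]\times\mathbb{R}^d)$ makes the remainder vanish, so $\limsup_n\sup_s\int_K|X_n-X|\wedge 1\,dx\leq 2\varepsilon/3<\varepsilon$. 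Since $\varepsilon$ was arbitrary, the claimed limit holds.

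\textbf{Main obstacle.} The substantive analytical work has already been carried out inside Theorem \ref{mainthm2}; here the only delicate point is the \emph{uniformity} of the constants $R_0,\delta_0$ in $n$. This requires checking that in Corollary \ref{maincorollary} the $n$-dependent data (namely $\mathbf{B}_n$ itself) enters only through the quantity $\|\mathbf{B}_1-\mathbf{B}\|_{L^1([0,T]\times B_{R_0})}$ on the right-hand side, and not into the determination of $R_0$ or $\delta_0$. Inspection of the statement confirms this: the structural dependence is on $\mathbf{B}$ (via $b_{R_0}$), while $\mathbf{B}_1,\mathbf{B}_2$ appear only through the uniform growth constant $C_0$ and compression constant $L_0$, both of which are assumed bounded uniformly in $n$ by hypothesis. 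Once this is noted, the remainder of the proof is an entirely routine passage from quantitative convergence in measure to $L^1$-convergence of the truncation.
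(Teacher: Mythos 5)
Your proof is correct and is essentially the argument the paper intends: the paper defers to the stability proofs of Bouchut--Crippa, which consist precisely of applying the quantitative estimate (here Corollary \ref{maincorollary} with $\mathbf{B}_1=\mathbf{B}_n$, $\mathbf{B}_2=\mathbf{B}$) and exploiting that $R_0,\delta_0$ depend only on the limit field's structure and the uniform bounds $C_0,L_0$, then converting uniform-in-time convergence in measure into convergence of $\int_K|X_n-X|\wedge 1$. Your uniformity check and the truncation step are both handled correctly.
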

\begin{proposition}(Compactness) Let $\mathbf{B}_n\in C_b^1([0,T]\times\mathbb{R}^d,\mathbb{R}^d)$  converge  in $L^1_{\loc}([0,T]\times\mathbb{R}^d)$ to a vector field $\mathbf{B}$ which satisfies  the assumptions of  $\mathbf{B}$  in Proposition \ref{uniqueness}. 
Let $X_n$  be the flow starting at time $t$ associated $\mathbf{B}_n$ and denote by $L_n$  the compression constants of the flow. Assume that for some decomposition $
	\frac{\mathbf{B}_{n}}{1+|x|}=\tilde{B}_{n,1}+\tilde{B}_{n,2}$
	as in assumption $\mathbf{(R_1)}$, we have $
	L_n+||\tilde{B}_{n,1}||_{L^1((0,T),L^1(\mathbb{R}^d)}+||\tilde{B}_{n,2}||_{L^1((0,T),L^\infty(\mathbb{R}^d)}\leq C$  $\forall~n\in \mathbb{N},$
	for some constant $C>0$.  Then, there exists  a regular Lagrangian flow $X$ starting at time $t$ associated to $B$ such that  for any compact set $K$, 
	\begin{align}
	\lim\limits_{n\to \infty}\sup_{s\in [t,T]}\int_K |X_n(s,x)-X(s,x)|\wedge 1 dx=0 .
	\end{align}
\end{proposition}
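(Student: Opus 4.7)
The plan is to deduce compactness from the quantitative stability in Corollary \ref{maincorollary} applied to pairs of approximating flows. Fix a compact $K \subset \mathbb{R}^d$ and $r>0$ with $K \subset B_r$, and fix $\kappa \in (0,1)$. By hypothesis, the constants $L_n$ and $\|\tilde{B}_{n,1}\|_{L^1(L^1)} + \|\tilde{B}_{n,2}\|_{L^1(L^\infty)}$ are uniformly bounded, so I apply Corollary \ref{maincorollary} with $\mathbf{B}_1 = \mathbf{B}_n$, $\mathbf{B}_2 = \mathbf{B}_m$, reference field $\mathbf{B}$, and flows $X_1 = X_n$, $X_2 = X_m$. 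This produces $R_0 = R_0(d,T,r,C,L_0,\kappa)$ and $\delta_0 > 0$, both independent of $n,m$, such that for every $\delta \in (0,\delta_0)$,
\begin{align*}
\sup_{s \in [t,T]}\mathcal{L}^d\!\bigl(\{x \in B_r : |X_n(s,x) - X_m(s,x)| > \delta^{1/2}\}\bigr) \leq \frac{C(d)L_0}{\delta}\|(\mathbf{B}_n-\mathbf{B},\,\mathbf{B}_m-\mathbf{B})\|_{L^1([0,T]\times B_{R_0})} + \kappa.
\end{align*}
Because $\mathbf{B}_n \to \mathbf{B}$ in $L^1_{\loc}$, fixing $\delta$ and sending $n,m \to \infty$ kills the first summand; then letting $\delta\to 0$ and $\kappa\to 0$ shows that $\{X_n\}$ is Cauchy in measure on $B_r$, uniformly in $s \in [t,T]$. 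A standard diagonal extraction yields a limit $X \in C([t,T]; L^0_{\loc}(\mathbb{R}^d))$, and $X_n \to X$ along a subsequence in the $L^1(|\cdot|\wedge 1)$ topology on each $B_r$, uniformly in $s$.

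To verify that $X$ is a regular Lagrangian flow for $\mathbf{B}$ in the sense of Definition \ref{de-RLF}, three items must be checked. The initial condition $X(t,x) = x$ passes trivially. The compression estimate $X(s,\cdot)_{\#}\mathcal{L}^d \leq L_0\mathcal{L}^d$ follows by Fatou applied to $\int \varphi(X_n(s,\cdot))\,dx \leq L_n \int \varphi$ for $\varphi \in C_c(\mathbb{R}^d,[0,\infty))$, together with the a.e.\ convergence and the uniform bound on $L_n$. For the renormalized ODE, I pass to the limit in
\begin{align*}
\partial_t\,h(X_n(t,x)) = (\nabla h)(X_n(t,x))\,\mathbf{B}_n(t,X_n(t,x))
\end{align*}
tested against $\varphi \in C_c^\infty((t,T)\times\mathbb{R}^d)$. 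The left-hand side passes by dominated convergence using the growth bound on $h$ and the $\log L_{\loc}$ control on trajectories, which follows from Lemma \ref{decay-lem} applied uniformly in $n$. For the right-hand side a change of variables and the compression bound give
\begin{align*}
\int_t^T\!\!\int_{B_r}|\mathbf{B}_n(s,X_n)-\mathbf{B}(s,X)|\,dx\,ds \leq L_0\|\mathbf{B}_n-\mathbf{B}\|_{L^1([t,T]\times B_{R'})} + \int_t^T\!\!\int_{B_r}|\mathbf{B}(s,X_n)-\mathbf{B}(s,X)|\,dx\,ds,
\end{align*}
where $R'$ is chosen via Lemma \ref{decay-lem} so that trajectories remain in $B_{R'}$ outside a set of arbitrarily small measure. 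The first summand vanishes by hypothesis; the second by the a.e.\ convergence $X_n \to X$, the $L^1_{\loc}$ regularity of $\mathbf{B}$, and a standard uniform integrability argument (approximate $\mathbf{B}$ by smooth fields). Uniqueness via Proposition \ref{uniqueness} promotes the subsequential limit to a full-sequence limit.

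The main obstacle is the passage to the limit in the nonlinear composition $\mathbf{B}_n(s,X_n(s,\cdot))$: since $\mathbf{B}$ is only $L^1_{\loc}$, pointwise arguments are unavailable, and one must simultaneously exploit the $L^1_{\loc}$ convergence $\mathbf{B}_n \to \mathbf{B}$, the uniform compression bound $L_0$ (to transfer norms from trajectory space back to Lebesgue measure), and the tightness of trajectories supplied by Lemma \ref{decay-lem}. The quantitative estimate of Theorem \ref{mainthm2} is essential to convert the $L^1_{\loc}$ smallness of $\mathbf{B}_n - \mathbf{B}$ into smallness in measure of $X_n - X_m$; without it, only weak compactness of the flows would be available, which is insufficient to identify the limit as a renormalized solution.
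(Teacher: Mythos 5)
Your proposal is correct and follows exactly the route the paper intends: the paper does not write out this proof but refers to Sections 6--7 of \cite{BoCrip}, where compactness is obtained precisely by applying the quantitative stability estimate to the pair $(X_n,X_m)$ with $\mathbf{B}$ as reference field (legitimate here since $R_0$ and $\delta_0$ depend only on the uniformly bounded data and on the fixed field $\mathbf{B}$), deducing uniform-in-time Cauchyness in measure, and then passing to the limit in the renormalized ODE via the uniform compression bound, the sublevel estimate of Lemma \ref{decay-lem}, and approximation of $\mathbf{B}$ by continuous fields. No gaps.
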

\begin{proposition}(Existence)\label{exist} Let $\mathbf{B}$ be as in Proposition \ref{uniqueness}. Assume that  $\operatorname{div}(\mathbf{B})\geq a(t)$ in $(0,T)\times\mathbb{R}^d$ with $a\in L^1((0,T))$. Then, for all $t\in [0,T)$ there exists a regular Lagrangian flow $X:=X(.,t,.)$ associated to $\mathbf{B}$ starting at time $t$. Moreover, the flow $X$ satisfies $X\in C(D_T;L^0_{\loc}(\mathbb{R}^d))\cap \mathcal{B}(D_T;\log L_{\loc}(\mathbb{R}^d))$
	where $D_T=\{(s,t):0\leq t\leq s\leq T\}$ and for every $0\leq t\leq \tau\leq s\leq T$, there holds $X(s,\tau,X(\tau,t,x))=X(s,t,x)$ for $\mathcal{L}^d$-a.e $x\in \mathbb{R}^d$.
\end{proposition}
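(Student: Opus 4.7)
The plan is to construct $X$ by a smooth approximation and then invoke the Compactness Proposition stated just above. Fix $t\in [0,T)$ and let $\varrho_n$ be a standard sequence of mollifiers in $\mathbb{R}^d$. I would first consider the spatial regularization $\mathbf{B}_n(s,x):=(\mathbf{B}(s,\cdot)\star\varrho_n)(x)$ (with a further time mollification if needed to reach $C^1_b$), so that $\mathbf{B}_n\in C^1_b([0,T]\times\mathbb{R}^d,\mathbb{R}^d)$ and $\mathbf{B}_n\to\mathbf{B}$ in $L^1_{\loc}([0,T]\times\mathbb{R}^d)$. Using the decomposition $\frac{\mathbf{B}}{1+|x|}=\tilde B_1+\tilde B_2$ granted by hypothesis $\mathbf{(R_1)}$, one checks in a routine way that $\frac{\mathbf{B}_n}{1+|x|}$ admits decompositions $\tilde B_{n,1}+\tilde B_{n,2}$ which are uniformly bounded in $L^1(L^1)+L^1(L^\infty)$ by twice the corresponding norms of $\tilde B_1,\tilde B_2$.

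The key point for applying the Compactness Proposition is the uniform control of the compression constants $L_n$. Since $\mathbf{B}_n$ is smooth, the classical Cauchy--Lipschitz theorem produces a flow $X_n$ and
\begin{equation*}
\det(\nabla_x X_n(s,x))=\exp\Bigl(\int_t^s\operatorname{div}(\mathbf{B}_n)(\tau,X_n(\tau,x))\,d\tau\Bigr).
\end{equation*}
Now the assumption $\operatorname{div}(\mathbf{B})(\tau,\cdot)\geq a(\tau)$ with $a(\tau)$ independent of $x$ is preserved under convolution with a nonnegative mollifier:
\begin{equation*}
\operatorname{div}(\mathbf{B}_n)(\tau,x)=(\operatorname{div}(\mathbf{B})(\tau,\cdot)\star\varrho_n)(x)\geq \int_{\mathbb{R}^d}a(\tau)\varrho_n(x-y)\,dy=a(\tau).
\end{equation*}
Thus $\det(\nabla_x X_n(s,x))\geq \exp(-\|a\|_{L^1((0,T))})$ pointwise, and changing variables gives $(X_n(s,\cdot))_{\#}\mathcal{L}^d\leq e^{\|a\|_{L^1}}\mathcal{L}^d$, i.e.\ $L_n\leq e^{\|a\|_{L^1}}$ uniformly in $n$.

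With these uniform bounds in hand, the Compactness Proposition produces a regular Lagrangian flow $X(\cdot,t,\cdot)$ associated to $\mathbf{B}$ starting at time $t$, obtained as the local-in-measure limit of the $X_n$. The main (but expected) technical obstacle is checking that the limit object satisfies the renormalized ODE in the sense of Definition~\ref{de-RLF}$\mathbf{i)}$; this follows from the stability statement by testing against the allowed $h\in C^1(\mathbb{R}^d)$ with $|h|+|{\cdot}||\nabla h|$ bounded, passing to the limit in $\partial_t(h(X_n))=(\nabla h)(X_n)\mathbf{B}_n(t,X_n)$ using the log-growth control on $X_n$ guaranteed by $\mathbf{(R_1)}$ (as in \cite{BoCrip}).

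Finally, the joint continuity $X\in C(D_T;L^0_{\loc})\cap\mathcal{B}(D_T;\log L_{\loc})$ and the semigroup identity $X(s,\tau,X(\tau,t,x))=X(s,t,x)$ are obtained by the same scheme: they hold for each $X_n$ by classical ODE theory, and by applying Uniqueness (Proposition \ref{uniqueness}) and Stability to the initial time $\tau$ one transfers both properties to $X$ in the limit. The only delicate point here is the need to apply Uniqueness, which requires the hypothesis $\operatorname{div}(\mathbf{B})\in L^1((0,T),\mathcal{M}_{\loc})$ and $(\operatorname{div}(\mathbf{B}))^+\in L^1((0,T),L^1_{\loc})$; but both are guaranteed by the lower divergence bound $\operatorname{div}(\mathbf{B})\geq a(t)\in L^1$ together with the standing hypotheses on $\mathbf{B}$, so no additional assumption is needed.
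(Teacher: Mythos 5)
Your proposal is correct and follows exactly the route the paper intends: the paper gives no detailed proof, deferring to Sections 6--7 of \cite{BoCrip}, and its only comment is precisely your key observation, namely that the hypothesis $\operatorname{div}(\mathbf{B})\geq a(t)$ is there to make the mollified approximations have equi-bounded compression constants so that the Compactness Proposition applies. The remaining steps (uniform verification of $\mathbf{(R_1)}$ for $\mathbf{B}_n$, passage to the limit in the renormalized ODE, and the semigroup property via Uniqueness and Stability started at time $\tau$) are the standard Bouchut--Crippa scheme, as you describe.
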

In the previous proposition we assume the condition $\operatorname{div}(\mathbf{B})\geq a(t)$ in order to be sure to have a smooth approximating sequence with equi-bounded compression constants.
\begin{proposition}(Properties of the Jacobian)\label{jaco-pro} Let $\mathbf{B}$ be as in Proposition \eqref{exist}, $X:X(.,t,.)$ the regular Lagrangian flow associated to $\mathbf{B}$  starting at time $t$. Assume that $\operatorname{div}(\mathbf{B})\in L^1((0,T),L^\infty(\mathbb{R}^d))$. Then, the function $
	JX(s,t,x)=\exp\left(\int_{t}^{s}\operatorname{div}(\mathbf{B})(\tau,X(\tau,t,x))d\tau\right)$
satisfies 
\begin{align}\label{jacobian1}
\int_{\mathbb{R}^d}\phi(x)dx=\int_{\mathbb{R}^d} \phi(X(s,t,x)) JX(s,t,x)dx~~\forall~~\phi \in L^1(\mathbb{R}^d)
\end{align}
and  $
\partial_s JX(s,t,x)=JX(s,t,x) \operatorname{div}(\mathbf{B})(\tau,X(s,t,x))$ for all $s\in (t,T).$ Moreover,  $\exp(-L)\leq JX(s,t,x)\leq\exp(L)$ with $L=||\operatorname{div}(\mathbf{B})||_{L^1((0,T),L^\infty(\mathbb{R}^d))}$ and $JX\in C(D_T;L^\infty(\mathbb{R}^d)-w\star)\cap C(D_T;L^1_{\loc }(\mathbb{R}^d))$ where $D_T=\{(s,t):0\leq t\leq s\leq T\}$. Besides, for any $0\leq t\leq s\leq T$, $X^{-1}(t,s,.)(x)$ exists almost everywhere $x\in \mathbb{R}^d$.  The function $JX$ is called the Jacobian of the flow $X$. 
\end{proposition}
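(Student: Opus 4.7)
\textbf{Proof plan for Proposition \ref{jaco-pro}.}

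The plan is to argue by smooth approximation and pass to the limit using the stability and compactness results (the propositions stated just before). Let $\rho_n$ be a standard mollifier family on $\mathbb{R}^{d+1}$ and set $\mathbf{B}_n := \mathbf{B}\star\rho_n$ (extended by $0$ outside $[0,T]\times\mathbb{R}^d$). Then $\mathbf{B}_n\in C^1_b([0,T]\times\mathbb{R}^d,\mathbb{R}^d)$ with $\operatorname{div}(\mathbf{B}_n) = \operatorname{div}(\mathbf{B})\star\rho_n$, so in particular $\|\operatorname{div}(\mathbf{B}_n)\|_{L^1((0,T);L^\infty(\mathbb{R}^d))} \leq L$ uniformly in $n$, and $\operatorname{div}(\mathbf{B}_n)\to \operatorname{div}(\mathbf{B})$ in $L^1((0,T);L^q_{\loc}(\mathbb{R}^d))$ for every $q<\infty$. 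For each $n$ the classical flow $X_n(s,t,\cdot)\in C^1$ admits the smooth Jacobian $J_n(s,t,x)=\det(\nabla_x X_n)$, which by Liouville's formula equals $\exp(\int_t^s \operatorname{div}(\mathbf{B}_n)(\tau, X_n(\tau,t,x))d\tau)$ and satisfies $\partial_s J_n = J_n\,\operatorname{div}(\mathbf{B}_n)(s,X_n(s,t,\cdot))$. The uniform divergence bound gives $e^{-L}\leq J_n\leq e^{L}$ and the classical change of variables formula $\int \phi(X_n(s,t,x)) J_n(s,t,x)\,dx=\int\phi(x)\,dx$ for all $\phi\in L^1(\mathbb{R}^d)$; in particular each $X_n$ has compression constant $L_n\leq e^L$.

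By the Compactness proposition, $X_n\to X$ locally in measure uniformly in $s\in [t,T]$ and $X$ is the regular Lagrangian flow of $\mathbf{B}$; passing to a subsequence we may assume pointwise a.e.\ convergence on $[t,T]\times K$ for every compact $K$. The first key step is to show that $\operatorname{div}(\mathbf{B}_n)(\tau,X_n(\tau,t,x))\to\operatorname{div}(\mathbf{B})(\tau,X(\tau,t,x))$ in $L^1((t,T);L^1_{\loc})$. For this I would split
\[
\operatorname{div}(\mathbf{B}_n)(\tau,X_n)-\operatorname{div}(\mathbf{B})(\tau,X) = \bigl[\operatorname{div}(\mathbf{B}_n)-\operatorname{div}(\mathbf{B})\bigr](\tau,X_n) + \bigl[\operatorname{div}(\mathbf{B})(\tau,X_n)-\operatorname{div}(\mathbf{B})(\tau,X)\bigr].
\]
The first piece is controlled in $L^1([t,T]\times B_r)$ by the uniform compression bound $L_n\leq e^L$ and the local $L^1$ convergence of $\operatorname{div}(\mathbf{B}_n)$. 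For the second piece, I first approximate $\operatorname{div}(\mathbf{B})$ in $L^1((0,T);L^q_{\loc})$ by continuous $g_\varepsilon$, use uniform continuity of $g_\varepsilon$ together with $X_n\to X$ in measure, and then use the uniform $L^\infty_x$ bound on $\operatorname{div}(\mathbf{B})-g_\varepsilon$ together with the compression estimate again to close the argument; dominated convergence then yields convergence of the exponential, giving pointwise a.e.\ convergence $J_n\to JX$ and hence (by dominated convergence using $e^{-L}\leq J_n\leq e^L$) convergence in $L^1_{\loc}$ and weak-$\star$ in $L^\infty$.

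Once $J_n\to JX$ in $L^1_{\loc}$, the change of variables identity \eqref{jacobian1} passes to the limit: for $\phi\in C_c(\mathbb{R}^d)$ one writes $\int \phi(X_n)J_n\,dx - \int \phi(X)JX\,dx = \int[\phi(X_n)-\phi(X)]J_n\,dx + \int\phi(X)(J_n-JX)\,dx$, the first term vanishing by uniform continuity of $\phi$ and $X_n\to X$ a.e., the second by weak-$\star$ convergence of $J_n$ applied to the $L^1$ function $\phi(X)$; density of $C_c$ in $L^1$ combined with the uniform $L^\infty$ bound on $J_n$ extends this to all $\phi\in L^1$. The two-sided exponential bound on $JX$ is immediate from the defining formula, and the evolution equation $\partial_s JX = JX\,\operatorname{div}(\mathbf{B})(s,X(s,t,\cdot))$ holds in the sense of distributions on $(t,T)\times\mathbb{R}^d$ by passing to the limit in the corresponding equation for $J_n$. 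Continuity of $JX$ in $C(D_T;L^1_{\loc})$ follows from continuity of $X$ in $C(D_T;L^0_{\loc})$ composed with the $L^1$-integrability of $\operatorname{div}(\mathbf{B})$, and weak-$\star$ continuity in $L^\infty$ then follows from the uniform pointwise bound plus density.

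For the last claim on the existence of $X^{-1}(t,s,\cdot)(x)$ for a.e.\ $x$: applying \eqref{jacobian1} to $\phi=\mathbf{1}_N$ with $|N|=0$ gives $\int \mathbf{1}_N(X(s,t,x))JX\,dx=0$, so $X(s,t,\cdot)_{\#}\mathcal{L}^d\ll \mathcal{L}^d$; combined with the semigroup identity $X(s,\tau,X(\tau,t,x))=X(s,t,x)$ from Proposition \ref{exist} applied to $\tau=s$, $t$ replaced by $s$, $s$ by $t$ in the reversed-time flow (or, equivalently, using the approximating smooth diffeomorphisms $X_n(s,t,\cdot)^{-1}$ together with the uniform compression bound and lower bound $J_n\geq e^{-L}>0$), one obtains an a.e.\ defined measurable inverse. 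The main obstacle in this program is the convergence $\operatorname{div}(\mathbf{B}_n)(\tau,X_n)\to\operatorname{div}(\mathbf{B})(\tau,X)$ in step two, since neither $\operatorname{div}(\mathbf{B})$ is pointwise defined nor does $X_n\to X$ hold uniformly; resolving it requires the combination of uniform compression bounds (inherited from the smooth approximation and the $L^\infty_x$ control on divergence) with the continuous-approximation trick sketched above.
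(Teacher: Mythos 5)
Your plan is correct and follows essentially the same route as the paper, which for this proposition simply defers to Sections 6--7 of Bouchut--Crippa \cite{BoCrip}: mollify $\mathbf{B}$, use the classical Liouville formula and uniform bounds $e^{-L}\leq J_n\leq e^{L}$ for the smooth flows, and pass to the limit via the compactness/stability propositions, with the key technical step being exactly the convergence $\operatorname{div}(\mathbf{B}_n)(\tau,X_n)\to\operatorname{div}(\mathbf{B})(\tau,X)$ handled by the compression bound plus approximation of the divergence by continuous functions (and the superlevel decay of Lemma \ref{decay-lem} to control escape to infinity). The only cosmetic point is that $\mathbf{B}\star\rho_n$ need not lie in $C^1_b$ under \textbf{(R1)} alone, so a standard truncation is needed before invoking the compactness proposition; this does not affect the argument.
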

\textbf{5.2} \textit{Well posedness of Transport and continuity equations}:
Next, we will connect the Regular Lagrangian flows to the transport and continuity equations. We first recall definition of  renormalized solution of \eqref{contieq}, it was first introduced in \cite{Diperlions}.
\begin{definition} Let $u_0:\mathbb{R}^d\to \mathbb{R}$ be a measurable function, let $\mathbf{B}\in L^1_{\loc}((0,T)\times \mathbb{R}^d;\mathbb{R}^d)$ be a vector field such that $\operatorname{div}(\mathbf{B})\in L^1_{\loc}((0,T)\times\mathbb{R}^d)$ and let  $G,F\in L^1_{\loc}((0,T)\times\mathbb{R}^d)$. A measure function $u:[0,T]\times\mathbb{R}^d\to \mathbb{R}$ is a renormalized solution of \eqref{contieq} if for every function $\beta:\mathbb{R}\to\mathbb{R}$ satisfying $\beta\in C^1_b(\mathbb{R})$ and $\beta'(z)z\in L^\infty(\mathbb{R})$, $\beta(0)=0$ we have that 
	\begin{align*}
	\partial_t\beta(u)+\operatorname{div}(\mathbf{B}\beta(u))+\operatorname{div}(\mathbf{B})\left(u\beta'(u)-\beta(u)\right)=G u\beta'(u)+F\beta'(u)
	\end{align*}
and $\beta(u)(t=0)=\beta(u_0)$	in the sense of distributions.
\end{definition}
We have the following proposition:
\begin{proposition}  Let $\mathbf{B}$ be as in Proposition \eqref{exist}, $X$ be the regular Lagrangian flow associated to $\mathbf{B}$  starting at time $0$  in Proposition \eqref{exist}. Assume that $\operatorname{div}(\mathbf{B})\in L^1((0,T),L^\infty(\mathbb{R}^d))$. Let  $G,F\in L^1((0,T)\times\mathbb{R}^d)$  and let $u_0:\mathbb{R}^d\to \mathbb{R}$ be a measurable function.  Then,  there exists a unique renormalized solution $u:[0,T]\times\mathbb{R}^d\to \mathbb{R}$ of \eqref{contieq} starting from $u_0$. Furthermore, for any $(t,x)\in [0,T]\times\mathbb{R}^d$ we have
	\begin{align}\nonumber
	u(t,x)&=\frac{u_0(\overline{x})}{JX(t,\overline{x})} \exp\left(\int_{0}^{t}G(s,X(s,\overline{x}))ds\right)\\&+\frac{1}{JX(t,\overline{x})}\int_{0}^{t}f(\tau,X(\tau,\overline{x})) \exp\left(\int_{\tau}^{t}G(s,X(s,\overline{x}))ds\right) JX(\tau,\overline{x})d\tau,
	\end{align}
	with $\overline{x}=X^{-1}(t,.)(x)$, $JX(t,\overline{x}):=JX(t,0,\overline{x})$.
\end{proposition}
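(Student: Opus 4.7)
The plan is to build the renormalized solution explicitly via the stated representation formula and to establish uniqueness by a Lagrangian rigidity argument, exploiting the well-posedness theory for the flow $X$ developed in the preceding subsection.

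For existence, define $u$ by the displayed formula. It is meaningful $\mathcal{L}^d$-a.e.: the two-sided bound $e^{-L}\le JX\le e^L$ with $L=\|\operatorname{div}(\mathbf{B})\|_{L^1(L^\infty)}$ from Proposition \ref{jaco-pro} makes the quotient finite, and the a.e.\ invertibility of $X(t,\cdot)$ (also from Proposition \ref{jaco-pro}) gives $\overline x = X^{-1}(t,\cdot)(x)$. The formula says precisely that
$$y(t):= u(t,X(t,x))\,JX(t,x),\qquad y(0)=u_0(x),$$
solves the scalar ODE $\dot y(t)=G(t,X(t,x))\,y(t)+F(t,X(t,x))\,JX(t,x)$ for $\mathcal{L}^d$-a.e.\ $x$. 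Using $\partial_t JX = JX\,\operatorname{div}(\mathbf{B})\circ X$ from Proposition \ref{jaco-pro}, this rewrites as $\frac{d}{dt}[u(t,X(t,x))] = ((G-\operatorname{div}(\mathbf{B}))u+F)(t,X(t,x))$. Testing against $\varphi\in C_c^\infty([0,T)\times\mathbb{R}^d)$, composing with $X(t,\cdot)^{-1}$, and changing variables by means of \eqref{jacobian1} yields the distributional continuity equation; the renormalized form for any admissible $\beta$ is recovered by the same derivation applied to $\beta(y)$.

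For uniqueness it suffices, by linearity, to show that a renormalized solution $u$ with $u_0\equiv 0$, $F\equiv 0$, $G\equiv 0$ vanishes. The idea is to pull the identity
$$\partial_t\beta(u)+\operatorname{div}(\mathbf{B}\beta(u))+(u\beta'(u)-\beta(u))\operatorname{div}(\mathbf{B})=0 \quad \text{in }\mathcal{D}'$$
back along $X$. Using $\operatorname{div}(\mathbf{B})\in L^1(L^\infty)$, the a.e.\ invertibility of $X(t,\cdot)$, and \eqref{jacobian1}, I would test this equation against $\varphi\circ X(t,\cdot)^{-1}$ for $\varphi\in C_c^\infty(\mathbb{R}^d)$ to obtain
$$\frac{d}{dt}\int_{\mathbb{R}^d}\beta(u(t,X(t,x)))\,\varphi(x)\,dx = 0 \quad \text{in }\mathcal{D}'((0,T));$$
the zero initial condition then forces $\beta(u(t,X(t,\cdot)))\equiv 0$ a.e., and approximating $z\mapsto (z\wedge n)\vee(-n)$ by admissible $\beta_n$ gives $u\circ X\equiv 0$, hence $u\equiv 0$ after composing with the inverse flow.

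The core technical step is the rigorous justification of the pullback computation producing the time derivative of $\int\beta(u\circ X)\varphi\,dx$, because $u$ is only measurable and $X$ is merely a flow in the renormalized sense; the naive chain rule is unavailable. I would proceed by the standard regularization scheme: mollify $u$ in $x$ to get $u^\varepsilon$, write a transported equation for $\beta(u^\varepsilon)$, control the commutator $r_\varepsilon = \rho_\varepsilon\star\operatorname{div}(\mathbf{B}u)-\operatorname{div}(\mathbf{B}(\rho_\varepsilon\star u))$ in $L^1_{\mathrm{loc}}$, and pass to the limit using the renormalization property, which is available in the present class of vector fields thanks to Theorem \ref{mainthm2}. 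Once the commutator vanishes in the limit, the pushforward identity follows exactly as in Sections 6--7 of \cite{BoCrip}, and no further argument is required.
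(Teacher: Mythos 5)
The paper itself does not write out this proof (it is deferred to \cite{comcrip}), so I am judging your proposal against that intended route. Your existence half is fine and is the standard argument: the representation formula is exactly the solution of the ODE for $y(t)=u(t,X(t,x))JX(t,x)$, and the change of variables \eqref{jacobian1} together with $\partial_s JX=JX\,\operatorname{div}(\mathbf{B})\circ X$ from Proposition \ref{jaco-pro} converts it into the distributional and renormalized formulations.

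The uniqueness half has a genuine gap, in two related places. First, the reduction ``by linearity'' to $u_0=F=G=0$ is not available: the class of renormalized solutions is defined by the nonlinear identities for $\beta(u)$, and the difference of two renormalized solutions is not known to be renormalized unless one already knows that \emph{every} bounded distributional solution is renormalized --- which is exactly the DiPerna--Lions commutator statement. Second, and more seriously, your proposed justification of the ``core technical step'' is to prove $r_\varepsilon=\rho_\varepsilon\star\operatorname{div}(\mathbf{B}u)-\operatorname{div}(\mathbf{B}\,\rho_\varepsilon\star u)\to 0$ in $L^1_{\loc}$ ``thanks to Theorem \ref{mainthm2}''. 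Theorem \ref{mainthm2} is a purely Lagrangian quantitative estimate on $|X_1-X_2|$; it yields no commutator estimate whatsoever, and the vanishing of the commutator for vector fields of the form $\mathbf{K}\star BV$ is precisely what is \emph{not} proved in this paper (avoiding it is the entire point of the Lagrangian approach of \cite{CripLellis,BoCrip} adopted here). So the step on which your uniqueness argument rests is unsupported. The correct route, as in \cite[Sections 6--7]{BoCrip} and \cite{comcrip}, stays entirely on the Lagrangian side: one does not mollify $u$, but instead uses the well-posedness, stability and compactness of the regular Lagrangian flow (Propositions of Section 5), the semigroup property and the Jacobian identity \eqref{jacobian1}, to show directly that for each admissible $\beta$ the function $\beta(u)(t,X(t,\cdot))JX(t,\cdot)$ satisfies the expected integral identity against test functions transported by the (approximated, then limiting) flow; this identifies every renormalized solution with the explicit representative and gives uniqueness without any commutator lemma. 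As written, your argument would only be valid in settings (e.g.\ $W^{1,1}$ or $BV$ with Ambrosio's anisotropic kernels \cite{Diperlions,luigiinvent}) where the commutator convergence is actually known.
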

Proof of previous proposition is very similar to \cite{comcrip}[Proof of Theorem 2.7]. It is left to the reader.

%
\section{Appendix}
\begin{proof}[Proof of Proposition \ref{exampleDipernaLions}]  1. By \cite{Diperlions},  there exist two different regular Lagrangian flows $X_1,X_2$  associated to the following vector field
	\begin{align*}
	\mathbf{B}(x)=\left(-\operatorname{sign}(x_2)\left[\frac{x_1}{|x_2|^2}\mathbf{1}_{|x_1|\leq |x_2|}+\mathbf{1}_{|x_1|>|x_2|}\right],-\left[\frac{1}{|x_2|}\mathbf{1}_{|x_1|\leq |x_2|}+\mathbf{1}_{|x_1|>|x_2|}\right]\right)
	\end{align*} such that  for any $x\in \mathbb{R}^2$, $X_1,X_2\in W^{1,p}_{\text{loc}}(\mathbb{R}^2)$ for any $1<p<2$, 
	$X_1,X_2\in L_{\loc}^\infty(\mathbb{R}^2;C(\mathbb{R}))\cap C(\mathbb{R}^2;L^q_{\loc}(\mathbb{R}))$ for any $q<\infty$ and 
	$X(t,.)_{\#}\mathcal{L}^d =\mathcal{L}^d$ for any $t\in [0,T]$, $X_j(t+s,.)=X_j(t,X(s,.))$ a.e on $\mathbb{R}^2$, for all $t,s\in \mathbb{R}^d$.  
	Clearly, $\frac{|\mathbf{B}(x)|}{|x|+1}\in L^1(\mathbb{R}^2)+L^\infty(\mathbb{R}^2)$ and $\operatorname{div}(\mathbf{B})=0$. \\
	2. Therefore, it is enough to show that  there exist  functions $\Omega_1,...,\Omega_m\in \left(L^\infty\cap BV\right)(S^1) $ such that $\Omega_l(\theta)=\Omega_l(t\theta)$ for $\theta\in S^1,t>0$, $\int_{S^1}\Omega_j=0$ and  for any $R>1$ we have 
	\begin{align}
	\partial_l \mathbf{B}^i=\sum_{j=1}^{m}\left(\frac{\Omega_j^i(.)}{|.|^2}\right)\star\mu_{jR}^{l}~~\text{in}~~\mathcal{D}'(B_R)
	\end{align}
	for some $\mu_{jR}^{l}\in \mathcal{M}_b(\mathbb{R}^2)$ $l=1,2$ and $j=1,...,m$.\\
	Let $K_1(x_1,x_2)=c\frac{x_2^2-x_1^2}{(x_1^2+x_2^2)^2}$, $K_2(x_1,x_2)=c\frac{x_1^2-x_2^2}{(x_1^2+x_2^2)^2}$ be kernels of operators $\mathcal{R}_1^2,\mathcal{R}_2^2$, where $\mathcal{R}_1,\mathcal{R}_2$ are the Riesz transforms in $\mathbb{R}^2$. Let $\chi\in C^\infty_c([0,\infty))$ be such that $\chi=1$ in $[0,2)$ and $\chi=0$ in $(4,\infty)$ and set $\chi_r(x)=\chi(\frac{|x|}{r})$. Put $b_0(x_1,x_2)= -\mathbf{1}_{|y_1|> |y_2|}\in BV_{\loc}(\mathbb{R}^2)$. Fix $R>100$, we have 
	\begin{align}\nonumber
	\partial_1\mathbf{B}^1(x)&=-\frac{\operatorname{sign}(x_2)}{|x_2|}\mathbf{1}_{|x_1|\leq |x_2|}+\chi_{8R}\frac{\operatorname{sign}(x_2)|x_1|}{|x_2|^2}d\delta_{|x_1|=|x_2|}(x_1)d\mathcal{L}^1(x_2)+\chi_R\partial_1b_0(x_1,x_2)\\& = \mathbf{K}_1\star\delta_{0}(x)+\sum_{j=1,2}\mathcal{R}_j^2(\chi_R\partial_1b_0)(x)+\sum_{j=1,2}\mathcal{R}_j^2(\chi_{8R}\nu)(x)~\text{in}~\mathcal{D}'(B_{R}).\label{deriB11}
	\end{align}
	where $\mathbf{K}_1(y)=\frac{\Omega_1(y)}{|y|^2}$, $\Omega_1(y_1,y_2)=-\frac{|y|}{y_2}\mathbf{1}_{|y_1|\leq |y_2|}\in \left(L^\infty\cap BV\right)(B_2(0)\backslash B_1(0))$ with $\int_{S^1}\Omega_1=0$, $\chi_R\partial_1b_0\in \mathcal{M}_b(\mathbb{R}^2)$ and $\nu(x_1,x_2)=\frac{\operatorname{sign}(x_2)|x_1|}{|x_2|^2}d\delta_{|x_1|=|x_2|}(x_1)d\mathcal{L}^1(x_2)$ in $\mathcal{D}'(B_{2R})$.\\
	Since $\chi_{8R}=(\chi_{8R}-\chi_{|x|})+(\chi_{|x|}-\chi_{|x|/8})+\chi_{|x|/8}$,
	\begin{align}\nonumber
	\sum_{j=1,2}\mathcal{R}_j^2(\chi_{8R}\nu)(x)&=\sum_{j=1,2}\chi_R(x)\mathcal{R}_j^2(\chi_{|x|/8}\nu)(x)+\sum_{j=1,2}\chi_R(x)\mathcal{R}_j^2((\chi_{8R}-\chi_{|x|})\nu)(x)\\&+\sum_{j=1,2}\mathcal{R}_j^2((\chi_{|x|}-\chi_{|x|/2})\nu)(x)~\text{in}~\mathcal{D}'(B_{R}),\label{deriB12}
	\end{align}
	and
	\begin{align}\label{deriB13}
	\chi_R(x)\mathcal{R}_j^2(\chi_{|x|/8}\nu)(x), \chi_R(x)\mathcal{R}_j^2((\chi_{8R}-\chi_{|x|})\nu)(x)\in (L^\infty\cap L^1)(\mathbb{R}^2) ~j=1,2.
	\end{align}
	We now show that there exists $\tilde{\Omega}_j\in \left(L^\infty\cap BV\right)(B_2(0)\backslash B_1(0))$ such that $\tilde{\Omega}_j(\theta)=\tilde{\Omega}_j(r\theta)$ for any $r>0,\theta\in S^1$, $\int_{S^1}\tilde{\Omega}_j(\theta)=0$ and 
	\begin{align}
	\mathcal{R}_j^2((\chi_{|x|}-\chi_{|x|/2})\nu)(x)=\frac{\tilde{\Omega}_j(x)}{|x|^2}=\frac{\tilde{\Omega}_j(.)}{|.|^2}\star\delta_{0}(x)~~\forall~x\in \mathbb{R}^2\label{deriB14}
	\end{align}
	Indeed, we have 
	\begin{align*}
	&\mathcal{R}_j^2((\chi_{|x|}-\chi_{|x|/2})\nu)(x)\\&=\lim_{\varepsilon\to 0}\int_{\mathbb{R}}\left(K_j^\epsilon(x_1-|y_2|,x_2-y_2)+K_j^\epsilon(x_1+|y_2|,x_2-y_2)\right)(\chi_{|x|}-\chi_{|x|/2})(y_2,y_2)\frac{dy_2}{y_2}\\&= \frac{1}{|x|^2}\lim_{\varepsilon\to 0}\int_{\mathbb{R}}\left(K_j^\epsilon(\theta_1-|y_2|,\theta_2-y_2)+K_j^\epsilon(\theta_1+|y_2|,\theta_2-y_2)\right)\left[\chi\left(\sqrt{2}|y_2|\right)-\chi\left(8\sqrt{2}|y_2|\right)\right]\frac{dy_2}{y_2}\\&:=\frac{\tilde{\Omega}_j(\theta)}{|x|^2},
	\end{align*}
	with $K_j^\epsilon(.)=\mathbf{1}_{|.|>\epsilon}K_j^\epsilon(.), \theta=x/|x|$.\\
	Clearly, $\tilde{\Omega}_j(\theta_1,-\theta_2)=-\tilde{\Omega}_j(\theta_1,\theta_2)$ and $\tilde{\Omega}_j(-\theta_1,\theta_2)=\tilde{\Omega}_j(\theta_1,\theta_2)$, so, $\int_{S^1}\tilde{\Omega}_j=0.$\\ To prove $\tilde{\Omega}_j\in \left(L^\infty\cap BV\right)(S^{1})$, we can 
	assume  that $\theta_2,\theta_1\geq 0$ and $\theta_1\not=\theta_2$. So, we can write 
	\begin{align*}
	\tilde{\Omega}_j(\theta)&=a_j^1(\theta)+a_j^2(\theta)
	\end{align*}
	where 
	\begin{align*}
	&a_j^1(\theta)=\lim_{\varepsilon\to 0}\int_{\frac{1}{4\sqrt{2}}}^{2\sqrt{2}} K_j^\epsilon(\theta_1-y_2,\theta_2-y_2)\left[\chi\left(\sqrt{2}y_2\right)-\chi\left(8\sqrt{2}y_2\right)\right]\frac{dy_2}{y_2}\\&
	a_j^2(\theta)=\int_{-\infty}^0K_j(\theta_1-|y_2|,\theta_2-y_2)\left[\chi\left(\sqrt{2}|y_2|\right)-\chi\left(8\sqrt{2}|y_2|\right)\right]\frac{dy_2}{y_2}\\&+\int_{\mathbb{R}}K_j(\theta_1+|y_2|,\theta_2-y_2)\left[\chi\left(\sqrt{2}|y_2|\right)-\chi\left(8\sqrt{2}|y_2|\right)\right]\frac{dy_2}{y_2}.
	\end{align*}
	Clearly, $
	a_j^1(\theta) \in C^\infty(S^{1}\cap\{\theta_1,\theta_2\geq 0\}\backslash\{(1/\sqrt{2},1/\sqrt{2})\})$
	$a_j^2(\theta)\in C_b^\infty(S^{1}\cap\{\theta_1,\theta_2\geq 0\})$.  Using the fact that 
	\begin{align*}
	\left|\int_{\frac{1}{4\sqrt{2}}}^{2\sqrt{2}} K_j^\epsilon(\theta_1-y_2,\theta_2-y_2)dy_2\right|+\left|\int_{\frac{1}{4\sqrt{2}}}^{2\sqrt{2}} K_j^\epsilon(\theta_1-y_2,\theta_2-y_2)(\theta_2-y_2)dy_2\right|\leq C~\forall \varepsilon>0.
	\end{align*}
	and Taylor's Formula : for any $y_2,\theta_2\geq 0$, 
	\begin{align*}
	|\tilde{f}(y_2)-\tilde{f}(\theta_2)-\tilde{f}'(\theta_2)(y_2-\theta_2)|\lesssim |y_2-\theta_2|^2, \tilde{f}(y_2)=y_2^{-1}\left[\chi\left(\sqrt{2}y_2\right)-\chi\left(8\sqrt{2}y_2\right)\right],
	\end{align*}
	we find that 
	\begin{align*}
	|a_j^1(\theta)|\lesssim1+ \int_{\frac{1}{4\sqrt{2}}}^{2\sqrt{2}} |K_j(\theta_1-y_2,\theta_2-y_2)|\theta_2-y_2|^2dy_2\lesssim 1.
	\end{align*}
	Thus, we obtain that $\tilde{\Omega}_j\in \left(L^\infty\cap BV\right)(S^{1})$, and in particular, $\tilde{\Omega}_j\in \left(L^\infty\cap BV\right)(B_2(0)\backslash B_1(0))$.
	Therefore, we derive from \eqref{deriB11},\eqref{deriB12},\eqref{deriB13} and \eqref{deriB14} that there exist functions $\Omega_1,...,\Omega_m\in \left(L^\infty\cap BV\right)(S^1) $ such that $\Omega_j(\theta)=\Omega_j(r\theta)$ for $\theta\in S^1,r>0$, $\int_{S^1}\Omega_j=0$ and  for any $R>1$ we have $
	\partial_1\mathbf{B}^1=\sum_{j=1}^{m}\left(\frac{\Omega_j(.)}{|.|^2}\right)\star\mu_{jR}~~\text{in}~~\mathcal{D}'(B_R)$
	for some $\mu_{1R},...,\mu_{mR}\in \mathcal{M}_b(\mathbb{R}^2)$. Similarly, we can do this for $\partial_2\mathbf{B}_1,\partial_1\mathbf{B}^2,\partial_2\mathbf{B}^2$. The proof is complete.
\end{proof}
To prove Lemma \ref{lem-es-hol}, we need to have the following result:
\begin{lemma}\label{lem-es-onhyper} Let $e\in S^{d-1}$. For any  $z_1,z_1'\in\tilde{H}_{e},  y_2,y_2',z_2,z_2'\in H_{e}$,  $\varepsilon>0,\rho>0$, there holds
	\begin{align}\nonumber
	M&=	\int_{\tilde{H}_{e}} |f(y_2'+y_1 )-f(y_2'+z_1')|\mathbf{1}_{|(z_1+z_2)-(y_1+y_2)|\leq \varepsilon}~1\wedge\left(\frac{\rho}{|(z_1'+z_2')-(y_1+y_2)|}\right)^{d+2}~ d\mathcal{H}^1(y_1)\\&\nonumber\lesssim \frac{\rho^2}{\varepsilon} \int_{\tilde{H}_{e}}  1\wedge\left(\frac{\rho}{|(z_1'+z_2')-(z+y_2)|}\right)^{d-\frac{1}{2}}d|Df^{e}_{y_2'}|(z)\\&+\rho \int_{\tilde{H}_{e}} \mathbf{1}_{|(z_1+z_2)-(z+y_2)|\leq 4\varepsilon} ~1\wedge\left(\frac{\rho}{|(z_1'+z_2')-(z+y_2)|}\right)^{d+\frac{1}{2}}d|Df^{e}_{y_2'}|(z).\label{es-Nov222}
	\end{align}
\end{lemma}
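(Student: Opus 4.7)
The argument proceeds in three stages. \textbf{Stage 1 (1D BV representation and Fubini).} Writing $f^e_{y_2'}(t) := f(y_2'+t)$ for $t\in \tilde H_e$, the fundamental theorem of calculus for BV along a line gives
\begin{align*}
|f(y_2'+y_1)-f(y_2'+z_1')|\;\leq\;\int_{\tilde H_e}\mathbf{1}_{z\in[z_1',y_1]}\,d|Df^e_{y_2'}|(z),
\end{align*}
where $[z_1',y_1]$ is the oriented segment. Inserting this into $M$ and exchanging the order of integration via Fubini reduces \eqref{es-Nov222} to the pointwise bound
\begin{align*}
J(z)\;\leq\;\tfrac{C\rho^{2}}{\varepsilon}\,1\wedge\bigl(\rho/R_z'\bigr)^{d-1/2}\;+\;C\rho\,\mathbf{1}_{R_z\leq 4\varepsilon}\,1\wedge\bigl(\rho/R_z'\bigr)^{d+1/2},
\end{align*}
where $R_z':=|(z_1'+z_2')-(z+y_2)|$, $R_z:=|(z_1+z_2)-(z+y_2)|$, and $J(z)$ is the resulting inner integral in $y_1$ with $\mathbf{1}_{z\in[z_1',y_1]}$ replacing $|f(\cdot)-f(\cdot)|$.

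\textbf{Stage 2 (Reduction to a 1D integral).} Parametrize $\tilde H_e=\mathbb R e$ and write $s,s_{0},s_{1},s_{1}'$ for the coordinates of $y_{1},z,z_{1},z_{1}'$, and set $c:=|y_2-z_2'|$, $a:=|s_{0}-s_{1}'|$. The orthogonality $\tilde H_e\perp H_e$ gives $|(a_{1}+a_{2})-(b_{1}+b_{2})|^{2}=|a_{1}-b_{1}|^{2}+|a_{2}-b_{2}|^{2}$; hence the indicator $|(z_{1}+z_{2})-(y_{1}+y_{2})|\leq\varepsilon$ forces $|z_{2}-y_{2}|\leq\varepsilon$ and restricts $s$ to an interval of length $\leq 2\varepsilon$ around $s_{1}$. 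The constraint $z\in[z_{1}',y_{1}]$ places $s$ on the half-line through $z$ opposite to $s_{1}'$ and crucially yields the monotonicity $|s-s_{1}'|\geq a$, hence $|(z_{1}'+z_{2}')-(y_{1}+y_{2})|\geq R_z'$. After the (symmetric) WLOG $s_{0}\geq s_{1}'$, $J(z)$ is therefore dominated by the 1D integral of $1\wedge(\rho/\sqrt{\sigma^{2}+c^{2}})^{d+2}\,d\sigma$ over $\sigma\in[\max(a,s_{1}-\varepsilon-s_{1}'),\,s_{1}+\varepsilon-s_{1}']$.

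\textbf{Stage 3 (Two complementary bounds and a case split).} I then establish two pointwise bounds on $J(z)$: the direct length-times-maximum estimate $J(z)\leq 2\varepsilon\cdot 1\wedge(\rho/R_z')^{d+2}$, and, by enlarging the $\sigma$-domain to $[a,\infty)$ and a short computation using $\sqrt{\sigma^{2}+c^{2}}\geq\max(\sigma,c)$, the tail estimate $J(z)\leq C\rho\cdot 1\wedge(\rho/R_z')^{d+1}$. I split on whether $R_z\leq 4\varepsilon$. When $R_z\leq 4\varepsilon$, the tail estimate alone dominates the second term of the target, since $1\wedge(\rho/R_z')^{d+1}\leq 1\wedge(\rho/R_z')^{d+1/2}$. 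When $R_z>4\varepsilon$, combining $|z_{2}-y_{2}|\leq\varepsilon$ with $R_z>4\varepsilon$ forces $|s_{1}-s_{0}|>\sqrt{15}\,\varepsilon$, so the $\sigma$-interval is separated from $a$ by order $\varepsilon$; this sharpens the pointwise maximum to $J(z)\leq 2\varepsilon\cdot 1\wedge(\rho/(R_z'+\varepsilon))^{d+2}$, and interpolating this sharpened bound with the tail bound produces the first term $\tfrac{C\rho^{2}}{\varepsilon}\cdot 1\wedge(\rho/R_z')^{d-1/2}$.

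The main obstacle is the bookkeeping in Stage 3: the half-integer exponents $d\pm1/2$ and the asymmetric prefactors $\rho^{2}/\varepsilon$ versus $\rho$ force one to partition the $(\rho,\varepsilon,R_z')$-parameter space into several regimes and to apply the appropriate combination of the two bounds in each. In particular, in the regime $\rho<\varepsilon$ and $R_z'\lesssim\rho\,(\varepsilon/\rho)^{2/3}$, neither the length bound nor the tail bound alone suffices, and one must exploit the $\varepsilon$-gap between the $\sigma$-interval and $s_{1}'$ that is available precisely when $R_z>4\varepsilon$.
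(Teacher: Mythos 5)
Your proposal is correct, and Stage 1 coincides with the paper's opening move: bound $|f(y_2'+y_1)-f(y_2'+z_1')|$ by the total variation of $f^e_{y_2'}$ over the relevant segment and apply Fubini, reducing everything to a pointwise estimate on the inner $y_1$-integral $V(z)$ (the paper uses the slightly enlarged indicator $\mathbf{1}_{|z-z_1'|\leq 2|z_1'-y_1|}$, which changes nothing essential). Where you diverge is in how that inner integral is estimated. The paper splits the $y_1$-integration into the regions $|(z_1'+z_2')-(y_1+y_2)|\leq\varepsilon$ and $>\varepsilon$, and in each region factors the exponent $d+2$ as $(d+\tfrac12)+\tfrac32$ or $(d-\tfrac12)+\tfrac32+1$: the $(d\pm\tfrac12)$-power is pulled out using $|(z_1'+z_2')-(y_1+y_2)|\gtrsim R_z'$, the extra power of $1$ is bounded by $\rho/\varepsilon$ on the second region, the indicator $\mathbf{1}_{R_z\leq 4\varepsilon}$ drops out of the triangle inequality on the first region, and the leftover $\int 1\wedge(\rho/|z_1'-y_1|)^{3/2}\,d\mathcal{H}^1(y_1)\leq C\rho$ supplies the factor $\rho$ in both terms. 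You instead keep the exponent $d+2$ intact, prove two global bounds on the inner integral (length-times-maximum and the tail integral over $[a,\infty)$), split on the $z$-dependent condition $R_z\leq 4\varepsilon$ rather than on a $y_1$-region, and in the case $R_z>4\varepsilon$ exploit the forced $\varepsilon$-gap between the $\sigma$-interval and $a$ to sharpen the maximum to $1\wedge(\rho/(R_z'+\varepsilon))^{d+2}$ before interpolating; I checked that the resulting minimum of the two bounds is indeed dominated by $(C\rho^2/\varepsilon)\,1\wedge(\rho/R_z')^{d-1/2}$ in all regimes of $(\rho,\varepsilon,R_z')$. The paper's exponent-factorization is shorter and avoids your multi-regime bookkeeping entirely; your route is more elementary and makes explicit where the half-integer losses and the $\rho^2/\varepsilon$ prefactor come from, at the cost of the case analysis you correctly flag as the main obstacle.
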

\begin{proof} Since 
	$|f(y_2'+y_1 )-f(y_2'+z_1')|\leq \int_{\tilde{H}_{e}} \mathbf{1}_{|z-z_1'|\leq 2|z_1'-y_1|}d|Df^{e}_{y_2'}|(z),$
	so, $
	M\leq \int_{\tilde{H}_{e}} Vd|Df^{e}_{y_2'}|(z),$
	where $$
	V= \int_{\tilde{H}_{e}} \mathbf{1}_{|z-z_1'|\leq 2|z_1'-y_1|}1_{|(z_1+z_2)-(y_1+y_2)|\leq \varepsilon}\min\left\{1,\left(\frac{\rho}{|(z_1'+z_2')-(y_1+y_2)|}\right)^{d+2}\right\}d\mathcal{H}^1(y_1).$$\\
	Note that if $|z-z_1'|\leq 2|z_1'-y_1|$, then 
	$|(z_1'+z_2')-(z+y_2)|\leq 4|(z_1'+z_2')-(y_1+y_2)|$ and
	$|(z_1+z_2)-(z+y_2)|\leq |(z_1+z_2)-(y_1+y_2)|+3|(z_1'+z_2')-(y_1+y_2)|$. 
	Thus, we can estimate 
	\begin{align*}
	&V= \int_{\tilde{H}_{e}} \mathbf{1}_{|(z_1'+z_2')-(y_1+y_2)|\leq \varepsilon}+\int_{\tilde{H}_{e}} \mathbf{1}_{|(z_1'+z_2')-(y_1+y_2)|> \varepsilon}\\&\lesssim\mathbf{1}_{|(z_1+z_2)-(z+y_2)|\leq 4\varepsilon} ~1\wedge\left(\frac{\rho}{|(z_1'+z_2')-(z+y_2)|}\right)^{d+\frac{1}{2}}\int_{\tilde{H}_{e}} 1\wedge\left(\frac{\rho}{|z_1'-y_1|}\right)^{\frac{3}{2}}d\mathcal{H}^1(y_1)\\&+
	\frac{\rho}{\varepsilon}~ 1\wedge\left(\frac{\rho}{|(z_1'+z_2')-(z+y_2)|}\right)^{d-\frac{1}{2}}\int_{\tilde{H}_{e}} 1\wedge\left(\frac{\rho}{|z_1'-y_1|}\right)^{\frac{3}{2}}d\mathcal{H}^1(y_1)\\&\lesssim
	\rho \mathbf{1}_{|(z_1+z_2)-(z+y_2)|\leq 4\varepsilon} 1\wedge\left(\frac{\rho}{|(z_1'+z_2')-(z+y_2)|}\right)^{d+\frac{1}{2}}+
	\frac{\rho^2}{\varepsilon} 1\wedge\left(\frac{\rho}{|(z_1'+z_2')-(z+y_2)|}\right)^{d-\frac{1}{2}}
	\end{align*}
	which implies \eqref{es-Nov222}. The proof is complete. 
	%
\end{proof}
\begin{proof}[Proof of Lemma \ref{lem-es-hol}] We first observe that 
	\begin{align*}
	M&\leq \sum_{k=0}^{d-2} \int_{\tilde{H}_{e_1}}...\int_{\tilde{H}_{e_d}}~1\wedge\left(\frac{\rho}{|\sum_{i=1}^{d}(x_i-y_i)|}\right)^{d+2} \mathbf{1}_{|\sum_{i=1}^{d}(y_{0i}-y_i)|\leq \varepsilon}\\&~~~\times \left|f(\sum_{i=1}^{d-k-1}y_i+\sum_{i=d-k+1}^{d}x_i+y_{d-k})-f(\sum_{i=1}^{d-k-1}y_i+\sum_{i=d-k+1}^{d}x_i+x_{d-k})\right|d\mathcal{H}^1(y_d)...d\mathcal{H}^1(y_1).
	\end{align*}
	Applying Lemma \ref{lem-es-onhyper} to $e=e_{d-k}$, $y_2'=\sum_{i=1}^{d-k-1}y_i+\sum_{i=d-k+1}^{d}x_i$, $y_2=\sum_{i\not= d-k}y_i$, $z_1=y_{0,d-k},z_2=\sum_{i\not= d-k} y_{0,i}$,$z_1'=x_{d-k},z_2'=\sum_{i\not= d-k} x_i$ yields
	\begin{align}\nonumber
	M&\lesssim \sum_{k=0}^{d-2} \frac{\rho^2}{\varepsilon}\int_{\tilde{H}_{e_1}}...\int_{\tilde{H}_{e_d}}1\wedge\left(\frac{\rho}{|\sum_{i=0}^{d}(x_i-y_i)|}\right)^{d-\frac{1}{2}}\\&\nonumber~~~\times d\mathcal{H}^1(y_d)...d\mathcal{H}^1(y_{d-k+1})d|Df^{e_{d-k}}_{\sum_{i=1}^{d-k-1}y_i+\sum_{i=d-k+1}^{d}x_i}|(y_{d-k})d\mathcal{H}^1(y_{d-k-1})...d\mathcal{H}^1(y_1)\\&+\sum_{k=0}^{d-2} \rho\int_{\tilde{H}_{e_1}}...\int_{\tilde{H}_{e_d}} 1\wedge\left(\frac{\rho}{|\sum_{i=0}^{d}(x_i-y_i)|}\right)^{d+\frac{1}{2}} \mathbf{1}_{|\sum_{i=1}^{d}(y_{0i}-y_i)|\leq \varepsilon}\nonumber\\&~~~\times d\mathcal{H}^1(y_d)...d\mathcal{H}^1(y_{d-k+1})d|Df^{e_{d-k}}_{\sum_{i=1}^{d-k-1}y_i+\sum_{i=d-k+1}^{d}x_i}|(y_{d-k})d\mathcal{H}^1(y_{d-k-1})...d\mathcal{H}^1(y_1).\label{es-nov231}
	\end{align}
	It is clear to see that
	$$\int_{\tilde{H}_{e_{d-k+1}}}...\int_{\tilde{H}_{e_d}}1\wedge \left(\frac{\rho}{|\sum_{i=0}^{d}(x_i-y_i)|}\right)^{d-\frac{1}{2}} d\mathcal{H}^1(y_d)...d\mathcal{H}^1(y_{d-k+1})\lesssim \rho^k~ 1\wedge \left(\frac{\rho}{|\sum_{i=1}^{d-k}(x_i-y_i)|}\right)^{d-k-\frac{3}{4}},$$
	and
	\begin{align*}
	&\int_{\tilde{H}_{e_{d-k+1}}}...\int_{\tilde{H}_{e_d}}1\wedge\left(\frac{\rho}{|\sum_{i=0}^{d}(x_i-y_i)|}\right)^{d+\frac{1}{2}} ~\mathbf{1}_{|\sum_{i=1}^{d}(y_{0i}-y_i)|\leq \varepsilon} d\mathcal{H}^1(y_d)...d\mathcal{H}^1(y_{d-k+1})\\&\lesssim \rho^k\mathbf{1}_{|\sum_{i=1}^{d}(y_{0i}-x_i)|\leq 2\varepsilon}\mathbf{1}_{|\sum_{i=1}^{d-k}(y_{0i}-y_i)|\leq 2\varepsilon} 1\wedge\left(\frac{\rho}{|\sum_{i=1}^{d-k}(x_i-y_i)|}\right)^{d-k+\frac{1}{4}}\\&~~+
	\frac{\rho^{k+1}}{\varepsilon}~1\wedge \left(\frac{\rho}{|\sum_{i=0}^{d-k}(x_i-y_i)|}\right)^{d-k-\frac{3}{4}}.
	\end{align*}
	Combining these with \eqref{es-nov231} we find that 
	\begin{align*} 
	&M\lesssim \sum_{k=0}^{d-2} \frac{\rho^{k+2}}{\varepsilon}\int_{\tilde{H}_{e_1}}...\int_{\tilde{H}_{e_{d-k}}}~1\wedge \left(\frac{\rho}{|\sum_{i=1}^{d-k}(x_i-y_i)|}\right)^{d-k-\frac{3}{4}} d\nu^1_{k,\sum_{i=d-k+1}^{d}x_i}(y_{d-k},...,y_1)\\&~~+\sum_{k=0}^{d-2} \rho^{k+1}\mathbf{1}_{|\sum_{i=1}^{d}(y_{0i}-x_i)|\leq 2\varepsilon}\int_{\tilde{H}_{e_1}}...\int_{\tilde{H}_{e_{d-k}}}1\wedge \left(\frac{\rho}{|\sum_{i=1}^{d-k}(x_i-y_i)|}\right)^{d-k+\frac{1}{4}} d\nu^2_{k,\sum_{i=d-k+1}^{d}x_i}(y_{d-k},..,y_1).
	\end{align*}
	Hence, using the fact that for any $\omega\in \mathcal{M}^+(\bigotimes_{i=1}^{d-k} \tilde{H}_{e_i})$,
	\begin{align*}
	\int_{\tilde{H}_{e_1}}...\int_{\tilde{H}_{e_{d-k}}}1\wedge \left(\frac{\rho}{|\sum_{i=1}^{d-k}(x_i-y_i)|}\right)^{d-k+\frac{1}{4}}d\omega(y_{d-k},...,y_1)\lesssim\rho^{d-k}\mathbf{M}(\omega,\bigotimes_{i=1}^{d-k} \tilde{H}_{e_i})(\sum_{i=1}^{d-k}x_i),
	\end{align*}
	one gets the first inequality of Lemma \ref{lem-es-hol}. Similarly, we also have second one. The proof is complete.
\end{proof}

								 \providecommand{\bysame}{\leavevmode\hbox to3em{\hrulefill}\thinspace}
								 \providecommand{\MR}{\relax\ifhmode\unskip\space\fi MR }
								 \providecommand{\MRhref}[2]{%
								 	\href{http://www.ams.org/mathscinet-getitem?mr=#1}{#2}
								 }
								 \providecommand{\href}[2]{#2}


\begin{thebibliography}{10}
								\bibitem{Alber}  Alberti, G. ~Rank one property for derivatives of functions with bounded variation. \textit{Proc. Roy. Soc. Edinburgh Sect. A} \textbf{123} (1993), no. 2, 239-274.
							\bibitem{luigiinvent}  Ambrosio, L.~  Transport equation and Cauchy problem for BV vector fields. \textit{ Invent.  Math.}  \textbf{158} (2004),  227-260.
							\bibitem{Am-Rev17}Ambrosio, L.  Well posedness of ODE's and continuity equations with non-smooth vector fields, and applications. \textit{Rev. Mat. Complut.} \textbf{30} (2017), no. 3, 427-450.
							\bibitem{AMF1} Ambrosio, L.;  Colombo, M.; Figalli, A. ~ Existence and uniqueness of maximal
							regular flows for non-smooth vector fields. \textit{Arch.  Ration. Mech.
								Anal.} \textbf{218} (2015), 1043-1081.
							\bibitem{AMF2} Ambrosio, L.;  Colombo, M.; Figalli, A.   On the Lagrangian structure of transport
							equations: the Vlasov-Poisson system.  \textit{Duke Math. J.}  \textbf{166} (2017), 3505-3568.
							\bibitem{AmbCrip} Ambrosio, L.; Crippa, G.~ Continuity equations and ODE flows with non-smooth velocity, ~  \textit{Proc. Roy. Soc. Edinburgh Sect. A} \textbf{144} (2014), 1191-1244.
							\bibitem{AmDelelissMa} Ambrosio, L.;  De Lellis, C.; Mal\'y, J. On the chain rule for the divergence of
							BV like vector fields: applications, partial results, open problems. In Perspectives in
							Nonlinear Partial Differential Equations: in honor of Haim Brezis, \textit{Contemporary
								Mathematics}, \textbf{446} (2007), 31-67.
							\bibitem{luigiFuPa} Ambrosio, L.; Fusco, N.;  Pallara, D. {\em Functions of bounded variation and free discontinuity problems.} Oxford Mathematical Monographs, Clarendon Press, Oxford (2000).
							\bibitem{AmLeMa}  Ambrosio, L.; Lecumberry, M;  Maniglia, S.  Lipschitz regularity and approximate differentiability of the DiPerna-Lions flow.  \textit{Rendiconti del Seminario Fisico Matematico di Padova} \textbf{114} (2005), 29-50.
							\bibitem{AmTilli} Ambrosio, L;  Tilli, P. {\em Topics on Analysis in Metric spaces.} Oxford Lecture
							Series in Mathematics 25, Oxford University Press ( 2004).
							
							
							\bibitem{bianBoni1} Bianchini, S.;  Bonicatto, P.; . Gusev,  N.A.    Renormalization for autonomous
							nearly incompressible BV vector fields in two dimensions.  \textit{ SIAM J. Math. Anal.} \textbf{48}
							(2016), 1-33.
							\bibitem{bianBoni2} Bianchini, S.;  Bonicatto, P. ~ A uniqueness result for the decomposition of vector
							fields in $\mathbb{R}^d$,~  \textit{Invent. math.}  \textbf{220} (2020), 255-393. 
							\bibitem{BiNi} Bianchini, S.;  De Nitti,   Differentiability in measure of the flow associated to a nearly incompressible $BV$ vector field.  \textit{https://cvgmt.sns.it/paper/4858/}.
							
							\bibitem{BoBoCrip1}  Bohun, A.;  Bouchut, F.; Crippa, G. ~ Lagrangian flows for vector fields with anisotropic regularity.  \textit{ Ann. Inst. H. Poincar\'e Anal. Non Lin\'eaire} \textbf{33} (2016), 1409-1429.
							\bibitem{BoBoCrip2} Bohun, A.;  Bouchut, F.; Crippa, G. ~ Lagrangian solutions to the 2D Euler system with
							L1 vorticity and infinite energy.  \textit{ Nonlinear Anal.} \textbf{132} (2016), 160-172.
							\bibitem{Bo01} Bouchut, F.   Renormalized solutions to the Vlasov equation with coefficients of bounded variation.  \textit{ Arch. Ration. Mech. Anal.} \textbf{157} (2001), 75-90. 
							\bibitem{BoCrip} Bouchut, F.; Crippa. G.  ~Lagrangian flows for vector fields with gradient given by a singular	integral.  \textit{J. Hyperbolic Differ. Eq.} \textbf{10} (2013),  235-282.
							\bibitem{BoJa} Bouchut, F.; James, F.  One dimensional transport equation with discontinuous
							coefficients.  \textit{Nonlinear Analysis.} \textbf{32} (1998), 891-933.
							\bibitem{Bressan1}  Bressan, A.   A lemma and a conjecture on the cost of rearrangements.  \textit{ Rend. Sem. Mat. Univ. Padova} \textbf{110} (2003), 97-102.
							\bibitem{Bressan2}  Bressan, A.  An ill posed Cauchy problem for a hyperbolic system in two space dimensions.  \textit{ Rend. Sem. Mat. Univ. Padova} \textbf{110} (2003), 103-117.
							\bibitem{BCL}  Bru\'{e}, E.; Colombo, M.;   De Lellis, C. ~ Positive solutions of transport equations and classical nonuniqueness of characteristic curves, 	{\em arXiv:2003.00539}.
							\bibitem{BHS} Bru\'e, E.; Nguyen, Q.-H.; Stefani, G. ~ A maximal functiona characterization of absolutely continouos measures and sobolev functions.  \textit{ Atti Accad. Naz. Lincei Rend. Lincei Mat. Appl.}  {\bf  30}  (2019),  599-614. 
							\bibitem{BN18a}
							Bru\'e, E.; Nguyen, Q.-H.
							Sharp regularity estimates for solutions to the continuity equation drifted by sobolev vector fields.
							{\em arXiv:1806.03466v2, Analysis and PDE.} (2020), in press.
							\bibitem{BN18b}
							Bru\'e, E.; Nguyen, Q.-H.
							On the {S}obolev space of functions with derivative of logarithmic order.
							{\em Adv. Nonlinear Anal.}, \textbf{9} (2020), 836--849.
							\bibitem{BN18c}
							Bru\'e, E.; Nguyen, Q.-H.  Sobolev estimates for solutions of the transport equation and ode flows associated to non-lipschitz drifts. 
							{\em Mathematische Annalen.} (2020), https://doi.org/10.1007/s00208-020-01988-5. 
							\bibitem{BN18d}
							Bru\'e, E.; Nguyen, Q.-H.
							Advection diffusion equations with sobolev velocity field.  {\em arXiv:2003.08198v1, Commun. Math. Phys.} (2020),  in press.
							
							\bibitem{ChamJabin} Champagnat, N.; Jabin, P.-E.    Well posedness in any dimension for Hamiltonian
							flows with non BV force terms.  \textit{ Comm. PDE.} \text{35} (2010), 786-816.	
							\bibitem{Christ1} Christ, M.    Weak type (1, 1) bounds for rough operators.  \textit{ Ann. of Math.} \textbf{128} (1988), 19-42.
							\bibitem{Christ2} Christ, M.; Rubio de Francia, J.-L.  ~Weak type (1,1) bounds for rough operators, II.  \textit{ Invent. Math.} \textbf{93} (1988), 225-237.
							\bibitem{CCS1} Ciampa, G.; Crippa, G.; Spirito, S.  Smooth approximation is not a selection principle for the transport equation with rough vector field.  \textit{ Calc. Var. Partial Differential Equations.} (2020) 59:13. 
							\bibitem{CCS2} Ciampa, G.; Crippa, G.; Spirito, S.  On smooth approximations of rough vector fields and the selection of flows.  {\em arXiv:1902.00710.} (2019).
							\bibitem{ColoLerner1}  Colombini,  F.; Lerner. N.   Uniqueness of continuous solutions for BV vector
							fields.  \textit{ Duke Math. J.} \textbf{111} (2002), 357-384.
							\bibitem{ColoLerner2} Colombini,  F.; Lerner. N.  Uniqueness of $L^\infty$ solutions for a class of conormal BV vector fields.  \textit{ Contemp. Math. } \textbf{368} (2005), 133-156.
							\bibitem{comcrip} Colombo, M.; Crippa, G.; Spirito, S.  Renormalized solutions to the continuity equation
							with an integrable damping term.  \textit{ Calc. Var. Partial Differential Equations.} \textbf{54} (2015), 1831-1845.
							
							\bibitem{CripLellis} Crippa, G.;  De Lellis, C. ~  Estimates and regularity results for the DiPerna-Lions flow.   \textit{J. Reine Angew.  Math.} \textbf{616} (2008), 15-46.	
							\bibitem{CripLiga} Crippa, G.;  Ligabue, S.   A note on the Lagrangian flow associated to a partially regular vector field.  \textit{Diferential Equations and Dynamical Systems.}
							https://doi.org/10.1007/s12591-020-00530-y	
							\bibitem{CripNobiSeisSpi}  Crippa, G.; Nobili, C.; Seis, C.; Spirito. S.  Eulerian and Lagrangian solutions to the continuity and Euler equations with $L^1$ vorticity.  \textit{ SIAM J. Math. Anal.} \textbf{49} (2017), no. 5, 3973-3998.
							
							
							\bibitem{Delellis} De Lellis, C.  Notes on hyperbolic systems of conservation laws and transport equations.  \textit{Handbook of differential equations: evolutionary equations.} Vol. III, 277-382, Handb. Differ. Equ., Elsevier/North-Holland, Amsterdam, 2007.
							\bibitem{PhiRin} De Philippis, G.; Rindler, F.  On the structure of A-free measures and applications.  \textit{ Ann. of Math.} \textbf{184} (2016), 1017-1039.
							\bibitem{Depauw} Depauw, N.  Non-unicit\'e du transport par un champ de vecteurs presque BV (French) [Non-uniqueness of transport by an almost BV vector field] Seminaire:  {\em \'Equations aux D\'eriv\'ees Partielles.} 2002-2003, Exp. No. XIX, 9 pp., S\'min. \'Equ. D\'eriv. Partielles, \'Ecole Polytech., Palaiseau, (2003). 
							\bibitem{Diperlions}  DiPerna, R.J.;  Lions, P.-L.~  Ordinary differential equations, transport theory and Sobolev spaces.  \textit{ Invent. Math.} \textbf{98} (1989), 511-547.
							\bibitem{Gra1} Grafakos, L. {\em Classical Fourier analysis}, Third Edition, Springer, (2014).	
							\bibitem{Hau} Hauray, M.  On Liouville transport equation with force field in BV-loc.
							\textit{Comm. Partial Differential Equations.} \textbf{29} (2004), no. 1-2, 207-217. 
							\bibitem{HauLebris} Hauray, M.;  Le Bris, C.  A new proof of the uniqueness of the flow for ordinary differential
							equations with BV vector fields.  \textit{Ann. Mat. Pura Appl.} \textbf{190} (2011),  91-103.
							
							\bibitem{Jabin}  Jabin, P.-E.  Differential equations with singular fields.  \textit{ J. Math. Pures Appl.}  \textbf{94} (2010), 597-621.
							\bibitem{Lerner} Lerner, N.  Transport equations with partially BV velocities.  \textit{ Annali Scuola Normale
								Superiore.}  (2004), 681-703.
							\bibitem{lion98} Lions, P.L.  Sur les \'equations diff\'erentielles ordinaires et les equations de transport.  \textit{ C. R. Acad. Sci., Paris, S\'er. I, Math.} \textbf{326} (1998), 833-838. 
							
							\bibitem{Seeger} Seeger, A.  Singular integral operators with rough convolution kernels. \textit{ J. Amer. Math.Soc. } \textbf{9} (1996), no. 1, 95-105.
							\bibitem{Seis} Seis, C.  A quantitative theory for the continuity equation.  \textit{Ann. Inst. H. Poincar\'e Anal. Non
								Lin\'eaire.} \textbf{34} (2017), no. 7, 1837-1850. 
							\bibitem{Stein1}	Stein, E. M. {\em Singular integrals and differentiability properties of functions.} Princeton University Press, (1970).
							\bibitem{Stein2} Stein, E. M.  {\em Harmonic analysis: real-variable methods, orthogonality, and oscillatory integrals} Princeton
							University Press, (1993).
								 	
								 \end{thebibliography}
\end{document}